\documentclass{article}
\usepackage[utf8]{inputenc}
\usepackage{amsmath}
\usepackage{amssymb}
\usepackage{amsfonts}
\usepackage{amsthm}
\usepackage{setspace}
\usepackage{bbm}
\usepackage[english]{babel}
\usepackage{graphicx}
\usepackage{xcolor}
\usepackage{fancyhdr} 
\usepackage[left=1in,right=1in,top=1in,bottom=1in]{geometry}
\usepackage{amsmath,amscd}
\usepackage[all]{xy}
\usepackage{tikz}
\usepackage{tikz-cd}
\usetikzlibrary{shapes}
\usetikzlibrary{arrows}
\usetikzlibrary{matrix}
\usetikzlibrary{calc}
\usetikzlibrary{positioning}
\usetikzlibrary{decorations.pathreplacing}
\usetikzlibrary{decorations.markings}
\usetikzlibrary{intersections}
\usepackage{mathtools}
\usepackage[hypcap=true]{caption}
\usepackage{hyperref}
\usepackage[numbers]{natbib}
\usepackage{authblk}
\DeclareFontFamily{U}{mathx}{\hyphenchar\font45}
\DeclareFontShape{U}{mathx}{m}{n}{
      <5> <6> <7> <8> <9> <10>
      <10.95> <12> <14.4> <17.28> <20.74> <24.88>
      mathx10
      }{}
\DeclareSymbolFont{mathx}{U}{mathx}{m}{n}
\DeclareFontSubstitution{U}{mathx}{m}{n}
\DeclareMathAccent{\widecheck}{0}{mathx}{"71}
\usepackage{marvosym}
\usepackage{mathrsfs}
\usepackage{theoremref}

\newcommand{\tc}[2]{{\textcolor{#1}{#2}}}
\newcommand{\lrp}[1]{\left(#1\right)}
\newcommand{\lrb}[1]{\left[#1\right]}

\newcommand{\lrm}[1]{\left|#1\right|}
\newcommand{\lrc}[1]{\left\{#1\right\}}
\newcommand{\lra}[1]{\left\langle{#1}\right\rangle}

\newcommand{\LRA}{\Longleftrightarrow}

\newcommand{\eq}[2]{\begin{equation}\label{#2} \begin{split} #1  \end{split} \end{equation}}
\newcommand{\eqn}[1]{\begin{equation*} \begin{split} #1 \end{split} \end{equation*}}

\newcommand{\Q}{\mathbb{Q} }
\newcommand{\R}{\mathbb{R} }

\newcommand{\Z}{\mathbb{Z} }

\newcommand{\kk}{\mathbbm{k} }

\newcommand{\vb}[1]{\mathbf{#1}}
\newcommand{\cA}{\mathcal{A} }

\newcommand{\ssO}{\mathcal{O} }

\newcommand{\trop}{\mathrm{trop} }

\newcommand{\mf}[1]{\mathfrak{#1} }
\newcommand{\tf}{\vartheta}
\newcommand{\wall}{\mathfrak{d}}
\newcommand{\scat}{\mathfrak{D}}
\makeatletter
\newlength{\negph@wd}
\DeclareRobustCommand{\negphantom}[1]{%
  \ifmmode
    \mathpalette\negph@math{#1}%
  \else
    \negph@do{#1}%
  \fi
}
\newcommand{\negph@math}[2]{\negph@do{$\m@th#1#2$}}
\newcommand{\negph@do}[1]{%
  \settowidth{\negph@wd}{#1}%
  \hspace*{-\negph@wd}%
}
\makeatother

\makeatletter
\newcommand{\subalign}[1]{%
  \vcenter{%
    \Let@ \restore@math@cr \default@tag
    \baselineskip\fontdimen10 \scriptfont\tw@
    \advance\baselineskip\fontdimen12 \scriptfont\tw@
    \lineskip\thr@@\fontdimen8 \scriptfont\thr@@
    \lineskiplimit\lineskip
    \ialign{\hfil$\m@th\scriptstyle##$&$\m@th\scriptstyle{}##$\hfil\crcr
      #1\crcr
    }%
  }%
}
\makeatother

\definecolor{blue-ish}{rgb}{0,.55,.7}
\definecolor{more-blue}{rgb}{0,.2,.8}
\definecolor{more-green}{rgb}{.1,.5,.1}

\theoremstyle{plain}
\newtheorem{theorem}{Theorem}
\newtheorem{prop}[theorem]{Proposition}
\newtheorem{lemma}[theorem]{Lemma}

\newtheorem{cor}[theorem]{Corollary}

\theoremstyle{definition}
\newtheorem{definition}[theorem]{Definition}

\newtheorem{notation}[theorem]{Notation}

\theoremstyle{remark}
\newtheorem{remark}[theorem]{Remark}

\DeclareMathOperator{\supp}{supp}

\DeclareMathOperator{\conv}{conv}
\DeclareMathOperator{\bconv}{conv_{BL}}

\DeclareMathOperator{\Cone}{Cone}
\newcommand{\sumt}{\displaystyle{\sum}_\tf}
\DeclareMathOperator*{\Sumt}{\sumt}
\DeclareMathOperator{\+t}{+_\tf}
\DeclareMathOperator{\codim}{codim}
\DeclareMathOperator{\colim}{colim}
\DeclareMathOperator{\Sing}{Sing}



\title{Fundamentals of Broken Line Convex Geometry}

\author[1]{Juan Bosco Fr\'ias-Medina}
\author[2]{Timothy Magee}
\affil[1]{Universidad Michoacana de San Nicol\'as de Hidalgo, \href{mailto:juan.frias@umich.mx}
{juan.frias@umich.mx}}
\affil[2]{Hollins University,  \href{mailto:mageetd@hollins.edu}{mageetd@hollins.edu}}
\setcounter{Maxaffil}{0}

\date{}


\begin{document}

\maketitle

\begin{abstract}
    We develop the fundamentals of a new theory of convex geometry -- which we call {\it{broken line convex geometry}}.
    This is a theory of convexity where the ambient space is the rational tropicalization of a cluster variety, as opposed to an ambient vector space. 
    In this theory, {\it{line segments}} are replaced by {\it{broken line segments}}, and we adopt the  notion of convexity in \cite{BLC}.
    We state and prove broken line convex geometry versions of many standard results from usual convex geometry.
\end{abstract}

\tableofcontents

\section{Introduction}

In this paper we study {\it{broken line convex geometry}} -- a generalization of convex geometry in which the ambient space is the tropicalization of a cluster variety rather than simply a vector space, and in which broken line segments play the role ordinarily filled by line segments.
We show that many classical convex geometry results remain true in this setting.
For instance, versions of the following classical results remain true in broken line convex geometry:
\begin{enumerate}
    \item \label{it:DilateSumConvex}A set $S$ is convex if and only if $t S +(1-t)S = S$ for all $t \in [0,1]$.
    \item \label{it:ConvSum-SumConv}$\conv(S+T) = \conv(S) + \conv(T)$.
    \item \label{it:ConvFun-ConvSet}If $\varphi$ is a convex function, then the locus where $\varphi$ is at least some constant $r$ is a convex set. 
    \item A bounded polyhedron is the convex hull of its vertices.
    \item \label{it:StronglyConvex-FullDim} The dual of a convex set $S$ is full dimensional if and only if $S$ is strongly convex.
    \item \label{it:DualFaceComplex}If $P$ and $P^\circ$ are dual polytopes, there is a bijective, containment-reversing correspondence between the faces of $P$ and $P^\circ$.
\end{enumerate}
Other aspects of the theory need a bit of modification, but remain quite pleasant.
In broken line convex geometry, the faces of a polyhedral set are generally not broken line convex.
However, they satisfy a rather natural  weaker convexity notion -- which we call {\it{weak convexity}}.
They may also fail to form a complex.
Nevertheless, they do have a structure reminiscent of a polyhedral complex, forming what we call a {\it{pseudo-complex}}.

As is already evident in the brief list above, one operation central to the theory of convex geometry is the Minkowski sum.
As such, a key element of this story is our notion of Minkowski addition in a tropical space.
It is morally the same as usual Minkowski addition, but the lack of linear structure in tropical spaces makes this addition multi-valued.
See \thref{def:TropicalMinkowskiSum} for the precise definition.
\S\ref{sec:TMS} treats the interplay of this tropical Minkowski sum and the {\it{broken line convex hull}} of \cite{BLC}.
We find that these concepts relate to each other in much the same way as the usual Minkowski sum and convex hull do.
In particular, \thref{char_blc} is the broken line convex geometry version of Item~\ref{it:DilateSumConvex} and \thref{thm:Compatible} is the broken line convex geometry version of Item~\ref{it:ConvSum-SumConv}.

Next, we turn our attention to the meaning of convexity of functions in broken line convex geometry.
Here again we adapt the linear definition to the tropical setting by replacing {\it{the line segment}} between a pair of points with {\it{all broken line segments}} between a pair of points:
\begin{definition}[\thref{def:ConvWRTBL}]
Let $S\subset U^{\trop}(\Q)$ be a broken line convex set. 
A function $\varphi:S\rightarrow \Q $ is \textit{convex with respect to broken lines} if for 
any broken line segment $\gamma:[t_1,t_2]\rightarrow S$, 
we have that    
\eqn{\varphi(\gamma(t)) \geq \lrp{\dfrac{t_2-t}{t_2-t_1}} \varphi(\gamma(t_1)) + \lrp{\dfrac{t-t_1}{t_2-t_1}} \varphi(\gamma(t_2))}
for all $t\in [t_1,t_2]$.
\end{definition}
We then give an equivalent characterization these functions in terms of structure constants of $\tf$-function multiplication:\footnote{We will discuss these $\tf$-functions and structure constants in greater detail in \S\ref{sec:Background}. For now, a non-zero structure constant $ \alpha_{p_1, \dots, p_d}^{q} $ means that $q$ is a value of the multi-valued sum $p_1 \+t \dots \+t p_d$.}
\begin{theorem}[\thref{prop:ConvWRTBL}, \thref{rem:ConvWRTBL-Equiv}]
    Let $S\subset U^{\trop}(\Q)$ be broken line convex.
    Then $\varphi:S \to \Q$ is convex with respect to broken lines if and only if
    for all $s_1,\dots, s_d, s \in S$, $a_1, \dots, a_d \in \Q_{\geq 0}$ with $a_1 s_1,\dots, a_d s_d$, and $(a_1+\cdots +a_d)s$ all integral, and $\alpha_{a_1 s_1,\dots, a_d s_d}^{(a_1+\cdots +a_d)s} \neq 0$, we have
    \eqn{\varphi(s) \geq \sum_{i=1}^{d} \frac{a_i}{a_1+\cdots +a_d}\varphi(s_i) .}
\end{theorem}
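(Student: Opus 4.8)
The plan is to prove the two directions of the equivalence by relating the abstract structure constants $\alpha^q_{p_1,\dots,p_d}$ to concrete broken line segments, and then invoking the definition of convexity with respect to broken lines. Recall from the footnote (and the background section) that $\alpha^q_{a_1 s_1,\dots,a_d s_d}\neq 0$ precisely when $q = (a_1+\cdots+a_d)s$ is a value of the multi-valued sum $a_1 s_1 \+t \cdots \+t a_d s_d$, which in turn is detected by the existence of broken lines with prescribed asymptotic directions whose monomials multiply to the $q$-monomial. So the content of the theorem is a translation between the ``broken line segment between two points'' formulation in the definition and the ``$d$-fold tropical sum'' formulation in terms of structure constants.

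For the forward direction ($\Rightarrow$), I would first reduce the $d$-fold statement to the $d=2$ case by induction: given $a_1 s_1,\dots,a_d s_d$ with nonzero structure constant for $(a_1+\cdots+a_d)s$, associativity of the $\+t$ operation (which underlies associativity of $\tf$-function multiplication) lets me split off $a_d s_d$ and realize $s$ as a two-term tropical sum of $s_d$ and an intermediate point $s'$ with weights $a_d$ and $a_1+\cdots+a_{d-1}$, where $s'$ is itself a value of $a_1 s_1 \+t \cdots \+t a_{d-1} s_{d-1}$. Since $S$ is broken line convex, the intermediate point $s'$ lies in $S$, so the inductive hypothesis applies to it. Then the $d=2$ case: a nonzero structure constant $\alpha^{(a+b)s}_{as_1, bs_2}\neq 0$ means there is a broken line segment $\gamma$ realizing $s$ as the appropriately weighted combination of $s_1$ and $s_2$ — concretely, $\gamma$ is a broken line segment from $s_1$ to $s_2$ (suitably parametrized) with $\gamma(t) = s$ for the value of $t$ with $(t_2-t)/(t_2-t_1) = a/(a+b)$. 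Applying the defining inequality of convexity with respect to broken lines to this $\gamma$ at that $t$ gives exactly $\varphi(s)\geq \frac{a}{a+b}\varphi(s_1)+\frac{b}{a+b}\varphi(s_2)$, and unwinding the induction yields the general inequality.

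For the reverse direction ($\Leftarrow$), I would take an arbitrary broken line segment $\gamma:[t_1,t_2]\to S$ and a point $t\in[t_1,t_2]$, and show that the three points $\gamma(t_1), \gamma(t_2), \gamma(t)$ fit the structure-constant hypothesis with $d=2$. The key point is that the very data of a broken line segment passing through $\gamma(t)$ with endpoints $\gamma(t_1),\gamma(t_2)$ encodes a nonzero structure constant $\alpha^{q}_{a\gamma(t_1),\, b\gamma(t_2)}$ with $a/(a+b) = (t_2-t)/(t_2-t_1)$ and $q = (a+b)\gamma(t)$ — here one must choose $a,b$ (scaling $t_1,t_2$ and the ratio by a common integer) so that all the relevant points are integral, which is possible since the ratio is rational; rescaling a broken line segment rescales its endpoints and midpoint monomials compatibly. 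Then the hypothesis gives $\varphi(\gamma(t))\geq \frac{a}{a+b}\varphi(\gamma(t_1))+\frac{b}{a+b}\varphi(\gamma(t_2))$, which is the required inequality.

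The main obstacle I anticipate is the bookkeeping in the translation between broken line segments and structure constants — in particular, making precise that a broken line segment through an interior point $\gamma(t)$ genuinely corresponds to a nonzero $\alpha$ with the correct weights and correct target point, and handling the integrality constraints (scaling everything by a suitable positive integer to clear denominators, while checking the rescaled points still lie in $S$ and still realize the same tropical sum). The associativity reduction in the forward direction is conceptually clean but also requires care: one must confirm that the intermediate points produced by splitting a $d$-fold sum are again elements of the broken line convex set $S$ and are again integral after an appropriate common rescaling. Once these compatibility statements are nailed down, both directions are short applications of the definition.
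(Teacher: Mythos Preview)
Your proposal is correct and follows essentially the same route as the paper: the forward direction is an induction on $d$ that splits off one factor using associativity of $\tf$-function multiplication (the paper expands $\tf_{a_1 s_1}\cdots\tf_{a_k s_k}$ and uses linear independence to locate the intermediate $s'$), with the base case $d=2$ handled via the broken-line-segment interpretation of $\alpha_{a s_1, b s_2}^{(a+b)s}\neq 0$ from \cite{BLC}; the reverse direction converts a broken line segment and an interior time $t$ back into a $d=2$ structure constant by rescaling to clear denominators, again via \cite{BLC}. You are slightly more careful than the paper in flagging that the intermediate point $s'$ must lie in $S$ (this uses broken line convexity of $S$, equivalently positivity), a point the paper leaves implicit.
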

Note that the inequality appearing here is a broken line convex geometry version of Jensen's inequality.  
We use these equivalent characterizations to prove the equivalence of Gross-Hacking-Keel-Kontsevich's {\it{min-convex}} and {\it{decreasing}} definitions.  See \thref{cor:min-convex=decreasing}.
We then describe other properties of functions which are convex with respect to broken lines.
In particular, \thref{prop:ConvWRTBL-BLC} is the broken line convex geometry version of Item~\ref{it:ConvFun-ConvSet}.

After this discussion of convexity for functions, we treat {\it{polyhedral}} broken line convex geometry.
The canonical pairing between tropicalizations of mirror cluster varieties affords us a natural notion of a half-space in this context.
\begin{definition}[\thref{def:Half-space-Hyperplane}]
For $y\in (U^\vee)^\trop(\Q)$ and $r\in \Q$, we call the set \eqn{K(y,r):=\lrc{x \in U^\trop(\Q) : \lra{x, y } \geq -r }} a {\it{tropical half-space}}.
\end{definition}
This in turn provides a natural analogue of a polyhedron-- we say a subset $S\subset U^{\trop}(\Q)$ is {\it{polyhedral}} if it is the intersection of finitely many tropical half-spaces. (See \thref{def:polyhedral}.)
{\it{Faces}} of $S$ are defined much like in usual convex geometry-- we take a tropical half-space containing $S$ and intersect its boundary with $S$.  See \thref{def:face-half-space}.
As mentioned above, these faces satisfy only a weaker notion of convexity.
If we choose a pair of points $x_1,\, x_2$ in a face $F$, we cannot say that $F$ contains {\emph{all}} broken line segments connecting $x_1$ and $x_2$.
We can only say that $F$ contains {\emph{some}} broken line segment connecting $x_1$ and $x_2$. (See \thref{cor:FaceWC}.)
The failure of these faces to be broken line convex hinders another familiar property from usual convex geometry-- the intersection of two faces need not be a face.
As such, faces may not form a complex.  They do however have a structure very reminiscent of a complex, which we refer to as a {\it{pseudo-complex}}.  See \thref{def:pseudo-complex} and \thref{prop:FacePseudo-Complex}.
Moreover, we show in \thref{prop:FaceDual} that the face pseudo-complexes of polar polytopal sets are related in precisely the same way as the linear case described in Item~\ref{it:DualFaceComplex}.


Ultimately, our expectation is that broken line convex geometry will encode the algebraic geometry of minimal models for cluster varieties in much the same way that usual convex geometry encodes the algebraic geometry of toric varieties.
This is the underlying motivation for the theory, and we hope to develop a 
\eqn{\lrc{\textit{polyhedral broken line convex geometry}}\longleftrightarrow\lrc{\textit{algebraic geometry of minimal models for cluster varieties}}} dictionary in future works.

Shortly before posting the first version of this paper, we met another team working in a similar direction at the conference {\it{Mirror Symmetry for Cluster Varieties and Representation Theory}}.
J. B. Fr\'ias-Medina presented on this paper while C. Manon presented his joint work \cite{EHM} with L. Escobar and M. Harada.
While our perspectives differ -- their approach is more rooted in the theory of Newton-Okounkov bodies while ours stems from the theory of mirror symmetry for cluster varieties elucidated in \cite{GHKK} -- there is quite a bit of overlap with the topics we discuss.
Many of the constructions, structures, and questions we discuss are remarkably similar, including the presence of an intrinsic piecewise-linear ambient manifold, notions of ``line segments'' within this manifold, a convexity definition that considers {\emph{all}} line segments between a pair of points, and a multi-valued version of Minkowski addition.
A similar polyhedral theory to the one we discuss in \S\ref{sec:Polyhedra} also appears in the recent paper \cite{Lai-Zhou} in the two dimensional setting, with exciting applications to mirror symmetry for log Calabi-Yau surfaces. 

Finally, we would like to mention that in our initial plan for this paper, we envisioned a closing crescendo in which we state and prove a broken line convex geometry version of Borisov's duality for nef-partitions.  See \cite{Borisov} for the original version.
Unfortunately, the problem has thwarted all of our attempts to date. 
From the outset, we viewed this paper as part of a research program we undertook with our close collaborators L. Bossinger, M.-W. Cheung, and A. N\'ajera Ch\'avez, with the goal of generalizing Batyrev and Batyrev-Borisov mirror symmetry constructions from the setting of Gorenstein Fano toric varieties to the setting of Gorenstein Fano minimal models for cluster varieties.
We hope that the broken line convex geometry results of this paper will be an important step toward that common goal.

\subsection*{Acknowledgments}

We would first like to thank our collaborators L. Bossinger, M.-W. Cheung, and A. N\'ajera Ch\'avez.
As mentioned above, we view this paper as part of a program we have been pursuing with them, and many ideas in this paper have surely arisen in discussion with them over our years of collaboration.
For instance, T.M. remembers discussing a candidate for Minkowski addition in this context (similar to \thref{def:TropicalMinkowskiSum}) with A. N\'ajera Ch\'avez once several years ago.
He also recalls attempting to prove \thref{cor:min-convex=decreasing} by other means with M.-W. Cheung.
J.B.F.-M. acknowledges the grant of ``Programa de Estancias Posdoctorales por M\'exico'' from CONAHCYT (now SECIHTI).
T.M. gratefully acknowledges the support of the EPSRC through the grant EP/V002546/1. We would both like to thank UNAM Campus Morelia and the EPSRC for supporting two research visits during the preparation of this article.
First, UNAM Campus Morelia kindly provided an office for T.M. while he visited J.B.F.-M. in Morelia.
The grant EP/V002546/1 also provided travel funding for this trip.
Next, the grant EP/V002546/1 provided travel funding to J.B.F.-M. for his visit to T.M. and his participation in the {\it{Mirror Symmetry for Cluster Varieties and Representation Theory}} conference, where he presented on this project.
We are very optimistic that the feedback from this presentation will lead to many exciting new projects and collaborations.

\section{Background}\label{sec:Background}

The notion of broken line convexity used in this paper comes from \cite{BLC}, where the main result is the equivalence of this convexity notion with the algebraic notion of {\it{positivity}} from \cite{GHKK}.
That said, we will employ subtly different conventions and definitions here.
First, as we are only ever interested in the rational points of our tropical spaces, we will always work over $\Q$ instead of $\R$.\footnote{The prescient reader may raise concern about placement of basepoints for broken lines in regions of dense walls.  We will address this concern in \S\ref{sec:Scattering-BrokenLines} with a discussion of {\it{broken lines}} vs. {\it{generic broken lines}}, as in \cite{BLC}, with one modification. The sequence of generic broken lines we use to define our broken lines here will live in the finite order scattering diagrams whose colimit produces the cluster scattering diagram.}
Next, the definition of positivity in \cite{GHKK}, and in turn that of broken line convexity in \cite{BLC}, makes reference to {\emph{closed}} sets.
However, these definitions may equally well be made without requiring closure.
Moreover, the proof of the equivalence of broken line convexity and positivity in \cite{BLC} does not rely on closure -- the result still holds if closure is dropped from both definitions.
We do precisely this.

\begin{definition}[\cite{BLC}]\thlabel{def:BLC}
    A subset $S$ of $U^{\trop}(\Q)$ is {\it{broken line convex}} if for every pair of points $s_1$, $s_2$ in $S$, every broken line segment with endpoints $s_1$ and $s_2$ has support entirely contained in $S$.
\end{definition}
Less formally:
\begin{center}
    {\it $S$ is broken line convex if every broken line segment that starts and ends in $S$ stays in $S$.}
\end{center}
This is the natural generalization of usual convexity to $U^{\trop}(\Q)$, where broken line segments fill the role occupied by line segments in usual convex geometry.
The aforementioned {\it{positivity}} which it is equivalent to is defined as follows:

\begin{definition}[\cite{GHKK}]\thlabel{def:positive}
    A subset $S$ of $U^{\trop}(\Q)$ is {\it{positive}} if for any non-negative integers $a$ and $b$, and any integral tropical points $p \in a\, S(\Z)$, $q \in b\, S(\Z)$, and $r\in U^{\trop}(\Z)$ with $\alpha_{p,q}^r \neq 0$, we have $r\in (a+b)S(\Z)$.
\end{definition}

In usual convex geometry, there is a canonical way to take a possibly non-convex set and replace it with a convex set which contains it -- namely the convex hull.
There is a completely analogous procedure here:

\begin{definition}[\cite{BLC}]\thlabel{def:BLCHull}
Let $S \subset U^\trop(\Q)$.
We define the {\it{broken line convex hull of $S$}}, denoted $\bconv(S)$ to be the intersection of all broken line convex sets containing $S$. 
\end{definition}

\begin{notation}
    As we will always work over $\Q$, when we write an interval $[t_1,t_2]$ we mean an interval in $\Q$, {\emph{not}} an interval in $\R$.
\end{notation}

\subsection{Results from \texorpdfstring{\cite{ThetaReciprocity}}{CMMM}}

There are two key results from \cite{ThetaReciprocity} that we will need throughout the course of this work.
We will state them here in simplified form -- the setting of \cite{ThetaReciprocity} is more general than that of the current work.

Let $U$ and $U^\vee$ be mirror cluster varieties for which the full Fock-Goncharov conjecture holds. 
Roughly, this means that the integral tropical points of each parametrize a basis (the $\tf$-basis) for the algebra of regular functions on its mirror.
For a more precise definition, see \cite[Definition~0.6]{GHKK}.\footnote{The terminology refers to a conjecture of Fock and Goncharov in \cite{FG1}.}
Then:

\begin{theorem}[\cite{ThetaReciprocity},``Theta Reciprocity'']\thlabel{thm:ThetaRecipocity}
Let $x\in U^{\trop}(\Z)$ and $y \in (U^\vee)^{\trop}(\Z)$. Then $x(\tf_y)=y(\tf_x)$.
\end{theorem}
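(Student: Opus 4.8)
\emph{Proof strategy.} The plan is to compute both $x(\tf_y)$ and $y(\tf_x)$ explicitly through the cluster scattering diagram, to show that on the cluster complex each reduces to a single naive linear pairing read off in a common seed, and then to propagate the resulting identity to all integral tropical points by a limiting/continuity argument. As the first step, fix a seed $\mathbf{s}$ together with its cluster torus $T_{\mathbf{s}}\subset U$; since $\tf_y\in\mathcal{O}(U)$ we may expand $\tf_y|_{T_{\mathbf{s}}}=\sum_m c_m\,z^m$ with $c_m\in\Z_{\geq 0}$, the exponents being exactly the final exponents of the broken lines of asymptotic direction $y$ ending at a generic basepoint in $T_{\mathbf{s}}$. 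By definition of tropicalized evaluation, $x(\tf_y)$ is the resulting extremum (say $\min$, in the paper's sign conventions) of $\lra{x,m}$ over this support. Now choose $\mathbf{s}$ so that its $\mathbf{g}$-chamber contains $x$; this is possible whenever $x$ lies in the cluster complex, which is the dense ``tame'' part of $U^{\trop}(\R)$. The theta function $\tf_y|_{T_{\mathbf{s}}}$ is \emph{pointed}: it carries a distinguished coefficient-$1$ monomial $z^{g^{\mathbf{s}}_y}$ -- the $\mathbf{g}$-vector of $\tf_y$ in $\mathbf{s}$ -- which is extremal against every $x$ in the interior of that chamber, a consequence of positivity of the cluster scattering diagram (the main theorem of \cite{GHKK}). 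Hence $x(\tf_y)=\lra{x,g^{\mathbf{s}}_y}$ for $x$ in the cluster complex.

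Running the identical argument on the mirror side -- using that, by \cite{GHKK}, the scattering diagram governing $\mathcal{O}(U^\vee)$ is obtained from the one governing $\mathcal{O}(U)$ under the canonical (sign-transpose) identification of their ambient spaces, so that $\mathbf{s}$, its torus, and its chamber all acquire mirror counterparts -- gives $y(\tf_x)=\lra{y,g^{\mathbf{s}}_x}$. Reciprocity on the cluster complex therefore reduces to the purely combinatorial identity $\lra{x,g^{\mathbf{s}}_y}=\lra{y,g^{\mathbf{s}}_x}$, which is precisely the tropical duality of $\mathbf{g}$-vectors in the sense of Nakanishi--Zelevinsky, reinterpreted scattering-theoretically: the $\mathbf{g}$-matrix at $\mathbf{s}$ on one side is the transpose of the $\mathbf{g}$-matrix at $\mathbf{s}$ on the mirror side. (One may also read the statement as a cluster-variety analogue of Weil reciprocity $f(\div g)=g(\div f)$ on a curve, with $x,y$ in the role of divisorial valuations and $\tf_x,\tf_y$ in the role of the functions; I find this a useful guide to intuition but would not try to make it the formal engine of the proof.)

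It remains to pass from the cluster complex to all of $U^{\trop}(\Z)\times(U^\vee)^{\trop}(\Z)$. For fixed $y$, both $x\mapsto x(\tf_y)$ and $x\mapsto y(\tf_x)$ are integral piecewise-linear functions on $U^{\trop}(\R)$ that agree on the union of $\mathbf{g}$-chambers -- a dense set -- so continuity forces them to agree everywhere, \emph{provided} one can control both sides on the region not covered by cluster chambers (the locus of dense walls), where neither ``choose an adapted seed'' nor ``expand $\tf_y$ as a finite Laurent polynomial'' is literally available. This is exactly the technical point flagged in the paper's footnote on generic broken lines. The remedy is to carry out the computation in the finite-order scattering diagrams $\mathfrak{D}_k$ whose colimit is the cluster scattering diagram: prove the reciprocity modulo order $k$, where only finitely many walls appear and the piecewise-linear/continuity argument is clean, and then verify that the truncated values of $x(\tf_y)$ and $y(\tf_x)$ stabilize as $k\to\infty$ to the genuine ones. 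I expect essentially all of the real work to be concentrated in this last step -- proving the stabilization and arranging the generic basepoints and perturbations compatibly on the two mirror sides -- everything before it being bookkeeping with the definitions of broken lines and theta functions together with the \cite{GHKK} positivity theorem and tropical $\mathbf{g}$-vector duality.
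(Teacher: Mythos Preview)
The paper does not prove this theorem. It is quoted from the forthcoming work \cite{ThetaReciprocity} and explicitly flagged as such: the authors write that the reader ``may for the time being take these two results to be conjectures upon which many aspects of the current work rely.'' There is therefore no proof in the paper to compare your proposal against.

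As for the proposal itself: the reduction on the cluster complex to a naive pairing via pointedness of theta functions and tropical duality of $\mathbf{g}$-vectors is the natural first move, and you are right that the serious content lies in the extension off the cluster complex. But your density/continuity argument is not sufficient as stated. The cluster complex need not be dense in $U^{\trop}(\R)$ in general (density is a strong hypothesis, closely tied to the ``large cluster complex'' or ``Enough Global Monomials'' conditions in \cite{GHKK}), and even when it is, the map $x\mapsto y(\tf_x)$ is not obviously continuous in $x$: varying $x$ changes the theta function itself, not merely the point at which a fixed piecewise-linear function is evaluated, so one cannot simply invoke continuity of a single $\Z$-PL map. Your order-$k$ truncation idea is closer to a workable approach, but the stabilization you need --- that the truncated valuations converge to the true ones uniformly enough to pass the identity to the limit --- is exactly the hard analytic input, and you have only asserted it. In short, your outline is a reasonable heuristic for why the statement should hold, but it does not yet contain the mechanism that handles points outside the cluster complex, which is where the genuine difficulty lives.
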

This means we have a truly canonical pairing between $U^{\trop}(\Z)$ and $(U^\vee)^{\trop}(\Z)$, rather than two different evaluation pairings.
This pairing $\lra{\ \boldsymbol{\cdot}\ ,\ \boldsymbol{\cdot}\ }$ extends uniquely to a pairing between $U^{\trop}(\Q)$ and $(U^\vee)^{\trop}(\Q)$.

The other key result of \cite{ThetaReciprocity} we need is the {\it{valuative independence theorem}}.

\begin{theorem}[\cite{ThetaReciprocity},``Valuative Independence'']\thlabel{thm:ValuativeIndependence}
Let \eqn{f = \sum_{y\in (U^\vee)^{\trop}(\Z)} c_y \tf_y} be any regular function on $U$ and $x \in U^{\trop}(\Z)$ any integral tropical point.
Then \eqn{x(f) = \min_{c_y \neq 0} \lrc{x(\tf_y)}.}
\end{theorem}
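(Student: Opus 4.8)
The statement asserts that every integral tropical point $x$ defines a valuation on $\mathcal O(U)$ for which the $\tf$--basis is adapted, in the sense that the value of a regular function is the minimum of the values of the $\tf$--functions appearing in its (unique) expansion; in the toric case $U\cong T_N$ this is precisely the familiar fact that the tropicalization of a Laurent polynomial is the minimum over its monomials. I would split the proof into the trivial inequality and the substantive one. Fixing any seed $s$, restriction to the cluster torus $T_s\subset U$ embeds $\mathcal O(U)$ into a Laurent polynomial ring $\kk[M_s]$ (here $M_s$ is the exponent lattice of $T_s$), and $x$, read in the coordinates of $s$, gives a monomial valuation on $\kk[M_s]$, so that $x(g)=\min\lrc{x(z^m) : z^m \text{ occurs in } g|_s}$ for every $g\in\mathcal O(U)$; independence of this quantity from $s$ is the statement that the transition maps between Laurent expansions of a global function are subtraction--free, hence commute with tropicalization, while $x$ transforms by the corresponding tropicalized mutation. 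Granting this, every monomial occurring with nonzero coefficient in $f|_s=\sum_y c_y\,\tf_y|_s$ occurs in $\tf_y|_s$ for some $y$ with $c_y\ne 0$, and $x$ is trivial on $\kk^\times$, so $x(f)\ge \min_{c_y\ne 0} x(\tf_y)$ at once. The content is the reverse inequality, i.e.\ that the lowest--order part of $f$ with respect to $x$ does not vanish; equivalently, that the leading forms $\lrc{[\tf_y]}$ are linearly independent in $\gr_x\mathcal O(U)$ (they automatically span, since the $\tf_y$ span $\mathcal O(U)$).

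I would first handle the case where $x$ lies in the interior of a maximal cone $\mathcal C_s$ of the cluster scattering diagram. There the coordinates of $s$ present each theta function as $\tf_y|_s=z^{g_s(y)}\bigl(1+(\text{corrections of strictly higher order})\bigr)$, the correction exponents lying in $g_s(y)$ plus a strongly convex cone on which $x$ is strictly positive; hence $z^{g_s(y)}$ is the \emph{unique} $x$--minimal monomial of $\tf_y$. Since $y\mapsto g_s(y)$ is injective, the $x$--minimal monomials of distinct basis elements are distinct, so no cancellation can occur among the lowest--order parts of $f$, and equality follows.

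For a general $x$ --- in particular an integral tropical point in the Badlands, outside the cluster complex --- no such seed exists, and here Theta Reciprocity is the essential input: by \thref{thm:ThetaRecipocity} the assignment $y\mapsto x(\tf_y)=y(\tf_x)$ is the tropicalization, restricted to integral points, of the \emph{single} regular function $\tf_x$ on the mirror $U^\vee$, which reduces the problem to the local structure of $\tf_x$ near $x$. Concretely, I would pass to the finite--order scattering diagrams whose colimit is the cluster scattering diagram (as in \S\ref{sec:Scattering-BrokenLines}), fix a sufficiently generic basepoint, and expand the relevant $\tf_y$ through broken lines; using positivity of the wall--crossing functions I would then aim to exhibit, among all broken lines attached to all $y$ with $c_y\ne 0$, a contribution whose final exponent has strictly minimal $x$--value and which therefore survives the summation defining $f$, and then pass to the colimit to obtain $x(f)\le \min_{c_y\ne 0} x(\tf_y)$ in general.

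The main obstacle is precisely this last step. When $x$ lies in a cluster chamber the injectivity of $g$--vectors makes non--cancellation transparent, but in the Badlands a bent broken line can carry a final exponent of smaller $x$--value than the straight one, so one must rule out simultaneous cancellation among infinitely many higher--order corrections produced by dense walls; I expect the genuinely new ingredient --- beyond the formal valuation properties and the $g$--vector picture --- to be a careful broken--line analysis in the finite--order scattering diagrams, organized throughout by Theta Reciprocity, producing such a surviving lowest--order contribution.
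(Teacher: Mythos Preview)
The paper does not contain a proof of this theorem: it is stated as a result of the forthcoming paper \cite{ThetaReciprocity}, and the authors explicitly invite the reader to treat it as a conjecture on which the present work relies. So there is no ``paper's own proof'' to compare your proposal against.

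As a standalone sketch, your outline is reasonable but, as you yourself acknowledge, incomplete. The easy inequality $x(f)\ge \min_{c_y\ne 0} x(\tf_y)$ is correct and essentially tautological. Your argument for $x$ in the interior of a cluster chamber is the standard one: the $g$-vector monomial is the unique $x$-minimal term of $\tf_y|_s$ and $y\mapsto g_s(y)$ is injective, so leading terms cannot cancel. But the general case --- $x$ in the closure of the cluster complex or in the Badlands --- is exactly where the content lies, and here your proposal is only a plan, not an argument. Invoking Theta Reciprocity to rewrite $x(\tf_y)=y(\tf_x)$ is a natural move, but you have not explained how it actually produces a surviving minimal contribution; knowing that $y\mapsto x(\tf_y)$ is the tropicalization of a single function $\tf_x$ on $U^\vee$ does not by itself control cancellations among the broken-line expansions of the $\tf_y$ on $U$. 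The phrase ``I would then aim to exhibit\dots a contribution\dots which therefore survives'' is precisely the missing idea. Since the source \cite{ThetaReciprocity} is not yet public, it is fair to present this as a strategy rather than a proof, but you should be explicit that the Badlands case remains a genuine gap rather than a routine limit argument.
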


Recall that integral tropical points are discrete valuations, and as such the inequality 
\eqn{x(f) \geq \min_{c_y \neq 0} \lrc{x(\tf_y)}}
holds by definition.
The valuative independence theorem replaces the inequality with an equality, essentially by eliminating the possibility of pole cancellations. 

\subsection{Scattering diagrams and broken lines}\label{sec:Scattering-BrokenLines}

We refer the reader to \cite{GHKK} for background on cluster scattering diagrams. 
We simply recall a few basic points, in part to fix terminology and notation, and follow up with some simple observations.
\begin{enumerate}
    \item \label{it:walls}A scattering diagram $\scat$ is a collection of {\it{walls}} $(\wall,f_\wall)$, where the {\it{support}} $\wall$ is a rational polyhedral cone in the ambient vector space, and $f_\wall$ is the {\it{scattering function}}, which lives in a certain completed monoid ring $\widehat{\kk[P]}$. (See \cite[Definition~1.4]{GHKK}.)
    \item \label{it:wall-crossing}Crossing a wall $(\wall,f_\wall)$ induces an automorphism $\mathfrak{p}_{f_\wall}$ of $\widehat{\kk[P]}$, and a generic path $\gamma$ crossing multiple walls induces an automorphism $\mathfrak{p}_{\gamma}$ of $\widehat{\kk[P]}$ by composition. (See \cite[Definition~1.2 and discussion of {\it{path ordered product}}]{GHKK}.)
    \item \label{it:singular}For $\mathfrak{p}_{\gamma}$ of Item~\ref{it:wall-crossing} to be defined, $\gamma$ must avoid the {\it{singular locus}} of $\scat$:
    \eqn{\Sing(\scat):= {\bigcup_{(\wall,f_\wall)\in \scat} \partial \wall} \quad \cup {\bigcup_{\substack{(\wall_1,f_{\wall_1}),(\wall_2,f_{\wall_2})\in \scat\\ \codim(\wall_1 \cap \wall_2)\geq 2}}\wall_1 \cap \wall_2}.}
    \item \label{it:construction}Cluster scattering diagrams are constructed order by order as a colimit $\scat = \colim_k \scat_k$, and each $\scat_k$ is a finite scattering diagram. (See \cite[Appendix~C]{GHKK}.)
    \item \label{it:mutation-invariance}While the construction mentioned in Item~\ref{it:construction} depends upon a choice of initial seed $\vb{s}$, if $\vb{s}$ and $\vb{s}'$ are related by mutation $\mu:T_{L^*;\vb{s}}\dashrightarrow T_{L^*;\vb{s}'}$, then $\mu^{\trop}(\scat_{\vb{s}})$ is equivalent to $\scat_{\vb{s}'}$. (See \cite[\S1.3]{GHKK}.)
\end{enumerate}

Since the support $\wall$ of each wall is a {\emph{rational}} polyhedral cone (Item~\ref{it:walls}), we can describe the scattering diagram perfectly well over $\Q$ rather than $\R$.
Next, thanks to mutation invariance of the scattering diagram (Item~\ref{it:mutation-invariance}), we interpret $(U^\vee)^{\trop}(\Q)$ as the natural ambient space of the scattering diagram for $U$.
To write down a scattering diagram explicitly however, we choose a seed of the cluster structure.
This selects a cluster torus $T_L$ in $U$ and $T_{L^*}$ in $U^\vee$, and piecewise linearly identifies the integral tropical points of $U$ and $U^\vee$ with the integral tropical points of $T_L$ and $T_{L^*}$ respectively, {\it{i.e.}} with the cocharacter lattices $L$ and $L^*$ of these tori.
In turn, it identifies $U^{\trop}(\Q)$ and $(U^\vee)^{\trop}(\Q)$ with a pair of dual $\Q$-vector spaces $V:=L\otimes \Q$ and $V^*:=L^*\otimes \Q$.
\begin{notation}\thlabel{not:Qvs}
For each seed $\vb{s}$, write $\mf{r}_{\vb{s}}:U^{\trop}(\Q) \to V$ and $\mf{r}_{\vb{s}}^\vee:(U^\vee)^{\trop}(\Q) \to V^*$ for the piecewise linear identifications described above.    
\end{notation}

We will also use the piecewise linear identifications $\mf{r}_{\vb{s}}$ and $\mf{r}_{\vb{s}}^\vee$ to define topologies on $U^{\trop}(\Q)$ and $(U^\vee)^{\trop}(\Q)$.
We equip both $V$ and $V^*$ with the Euclidean topology and say a set $S$ in $U^{\trop}(\Q)$ (respectively, in $(U^{\vee})^\trop(\Q)$) is open if and only if $\mf{r}_{\vb{s}}(S)$ (respectively, $\mf{r}_{\vb{s}}^\vee(S)$) is open.

We are now prepared to discuss broken lines.
Our discussion will have a few minor differences from most of the literature.
First, as in \cite{BLC}, we will need to allow broken lines to have endpoints on walls and intersect the singular locus.
Our treatment will differ only slightly from that in \cite{BLC}.
We still use a notion of {\it{generic broken lines}} and define {\it{broken lines}} as a limit of these.
However, as we are working over $\Q$, there may be regions in which all possible endpoints are contained in walls. This prevents the construction of a sequence of generic broken lines whose limit is a broken line having a prescribed endpoint in this region.
We deal with this issue by instead defining $k$-genericity with respect to the finite scattering diagram $\scat_k$ in \thref{def:k-generic}, and requiring $\gamma_k$ to be a $k$-generic broken line in \thref{def:broken}.
Next, in order to have a broken line convex geometry version of {\it{strongly convex}}, we will need a notion of a broken line which extends infinitely in both directions.
That is, broken lines are really analogous to {\it{rays}} rather than {\it{lines}}, and we need an analogue of {\it{lines}}.
For this, we introduce {\it{doubly infinite broken lines}} in \thref{def:broken2infinity}.

\begin{definition} \thlabel{def:k-generic}
Let $\scat = \colim_k \scat_k$ be a scattering diagram in $V^*$, let $m\in L^*\setminus \lrc{0}$, and let $x_0 \in V^* \setminus \supp(\scat_k)$. 
A $k$-{\it{generic broken line}} $\gamma$ with {\it{initial exponent}} $m=:I(\gamma)$ and {\it{endpoint}} $x_0$ is a piecewise linear continuous proper path $\gamma : \Q_{\leq 0} \rightarrow V^* \setminus \Sing (\scat_k)$ bending only at walls of $\scat_k$, with a finite number of domains of linearity $\ell$ and a monomial $c_\ell z^{m_\ell} \in \kk[L^*]$ for each of these domains. The path $\gamma$ and the monomials $c_\ell z^{m_\ell}$ are required to satisfy the following conditions:
\begin{itemize}
    \item $\gamma(0) = x_0$.
    \item If $\ell$ is the unique unbounded domain of linearity of $\gamma$, then $c_\ell z^{m_\ell} = z^{m}$.
    \item For $t$ in a domain of linearity $\ell$, $\dot{\gamma}(t) = -m_\ell$.
    \item Suppose $\gamma$ bends at a time $t$, passing from the domain of linearity $\ell$ to $\ell'$, and set ${\scat_t = \lrc{\left.(\wall, f_{\wall}) \in \scat \right| \gamma (t) \in \wall}}$. Then $c_{\ell'}z^{m_{\ell'}}$ is a term in $\mathfrak{p}_{{\gamma}|_{(t-\epsilon,t+\epsilon)},\scat_t} (c_\ell z^{m_\ell}) $.
\end{itemize}
\end{definition}

\begin{definition}\thlabel{def:broken}
Let $m \in L^* \setminus \lrc{0}$ and $x_0 \in V^*$. 
A {\it{broken line}} with initial exponent vector $m$ and endpoint $x_0$ is a piecewise linear continuous proper path  $\gamma: \Q_{\leq 0} \to V^*$, together with a sequence $(\gamma_k)_{k \in \Z_{>0}}$ satisfying:
\begin{itemize}
    \item $\gamma_k$ is a $k$-generic broken line;
    \item $\lrp{\supp(\gamma_k)}_{k\in \Z_{>0}}$ converges to $\text{Im}(\gamma)$;
    \item $I(\gamma_k) = m$ for all $k \in \Z_{>0}$; and
    \item for some sufficiently large $K$, all $\gamma_k$ with $k>K$ bend at the same collection of walls in the same order and have the same decorating monomials.
\end{itemize}   
We call $\text{Im}(\gamma)$ the \emph{support} of the broken line and denote it by $\supp(\gamma)$. 
\end{definition}

We can modify Definitions~\ref{def:k-generic}~and~\ref{def:broken} slightly to obtain our analogue of lines.

\begin{definition} \thlabel{def:k-generic-2infinity}
Let $\scat = \colim_k \scat_k$ be a scattering diagram in $V^*$ and let $m_1, m_2\in L^*\setminus \lrc{0}$. 
A $k$-{\it{generic doubly infinite broken line}} with {\it{initial exponent}} $m_1$ and {\it{final exponent}} $m_2$ is a piecewise linear continuous proper path $\gamma : \Q \rightarrow V^* \setminus \Sing (\scat_k)$ bending only at walls of $\scat_k$, with a finite number of domains of linearity $\ell$ and a monomial $c_\ell z^{m_\ell} \in \kk[L^*]$ for each of these domains. The path $\gamma$ and the monomials $c_\ell z^{m_\ell}$ are required to satisfy the following conditions:
\begin{itemize}
    \item $\displaystyle{\lim_{t \to -\infty}\dot{\gamma}(t) = -m_1}$ and $\displaystyle{\lim_{t \to \infty}\dot{\gamma}(t) = -m_2}$.
    \item If $\ell$ is the unbounded domain of linearity of $\gamma$ associated to times $t \ll 0$, then $c_\ell z^{m_\ell} = z^{m_1}$.
    \item For $t$ in a domain of linearity $\ell$, $\dot{\gamma}(t) = -m_\ell$.
    \item Suppose $\gamma$ bends at a time $t$, passing from the domain of linearity $\ell$ to $\ell'$, and set ${\scat_t = \lrc{\left.(\wall, f_{\wall}) \in \scat \right| \gamma (t) \in \wall}}$. Then $c_{\ell'}z^{m_{\ell'}}$ is a term in $\mathfrak{p}_{{\gamma}|_{(t-\epsilon,t+\epsilon)},\scat_t} (c_\ell z^{m_\ell}) $.
\end{itemize}
\end{definition}

\begin{definition}\thlabel{def:broken2infinity}
Let $\scat = \colim_k \scat_k$ be a scattering diagram in $V^*$ and let $m_1, m_2\in L^*\setminus \lrc{0}$. 
A {\it{doubly infinite broken line}} with {\it{initial exponent}} $m_1$ and {\it{final exponent}} $m_2$ is a piecewise linear continuous proper path $\gamma: \Q \to V^*$, together with a sequence $(\gamma_k)_{k \in \Z_{>0}}$ satisfying:
\begin{itemize}
    \item $\gamma_k$ is a $k$-generic doubly infinite broken line;
    \item $\lrp{\supp(\gamma_k)}_{k\in \Z_{>0}}$ converges to $\text{Im}(\gamma)$;
    \item $I(\gamma_k) = m_1$ for all $k \in \Z_{>0}$; and
    \item for some sufficiently large $K$, all $\gamma_k$ with $k>K$ bend at the same collection of walls in the same order and have the same decorating monomials and, in particular, have final exponent $m_2$.
\end{itemize}   
We call $\text{Im}(\gamma)$ the \emph{support} of the doubly infinite broken line and denote it by $\supp(\gamma)$. 
\end{definition}

\section{Tropical Minkowski sum}\label{sec:TMS}

In order to generalize many convex polyhedral geometry constructions of the toric world to the setting of cluster varieties,
we will need a convex tropical geometry version of the Minkowski sum.
In this section we provide such a notion and illustrate some of its key properties, particularly \thref{thm:Compatible} which illustrates the compatibility of this {\it{tropical Minkowski sum}} with the {\it{broken line convex hull}}. 
In essence, the tropical Minkowski sum of two subsets $S$ and $T$ of a tropical space $U^{\trop}(\Q)$ works the same way as the usual Minkowski sum of subsets of a Euclidean space-- we ``add'' pairs of elements $(s,t)$ with $s\in S$, $t\in T$.
However, in this setting where we have only a {\emph{piecewise}} linear structure, our ``addition'' is multivalued. 
The values that arise correspond to non-zero summands of products of $\tf$-functions.
Namely, if for some $a\in \Z_{>0}$, the function $\tf_{a\, x}$ is a non-zero summand of $\tf_{a\, s} \tf_{a\, t}$, then $x$ is a value of the ``sum'' of $s$ and $t$.

\begin{definition}\thlabel{def:TropicalMinkowskiSum}
Let $S$ and $T$ be subsets of $U^\trop(\Q)$.
We define the {\it{tropical Minkowski sum}} of $S$ and $T$ as follows:
\eqn{ S+_\tf T  :=& \lrc{x \in U^\trop(\Q) :  \exists  s \in S, t\in T, a \in \Z_{> 0} \text{ with } as, at, a x \in U^\trop(\Z) \text{ such that } \alpha_{as,at}^{a x} \neq 0 }\\
=& \lrc{
     x \in U^\trop(\Q) :  \exists  s \in S, t\in T,
     \gamma:[0,\tau] \to U^{\trop}(\Q)  \text{ with } \gamma(0) = s, \gamma(\tau)=t, \gamma(\tau/2) = x/2
}}
where $\gamma$ is a broken line segment.
\end{definition}

The equivalence of the two descriptions in \thref{def:TropicalMinkowskiSum} follows immediately from the proof of  \cite[Theorem~6.1]{BLC}.
See Figure~\ref{fig:MinkowskiSumOfPoints} for a simple example of the tropical Minkowski sum, highlighting the multivalued nature of the sum.

\noindent
\begin{center}
\begin{minipage}{.85\linewidth}
\captionsetup{type=figure}
\begin{center}
\begin{tikzpicture}[scale=.85]
    \def\x{1}
    \def\d{2}
    \def\l{3}
    \def\op{0.3}

    \path (-\l,0) coordinate (3) --++ (\l,0) coordinate (0) --++ (\l,0) coordinate (1);
    \path (0,\l) coordinate (2) --++ (0,-2*\l) coordinate (4) --++ (\l,0) coordinate (5);
    \path (5) --++ (0,2*\l) coordinate (tr) --++ (-2*\l,0) coordinate (tl) --++ (0,-2*\l) coordinate (bl);

    \draw[thick, ->, opacity=\op] (3) -- (1);
    \draw[thick, ->, opacity=\op] (2) -- (4);
    \draw[thick, ->, opacity=\op] (0) -- (5) node [pos=.5, sloped, above, opacity=\op] {$1+z^{e_2^*-e_1^*}$};

    \node[opacity=\op] at (.8,\l) {$1+z^{e_2^*}$};
    \node[opacity=\op] at (-2.1,-.35) {$1+z^{-e_1^*}$};

    \node [circle, fill, more-blue, inner sep =1.5pt] (S) at (-\d,0) {};
    \path (S) --++ (0,.35) node [color=more-blue] {$S$};
    \node [circle, fill, more-green, inner sep =1.5pt] (T) at (\d,0) {};
    \path (T) --++ (0,.35) node [color=more-green] {$T$};

    \coordinate (bend) at (0,.5*\d) {};

    \draw [very thick, color=violet] (S) -- (T);
    \draw [very thick, color=violet] (S) -- (bend) -- (T);
    \node [very thick, color=violet] at (0.03,.25*\d) {$\boldsymbol{\vdots}$};

\begin{scope}[xshift=8.5cm]

    \path (-\l,0) coordinate (3) --++ (\l,0) coordinate (0) --++ (\l,0) coordinate (1);
    \path (0,\l) coordinate (2) --++ (0,-2*\l) coordinate (4) --++ (\l,0) coordinate (5);
    \path (5) --++ (0,2*\l) coordinate (tr) --++ (-2*\l,0) coordinate (tl) --++ (0,-2*\l) coordinate (bl);

    \draw[thick, ->, opacity=\op] (3) -- (1);
    \draw[thick, ->, opacity=\op] (2) -- (4);
    \draw[thick, ->, opacity=\op] (0) -- (5) node [pos=.5, sloped, above, opacity=\op] {$1+z^{e_2^*-e_1^*}$};

    \node[opacity=\op] at (.8,\l) {$1+z^{e_2^*}$};
    \node[opacity=\op] at (-2.1,-.35) {$1+z^{-e_1^*}$};

    \node [circle, fill, more-blue, inner sep =1.5pt] (S) at (-\d,0) {};
    \path (S) --++ (0,.35) node [color=more-blue] {$S$};
    \node [circle, fill, more-green, inner sep =1.5pt] (T) at (\d,0) {};
    \path (T) --++ (0,.35) node [color=more-green] {$T$};

    \coordinate (bend) at (0,.5*\d) {};
    
\draw [ultra thick, color=violet] (0,\d) -- (0);
    \node [color=violet] at (0,\d+.35) {$S\+t T$};

\end{scope}

\end{tikzpicture}
~                      

\captionof{figure}{\label{fig:MinkowskiSumOfPoints} The tropical Minkowski sum of two points in $ (\cA^\vee)^{\trop}(\Q)$ for the $\cA$ cluster variety of type $A_2$.  
As is standard, to draw this picture we identify $(\cA^\vee)^{\trop}(\Q)$ with $\Q^2$ via a choice of seed.
The relevant broken lines appear on the left and the corresponding tropical Minkowski sum on the right.
} 
\end{center}
\end{minipage}
\end{center}

\begin{remark}\thlabel{rem:LinInd} 
Consider a function $f\in \ssO(U^\vee)$ given as linear combination of products of theta functions $f=\sum_{s,t\in U^{\trop}(\Z)} c_{s,t} \tf_{s}\cdot \tf_{t}$.
Since $f\in \ssO(U^\vee)$, we may also expand it as $f= \sum_{x \in U^{\trop}(\Z)} f_x \tf_x$. 
Let $\tf_{x_0}$ be one such non-zero summand of $f$. 
Then, there exist $s_0,t_0\in U^{\trop}(\Z)$ with $c_{s_0,t_0} \neq 0$ such that $\tf_{x_0}$ is a non-zero summand of $\tf_{s_0}\cdot \tf_{t_0}$.
To see this, note that we have
\eqn{f &= \sum_{s,t\in U^{\trop}(\Z)} c_{s,t} \tf_{s}\cdot \tf_{t}\\
&= \sum_{s,t\in U^{\trop}(\Z)} \sum_{x \in U^{\trop}(\Z)}c_{s,t} \alpha_{s,t}^{x}\tf_{x}\\
&= \sum_{x \in U^{\trop}(\Z)} f_x \tf_x.
}
By linear independence of theta functions, we must have that $f_x = \sum_{s,t\in U^{\trop}(\Z)} c_{s,t} \alpha_{s,t}^{x}$ for each $x\in U^{\trop}(\Z)$. 
So $f_{x_0}$ may only be non-zero if we have some $s_0$, $t_0$ with $c_{s_0,t_0}$ and $\alpha_{s_0,t_0}^{x_0}$ both non-zero.
Note that this argument also applies if we replace the products of pairs of theta functions with products of arbitrary finite numbers of theta functions.
\end{remark}

\begin{remark} \thlabel{rem:StrongPositivity}
    The non-negativity of scattering functions for cluster scattering diagrams implies that all structure constants $\alpha_{p,q}^{r}$  (or more generally $\alpha_{p_1, \dots, p_d}^r$) are non-negative.
    This result is sometimes referred to as {\it{strong positivity}}.  Versions of this result are due to \cite[Theorem~7.5]{GHKK}, \cite[Proposition~2.15]{Travis}, and \cite[Theorem~1.1] {DavisonMandel}.
\end{remark}

\begin{lemma}\thlabel{lem:rescale}
    Let $p, q, r \in U^{\trop}(\Z)$ be such that $\alpha_{p,q}^{r} \neq 0$, and let $a\in \Z_{>0}$.  Then $\alpha_{a p, a q}^{a r} \neq 0$.
\end{lemma}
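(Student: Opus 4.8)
The plan is to realize the structure constant $\alpha_{p,q}^{r}$ through broken lines and then to check that multiplying all the relevant data by $a$ preserves it. Recall that, after fixing a seed, $\vartheta_p$ and $\vartheta_q$ are computed by broken lines in the cluster scattering diagram, and $\alpha_{p,q}^{r}\neq0$ exactly when there is a pair of broken lines $\gamma_{1},\gamma_{2}$ with initial exponents $I(\gamma_{1})=p$, $I(\gamma_{2})=q$, sharing a common generic endpoint $Q$ chosen near $r$ (so that $\vartheta_{r}$ has leading exponent $r$ there), whose final decorating exponents $m_{1},m_{2}$ satisfy $m_{1}+m_{2}=r$ -- equivalently, by \cite[Theorem~6.1]{BLC} and the second description in \thref{def:TropicalMinkowskiSum}, a broken line segment joining $p$ and $q$ whose midpoint is $r/2$. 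So it suffices, given such $\gamma_{1},\gamma_{2}$, to produce analogous broken lines witnessing $\alpha_{ap,aq}^{ar}\neq0$.

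First I would define, for a ($k$-generic) broken line $\gamma$ with initial exponent $m$ and decorations $c_{\ell}z^{m_{\ell}}$ on its domains of linearity, a rescaled broken line $a\!\cdot\!\gamma$: take the same image, crossing the same walls in the same order -- legitimate because wall supports are cones, hence scale-invariant, and because reparametrising so that the velocity on domain $\ell$ is $-am_{\ell}$ changes neither the image nor the walls met -- with decorating exponents $am_{\ell}$ and coefficients $c'_{\ell}\in\Q_{>0}$ defined recursively from the unbounded domain (where $c'=1$, so $I(a\!\cdot\!\gamma)=am$). The one thing to verify is that the bending condition survives: across a wall $(\wall,f_{\wall})$ a wall-crossing automorphism has the form $z^{m}\mapsto z^{m}f_{\wall}^{\langle n_{\wall},m\rangle}$, so it sends $z^{am_{\ell}}$ to $z^{am_{\ell}}\bigl(f_{\wall}^{\langle n_{\wall},m_{\ell}\rangle}\bigr)^{a}$; one checks that, since $z^{m_{\ell'}-m_{\ell}}$ occurs in $f_{\wall}^{\langle n_{\wall},m_{\ell}\rangle}$ with a coefficient that is positive by strong positivity (\thref{rem:StrongPositivity}), the monomial $z^{a(m_{\ell'}-m_{\ell})}$ occurs with positive coefficient in the $a$-th power, so that $z^{am_{\ell'}}$ genuinely occurs (with a well-defined $c'_{\ell'}>0$) in the wall-crossing of $c'_{\ell}z^{am_{\ell}}$. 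Rescaling termwise every $\gamma_{k}$ in a defining sequence then exhibits $a\!\cdot\!\gamma$ as a genuine broken line, with the same bends for $k\gg0$.

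Applying this to the $\gamma_{1},\gamma_{2}$ above yields broken lines $a\!\cdot\!\gamma_{1}$, $a\!\cdot\!\gamma_{2}$ with initial exponents $ap$, $aq$, the same common endpoint, and final exponents summing to $a(m_{1}+m_{2})=ar$; after moving the common endpoint to a deeper generic point of the same cluster chamber (allowed since $r$ and $ar$ lie in the closure of the same cone), this pair contributes a strictly positive quantity to $\alpha_{ap,aq}^{ar}$, and by strong positivity there is no cancellation, so $\alpha_{ap,aq}^{ar}\neq0$. The main obstacle is the non-cancellation point in the second step: one must be certain that raising the relevant one-variable wall factor to the $a$-th power does not annihilate the coefficient of the scaled exponent. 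It is precisely here that the positivity of the cluster scattering diagram -- and the resulting positivity of broken-line coefficients -- does real work, since for a scattering diagram with genuinely signed wall functions this rescaling could fail.
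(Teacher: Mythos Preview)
Your proof is correct and follows essentially the same approach as the paper: take a pair of broken lines contributing to $\alpha_{p,q}^{r}$, rescale the decorating exponents by $a$, and invoke positivity to rule out cancellation. The only minor technical difference is that the paper rescales the \emph{supports} of $\gamma_1,\gamma_2$ by $a$ as well (so the common endpoint moves from $Q$ to $aQ$, which is automatically near $ar$), whereas you keep the supports fixed and rely on basepoint independence of the structure constants to justify the endpoint; both are valid, and your version is more explicit about why the bending condition survives the rescaling.
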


\begin{proof}
    If $(\gamma_1,\gamma_2)$ is a pair of broken lines contributing to $\alpha_{p,q}^{r}$,
    we may rescale the exponent vectors of decoration monomials as well as the supports of $\gamma_1$ and $\gamma_2$ by a factor of $a$ to obtain a new pair broken lines (of higher multiplicity) $\lrp{\widetilde{\gamma}_1,\widetilde{\gamma}_2}$ contributing to $\alpha_{a p, a q}^{a r}$.
    Then positivity of scattering functions implies no cancellations may occur and $\alpha_{a p, a q}^{a r} \neq 0$.
\end{proof}

\begin{lemma}\thlabel{lem:axsumaix}
    Let $x \in U^{\trop}(\Q)$, and let $a_1,\dots, a_d $ be non-negative integers such that each $a_i x$ is integral. Then $\alpha_{a_1 x,\dots, a_d x}^{(a_1+\cdots + a_d)x} \neq 0$.
\end{lemma}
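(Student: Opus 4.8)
The plan is to identify the structure constant using \thref{thm:ThetaRecipocity} and \thref{thm:ValuativeIndependence}, bypassing broken-line combinatorics entirely. Write $a:=a_1+\cdots+a_d$. First, harmless reductions: if $x=0$ everything is trivial, and any index with $a_i=0$ contributes the factor $\tf_0=1$ and may be dropped, so assume $x\neq 0$ and all $a_i>0$. Since each $a_ix$ is integral, so is $ax$: under a seed identification $\mf{r}_{\vb{s}}$ it is the sum in $V$ of the lattice points $\mf{r}_{\vb{s}}(a_ix)$. Now put $f:=\tf_{a_1x}\cdots\tf_{a_dx}=\sum_{y}\alpha_{a_1x,\dots,a_dx}^{y}\,\tf_y\in\ssO(U^\vee)$. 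By strong positivity (\thref{rem:StrongPositivity}) every coefficient $\alpha_{a_1x,\dots,a_dx}^{y}$ is $\geq 0$, so it suffices to show that $ax\in\Sigma:=\{y:\alpha_{a_1x,\dots,a_dx}^{y}\neq 0\}$.

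For $w\in(U^\vee)^{\trop}(\Z)$, the point $w$ is a valuation, so $w(f)=\sum_i w(\tf_{a_ix})$. By \thref{thm:ThetaRecipocity}, $w(\tf_{a_ix})=(a_ix)(\tf_w)=a_i\,x(\tf_w)$, using that a tropical point acts on regular functions homogeneously along rays; hence $w(f)=\sum_i a_i\,x(\tf_w)=a\,x(\tf_w)=(ax)(\tf_w)=w(\tf_{ax})$. On the other hand \thref{thm:ValuativeIndependence} gives $w(f)=\min_{y\in\Sigma}w(\tf_y)$. Thus $w(\tf_{ax})=\min_{y\in\Sigma}w(\tf_y)$ for all integral $w$; as both sides are homogeneous and continuous in $w$, the identity extends to all $w\in(U^\vee)^{\trop}(\Q)$, hence to $(U^\vee)^{\trop}(\R)$. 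Fix a seed $\vb{s}$. In the corresponding torus chart of $U^\vee$ one has $w(\tf_z)=\min_{u\in\Newt_{\vb{s}}(\tf_z)}\lra{u,\mf{r}^\vee_{\vb{s}}(w)}$, and since theta functions have positive monomial coefficients there is no cancellation in $f=\sum_{y\in\Sigma}\alpha^y\tf_y$, so $\Newt_{\vb{s}}(f)=\conv\lrp{\bigcup_{y\in\Sigma}\Newt_{\vb{s}}(\tf_y)}$. The identity above therefore says the support functions of $\Newt_{\vb{s}}(\tf_{ax})$ and of $\Newt_{\vb{s}}(f)$ coincide, whence $\Newt_{\vb{s}}(\tf_{ax})=\conv\lrp{\bigcup_{y\in\Sigma}\Newt_{\vb{s}}(\tf_y)}$; in particular $\Newt_{\vb{s}}(\tf_y)\subseteq\Newt_{\vb{s}}(\tf_{ax})$ for every $y\in\Sigma$.

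To extract $ax\in\Sigma$ I would appeal to the leading-exponent normalization of theta functions: $\tf_z$ contains the monomial $z^{\mf{r}_{\vb{s}}(z)}$ with coefficient $1$, and $\mf{r}_{\vb{s}}(z)$ is the vertex of $\Newt_{\vb{s}}(\tf_z)$ extremal in the directions interior to the $\vb{s}$-chamber. Minimizing $\lra{\,\cdot\,,\eta}$ over $\Newt_{\vb{s}}(\tf_{ax})=\conv\lrp{\bigcup_{y\in\Sigma}\Newt_{\vb{s}}(\tf_y)}$ for a generic such $\eta$, the minimizer $\mf{r}_{\vb{s}}(ax)$ must lie in some $\Newt_{\vb{s}}(\tf_{y_0})$ with $y_0\in\Sigma$; being a fortiori the minimizer over that smaller polytope, it equals $\mf{r}_{\vb{s}}(y_0)$, and injectivity of $\mf{r}_{\vb{s}}$ forces $ax=y_0\in\Sigma$.

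\textbf{Where the difficulty lies.} The real obstacle is the regime where $\tf_{a_ix}$ fails to be a cluster monomial. If $x$ lies in the closure of a cluster chamber (automatic in finite type) the lemma needs nothing: the $\tf_{a_ix}$ are cluster monomials, their product is the cluster monomial whose $g$-vector is the sum of theirs, and that is $\tf_{ax}$. But in the ``Badlands'' regime $\tf_{a_1x}\cdots\tf_{a_dx}$ is a complicated regular function whose theta expansion is not transparent, and the point of the argument is precisely that \thref{thm:ValuativeIndependence} — which exists to forbid pole cancellations of exactly this type — prevents $\tf_{ax}$ from being cancelled away. The ancillary inputs (homogeneity of the tropical-point action, the extension of \thref{thm:ValuativeIndependence} and of the canonical pairing to $\Q$-points, and the leading-exponent normalization of theta functions) are routine but should be recorded explicitly.
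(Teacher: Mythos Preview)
Your argument is essentially correct in outline, but it is dramatically more elaborate than what is needed.  The paper's proof is three lines: fix a seed $\vb{s}$, and for each $i$ take the \emph{straight} (non-bending) broken line with initial exponent $a_i\,\mf{r}_{\vb{s}}(x)$ and endpoint $(a_1+\cdots+a_d)\,\mf{r}_{\vb{s}}(x)$.  The final exponents are unchanged and sum to the basepoint, so this tuple contributes $1$ to $\alpha_{a_1x,\dots,a_dx}^{(a_1+\cdots+a_d)x}$; strong positivity then rules out cancellation.  That is the entire argument.

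Your concern about the ``Badlands'' regime is a red herring.  A broken line is never \emph{required} to bend at a wall, so the straight broken line with initial exponent $m$ and prescribed endpoint exists for \emph{every} $m$, whether or not $m$ lies in a cluster chamber.  Thus the direct broken-line computation works uniformly, with no case distinction.  By contrast, your route invokes Theta Reciprocity, Valuative Independence, positivity of monomial expansions of theta functions in a seed chart, equality of Newton polytopes via support functions, and the $g$-vector extremality property --- each true in this setting, but none required for this lemma.  What your approach does buy is an illustration that the valuative-independence machinery can in principle recover such combinatorial statements; here, however, the combinatorics is already trivial.
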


\begin{proof}
Choose a seed $\vb{s}$ to identify $U^{\trop}(\Q)$ with a $\Q$-vector space $V$ by a map $\mf{r}_{\vb{s}}$ as in \thref{not:Qvs}.
Take $(\gamma_1,\cdots,\gamma_d)$ to be the collection of straight broken lines in $V$ where the initial decoration monomial of $\gamma_i$ is $z^{a_i \mf{r}_{\vb{s}}(x)}$ and the endpoint of each $\gamma_i$ is $(a_1+\cdots + a_d)\mf{r}_{\vb{s}}(x)$.
This contributes $1$ to  $\alpha_{a_1 x,\dots, a_d x}^{(a_1+\cdots + a_d)x}$. (It is in fact the only contribution.)
\end{proof}

\begin{lemma}\thlabel{lem:TMSAssociative}
Let $S$, $T$ and $R$ be subsets of $U^\trop(\Q)$. Then
\eqn{ (S +_\tf T) +_\tf R
= S +_\tf (T +_\tf R). } 
\end{lemma}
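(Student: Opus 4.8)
The plan is to route both sides through a single, manifestly symmetric three-fold version of the sum. Define $S +_\tf T +_\tf R$ by the same recipe as the two-fold sum, but with three summands: $x \in S +_\tf T +_\tf R$ iff there exist $s\in S$, $t\in T$, $u\in R$ and $a\in\Z_{>0}$ with $as, at, au, ax$ all integral and $\alpha_{as,at,au}^{ax}\neq 0$. It then suffices to prove $(S +_\tf T) +_\tf R = S +_\tf T +_\tf R$ together with $S +_\tf (T +_\tf R) = S +_\tf T +_\tf R$, and since these two statements are mirror images of one another (exchange the role of the inner and outer brackets), I will only spell out the first.

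The single algebraic input is associativity of $\tf$-function multiplication. Expanding $\tf_p\tf_q\tf_r = (\tf_p\tf_q)\tf_r$ in the $\tf$-basis and using linear independence of theta functions exactly as in \thref{rem:LinInd} produces the finite identity $\alpha_{p,q,r}^{s} = \sum_{w\in U^{\trop}(\Z)} \alpha_{p,q}^{w}\alpha_{w,r}^{s}$ for all integral $p,q,r,s$. For $S +_\tf T +_\tf R \subseteq (S +_\tf T) +_\tf R$: given $x$ in the left side with witnesses $s,t,u,a$, this identity gives $\sum_{w}\alpha_{as,at}^{w}\alpha_{w,au}^{ax} = \alpha_{as,at,au}^{ax}\neq 0$; since every structure constant is non-negative by strong positivity (\thref{rem:StrongPositivity}), some integral $w$ satisfies $\alpha_{as,at}^{w}\neq 0$ and $\alpha_{w,au}^{ax}\neq 0$. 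The point $w/a \in U^{\trop}(\Q)$ then lies in $S +_\tf T$ (witnessed by $s,t,a$, since $a\cdot(w/a)=w$ is integral), whence $x$ lies in $(S +_\tf T) +_\tf R$ (witnessed by $w/a,\,u,\,a$). For the reverse inclusion, let $x\in(S +_\tf T) +_\tf R$, so there are $y\in S +_\tf T$, $u\in R$, $a\in\Z_{>0}$ with $ay,au,ax$ integral and $\alpha_{ay,au}^{ax}\neq 0$, and there are $s\in S$, $t\in T$, $b\in\Z_{>0}$ with $bs,bt,by$ integral and $\alpha_{bs,bt}^{by}\neq 0$. Rescaling both structure constants to the common multiple $ab$ using \thref{lem:rescale} gives $\alpha_{abs,abt}^{aby}\neq 0$ and $\alpha_{aby,abu}^{abx}\neq 0$ with $abs,abt,abu,aby,abx$ all integral; the coherence identity together with non-negativity then gives $\alpha_{abs,abt,abu}^{abx}\ge\alpha_{abs,abt}^{aby}\alpha_{aby,abu}^{abx}>0$, so $x\in S +_\tf T +_\tf R$ with witnesses $s,t,u,ab$.

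The companion identity $\alpha_{p,q,r}^{s}=\sum_{v}\alpha_{q,r}^{v}\alpha_{p,v}^{s}$ coming from $\tf_p\tf_q\tf_r=\tf_p(\tf_q\tf_r)$ drives the symmetric argument for $S +_\tf (T +_\tf R)=S +_\tf T +_\tf R$, and combining the two equalities yields the lemma. I do not anticipate a real obstacle. The only two delicate points are both already handled: that no cancellation can destroy a non-zero structure constant when one passes through a triple product is precisely strong positivity (\thref{rem:StrongPositivity}), and the need to arrange a common scaling $a$ in the three-fold sum is disposed of cheaply by \thref{lem:rescale}. The one bit of bookkeeping to keep honest is that $w/a$ genuinely lies in the intermediate sum $S +_\tf T$: this is immediate because $U^{\trop}(\Q)$ is a $\Q$-vector space, so $w/a$ is a legitimate point, and $a\cdot(w/a)=w$ is integral with $\alpha_{a s,\,a t}^{\,a(w/a)}\neq 0$.
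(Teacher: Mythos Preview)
Your argument is correct and follows essentially the same route as the paper's proof: both rescale via \thref{lem:rescale}, pass through the triple product $\tf_{abs}\cdot\tf_{abt}\cdot\tf_{abu}$, and use associativity of $\tf$-multiplication together with non-negativity of structure constants (the paper phrases this via \thref{rem:LinInd}, you via the coherence identity $\alpha_{p,q,r}^{s}=\sum_w\alpha_{p,q}^{w}\alpha_{w,r}^{s}$) to extract an intermediate summand; your packaging through an explicit three-fold sum $S+_\tf T+_\tf R$ is a cosmetic reorganisation of the same logic. One small terminological slip: $U^{\trop}(\Q)$ is not a $\Q$-vector space but only carries a $\Q_{>0}$-scaling action, which is nonetheless all you need to make sense of $w/a$.
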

\begin{proof}
Let $x\in (S +_\tf T) +_\tf R$. 
Then there exists $y\in S +_\tf T$, $r\in R$ and $a\in\Z_{>0}$ with $\alpha_{ay,ar}^{ax}\neq 0$, 
meaning $\tf_{ax}$ is a non-zero summand of $\tf_{ay}\cdot\tf_{ar}$. 
Similarly, since $y\in S +_\tf T$ we have $\alpha_{bs,bt}^{by}\neq 0$ for some $s\in S$, $t\in T$ and $b\in\Z_{>0}$, meaning $\tf_{by}$ is a non-zero summand of $\tf_{bs}\cdot\tf_{bt}$. 
Then, since 
\eqn{\tf_{abx} \text{ is a non-zero summand of }\tf_{aby}\cdot\tf_{abr}}
and
\eqn{\tf_{aby} \text{ is a non-zero summand of }\tf_{abs}\cdot\tf_{abt},}
we obtain that 
\eqn{\tf_{abx} \text{ is a non-zero summand of }\tf_{abs}\cdot\tf_{abt}\cdot\tf_{abr}.}
Consider the expression $\tf_{abt}\cdot\tf_{abr}= \sum_{abz\in U^\trop(\Z)} \alpha_{abt,abr}^{abz} \tf_{abz}$.
By construction, if $\alpha_{abt,abr}^{abz}\neq 0$, then $z \in T+_\tf R$.
We have now that 
\eqn{\tf_{abx} \text{ is a non-zero summand of }\sum_{abz\in U^\trop(\Z)} \alpha_{abt,abr}^{abz} \tf_{abs}\cdot\tf_{abz}.}
Then by \thref{rem:LinInd}, we find that $\tf_{abx}$ is a non-zero summand of $\tf_{abs}\cdot\tf_{abz}$ for some $z \in T+_\tf R$. Consequently, $x\in S+_\tf (T+_\tf R)$.
\end{proof}

\begin{prop}\thlabel{char_blc}
A subset $S$ of $U^{\trop}(\Q)$ is broken line convex if and only if for all 
$t\in [0,1]$, we have
\eqn{
t S +_\tf (1-t) S = S.
}
\end{prop}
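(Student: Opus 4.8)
The plan is to prove the two implications separately, making essential use of the first (structure-constant) description in \thref{def:TropicalMinkowskiSum}, since it is the form best suited to the positivity arguments of \thref{rem:LinInd}, \thref{lem:rescale} and \thref{lem:axsumaix}.

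First I would handle the forward direction. Assume $S$ is broken line convex and fix $t = p/q \in [0,1]$ with $p, q \in \Z_{\geq 0}$. For the inclusion $tS +_\tf (1-t)S \subseteq S$: take $x$ in the tropical Minkowski sum, so by the first description there are $s_1, s_2 \in S$ and $a \in \Z_{>0}$ with $a\, t\, s_1$, $a(1-t)s_2$, and $ax$ all integral and $\alpha_{a t s_1,\, a(1-t)s_2}^{ax} \neq 0$. By the second (broken-line) description, there is a broken line segment $\gamma:[0,\tau] \to U^\trop(\Q)$ with $\gamma(0) = ts_1$, $\gamma(\tau) = (1-t)s_2$, and $\gamma(\tau/2) = x/2$. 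The subtlety is that the endpoints are $ts_1$ and $(1-t)s_2$, which need not lie in $S$; here I would instead observe that the natural object to feed into broken line convexity is a broken line segment between $s_1$ and $s_2$. Concretely, I expect that the pair of broken lines witnessing $\alpha^{ax}_{ats_1, a(1-t)s_2} \neq 0$ can be concatenated/rescaled into a broken line segment from $s_1$ to $s_2$ passing through $x$ — this is exactly the kind of endpoint-rescaling manipulation used in the proof of \cite[Theorem~6.1]{BLC} and in \thref{lem:rescale}. Broken line convexity of $S$ then forces $x \in S$. For the reverse inclusion $S \subseteq tS +_\tf (1-t)S$: given $s \in S$, apply \thref{lem:axsumaix} with the two exponents $a p$ and $a(q-p)$ (choosing $a$ so that everything is integral) to get $\alpha_{ap\,s,\; a(q-p)s}^{aq\,s} \neq 0$, i.e. $\alpha_{a(ts),\, a((1-t)s)}^{a s} \neq 0$ with $ts \in tS$ and $(1-t)s \in (1-t)S$, so $s \in tS +_\tf (1-t)S$.

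For the converse, assume $tS +_\tf (1-t)S = S$ for all $t \in [0,1]$, and let $s_1, s_2 \in S$ with $\gamma$ a broken line segment from $s_1$ to $s_2$; I must show $\supp(\gamma) \subseteq S$. Let $x \in \supp(\gamma)$, say $\gamma(\sigma) = x$ with $\sigma$ in the interval $[t_1,t_2]$ parametrizing $\gamma$. The idea is to reparametrize so that $x$ sits at the midpoint of a rescaled broken line: a broken line segment from $s_1$ to $s_2$ through $x$ translates, after the rescaling in the second description of \thref{def:TropicalMinkowskiSum}, into the statement $\alpha^{a' x}_{a' t' s_1, a'(1-t')s_2} \neq 0$ for the appropriate ratio $t'$ determined by where $x$ sits along $\gamma$ (and a suitable integer $a'$). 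This says precisely $x \in t'S +_\tf (1-t')S$, which by hypothesis equals $S$, so $x \in S$. Running over all $x \in \supp(\gamma)$ gives $\supp(\gamma) \subseteq S$, hence $S$ is broken line convex.

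The main obstacle I anticipate is the bookkeeping in translating between the two descriptions in \thref{def:TropicalMinkowskiSum} when the relevant point is \emph{not} the midpoint — i.e., showing that a broken line segment from $s_1$ to $s_2$ passing through an arbitrary interior point $x = \gamma(\sigma)$ corresponds to a nonvanishing structure constant $\alpha^{a' x}_{a' t' s_1,\, a'(1-t')s_2}$ with $t'$ reflecting the position of $\sigma$. Morally this is the content of \cite[Theorem~6.1]{BLC} combined with the rescaling trick of \thref{lem:rescale} (splitting $\gamma$ at $\sigma$ into two broken lines and matching multiplicities), and \thref{rem:StrongPositivity} guarantees no cancellations spoil the resulting structure constant; but making the reparametrization and the choice of the common denominator $a'$ precise, uniformly in $x$, is where the real care is needed. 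Everything else reduces to \thref{lem:axsumaix}, \thref{lem:rescale}, and \thref{rem:LinInd}.
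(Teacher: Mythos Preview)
Your proposal is correct, and the obstacle you flag is genuine but surmountable: the translation between broken line segments through an arbitrary interior point and nonvanishing structure constants with the appropriate weight is exactly \cite[Propositions~4.10 and 5.4]{BLC}. So both directions go through as you outline.

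That said, the paper takes a shorter route. Rather than working with \thref{def:BLC} directly, it uses the equivalence of broken line convexity with \emph{positivity} (\thref{def:positive}, \cite[Theorem~6.1]{BLC}). Positivity is already a structure-constant condition -- it says that $\alpha_{p,q}^r \neq 0$ with $p \in aS(\Z)$, $q \in bS(\Z)$ forces $r \in (a+b)S$ -- so it matches the first description in \thref{def:TropicalMinkowskiSum} on the nose. The forward direction then becomes: given $z \in tS +_\tf (1-t)S$ witnessed by $\alpha_{cx,cy}^{cz} \neq 0$, write $t = a/(a+b)$ with $c = a+b$ and apply positivity. The backward direction is the same manipulation in reverse: given $\alpha_{p,q}^r \neq 0$ with $p \in aS(\Z)$, $q \in bS(\Z)$, set $t = a/(a+b)$ and rescale to land in $tS +_\tf (1-t)S = S$. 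No broken line segments appear at all.

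What this buys is that the bookkeeping you anticipate -- rescaling, concatenation, matching the parameter $t'$ to the position along $\gamma$ -- has already been absorbed into the proof of \cite[Theorem~6.1]{BLC}, so there is no need to redo it here. Your approach effectively re-derives a piece of that equivalence inside the proof; it works, but the paper's version is a two-line corollary once positivity is on the table. For the easy inclusion $S \subset tS +_\tf (1-t)S$, both you and the paper use the same trivial observation (the straight segment from $tz$ to $(1-t)z$, equivalently \thref{lem:axsumaix}).
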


\begin{proof}
Let $S$ be broken line convex.
Then for all $a$, $b$ in $\Z_{\geq 0}$, $p\in a S(\Z)$, $q \in b S(\Z)$ and $r\in U^{\trop}(\Z)$ with $\alpha_{p,q}^r \neq 0$, we have that $r\in (a+b) S$.
If $z \in t S +_\tf (1-t) S$, then there is some $x\in t S$, $y\in (1-t) S$, and $c\in \Z_{>0}$ such that $c\, x$, $c\, y $, and $c\, z$ are in $U^{\trop}(\Z)$ and $\alpha_{c\, x, c\, y}^{c\, z} \neq 0$.
We can find non-negative integers $a$ and $b$ such that $t=\frac{a}{a+b}$ and $c = a+b$.
Then $p:= c\, x \in a S (\Z)$, $q:= c\, y \in b S (\Z)$, and $r:= c\, z$ must be in $(a+b) S$.
It follows that $z \in S$, and $t S +_\tf (1-t) S \subset S$.

On the other hand for all $z \in U^{\trop}(\Q)$, we can draw a straight line segment from $t z$ to $(1-t)z$.
As such, $z \in t \lrc{z} +_{\tf} (1-t)\lrc{z}$.
So if $z \in S$, then $z \in t \lrc{z} +_{\tf} (1-t)\lrc{z} \subset t S +_\tf (1-t) S$, and
$S \subset t S +_\tf (1-t) S $.

Now suppose $t S +_\tf (1-t) S = S$ for all 
$t \in [0,1]$.
We want to show that for all $a$, $b$ in $\Z_{\geq 0}$, $p\in a S(\Z)$, $q \in b S(\Z)$ and $r\in U^{\trop}(\Z)$ with $\alpha_{p,q}^r \neq 0$, we have $r\in (a+b) S$.
First we address the trivial case:  if $a=b=0$ and $\alpha_{p,q}^r \neq 0$, then necessarily $p=q=r=0 \in 0\cdot S$. 
Next, assume $a>0$ or $b>0$, and let $t= \frac{a}{a+b}$.
Write $p':=\frac{p}{a+b}$, $q':=\frac{q}{a+b}$, and $r':=\frac{r}{a+b}$, so 
$p'\in t S$, $q'\in (1-t)S$, and $\alpha_{(a+b)p',(a+b)q'}^{(a+b)r'}\neq 0$.
This implies $r'\in t S +_{\tf}(1-t)S =S$, so $r\in (a+b)S$ as desired.
\end{proof}

\begin{prop}\thlabel{blctropms}
If the subsets $S$ and $T$ of $U^{\trop}(\Q)$ are broken line convex, then $S +_\tf T$ is broken line convex.
\end{prop}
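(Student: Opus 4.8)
The plan is to verify broken line convexity of $S +_\tf T$ through the characterization of \thref{char_blc}, so I would show that $t(S +_\tf T) +_\tf (1-t)(S +_\tf T) = S +_\tf T$ for every $t \in [0,1]$. If $S$ or $T$ is empty then so is $S +_\tf T$ and there is nothing to prove, so I may assume both are nonempty. The endpoint cases $t = 0$ and $t = 1$ are immediate once one notes that $0\cdot X = \lrc{0}$ for nonempty $X$ and that $\lrc{0} +_\tf Y = Y$ for any $Y$: since $\tf_0 = 1$, the constant $\alpha_{0,\,ny}^{\,nx}$ vanishes unless $x = y$, and conversely every $y \in Y$ admits some $n \in \Z_{>0}$ with $ny$ integral. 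So the real content is the case $t \in (0,1)\cap\Q$.

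Two auxiliary facts would do the work. First, $+_\tf$ is commutative, which is immediate from $\alpha_{p,q}^{r} = \alpha_{q,p}^{r}$ (multiplication of $\tf$-functions is commutative). Second -- and this is the key point -- I would establish a distributivity law: for every $a \in \Q_{>0}$ and all subsets $X, Y \subseteq U^{\trop}(\Q)$, one has $a(X +_\tf Y) = aX +_\tf aY$. To see ``$\subseteq$'', take $z \in X +_\tf Y$ witnessed by $x\in X$, $w\in Y$, $c\in \Z_{>0}$ with $cx, cw, cz$ integral and $\alpha_{cx,cw}^{cz}\neq 0$; writing $a = p/q$, one applies \thref{lem:rescale} to rescale by $p$ and clears the denominator $q$ to obtain an integer $c'$ witnessing $az \in aX +_\tf aY$. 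The reverse inclusion is the same manipulation run backwards, again invoking \thref{lem:rescale} (this time to absorb a factor of $q^2$). I expect this distributivity lemma to be the main obstacle -- not a deep one, but the place where care with rational scalars and integrality of tropical points is needed.

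With commutativity, distributivity, and the associativity of \thref{lem:TMSAssociative} available, the computation for $t \in (0,1)$ is short:
\eqn{
t(S +_\tf T) +_\tf (1-t)(S +_\tf T) &= (tS +_\tf tT) +_\tf \lrp{(1-t)S +_\tf (1-t)T}\\
&= \lrp{tS +_\tf (1-t)S} +_\tf \lrp{tT +_\tf (1-t)T},
}
where the first equality is distributivity and the second is a regrouping via commutativity and associativity. Since $S$ and $T$ are broken line convex, \thref{char_blc} gives $tS +_\tf (1-t)S = S$ and $tT +_\tf (1-t)T = T$, so the right-hand side equals $S +_\tf T$. Hence $t(S +_\tf T) +_\tf (1-t)(S +_\tf T) = S +_\tf T$ for all $t \in [0,1]$, and \thref{char_blc} -- read in the reverse direction -- yields that $S +_\tf T$ is broken line convex.
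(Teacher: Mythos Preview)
Your proof is correct and follows the same overall strategy as the paper's: both verify the criterion of \thref{char_blc} by showing $\tau(S +_\tf T) +_\tf (1-\tau)(S +_\tf T) = S +_\tf T$. The difference is organizational. The paper unpacks the definition of $+_\tf$ directly, chasing witnesses $y,z,s_1,t_1,s_2,t_2$ and rescaling until $\tf_{abcx}$ appears as a summand of a four-fold product $\tf_{abc\tau s_1}\tf_{abc(1-\tau)s_2}\tf_{abc\tau t_1}\tf_{abc(1-\tau)t_2}$, then regroups the factors and invokes \thref{rem:LinInd} together with positivity of $S$ and $T$. You instead isolate the identity $a(X +_\tf Y) = aX +_\tf aY$ for $a\in\Q_{>0}$ as a standalone distributivity lemma, and then combine it with commutativity and the already-proved associativity (\thref{lem:TMSAssociative}) to carry out the regrouping at the level of sets rather than theta functions.

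Your route is slightly cleaner and extracts a reusable fact (distributivity of $+_\tf$ over rational dilation) that the paper leaves implicit. The paper's hands-on version has the minor virtue of not re-invoking \thref{lem:TMSAssociative}, but the underlying rescaling manipulations via \thref{lem:rescale} are the same in both arguments. One small quibble: in your sketch of the reverse inclusion for distributivity you mention absorbing ``a factor of $q^2$'', but in fact a single application of \thref{lem:rescale} with factor $q$ suffices (from $\alpha_{cax,caw}^{caz}\neq 0$ one gets $\alpha_{cpx,cpw}^{cpz}\neq 0$, and $cpx = q\cdot(cax)$ is integral).
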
 
\begin{proof}
Let $\tau\in [0,1]$. 
If we prove the equality 
\eqn{\tau(S +_{\tf} T)+_{\tf} (1-\tau)(S +_{\tf} T)= S +_{\tf} T,}
then by \thref{char_blc} we conclude the result. 

Assume that $x\in \tau(S +_{\tf} T)+_{\tf} (1-\tau)(S +_{\tf} T)$. So, there exist $y\in \tau\cdot (S +_{\tf} T)$, $z\in (1-\tau)\cdot (S +_{\tf} T)$ and $a\in\Z_{>0}$ such that $\alpha_{a\, y, a\, z}^{a\, x}\neq 0$, meaning $\tf_{ax}$ is a non-zero summand of $\tf_{ay}\cdot\tf_{az}$. Now, since  $y\in \tau\cdot (S +_{\tf} T)$, there exist $s_1\in S$, $t_1\in T$ and $b\in\Z_{>0}$ such that $\alpha_{b\,\tau\, s_1, b\,\tau\, t_1}^{b\, y}\neq 0$, meaning $\tf_{by}$ is a non-zero summand of $\tf_{b\tau s_1}\cdot \tf_{b\tau t_1}$. 
Similarly, $z\in (1-\tau)\cdot (S +_{\tf} T)$ implies the existence of $s_2\in S$, $t_2\in T$ and $c\in\Z_{>0}$ such that $\alpha_{c\,(1-\tau)\, s_2, c\, (1-\tau)\, t_2}^{c\, z}\neq 0$, meaning $\tf_{cz}$ is a non-zero summand of $\tf_{c (1-\tau) s_2}\cdot \tf_{c (1-\tau) t_2}$. Then, it follows that
\eqn{\tf_{abcx} &\text{ is a non-zero summand of } \tf_{abcy}\cdot\tf_{abcz}, \\
\tf_{abcy} &\text{ is a non-zero summand of } \tf_{abc\tau s_1}\cdot\tf_{abc\tau t_1}, \\
\tf_{abcz} &\text{ is a non-zero summand of }\tf_{abc(1-\tau) s_2}\cdot\tf_{abc(1-\tau) t_2}.
}
Then, we have that
\begin{equation}\label{eq:tfabcx}
\tf_{abcx} \text{ is a non-zero summand of }
\tf_{abc\tau s_1}\cdot\tf_{abc(1-\tau) s_2}\cdot\tf_{abc\tau t_1}\cdot\tf_{abc(1-\tau) t_2}.
\end{equation}
Now, consider the expressions
\eqn{\tf_{abc\tau s_1}\cdot\tf_{abc(1-\tau) s_2}&= \sum_{abcs\in U^\trop(\Z)} \alpha_{abc\tau s_1,abc(1-\tau) s_2}^{abcs} \tf_{abcs}, \quad \text{and} \\
\tf_{abc\tau t_1}\cdot\tf_{abc(1-\tau) t_2}&= \sum_{abct\in U^\trop(\Z)} \alpha_{abc\tau t_1,abc(1-\tau) t_2}^{abcs} \tf_{abct}.
}
By Equation~\eqref{eq:tfabcx} and \thref{rem:LinInd} we have that $\tf_{abcx}$ is a non-zero summand of $\tf_{abcs} \cdot \tf_{abct}$ for some $s\in S$ and $t\in T$. Therefore, we have that $x\in S +_{\tf} T$ and we conclude that $\tau (S +_{\tf} T) +_{\tf} (1-\tau)(S +_{\tf} T)\subseteq S +_{\tf} T$.

For the other containment, consider $x\in S +_\tf T$. If we consider the line segment $\tau x + (1-\tau)x$, then we have that $x\in \tau\{x\} +_{\tf} (1-\tau)\{x\}$. So, since $\tau\{x\} +_{\tf} (1-\tau)\{x\}\subset \tau (S +_{\tf} T) +_{\tf} (1-\tau)(S +_{\tf} T)$ we conclude that $S +_{\tf} T\subseteq \tau (S +_{\tf} T) +_{\tf} (1-\tau)(S +_{\tf} T)$. 
\end{proof}

\begin{cor}\thlabel{cor:ConvSumInSumConv}
Let $S$ and $T$ be subsets of $U^{\trop}(\Q)$. Then
\eqn{\bconv(S +_{\tf} T) \subseteq \bconv(S) +_{\tf} \bconv(T).}
\end{cor}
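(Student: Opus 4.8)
The plan is to show that $\bconv(S) +_\tf \bconv(T)$ is a broken line convex set containing $S +_\tf T$; since $\bconv(S +_\tf T)$ is by definition the intersection of all such sets (\thref{def:BLCHull}), the containment follows immediately. This reduces the corollary to two observations, one of which is already done.

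\textbf{Step 1: Broken line convexity of $\bconv(S) +_\tf \bconv(T)$.} By \thref{def:BLCHull}, $\bconv(S)$ and $\bconv(T)$ are broken line convex. Hence by \thref{blctropms}, their tropical Minkowski sum $\bconv(S) +_\tf \bconv(T)$ is broken line convex. This step is essentially a citation.

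\textbf{Step 2: Containment $S +_\tf T \subseteq \bconv(S) +_\tf \bconv(T)$.} This follows from the evident monotonicity of $+_\tf$ in each argument: if $S \subseteq S'$ and $T \subseteq T'$ then $S +_\tf T \subseteq S' +_\tf T'$. Indeed, unwinding \thref{def:TropicalMinkowskiSum}, if $x \in S +_\tf T$ then there exist $s\in S$, $t \in T$, $a \in \Z_{>0}$ with $as, at, ax$ integral and $\alpha_{as,at}^{ax} \neq 0$; since $s \in S \subseteq \bconv(S)$ and $t \in T \subseteq \bconv(T)$, the same data witnesses $x \in \bconv(S) +_\tf \bconv(T)$. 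Applying this with $S' = \bconv(S) \supseteq S$ and $T' = \bconv(T) \supseteq T$ (using $S \subseteq \bconv(S)$, which is immediate from the definition of the hull) gives the claim.

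\textbf{Conclusion.} Combining Steps 1 and 2: $\bconv(S) +_\tf \bconv(T)$ is a broken line convex set containing $S +_\tf T$, so it contains the intersection of all broken line convex sets containing $S +_\tf T$, which is $\bconv(S +_\tf T)$. I do not anticipate a genuine obstacle here — this corollary is the ``easy half'' of the eventual equality (the reverse inclusion, presumably \thref{thm:Compatible}, is where the real work lies). The only mild care needed is to confirm the monotonicity of $+_\tf$ directly from the definition rather than taking it for granted, but this is a one-line check as indicated above.
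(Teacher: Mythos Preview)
Your proof is correct and follows essentially the same approach as the paper: both arguments observe that $S +_\tf T \subseteq \bconv(S) +_\tf \bconv(T)$ and then invoke \thref{blctropms} to conclude that the right-hand side is already broken line convex, hence contains $\bconv(S +_\tf T)$. Your version is slightly more explicit about the monotonicity of $+_\tf$, which the paper simply asserts.
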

\begin{proof}
Since $S +_{\tf} T \subseteq \bconv(S) +_{\tf} \bconv(T)$, we have that 
\eqn{
\bconv(S +_{\tf} T) \subseteq \bconv(\bconv(S) +_{\tf} \bconv(T)).
}
Note that $\bconv(S)$ and $\bconv(T)$ are broken line convex sets, then \thref{blctropms} implies that $\bconv(S) +_{\tf} \bconv(T)$ is a broken line convex set and consequently we obtain that $\bconv(\bconv(S) +_{\tf} \bconv(T))=\bconv(S) +_{\tf} \bconv(T)$. The claim follows.
\end{proof}

\begin{definition}\thlabel{def:Sd}
Let $\alpha_{p_1,\dots,p_d}^r$ denote the coefficient of $\tf_r$ in the expansion of $\tf_{p_1}\cdots \tf_{p_d}$.
For $S \subset U^\trop(\Q)$ define
\eqn{S_d := \lrc{u \in U^\trop(\Q): \alpha_{a_1\, s_1, \dots, a_d\, s_d}^{(a_1+\cdots+ a_d)u}\neq 0 \text { for some } s_1,\dots, s_d \in S \text { and } a_1,\dots, a_d \in \Z_{\geq 0}, \text{ with }  \sum_{i=1}^{d}a_i \neq 0}. }  
\end{definition}

\begin{lemma}\thlabel{lem:filtration}
We have a filtration
$S= S_1 \subset S_2 \subset \cdots $.
\end{lemma}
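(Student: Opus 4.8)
The plan is to show two things: first that $S = S_1$, and second that $S_d \subseteq S_{d+1}$ for every $d \geq 1$. The first is essentially immediate: given $u \in S$, take $d = 1$, $s_1 = u$, and $a_1 = 1$; then $\alpha_{a_1 s_1}^{(a_1) u} = \alpha_u^u = 1 \neq 0$ (the product of a single $\tf$-function with itself trivially — here it's just $\tf_u = \tf_u$), so $u \in S_1$. Conversely, if $u \in S_1$, then $\alpha_{a_1 s_1}^{a_1 u} \neq 0$ for some $s_1 \in S$ and $a_1 \in \Z_{>0}$, which forces $a_1 u = a_1 s_1$ in $U^\trop(\Z)$ (the expansion of the single $\tf$-function $\tf_{a_1 s_1}$ is just $\tf_{a_1 s_1}$ itself), hence $u = s_1 \in S$. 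So $S = S_1$.

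**Next I would** prove $S_d \subseteq S_{d+1}$. Suppose $u \in S_d$, witnessed by $s_1, \dots, s_d \in S$ and $a_1, \dots, a_d \in \Z_{\geq 0}$ with $\sum a_i \neq 0$ and $\alpha_{a_1 s_1, \dots, a_d s_d}^{(a_1 + \cdots + a_d) u} \neq 0$. The idea is to pad the list with one more point: pick any $s_{d+1} \in S$ (which is nonempty, since otherwise $S_d$ is empty and there is nothing to prove) and set $a_{d+1} = 0$. Then formally $\alpha_{a_1 s_1, \dots, a_d s_d, 0 \cdot s_{d+1}}^{(a_1 + \cdots + a_d + 0) u}$ should equal $\alpha_{a_1 s_1, \dots, a_d s_d}^{(a_1 + \cdots + a_d) u} \neq 0$, since $\tf_{0 \cdot s_{d+1}} = \tf_0 = 1$ and multiplying by $1$ does not change the product $\tf_{a_1 s_1} \cdots \tf_{a_d s_d}$. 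Hence $u \in S_{d+1}$ with $\sum_{i=1}^{d+1} a_i = \sum_{i=1}^d a_i \neq 0$, as required.

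**The one point requiring care** is the convention that $\tf_0 = 1$ (equivalently, that $0 \in U^\trop(\Z)$ is the origin and its theta function is the constant function $1$) and that structure constants $\alpha_{p_1, \dots, p_d}^r$ with one of the $p_i$ equal to $0$ simply reduce to the corresponding structure constant with that index deleted. This is standard for cluster theta bases, but I would state it explicitly — either as a one-line remark invoking that $\tf_0 = 1$ and $\tf_p \cdot 1 = \tf_p$, or by noting that a broken line with initial exponent $0$ is the constant path decorated by the monomial $z^0 = 1$, so adjoining it to a collection of broken lines contributing to $\alpha_{a_1 s_1, \dots, a_d s_d}^{(\sum a_i) u}$ yields a collection contributing to $\alpha_{a_1 s_1, \dots, a_d s_d, 0}^{(\sum a_i) u}$ with the same multiplicity. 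With that convention in hand, the chain $S = S_1 \subseteq S_2 \subseteq \cdots$ follows.

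**I expect no serious obstacle here** — the lemma is a bookkeeping statement and the only subtlety is making the degenerate case $a_{d+1} = 0$ legitimate, which the definition of $S_d$ (allowing $a_i \in \Z_{\geq 0}$, with only the total required to be nonzero) was evidently designed to accommodate. The proof is short enough that I would simply write it out in full rather than sketch it.
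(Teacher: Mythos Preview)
Your proof is correct and follows exactly the same approach as the paper: the equality $S = S_1$ is immediate from the definition, and the containments $S_d \subset S_{d+1}$ are obtained by setting $a_{d+1} = 0$. The paper's proof is a two-line version of what you wrote, omitting the discussion of the convention $\tf_0 = 1$.
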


\begin{proof}
The first equality is immediate from the definition of $S_1$. 
For the remaining containments, set $a_{d+1}=0$ to find $S_d \subset S_{d+1}$.
\end{proof}

\begin{lemma}\thlabel{lem:SdPartialColim}
If $x\in S_{d_1}$, $y\in S_{d_2}$, and $\alpha_{n\, x, m\, y}^{(n+m)z} \neq 0$, then $z\in S_{d_1+d_2}$.
\end{lemma}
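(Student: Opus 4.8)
The plan is to unpack the hypotheses $x\in S_{d_1}$ and $y\in S_{d_2}$ into statements about non-vanishing structure constants, rescale all the relevant $\tf$-function relations to a common scale, and then compose them exactly as in the proof of \thref{lem:TMSAssociative}. Concretely, from $x\in S_{d_1}$ I would fix $s_1,\dots,s_{d_1}\in S$ and $a_1,\dots,a_{d_1}\in\Z_{\geq 0}$ with $A:=a_1+\cdots+a_{d_1}\neq 0$ such that $\tf_{Ax}$ is a non-zero summand of $\tf_{a_1s_1}\cdots\tf_{a_{d_1}s_{d_1}}$, and from $y\in S_{d_2}$ I would fix $t_1,\dots,t_{d_2}\in S$ and $b_1,\dots,b_{d_2}\in\Z_{\geq 0}$ with $B:=b_1+\cdots+b_{d_2}\neq 0$ such that $\tf_{By}$ is a non-zero summand of $\tf_{b_1t_1}\cdots\tf_{b_{d_2}t_{d_2}}$. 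The hypothesis $\alpha_{n\,x,m\,y}^{(n+m)z}\neq 0$ says $\tf_{(n+m)z}$ is a non-zero summand of $\tf_{nx}\cdot\tf_{my}$ (recall such a structure constant presupposes $nx$, $my$, $(n+m)z\in U^{\trop}(\Z)$, and we take $n,m$ not both zero, so $n+m\neq 0$).

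Next I would pass to a common scale. By \thref{lem:rescale} with rescaling factor $AB$, the monomial $\tf_{AB(n+m)z}$ is a non-zero summand of $\tf_{ABnx}\cdot\tf_{ABmy}$. The proof of \thref{lem:rescale} (rescale the supports and decoration exponents of a contributing tuple of broken lines, with no cancellation by strong positivity, \thref{rem:StrongPositivity}) applies verbatim to tuples of any size, so rescaling the first relation above by $Bn$ and the second by $Am$ shows that $\tf_{ABnx}$ is a non-zero summand of $P:=\tf_{Bna_1s_1}\cdots\tf_{Bna_{d_1}s_{d_1}}$ and $\tf_{ABmy}$ is a non-zero summand of $Q:=\tf_{Amb_1t_1}\cdots\tf_{Amb_{d_2}t_{d_2}}$.

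Now I would compose and read off the conclusion. Since $\tf_{ABnx}$ occurs in $P$, $\tf_{ABmy}$ occurs in $Q$, and strong positivity (\thref{rem:StrongPositivity}) forbids cancellation in the $\tf$-basis expansion of the theta-function product $PQ$, the monomial $\tf_{AB(n+m)z}$ --- which occurs in $\tf_{ABnx}\cdot\tf_{ABmy}$ --- also occurs in $PQ$. Writing $(u_1,\dots,u_{d_1+d_2}):=(s_1,\dots,s_{d_1},t_1,\dots,t_{d_2})$ and $(c_1,\dots,c_{d_1+d_2}):=(Bna_1,\dots,Bna_{d_1},Amb_1,\dots,Amb_{d_2})$, this says $\alpha_{c_1u_1,\dots,c_{d_1+d_2}u_{d_1+d_2}}^{AB(n+m)z}\neq 0$. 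Since each $u_k\in S$, each $c_k\in\Z_{\geq 0}$, $\sum_k c_k=ABn+ABm=AB(n+m)\neq 0$, and $AB(n+m)z=\bigl(\sum_k c_k\bigr)z$, this is precisely the defining condition for membership in $S_{d_1+d_2}$, whence $z\in S_{d_1+d_2}$.

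I expect the only real difficulty to be the scalar bookkeeping: one must choose the three rescaling factors so that the output exponent of each $S_{d_i}$-relation lines up with the matching input exponent of $\tf_{nx}\cdot\tf_{my}$ after rescaling, which is exactly what forces the factors $AB$, $Bn$, $Am$ above. Apart from that, the argument is a direct transcription of the composition step in the proof of \thref{lem:TMSAssociative}, with strong positivity of the structure constants (\thref{rem:StrongPositivity}) doing the one genuine piece of work, namely ruling out cancellation when $PQ$ is expanded in the $\tf$-basis.
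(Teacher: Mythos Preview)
Your proof is correct and follows essentially the same approach as the paper's: unpack the defining data for $x\in S_{d_1}$ and $y\in S_{d_2}$, rescale all three relations to the common scale $AB(n+m)$ using \thref{lem:rescale} (extended to tuples, which the paper also uses without explicit comment), and invoke strong positivity to rule out cancellation in the composite product. The paper's $\mathbf{a},\mathbf{b}$ are your $A,B$, and your choice of rescaling factors $AB$, $Bn$, $Am$ is identical to the paper's.
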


\begin{proof}
First, since $x\in S_{d_1}$, we have $\alpha_{a_1\, s_1, \dots, a_{d_1}\, s_{d_1}}^{{\bf{a}}\, x} \neq 0$ for some $s_1,\dots, s_{d_1} \in S$ and  $a_1,\dots, a_{d_1} \in \Z_{\geq 0}$  with  ${\bf{a}}:=\sum_{i=1}^{d_1}a_i \neq 0$, 
and 
\eq{\tf_{{\bf{a}}\, x} \text{ is a non-zero summand of }\tf_{a_1\, s_1}\cdots \tf_{a_{d_1}\, s_{d_1}}.}{eq:ax}
Similarly, since $y\in S_{d_2}$, we have $\alpha_{b_1\, r_1, \dots, b_{d_2}\, r_{d_2}}^{{\bf{b}}\, y} \neq 0$ for some $r_1,\dots, r_{d_2} \in S$ and  $b_1,\dots, b_{d_2} \in \Z_{\geq 0}$  with  ${\bf{b}}:=\sum_{i=1}^{d_2}b_i \neq 0$, 
and 
\eq{\tf_{{\bf{b}}\, y} \text{ is a non-zero summand of }\tf_{b_1\, r_1}\cdots \tf_{b_{d_2}\, r_{d_2}}.}{eq:by}
Next, since $\alpha_{n\, x, m\, y}^{(n+m)z} \neq 0$, 
\eq{\tf_{(n+m)z} \text{ is a non-zero summand of }\tf_{n\, x} \cdot \tf_{m\, y}.}{eq:xyz}

We claim that $\tf_{\mathbf{a}\, \mathbf{b}\,(n+m)z}$ is a non-zero summand of \eqn{\tf_{n\, \mathbf{b}\, a_1\, s_1}\cdots \tf_{n\, \mathbf{b}\, a_{d_1}\, s_{d_1}}\cdot \tf_{m\, \mathbf{a}\, b_1\, r_1}\cdots \tf_{m\, \mathbf{a}\, b_{d_2}\, r_{d_2}}.}
First, using \thref{lem:rescale}, we can conclude from \eqref{eq:ax}
that
\eq{\tf_{n\, {\bf a\, b}\, x} \text{ is a non-zero summand of }\tf_{n\, {\bf b}\, a_1\, s_1}\cdots \tf_{n\, {\bf b}\, a_{d_1}\, s_{d_1}}.}{eq:nabx}
The same argument applied to \eqref{eq:by} shows 
\eq{\tf_{m\,\mathbf{a}\,{\bf b}\,y} \text{ is a non-zero summand of }\tf_{m\,\mathbf{a}\, b_1\, r_1}\cdots \tf_{m\,\mathbf{a}\, b_{d_2}\, r_{d_2}}.}{eq:maby}

Next, since the structure constants are non-negative, by \thref{rem:StrongPositivity},
\eqref{eq:nabx}, and \eqref{eq:maby}, 
\eq{&\text{non-zero summands of } \tf_{n\, {\bf a\, b}\, x} \cdot \tf_{m\,\mathbf{a}\,{\bf b}\,y} \text{ must also be}\\
&\text{non-zero summands of } \tf_{n\, {\bf b}\, a_1\, s_1}\cdots \tf_{n\, {\bf b}\, a_{d_1}\, s_{d_1}} \cdot \tf_{m\,\mathbf{a}\, b_1\, r_1}\cdots \tf_{m\,\mathbf{a}\, b_{d_2}\, r_{d_2}}.}{eq:nabxmaby}
Finally, we can conclude from \thref{lem:rescale} and \eqref{eq:xyz} that 
\eq{\tf_{(n+m){\bf a\, b}\, z} \text{ is a non-zero summand of }\tf_{n\,{\bf a\, b}\,  x} \cdot \tf_{m\,{\bf a\, b}\,  y}.}{eq:nmabxyz}
Combining \eqref{eq:nabxmaby} and \eqref{eq:nmabxyz} finishes the proof.
\end{proof}

\begin{lemma}\thlabel{lem:MultiProdPositivity}
$S$ is positive if and only if for any $n>0$, $a_1, \dots, a_n \in \Z_{\geq 0}$, $s_i \in a_i S(\Z)$, and $r \in U^{\trop}(\Z)$ with $\alpha_{s_1, \dots, s_n}^{r} \neq 0$, we have 
$r \in (a_1 + \dots + a_n) S$.
\end{lemma}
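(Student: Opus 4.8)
The plan is to prove the two implications separately, the backward one being essentially tautological and the forward one a short induction on $n$.

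For the backward direction: if the displayed multi-product condition holds for every $n$, then specializing to $n = 2$ recovers verbatim the defining property of positivity in \thref{def:positive}, so $S$ is positive.

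For the converse I would assume $S$ is positive and induct on $n \geq 1$. The base case $n = 1$ is immediate: $\alpha_{s_1}^r$ is the coefficient of $\tf_r$ in $\tf_{s_1}$, so linear independence of the $\tf$-basis forces $r = s_1 \in a_1 S(\Z) \subseteq a_1 S$. For the inductive step with $n \geq 2$, I would write $\tf_{s_1}\cdots\tf_{s_n} = \bigl(\tf_{s_1}\cdots\tf_{s_{n-1}}\bigr)\,\tf_{s_n}$, expand the first factor in the $\tf$-basis as $\sum_u \alpha_{s_1,\dots,s_{n-1}}^u\,\tf_u$, and apply the arbitrary-finite-product form of \thref{rem:LinInd} (the extension noted at the end of that remark) to $\tf_{s_1}\cdots\tf_{s_n}$, which yields an integral point $u$ with $\alpha_{s_1,\dots,s_{n-1}}^u \neq 0$ and $\alpha_{u,s_n}^r \neq 0$. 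Applying the inductive hypothesis to $\alpha_{s_1,\dots,s_{n-1}}^u \neq 0$ (with the same $a_1,\dots,a_{n-1}$) gives $u \in (a_1 + \cdots + a_{n-1})S$, hence $u \in (a_1 + \cdots + a_{n-1})S(\Z)$ since $u$ is integral. Finally, positivity of $S$ applied with $a = a_1 + \cdots + a_{n-1}$, $b = a_n$, $p = u$, $q = s_n$ and the given $r$ produces $r \in (a_1 + \cdots + a_n)S(\Z) \subseteq (a_1 + \cdots + a_n)S$, completing the induction.

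I do not anticipate a genuine obstacle. The only points needing care are the degenerate cases in which some partial sum $a_1 + \cdots + a_j$ vanishes — but \thref{def:positive} already permits zero dilation factors and the $n=1$ base case likewise covers $a_1 = 0$, so these are harmless — together with the minor point that \thref{rem:LinInd} must be invoked for products of arbitrarily many theta functions rather than just pairs. One could alternatively derive the statement by peeling factors off one at a time via \thref{lem:SdPartialColim}, but the direct induction above is the most economical route.
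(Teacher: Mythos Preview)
Your proof is correct and is precisely a fleshed-out version of the paper's own argument, which simply reads ``use associativity of theta function multiplication'' for the forward direction. One tiny remark: after writing $\tf_{s_1}\cdots\tf_{s_n} = \sum_u \alpha_{s_1,\dots,s_{n-1}}^u\,\tf_u\,\tf_{s_n}$ you only need the \emph{pair} version of \thref{rem:LinInd}, not the arbitrary-finite-product extension, but this does not affect the validity of the argument.
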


\begin{proof}
For $n=2$, this is the definition of positivity, so the if part holds.
Next, if $S$ is positive, we use associativity of theta function multiplication to conclude the only if part. 
\end{proof}

\begin{lemma}\thlabel{lem:SdInConvS}
For all $d\in \Z_{>0}$, we have 
$\displaystyle{S_d \subset \bconv(S)}$.
\end{lemma}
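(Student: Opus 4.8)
The plan is to obtain this as an essentially immediate consequence of the fact that $\bconv(S)$ is broken line convex, hence positive (by the closure-free version of the equivalence from \cite{BLC} discussed in \S\ref{sec:Background}), together with the multi-product reformulation of positivity in \thref{lem:MultiProdPositivity}. The point is that membership in $S_d$ is phrased precisely in terms of a non-vanishing structure constant $\alpha_{a_1 s_1,\dots,a_d s_d}^{(a_1+\cdots+a_d)u}$ with $s_i\in S$, and this is exactly the input that positivity of a set containing $S$ is built to control.

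First I would note that $\bconv(S)$, being an intersection of broken line convex sets, is broken line convex, and therefore positive. Now fix $u\in S_d$ and choose witnesses $s_1,\dots,s_d\in S$ and $a_1,\dots,a_d\in\Z_{\geq 0}$ with $a_1+\cdots+a_d\neq 0$, all of $a_1 s_1,\dots,a_d s_d$ and $(a_1+\cdots+a_d)u$ integral, and $\alpha_{a_1 s_1,\dots,a_d s_d}^{(a_1+\cdots+a_d)u}\neq 0$. Since $S\subset\bconv(S)$ and each $a_i s_i$ is integral, $a_i s_i\in a_i\bconv(S)(\Z)$. Applying \thref{lem:MultiProdPositivity} to the positive set $\bconv(S)$, with $n=d$, the points $a_i s_i$, and $r=(a_1+\cdots+a_d)u$, gives $(a_1+\cdots+a_d)u\in (a_1+\cdots+a_d)\bconv(S)$; dividing by the nonzero integer $a_1+\cdots+a_d$ yields $u\in\bconv(S)$.

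I do not anticipate a real obstacle: the argument is essentially bookkeeping to line the definition of $S_d$ up with the hypotheses of \thref{lem:MultiProdPositivity}. The only mild subtlety is indices with $a_i=0$, where $a_i s_i=0\in a_i\bconv(S)(\Z)=\lrc{0}$ so the hypotheses still apply, together with the (purely conceptual) reliance on the fact that \emph{non-closed} broken line convex sets such as $\bconv(S)$ are still positive.

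If one wished to avoid citing the multi-product positivity, the same statement can instead be proved by induction on $d$: the case $d=1$ is $S_1=S\subset\bconv(S)$; for the inductive step one groups the first $d-1$ theta functions, uses \thref{rem:LinInd} and associativity of $\tf$-function multiplication to extract an integral $w$ with $\alpha_{a_1 s_1,\dots,a_{d-1}s_{d-1}}^{w}\neq 0$ and $\tf_{(a_1+\cdots+a_d)u}$ a non-zero summand of $\tf_w\cdot\tf_{a_d s_d}$, observes that $w/(a_1+\cdots+a_{d-1})\in S_{d-1}\subset\bconv(S)$ by induction (treating $a_1+\cdots+a_{d-1}=0$ and $a_d=0$ as degenerate cases directly), and then applies ordinary positivity of $\bconv(S)$ to the pair $(w,a_d s_d)$.
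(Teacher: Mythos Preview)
Your proposal is correct and essentially identical to the paper's own proof: both note that $\bconv(S)$ is broken line convex (hence positive), then apply \thref{lem:MultiProdPositivity} to the witnessing structure constant to conclude $(a_1+\cdots+a_d)u\in(a_1+\cdots+a_d)\bconv(S)$ and hence $u\in\bconv(S)$. Your additional remarks on the $a_i=0$ case and the alternative inductive route are fine but not needed.
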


\begin{proof}
If $u\in S_d$, we have $\alpha_{a_1\, s_1, \dots, a_d\, s_d}^{(a_1+\cdots a_d)u} \neq 0$ for some  $s_1,\dots, s_d \in S$ and $a_1,\dots, a_d \in \Z_{\geq 0}$  with $\sum_{i=1}^{d}a_i \neq 0$.
As $S  \subset \bconv(S)$, and positivity is equivalent to broken line convexity, $(a_1+\cdots +a_d)u$ must be in $(a_1+\cdots +a_d) \bconv(S)$.  By \thref{lem:MultiProdPositivity}, failure of this would contradict positivity of $\bconv(S)$.  So $u \subset \bconv(S)$, and $S_d \subset \bconv(S)$.
\end{proof}

\begin{cor}\thlabel{cor:ConvColim}
Let $S$ be any subset of $U^{\trop}(\Q)$.  Then
\eqn{\bconv(S)= \bigcup_{d\geq 1} S_d.}
\end{cor}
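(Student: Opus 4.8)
The plan is to prove the two inclusions $\bconv(S) \subseteq \bigcup_{d \geq 1} S_d$ and $\bigcup_{d\geq 1} S_d \subseteq \bconv(S)$ separately. The second inclusion is immediate: \thref{lem:SdInConvS} gives $S_d \subseteq \bconv(S)$ for every $d \in \Z_{>0}$, so the union is contained in $\bconv(S)$.

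For the reverse inclusion, the strategy is to show that $\bigcup_{d \geq 1} S_d$ is itself a broken line convex set containing $S$; since $\bconv(S)$ is the \emph{smallest} such set, the inclusion $\bconv(S) \subseteq \bigcup_{d\geq 1} S_d$ follows. That $\bigcup_{d\geq 1} S_d$ contains $S$ is clear since $S = S_1 \subseteq \bigcup_{d \geq 1} S_d$ by \thref{lem:filtration}. To verify broken line convexity, I would use the equivalence of broken line convexity and positivity (\thref{def:positive} and the cited result of \cite{BLC}): it suffices to check that $\bigcup_{d \geq 1} S_d$ is positive. So suppose $a, b \in \Z_{\geq 0}$, $p \in a\,(\bigcup_d S_d)(\Z)$, $q \in b\,(\bigcup_d S_d)(\Z)$, and $r \in U^\trop(\Z)$ with $\alpha_{p,q}^r \neq 0$; we must show $r \in (a+b)\bigcup_d S_d$. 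Writing $p = a x$ with $x \in \bigcup_d S_d$ (so $x \in S_{d_1}$ for some $d_1$) and $q = b y$ with $y \in S_{d_2}$ for some $d_2$, and setting $z := r/(a+b)$, we are exactly in the hypothesis of \thref{lem:SdPartialColim} — up to the mild bookkeeping of handling the degenerate case $a = b = 0$ (forcing $p=q=r=0$, which lies in $0 \cdot S$) and clearing denominators so that the integrality hypotheses of \thref{lem:SdPartialColim} are met by rescaling via \thref{lem:rescale}. That lemma then yields $z \in S_{d_1 + d_2} \subseteq \bigcup_{d \geq 1} S_d$, i.e. $r \in (a+b)\bigcup_{d\geq 1} S_d$, as required.

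The main obstacle I anticipate is purely the integrality/rescaling bookkeeping in applying \thref{lem:SdPartialColim}: its statement presumes $x, y$ and the relevant multiples are integral tropical points, whereas elements of $S_{d_1}, S_{d_2}$ are only rational, and $p = ax$, $q = by$ being integral does not make $x,y$ integral. One must therefore pass to a common multiple $N$ such that $Nx, Ny$ are integral, apply \thref{lem:rescale} to promote $\alpha_{p,q}^r \neq 0$ to $\alpha_{Np, Nq}^{Nr} \neq 0$ (or rather arrange the indices of \thref{lem:SdPartialColim} correctly), and then divide back out at the end. None of this is deep, but it is the only place where care is needed; the conceptual content is entirely carried by \thref{lem:SdPartialColim} and \thref{lem:SdInConvS}. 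Once the inclusion $\bconv(S) \subseteq \bigcup_d S_d$ is established, combining with \thref{lem:SdInConvS} gives the equality.
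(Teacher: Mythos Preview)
Your proposal is correct and follows essentially the same approach as the paper: use \thref{lem:SdInConvS} for one inclusion, and for the other, show the union is positive (hence broken line convex) via \thref{lem:SdPartialColim} together with $S=S_1$ from \thref{lem:filtration}. The integrality bookkeeping you flag is in fact a non-issue: \thref{lem:SdPartialColim} is already stated for rational $x\in S_{d_1}$, $y\in S_{d_2}$ with $nx,\,my,\,(n+m)z$ integral, so taking $n=a$, $m=b$, $z=r/(a+b)$ applies directly without any rescaling.
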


\begin{proof}
By \thref{lem:SdPartialColim}, the infinite union $\bigcup_{d\geq 1} S_d$ is positive, and hence broken line convex.
As it is broken line convex and contains $S$ (see \thref{lem:filtration}), we find that 
\eqn{\bconv(S) \subset \bigcup_{d\geq 1} S_d.}
By \thref{lem:SdInConvS}, we observe the opposite inclusion:
\eqn{\bconv(S) \supset \bigcup_{d\geq 1} S_d.}
\end{proof}

\begin{lemma}\thlabel{lem:Sd+Te}
Let $S$ and $T$ be subsets of $U^{\trop}(\Q)$.
For all $d,e\in\mathbb{Z}_{>0}$,
\eqn{S_d +_{\tf} T_e \subset 
(S +_{\tf} T)_{de}. }
\end{lemma}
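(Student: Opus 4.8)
The plan is to translate both sides into statements about non-zero summands of products of $\tf$-functions, and then rewrite a $(d+e)$-fold product of theta functions as a $de$-fold product in which every factor is a theta function indexed by a point of $S+_\tf T$; this follows the bookkeeping already used in the proofs of \thref{lem:SdPartialColim} and \thref{lem:TMSAssociative}.

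First I would take $x\in S_d+_\tf T_e$ and unwind the definitions. By \thref{def:TropicalMinkowskiSum} there are $u\in S_d$, $v\in T_e$ and $a\in\Z_{>0}$ with $au,av,ax$ integral and $\tf_{ax}$ a non-zero summand of $\tf_{au}\cdot\tf_{av}$. Expanding $u\in S_d$ via \thref{def:Sd} produces $s_1,\dots,s_d\in S$ and $a_1,\dots,a_d\in\Z_{\geq 0}$ with $\mathbf{a}:=\sum_i a_i\neq 0$, all $a_i s_i$ integral, and $\tf_{\mathbf{a}u}$ a non-zero summand of $\tf_{a_1 s_1}\cdots\tf_{a_d s_d}$; likewise $v\in T_e$ gives $t_1,\dots,t_e\in T$, $b_1,\dots,b_e\in\Z_{\geq 0}$ with $\mathbf{b}:=\sum_j b_j\neq 0$, all $b_j t_j$ integral, and $\tf_{\mathbf{b}v}$ a non-zero summand of $\tf_{b_1 t_1}\cdots\tf_{b_e t_e}$. (If $S_d$ or $T_e$ is empty -- for instance if $S$ or $T$ is empty -- the left-hand side is empty and there is nothing to prove, so assume otherwise.) Rescaling these three relations by $a\mathbf{b}$, $a\mathbf{a}$ and $\mathbf{a}\mathbf{b}$ respectively using \thref{lem:rescale}, so that all three now concern theta functions at the common scale $a\mathbf{a}\mathbf{b}$, and chaining them together with the non-negativity of structure constants (\thref{rem:StrongPositivity}) exactly as in the proof of \thref{lem:SdPartialColim}, I obtain that $\tf_{a\mathbf{a}\mathbf{b}x}$ is a non-zero summand of $\prod_{i=1}^{d}\tf_{a\mathbf{b}a_i s_i}\cdot\prod_{j=1}^{e}\tf_{a\mathbf{a}b_j t_j}$.

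The key move is the regrouping. Since $a\mathbf{b}a_i=\sum_{j=1}^{e}aa_ib_j$ and $a\mathbf{a}b_j=\sum_{i=1}^{d}aa_ib_j$, and the points $a_i s_i$, $b_j t_j$ are integral, \thref{lem:axsumaix} shows that $\tf_{a\mathbf{b}a_i s_i}$ is a non-zero summand of $\prod_{j=1}^{e}\tf_{aa_ib_j s_i}$ and $\tf_{a\mathbf{a}b_j t_j}$ is a non-zero summand of $\prod_{i=1}^{d}\tf_{aa_ib_j t_j}$. Substituting these in and using \thref{rem:StrongPositivity} once more gives that $\tf_{a\mathbf{a}\mathbf{b}x}$ is a non-zero summand of $\prod_{i=1}^{d}\prod_{j=1}^{e}\lrp{\tf_{aa_ib_j s_i}\cdot\tf_{aa_ib_j t_j}}$. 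For each pair $(i,j)$ with $a_ib_j>0$, every non-zero theta summand of $\tf_{aa_ib_j s_i}\cdot\tf_{aa_ib_j t_j}$ has the form $\tf_{aa_ib_j w}$ for some $w\in S+_\tf T$ -- immediate from \thref{def:TropicalMinkowskiSum} with witnesses $s_i$, $t_j$ and scaling $aa_ib_j$ -- while if $a_ib_j=0$ that factor is $1$. Applying \thref{rem:LinInd} to the resulting product of finite theta-linear combinations then yields points $w_{ij}\in S+_\tf T$, one for each $(i,j)$ with $a_ib_j>0$, such that $\tf_{a\mathbf{a}\mathbf{b}x}$ is a non-zero summand of $\prod_{a_ib_j>0}\tf_{aa_ib_j w_{ij}}$.

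To finish, I would set $c_{ij}:=aa_ib_j$ for every $(i,j)\in\{1,\dots,d\}\times\{1,\dots,e\}$ and choose the remaining $w_{ij}$ (those with $a_ib_j=0$) to be any element of $S+_\tf T$, which is nonempty because $\mathbf{a},\mathbf{b}\neq 0$ forces at least one pair with $a_ib_j>0$. Then $\sum_{i,j}c_{ij}=a\mathbf{a}\mathbf{b}\neq 0$, the factors with $c_{ij}=0$ are trivial, and $\alpha_{(c_{ij}w_{ij})_{i,j}}^{(\sum_{i,j}c_{ij})x}=\alpha_{(aa_ib_j w_{ij})_{a_ib_j>0}}^{a\mathbf{a}\mathbf{b}x}\neq 0$; since this product is indexed by exactly $de$ points of $S+_\tf T$, \thref{def:Sd} gives $x\in(S+_\tf T)_{de}$. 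I expect the only real obstacle to be organizational: keeping the rescaling factors consistent so the chain of ``non-zero summand'' relations composes, and cleanly handling the pairs $(i,j)$ with $a_i=0$ or $b_j=0$. The one genuinely new ingredient relative to the earlier lemmas of this section is the use of \thref{lem:axsumaix} to split each single-variable factor into $e$ (resp. $d$) factors, which is what allows the $S$- and $T$-factors to be matched into $de$ pairs.
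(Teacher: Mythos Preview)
Your proposal is correct and follows essentially the same route as the paper's proof: unwind the definitions, rescale to a common level $a\mathbf{a}\mathbf{b}$, use \thref{lem:axsumaix} to split each factor into $e$ (resp.\ $d$) pieces, regroup into $de$ pairwise products, and apply \thref{rem:LinInd} to extract elements of $S+_\tf T$. The only cosmetic difference is that the paper disposes of zero coefficients at the outset by invoking \thref{lem:filtration} to replace $d,e$ by smaller $d',e'$ with all coefficients positive, whereas you keep all $de$ factors and handle the $a_ib_j=0$ pairs by padding with arbitrary points of $S+_\tf T$; both are valid and amount to the same thing.
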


\begin{proof}
Let $x$ be in the sum $S_d +_{\tf} T_e$.
Then there is some $s\in S_d$, $t\in T_e$, and $a\in \Z_{>0}$ such that $as$, $at$, and $ax$ are all integral and $\alpha_{as, at}^{ax}\neq 0$. 
That is, 
\eq{\tf_{ax} \text{ is a non-zero summand of } \tf_{as}\cdot\tf_{at}.}{fact:axst} 
Now, since $s\in S_d$, there exist $s_1,\dots,s_d\in S$ and $b_1,\dots,b_d\in\Z_{\geq 0}$ such that $\alpha_{b_1\,s_1,\dots,b_d\,s_d}^{\vb{b}\, s}\neq 0$, where $\vb{b}:=b_1+\cdots+b_d$. 
That is, 
\eq{\tf_{\vb{b}\, s} \text{ is a non-zero summand of } \tf_{b_1 s_1}\cdots\tf_{b_d s_d}.}{fact:bsd} 
If any of these integers $b_i$ is $0$, we may simply replace $d$ by a smaller $d'$ using \thref{lem:filtration}.
So, we may assume $b_i>0$ for all $i\in \lrc{1,\dots,d}$.
Similarly, since $t\in T_e$, there exist $t_1,\dots,t_e\in T$ and $c_1,\dots,c_e\in\Z_{\geq 0}$ such that $\alpha_{c_1\,t_1,\dots,c_e\,t_e}^{\vb{c}\, t}\neq 0$, where $\vb{c}:=c_1+\cdots+c_e$. 
That is, 
\eq{\tf_{\vb{c}\, t} \text{ is a non-zero summand of } \tf_{c_1t_1}\cdots\tf_{c_e t_e}.}{fact:cte}
As before, we may assume $c_j>0$ for all $j\in \lrc{1,\dots,e}$.

Rescaling coefficients using \thref{lem:rescale}, the facts
\eqref{fact:axst}, \eqref{fact:bsd}, and \eqref{fact:cte} imply
\eq{\tf_{a\, \vb{b}\, \vb{c}\, x} \text{ is a non-zero summand of } \tf_{a\, \vb{b}\, \vb{c}\, s}\cdot\tf_{a\, \vb{b}\, \vb{c}\, t},}{fact:abcxst} 
\eq{\tf_{a\, \vb{b}\, \vb{c}\, s} \text{ is a non-zero summand of } \tf_{a\, b_1\, \vb{c}\,  s_1}\cdots\tf_{a\, b_d\, \vb{c}\,  s_d},}{fact:abcsd} 
and
\eq{\tf_{a\, \vb{b}\, \vb{c}\, t} \text{ is a non-zero summand of } \tf_{a\, \vb{b}\, c_1\, t_1}\cdots\tf_{a\, \vb{b}\, c_e\,  t_e}}{fact:abcte}
respectively.
Moreover, using \thref{lem:axsumaix}, we have that 
\eq{\tf_{a\, b_i\, \vb{c}\,  s_i} \text{ is a non-zero summand of } \tf_{a\, b_i\, c_1\,  s_i}\cdots \tf_{a\, b_i\, c_e\,  s_i} }{fact:abicsi}
and
\eq{\tf_{a\, \vb{b}\, c_j\,  t_j} \text{ is a non-zero summand of } \tf_{a\, b_1\, c_j\,  t_j}\cdots \tf_{a\, b_d\, c_j\,  t_j} .}{fact:abcjstj}
Next, using Remarks~\ref{rem:LinInd} and \ref{rem:StrongPositivity}, the facts \eqref{fact:abcxst}, \eqref{fact:abcsd}, \eqref{fact:abcte}, \eqref{fact:abicsi}, and \eqref{fact:abcjstj} together imply
\eq{\tf_{a\, \vb{b}\, \vb{c}\, x} \text{ is a non-zero summand of } \prod_{\substack{i\in \lrc{1,\dots,d}\\ j\in \lrc{1,\dots,e}}} \tf_{a b_i c_j s_i} \cdot \tf_{a b_i c_j t_j}.}{fact:prodij}
Expanding each product $\tf_{a b_i c_j s_i} \cdot \tf_{a b_i c_j t_j}$ and using \thref{rem:LinInd} once more, we find that 
\eq{\tf_{a\, \vb{b}\, \vb{c}\, x} \text{ is a non-zero summand of } \prod_{\substack{i\in \lrc{1,\dots,d}\\ j\in \lrc{1,\dots,e}}} \tf_{a b_i c_j r_{ij}}}{fact:prodrij}
for some collections of elements $\lrc{r_{ij}\in S+_\tf T: i\in \lrc{1,\dots,d},\, j\in \lrc{1,\dots,e}}$.
Finally, observe that 
\eq{\sum_{\substack{i\in \lrc{1,\dots,d}\\ j\in \lrc{1,\dots,e}}} a\, b_i\, c_j = a\, \vb{b}\, \vb{c}.}{eq:abc}
Thus, \eqref{fact:prodrij} and \eqref{eq:abc} imply $x \in \lrp{S+_\tf T}_{d e}$, as claimed.
\end{proof}

\begin{cor}\thlabel{cor:SumConvInConvSum}
Let $S$ and $T$ be subsets of $U^{\trop}(\Q)$.  Then
 \eqn{\bconv(S) +_\tf  \bconv(T) \subset \bconv(S+_\tf T).}
\end{cor}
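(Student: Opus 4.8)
The plan is to deduce this directly from \thref{cor:ConvColim} and \thref{lem:Sd+Te}, which together do essentially all the work. Recall that \thref{cor:ConvColim} expresses $\bconv(S) = \bigcup_{d \geq 1} S_d$, $\bconv(T) = \bigcup_{e \geq 1} T_e$, and $\bconv(S +_\tf T) = \bigcup_{n \geq 1} (S +_\tf T)_n$.

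First I would record the elementary fact that $+_\tf$ distributes over arbitrary unions in each argument: unwinding \thref{def:TropicalMinkowskiSum}, a point $x$ lies in $\left(\bigcup_\alpha A_\alpha\right) +_\tf B$ exactly when there are an index $\alpha$, points $s \in A_\alpha$ and $t \in B$, and $a \in \Z_{>0}$ with $as, at, ax$ integral and $\alpha_{as,at}^{ax} \neq 0$, and this is precisely the condition $x \in \bigcup_\alpha (A_\alpha +_\tf B)$; the second argument is symmetric. Applying this observation twice yields $\bconv(S) +_\tf \bconv(T) = \bigcup_{d, e \geq 1} (S_d +_\tf T_e)$.

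Then I would invoke \thref{lem:Sd+Te} to get $S_d +_\tf T_e \subset (S +_\tf T)_{de}$ for every $d, e \geq 1$, so that $\bconv(S) +_\tf \bconv(T) \subset \bigcup_{d, e \geq 1} (S +_\tf T)_{de} \subset \bigcup_{n \geq 1}(S +_\tf T)_n = \bconv(S +_\tf T)$, the last equality again being \thref{cor:ConvColim}, now applied to the set $S +_\tf T$. Alternatively, one can argue pointwise without mentioning unions at all: given $x \in \bconv(S) +_\tf \bconv(T)$, choose witnesses $y \in \bconv(S)$, $z \in \bconv(T)$, and $a \in \Z_{>0}$ with $\alpha_{ay,az}^{ax} \neq 0$; use \thref{cor:ConvColim} to locate $d, e$ with $y \in S_d$ and $z \in T_e$; observe that $x \in S_d +_\tf T_e$ by definition; and conclude via \thref{lem:Sd+Te} and \thref{lem:SdInConvS} that $x \in (S +_\tf T)_{de} \subset \bconv(S +_\tf T)$. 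Since all the substantive content is already packaged in the lemmas cited, I do not expect any genuine obstacle here; the only point meriting mild care is the distributivity of $+_\tf$ over unions, and even that is immediate from the definition.
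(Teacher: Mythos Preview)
Your proof is correct and follows exactly the approach the paper takes: the paper's proof is the single sentence ``This is an immediate consequence of \thref{cor:ConvColim} and \thref{lem:Sd+Te},'' and you have simply unpacked what that sentence means. Your care in verifying that $+_\tf$ distributes over unions (or equivalently, your pointwise alternative) fills in the only step left implicit.
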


\begin{proof}
This is an immediate consequence of \thref{cor:ConvColim} and  \thref{lem:Sd+Te}.     
\end{proof}

Combining \thref{cor:ConvSumInSumConv} and \thref{cor:SumConvInConvSum}, we obtain that the tropical Minkowski sum and broken line convex hull are compatible in the following sense:

\begin{theorem}\thlabel{thm:Compatible}
Let $S$ and $T$ be subsets of $U^{\trop}(\Q)$.  Then
\eqn{\bconv(S+_\tf T) = \bconv(S)+_\tf \bconv(T).}
\end{theorem}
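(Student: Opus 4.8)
The plan is to prove the equality by establishing the two inclusions separately, each of which has already been recorded as a corollary above, so that the theorem itself follows by a one-line combination with no remaining obstacle.

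For the inclusion $\bconv(S+_\tf T) \subseteq \bconv(S)+_\tf \bconv(T)$, I would simply invoke \thref{cor:ConvSumInSumConv}. Tracing that through, the content is: monotonicity of $+_\tf$ and of $\bconv$ give $\bconv(S+_\tf T)\subseteq \bconv(\bconv(S)+_\tf\bconv(T))$, and then \thref{blctropms} (the tropical Minkowski sum of broken line convex sets is broken line convex) together with idempotence of $\bconv$ on broken line convex sets collapses the right-hand side to $\bconv(S)+_\tf\bconv(T)$.

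For the reverse inclusion $\bconv(S)+_\tf \bconv(T) \subseteq \bconv(S+_\tf T)$, I would invoke \thref{cor:SumConvInConvSum}. This is the substantive direction, and it is where the real work sits: it combines the ``colimit'' description $\bconv(S) = \bigcup_{d\geq 1} S_d$ from \thref{cor:ConvColim} with the key containment $S_d +_\tf T_e \subseteq (S+_\tf T)_{de}$ of \thref{lem:Sd+Te}. That lemma is proved by a careful bookkeeping of structure constants of $\tf$-function products — rescaling exponents via \thref{lem:rescale} and \thref{lem:axsumaix}, and using strong positivity (\thref{rem:StrongPositivity}) together with linear independence of theta functions (\thref{rem:LinInd}) to guarantee that the relevant summands survive without cancellation. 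I expect that this lemma, already established, is the only genuinely hard part of the whole argument; by the time we reach the theorem, nothing of substance is left.

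Hence the proof reduces to: apply \thref{cor:ConvSumInSumConv} for ``$\subseteq$'', apply \thref{cor:SumConvInConvSum} for ``$\supseteq$'', and conclude equality. There is no obstacle at this stage — the obstacles were all absorbed into \thref{lem:Sd+Te} and \thref{blctropms}.
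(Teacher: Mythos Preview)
Your proposal is correct and matches the paper's approach exactly: the paper proves the theorem simply by combining \thref{cor:ConvSumInSumConv} and \thref{cor:SumConvInConvSum}, with all substantive work already absorbed into \thref{blctropms} and \thref{lem:Sd+Te}. Your tracing of the dependencies is accurate and nothing further is needed.
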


To conclude this section, we provide another result relating the tropical Minkowski sum and broken line convex hull.
It will come in handy in later sections.

\begin{prop}\thlabel{prop:ConvUnion-UnionSum}
	Let $S = \bigcup_{i\in I} S^i$, where each $S^i \subset U^{\trop}(\Q)$ is broken line convex.
	Then 
	\eq{\bconv(S) = \bigcup_{\substack{(a_i:i\in I) \in (\Q_{\geq 0})^{I}\\ \sum_{i\in I}a_i = 1}} \lrp{ \Sumt_{i\in I}  a_i S^i}. }{eq:MinkSumOfBLCComps}
\end{prop}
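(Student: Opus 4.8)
The plan is to prove the two inclusions in \eqref{eq:MinkSumOfBLCComps} separately. Write $R$ for the right-hand side. The inclusion $R \subseteq \bconv(S)$ will come from a dilation argument, while $\bconv(S) \subseteq R$ will follow once we verify that $R$ contains $S$ and is itself broken line convex, since $\bconv(S)$ is by definition the smallest broken line convex set containing $S$.

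Before either step I would record a few elementary facts about $+_\tf$. First, \emph{dilation commutes with $+_\tf$}: for $\lambda\in\Q_{>0}$ one has $\lambda(A+_\tf B) = \lambda A +_\tf \lambda B$, whence $\lambda(\Sumt_i a_i S^i) = \Sumt_i(\lambda a_i)S^i$. This is immediate from \thref{def:TropicalMinkowskiSum}: a witness $(s\in A,\ t\in B,\ a\in\Z_{>0})$ for $x\in A+_\tf B$ rescales, after clearing the denominator of $\lambda$, to a witness for $\lambda x\in\lambda A+_\tf\lambda B$, the nonvanishing of the structure constant being preserved by \thref{lem:rescale}. Second, a \emph{multi-term form of \thref{char_blc}}: if $C$ is broken line convex and $a_1,\dots,a_d\in\Q_{\geq 0}$ with $\sum_i a_i=1$, then $\Sumt_{i=1}^d a_i C = C$; this follows by induction on $d$, the cases $d\leq 2$ being \thref{char_blc} (note $a_1\in[0,1]$), and the step $d\geq 3$ using associativity (\thref{lem:TMSAssociative}) to split off $a_d C$, the inductive hypothesis and dilation to rewrite the remaining sum as $(1-a_d)C$, and \thref{char_blc} once more. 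In particular $pS^i +_\tf qS^i = (p+q)S^i$ for all $p,q\in\Q_{\geq 0}$ and each broken line convex $S^i$, degenerate weights included. I would also use that $+_\tf$ is commutative (from commutativity of $\tf$-function multiplication) and that $\{0\}+_\tf X = X$. Finally, since every point of $\bconv(S)$ lies in some $S_d$ by \thref{cor:ConvColim}, hence involves only finitely many of the $S^i$, and since a weight-$0$ summand equals $\{0\}$ and may be deleted, only finitely supported weight systems $(a_i)$ need ever be considered.

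For $R\subseteq\bconv(S)$: fix a weight system with $\sum_i a_i = 1$; since $S^i\subseteq S\subseteq\bconv(S)$, monotonicity of $+_\tf$ and dilation give $\Sumt_i a_i S^i \subseteq \Sumt_i a_i\bconv(S)$, and the right side equals $\bconv(S)$ by the multi-term identity applied to $C=\bconv(S)$; taking the union over weight systems gives the inclusion. For the reverse inclusion, first $S\subseteq R$, since choosing $a_j=1$ and the other weights $0$ gives $\Sumt_i a_i S^i = S^j$. It then remains to show $R$ is broken line convex, i.e.\ by \thref{char_blc} that $tR+_\tf(1-t)R = R$ for all $t\in[0,1]$. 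The inclusion $\supseteq$ is routine (apply \thref{char_blc} to the singleton $\{x\}$ for $x\in R$). For $\subseteq$, take $z\in tR+_\tf(1-t)R$, so $z\in\{x\}+_\tf\{y\}$ with $x\in tR$, $y\in(1-t)R$; write $x=tx'$, $y=(1-t)y'$ with $x',y'\in R$, pick weight systems with $x'\in\Sumt_i a_i S^i$ and $y'\in\Sumt_i b_i S^i$ (padding with zeros so both range over a common finite index set), and note $\sum a_i=\sum b_i=1$. By dilation $x\in\Sumt_i(ta_i)S^i$ and $y\in\Sumt_i((1-t)b_i)S^i$, so $z\in(\Sumt_i(ta_i)S^i)+_\tf(\Sumt_i((1-t)b_i)S^i)$; regrouping this tropical Minkowski sum by the index $i$ and collapsing each pair via $pS^i+_\tf qS^i=(p+q)S^i$ puts $z$ in $\Sumt_i c_i S^i$ with $c_i = ta_i+(1-t)b_i$. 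Since $\sum_i c_i = t+(1-t)=1$, we conclude $z\in R$.

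I expect the one genuinely delicate point to be this last regrouping, namely the identity $(\Sumt_i p_i S^i)+_\tf(\Sumt_i q_i S^i) = \Sumt_i(p_i S^i +_\tf q_i S^i)$: it is really just the assertion that $+_\tf$ makes the nonempty subsets of $U^\trop(\Q)$ into a commutative monoid with identity $\{0\}$, so that finite iterated sums may be rearranged at will, but one must take care to reconcile the (a priori different) supports of the two weight systems and to handle the degenerate weights uniformly. Everything else is routine bookkeeping on top of \thref{char_blc}, \thref{lem:rescale}, \thref{lem:TMSAssociative}, and \thref{cor:ConvColim}.
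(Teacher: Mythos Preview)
Your proof is correct and takes a genuinely different route from the paper's. For $R\subseteq\bconv(S)$, the paper argues by induction on $|I|$, using the broken line segment characterization from \cite{BLC} (specifically \cite[Proposition~4.10, Theorem~6.1]{BLC}) to handle the base case $|I|=2$; your one-line argument via $\Sumt_i a_i S^i \subseteq \Sumt_i a_i \bconv(S) = \bconv(S)$ is slicker and avoids that machinery. For $\bconv(S)\subseteq R$, the paper works directly with elements: it takes $s\in\bconv(S)$, invokes \thref{cor:ConvColim} to obtain $s\in S_d$, partitions the factors of the witnessing $\tf$-product according to which $S^i$ each $s_j$ lies in, and uses \thref{rem:LinInd} together with broken line convexity of each $S^i$ to extract a weight system exhibiting $s\in R$. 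You instead show that $R$ itself is broken line convex via the regrouping argument, so the inclusion follows from minimality of $\bconv(S)$. Your approach is more structural, resting on the commutative-monoid axioms for $+_\tf$ (associativity from \thref{lem:TMSAssociative}, commutativity from that of $\tf$-multiplication, identity $\{0\}$, and compatibility with dilation via \thref{lem:rescale}); the paper's approach is more hands-on with structure constants and does not need to assemble that infrastructure, but is correspondingly longer. The regrouping identity you flag as the delicate point is indeed the crux of your version, and it does follow from the monoid axioms once those are in hand.
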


\begin{proof}
    First, let $(a_i:i\in I) \in (\Q_{\geq 0})^{I}$ with $\sum_{i\in I}a_i = 1$, and let $s\in \displaystyle{\Sumt_{i\in I} } a_i S^i$.
    Each $a_i S^i$ is broken line convex since each $S^i$ is.
    If $\lrm{I}=1$, there is nothing to show.
    Next let $I=\lrc{1,2}$. 
    Then there is some $x_1 \in a_1 S^1$, $x_2 \in a_2 S^2$, and $c \in \Z_{>0}$ such that $c\, x_1$, $c\, x_2$, and $c\, s$ are all integral and $\alpha_{c\, x_1,c\, x_2}^{c\, s} \neq 0$.
    The case in which either $a_i$ is zero reduces to the $\lrm{I}=1$ case, so we may assume each $a_i$ is non-zero.
    Write $a_i = \frac{n_i}{d_i}$, with $n_i, d_i\in \Z_{> 0}$.
    Then $\alpha_{c d_1 d_2 x_1, c d_1 d_2 x_2}^{c d_1 d_2 s} \neq 0$ as well by \thref{lem:rescale}.
    But $c d_1 d_2 x_1 \in c d_2 n_1 S^1$ and $c d_1 d_2 x_2 \in c d_1 n_2 S^2$.
    So \cite[Proposition~4.10, Theorem~6.1]{BLC} implies there is a broken line segment from $\frac{d_1}{n_1} x_1 = {a_1}^{-1} x_1$ to $\frac{d_2}{n_2} x_2 = {a_2}^{-1}x_2$ passing through $\frac{d_1 d_2}{d_2 n_1 + d_1 n_2} s= (a_1+a_2)^{-1} s = s$.
    Since ${a_i}^{-1}x_i \in S^i\subset S$, this implies $s\in \bconv(S)$.
    Now suppose the right side of \eqref{eq:MinkSumOfBLCComps} is contained in the left whenever $\lrm{I}<r$, and consider the case $I=\lrc{1,\dots,r}$.
    If any $a_i = 0$, we return to the $\lrm{I}<r$ case.
    So assume each $a_i$ is non-zero.
    Let $\vb{a}= a_1 + \cdots +a_{r-1}$, and let $a_i'= \frac{a_i}{\vb{a}}$ for $i\in  I\setminus\lrc{r}=:I'$.
    By the induction hypothesis, we know that 
    \eqn{\Sumt_{i\in I'}a_i' S^i \subset \bconv\lrp{\bigcup_{i\in I'}S^i}=:S'.}
    So, $s\in \displaystyle{\Sumt_{i\in I} } a_i S^i \subset \vb{a}S' \+t a_r S^r $.
    But by the induction hypothesis, $\vb{a}S' \+t a_r S^r  \subset \bconv(S' \cup S^r) = \bconv(S)$.
    So
    \eqn{
    \bigcup_{\substack{(a_i:i\in I) \in (\Q_{\geq 0})^{I}\\ \sum_{i\in I}a_i = 1}} \lrp{ \Sumt_{i\in I}  a_i S^i} \subset \bconv(S). }

    Now suppose $s \in \bconv(S)$.
    By \thref{cor:ConvColim}, $s\in S_d$ (from \thref{def:Sd}) for some $d \in \Z_{>0}$.
    So, we can find some $s_1, \dots, s_d \in S$ and $a_1, \dots, a_d \in \Z_{\geq 0}$ with $a_1+\cdots +a_d \neq 0$, the tropical points $a_1\, s_1, \dots, a_d\, s_d$, and $(a_1+\cdots +a_d)s$ all integral, and the structure constant $\alpha_{a_1\, s_1, \dots, a_d\, s_d}^{(a_1+\cdots+ a_d)s}\neq 0$.
    Each $s_j$ is in some $S^i$.
    Let $\bigcup_{i \in I} J_i$ be a decomposition of $\lrc{1,\dots,d} $ as a disjoint union such that
    $j\in J_i$ only if $s_j \in S^i$.\footnote{The point here is that $s_j$ may be contained in multiple $S^i$'s.  We simply choose one such $i$.} 
    Now we have that 
    \eqn{\tf_{(a_1+\cdots +a_d)s} \text{ is a non-zero summand of } \prod_{i\in I} \lrp{ \prod_{j \in J_i} \tf_{a_{j} s_{j}}} = \prod_{i\in I} \lrp{ \sum_{x \in U^{\trop}(\Z)} \alpha_{\lrc{a_{j} s_{j}: j \in J_i }}^{x} \tf_{x} }.} 
    By \thref{rem:LinInd},
    we can find a collection $\lrc{x_i \in U^{\trop}(\Z): i\in I , \alpha_{\lrc{a_{j} s_{j}: j \in J_i }}^{x_i} \neq 0}$
    such that 
    \eqn{\tf_{(a_1+\cdots +a_d)s} \text{ is a non-zero summand of } \prod_{i\in I}  \tf_{x_{i}}.} 
    Since $S^i$ is broken line convex, $x_i \in (\sum_{j \in J_i} a_j)S^i$.
    Then 
    \eqn{(a_1+\cdots +a_d)s \in \Sumt_{i\in I}  \vb{a}_{J_i} S^i,} 
    where $  \vb{a}_{J_i}:=  \sum_{j \in J_i} a_j$, and
    \eqn{s \in \Sumt_{i\in I}  \frac{\vb{a}_{J_i}}{(a_1+\cdots +a_d)} S^i.}
    We conclude that
    \eqn{\bconv(S) \subset \bigcup_{\substack{(a_i:i\in I) \in (\Q_{\geq 0})^{I}\\ \sum_{i\in I}a_i = 1}} \lrp{ \Sumt_{i\in I}  a_i S^i} }
    as well.
\end{proof}

\section{Convexity for functions on \texorpdfstring{$U^{\trop}(\Q)$}{tropical spaces}}\label{sec:Functions}

\subsection{Definition and characterization}

In this section we describe what it means for a function on $U^{\trop}(\Q)$ to be convex, and we prove some key results about these convex functions.

\begin{definition}\thlabel{def:ConvWRTBL}
Let $S\subset U^{\trop}(\Q)$ be a broken line convex set. 
A function $\varphi:S\rightarrow \Q $ is \textit{convex with respect to broken lines} if for 
any broken line segment $\gamma:[t_1,t_2]\rightarrow S$, 
we have that    
\eq{\varphi(\gamma(t)) \geq \lrp{\dfrac{t_2-t}{t_2-t_1}} \varphi(\gamma(t_1)) + \lrp{\dfrac{t-t_1}{t_2-t_1}} \varphi(\gamma(t_2))}{eq:ConvWRTBLineq1} 
for all $t\in [t_1,t_2]$.
\end{definition}

We would like to draw attention to the direction of the inequality in \thref{def:ConvWRTBL}.
It is very common to see the opposite inequality in the definition of a convex function in the linear setting.
We have chosen our conventions to match those of \cite[Definition~6.1.4]{CLS} (where a similar word of warning is provided) and \cite[Definition-Lemma~8.1.(2)]{GHKK}.

\begin{prop}[Broken Line Jensen's Inequality]\thlabel{prop:ConvWRTBL}
    Let $S\subset U^{\trop}(\Q)$ be broken line convex, and let $\varphi:S \to \Q$ be convex with respect to broken lines.
    If $s_1,\dots, s_d, s \in S$, $a_1, \dots, a_d \in \Q_{\geq 0}$
    with $a_1 s_1,\dots, a_d s_d$, and $(a_1+\cdots +a_d)s$ all integral, and $\alpha_{a_1 s_1,\dots, a_d s_d}^{(a_1+\cdots +a_d)s} \neq 0$, then
	\eq{\varphi(s) \geq \sum_{i=1}^{d} \frac{a_i}{a_1+\cdots +a_d}\varphi(s_i) .}{eq:ConvWRTBLineq2}
\end{prop}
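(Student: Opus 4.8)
The plan is to reduce the general statement about a $d$-fold structure constant to a sequence of applications of the two-point (broken line segment) convexity inequality from \thref{def:ConvWRTBL}, using the second description of the tropical Minkowski sum in \thref{def:TropicalMinkowskiSum} together with the ``partial colimit'' machinery of Lemmas~\ref{lem:SdPartialColim}--\ref{lem:Sd+Te}. The key observation is that a non-zero structure constant $\alpha_{a_1 s_1, \dots, a_d s_d}^{(a_1 + \cdots + a_d)s} \neq 0$ witnesses, via associativity of $\tf$-function multiplication (as used in \thref{lem:MultiProdPositivity}) and \thref{rem:LinInd}, that there is a binary bracketing of the product $\tf_{a_1 s_1} \cdots \tf_{a_d s_d}$ through which $\tf_{(a_1+\cdots+a_d)s}$ appears as a non-zero summand. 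Each binary multiplication in that bracketing corresponds, by \thref{def:TropicalMinkowskiSum} and \cite[Theorem~6.1]{BLC}, to a broken line segment between two (rescaled) points passing through the midpoint, hence to a single instance of inequality~\eqref{eq:ConvWRTBLineq1}.

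The steps, in order, would be: First, handle $d = 1$ (trivial: $\alpha_{a_1 s_1}^{a_1 s} \neq 0$ forces $s = s_1$ by linear independence of theta functions) and clear away the case where some $a_i = 0$ by deleting those indices. Second, prove the case $d = 2$: given $\alpha_{a_1 s_1, a_2 s_2}^{(a_1+a_2)s} \neq 0$ with $a_1, a_2 \in \Q_{>0}$, clear denominators using \thref{lem:rescale} to get an honest structure constant with integral arguments, then invoke \cite[Proposition~4.10, Theorem~6.1]{BLC} exactly as in the proof of \thref{prop:ConvUnion-UnionSum} to produce a broken line segment $\gamma:[t_1,t_2]\to S$ from $s_1$ to $s_2$ with $\gamma(t_0) = s$ where $\frac{t_2 - t_0}{t_2 - t_1} = \frac{a_1}{a_1+a_2}$; applying \eqref{eq:ConvWRTBLineq1} at $t = t_0$ gives exactly $\varphi(s) \geq \frac{a_1}{a_1+a_2}\varphi(s_1) + \frac{a_2}{a_1+a_2}\varphi(s_2)$, provided $\gamma$ has image in $S$ — which it does, since $S$ is broken line convex and $s_1, s_2 \in S$. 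Third, induct on $d$: given $\alpha_{a_1 s_1, \dots, a_d s_d}^{(a_1+\cdots+a_d)s} \neq 0$, set $\mathbf{a}' = a_1 + \cdots + a_{d-1}$; using associativity and \thref{rem:LinInd}, factor $\tf_{a_1 s_1}\cdots \tf_{a_{d-1}s_{d-1}}$ and extract some $s' \in U^\trop(\Q)$ with $\mathbf{a}' s'$ integral, $\alpha_{a_1 s_1,\dots,a_{d-1}s_{d-1}}^{\mathbf{a}' s'} \neq 0$, and $\alpha_{\mathbf{a}' s', a_d s_d}^{(\mathbf{a}'+a_d)s}\neq 0$. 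Since $s_1,\dots,s_{d-1}\in S$ and $S$ is broken line convex, $s' \in S$ (this is where broken line convexity of $S$ is essential — it is $\thref{lem:SdInConvS}$ applied to $S = \bconv(S)$). By the $d=2$ case, $\varphi(s)\geq \frac{\mathbf{a}'}{\mathbf{a}'+a_d}\varphi(s') + \frac{a_d}{\mathbf{a}'+a_d}\varphi(s_d)$, and by the induction hypothesis $\varphi(s')\geq \sum_{i=1}^{d-1}\frac{a_i}{\mathbf{a}'}\varphi(s_i)$; substituting and simplifying the coefficients yields \eqref{eq:ConvWRTBLineq2}.

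The main obstacle I anticipate is the extraction step: ensuring that the intermediate point $s'$ can be chosen to lie in $S$ (not merely in $U^\trop(\Q)$) and that the rescaling needed to make $\mathbf{a}' s'$, $a_d s_d$, and $(\mathbf{a}' + a_d)s$ simultaneously integral does not disturb the non-vanishing of either structure constant. Both points are handled: non-vanishing is preserved under rescaling by \thref{lem:rescale} combined with strong positivity (\thref{rem:StrongPositivity}, no cancellations), and membership $s' \in S$ follows because $S$ is broken line convex, hence positive, hence (by \thref{lem:MultiProdPositivity}) closed under taking such intermediate points — precisely the content used in \thref{lem:SdInConvS}. A secondary bookkeeping nuisance is tracking the conversion between the ``time'' parametrization of the broken line segment in \eqref{eq:ConvWRTBLineq1} and the weight ratio $a_1 : a_2$; this is routine once one notes that \cite[Theorem~6.1]{BLC} gives the segment through the weighted point, and one is free to reparametrize $[t_1,t_2]$ affinely.
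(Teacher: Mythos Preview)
Your proposal is correct and follows essentially the same route as the paper's proof: induction on $d$, with the $d=2$ case handled via \cite[Proposition~4.10, Theorem~6.1]{BLC} (after clearing denominators with \thref{lem:rescale}) to produce the broken line segment on which \eqref{eq:ConvWRTBLineq1} is applied, and the induction step carried out by peeling off the last factor using associativity and \thref{rem:LinInd}. Your explicit remark that the intermediate point $s'$ lies in $S$ (by broken line convexity, so that $\varphi(s')$ is defined) is a detail the paper leaves implicit; the references to \thref{lem:SdPartialColim}--\thref{lem:Sd+Te} are not actually needed.
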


\begin{proof}
    Note first that if $d=1$, the inequality trivially becomes an equality.
    For $d=2$, suppose we have $s_1$, $s_2$, $s$, $a_1$, and $a_2$ as in the proposition statement.
    Assume for now that $a_1$ and $a_2$ are integral.
    Then by \cite[Proposition~4.10, Theorem~6.1]{BLC} there exists some broken line segment $\gamma: [0, \tau] \to U^{\trop}(\Q)$ with $\gamma(0)=s_1$, $\gamma(\tau)=s_2$, and $\gamma\lrp{\frac{a_2}{a_1+a_2} \tau }= s$. 
    Next, if $a_1$ and $a_2$ are only rational, we can clear denominators, writing $a_1' = \lambda a_1$ and $a_2' = \lambda a_2$.
    By \thref{lem:rescale}, $\alpha_{a_1' s_1, a_2' s_2}^{(a_1'+a_2')s} \neq 0$ as well.
    Thus we obtain a broken line segment $\gamma: [0, \tau] \to U^{\trop}(\Q)$ with $\gamma(0)=s_1$, $\gamma(\tau)=s_2$, and $\gamma\lrp{\frac{a_2'}{a_1'+a_2'} \tau }= s$.
    Note however that $\frac{a_2'}{a_1'+a_2'} = \frac{a_2}{a_1+a_2}$, so we have precisely the same outcome as the case of integral coefficients.
    
    Since $\varphi$ is convex with respect to broken lines, we have
    \eqn{\varphi\lrp{\gamma\lrp{\frac{a_2}{a_1+a_2} \tau }} \geq \lrp{1 - \frac{a_2}{a_1+a_2}}\varphi(\gamma(0)) + \frac{a_2}{a_1+a_2} \varphi(\gamma(\tau)),}
    so
    \eqn{\varphi\lrp{s} \geq \frac{a_1}{a_1+a_2}\varphi(s_1) + \frac{a_2}{a_1+a_2} \varphi(s_2).}
    This establishes the claim for $d=2$.
    Next, suppose the claim holds for $d=k$.
    If $s_1, \dots, s_{k+1}, s$, $a_1,\dots, a_{k+1}$ are as in the proposition statement, then
    $\alpha_{a_1\, s_1,\dots,a_{k+1} \, s_{k+1}}^{\lrp{a_1+\cdots + a_{k+1}}s} \neq 0$.
    That is, 
    \eqn{\tf_{\lrp{a_1+\cdots + a_{k+1}}s} \text{ is a non-zero summand of and}\prod_{i=1}^{k+1}\tf_{a_i\, s_i}.}
    Expanding the first $k$ terms of the product and using linear independence of theta functions, we see that 
    \eqn{\tf_{\lrp{a_1+\cdots + a_{k+1}}s} \text{ must be a non-zero summand of }\tf_{\lrp{a_1+\cdots + a_{k}}s'}\tf_{a_{k+1}\, s_{k+1}}}
    for some $\lrp{a_1+\cdots + a_{k}}s'$ with $\alpha_{a_1\, s_1,\dots,a_{k} \, s_{k}}^{\lrp{a_1+\cdots + a_{k}}s'}\neq 0$.
    So, by the induction hypothesis we have
    \eqn{\varphi(s)&\geq \frac{a_1+\cdots + a_{k}}{a_1+\cdots + a_{k+1}} \varphi(s') + \frac{a_{k+1}}{a_1+\cdots + a_{k+1}} \varphi(s_{k+1})\\
    &\geq \frac{a_1+\cdots + a_{k}}{a_1+\cdots + a_{k+1}} \lrp{\sum_{i=1}^{k} \frac{a_i}{a_1+\cdots +a_k}\varphi(s_i)} + \frac{a_{k+1}}{a_1+\cdots + a_{k+1}} \varphi(s_{k+1})\\
    &= \sum_{i=1}^{k+1} \frac{a_i}{a_1+\cdots +a_{k+1}}\varphi(s_i)}
proving the claim.
\end{proof}

\begin{remark}\thlabel{rem:ConvWRTBL-Equiv}
    In fact, \thref{prop:ConvWRTBL} provides an equivalent characterization of functions $\varphi:S\to \Q$ which are convex with respect to broken lines.
    That is, we also have the opposite implication.
    Suppose for any $s_1,\dots, s_d, s \in S$, $a_1, \dots, a_d \in \Q_{\geq 0}$ with $a_1 s_1,\dots, a_d s_d$, and $(a_1+\cdots +a_d)s$ all integral, and $\alpha_{a_1 s_1,\dots, a_d s_d}^{(a_1+\cdots +a_d)s} \neq 0$, we have
    \eqn{\varphi(s) \geq \sum_{i=1}^{d} \frac{a_i}{a_1+\cdots +a_d}\varphi(s_i) .}
    Then we claim $\varphi$ is convex with respect to broken lines.
    To see this, consider a broken line segment $\gamma:[t_1, t_2]\to S$, and let $\overline{\gamma}$ be
    the reparametrized broken line segment $\overline{\gamma}:[0, \tau= t_2-t_1] \to S$ defined by $\overline{\gamma}(t)=\gamma(t-t_1)$.
    Clearly, 
    \eqn{\varphi(\gamma(t)) \geq \lrp{\dfrac{t_2-t}{t_2-t_1}} \varphi(\gamma(t_1)) + \lrp{\dfrac{t-t_1}{t_2-t_1}} \varphi(\gamma(t_2))} 
    for all $t\in [t_1,t_2]$
    if and only if
    \eqn{\varphi(\overline{\gamma}(t)) \geq \dfrac{\tau-t}{\tau} \varphi(\overline{\gamma}(0)) + \dfrac{t}{\tau} \varphi(\overline{\gamma}(\tau))}
    for all $t\in [0,\tau]$.
    We may always write $t= \frac{b}{a+b} \tau$.
    By \cite[Proposition~5.4, Theorem~6.1]{BLC}, we may choose $a$ and $b$ such that $a\, \overline{\gamma}(0)$, $b\, \overline{\gamma}(\tau)$, and $(a+b)\, \overline{\gamma}(\tau)$ are all integral and $\alpha_{a\, \overline{\gamma}(0),b\, \overline{\gamma}(\tau)}^{(a+b)\, \overline{\gamma}(\tau)} \neq 0$.
    Then \eqn{\varphi\lrp{\overline{\gamma}\lrp{\frac{b}{a+b} \tau}}
    &\geq
    \frac{a}{a+b}\varphi\lrp{\overline{\gamma}\lrp{0}}+ \frac{b}{a+b}\varphi\lrp{\overline{\gamma}\lrp{\tau}}\\
    &=\frac{\tau-t}{\tau}\varphi\lrp{\overline{\gamma}\lrp{0}}+ \frac{t}{\tau}\varphi\lrp{\overline{\gamma}\lrp{\tau}}
    .}
\end{remark}

\subsection{Equivalence of \texorpdfstring{\cite{GHKK}}{GHKK}'s ``min-convex'' and ``decreasing''}
\thref{prop:ConvWRTBL} and \thref{rem:ConvWRTBL-Equiv} allow us to resolve a question posed by Gross-Hacking-Keel-Kontsevich in \cite[Remark~8.5]{GHKK}, and we take a slight detour to do so here.

\begin{prop}\thlabel{prop:decreasing}
    A piecewise linear function $\varphi:U^{\trop}(\Q) \to \Q$ is convex with respect to broken lines if and only if it is {\it{decreasing}} in the sense of \cite[Definition~8.3]{GHKK}.
\end{prop}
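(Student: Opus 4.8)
The plan is to route everything through the structure-constant reformulation of convexity supplied by \thref{prop:ConvWRTBL} and \thref{rem:ConvWRTBL-Equiv}. Together these say that $\varphi : U^\trop(\Q)\to\Q$ is convex with respect to broken lines if and only if the inequality \eqref{eq:ConvWRTBLineq2} holds for every admissible tuple of points and coefficients; and since a piecewise linear function is in particular homogeneous of degree one ($\varphi(\lambda x)=\lambda\varphi(x)$ for $\lambda\in\Q_{\geq 0}$), this is equivalent, after clearing denominators with \thref{lem:rescale}, to the statement that $\alpha_{p_1,\dots,p_d}^{q}\neq 0$ forces $\varphi(q)\geq\varphi(p_1)+\dots+\varphi(p_d)$. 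On the other side, GHKK's \emph{decreasing} condition \cite[Definition~8.3]{GHKK} is a condition on the monomial decorations of broken lines, asking that along any broken line the value of $\varphi$ on the exponent of the decorating monomial be monotone as one passes between the unbounded end and the endpoint. The proof then amounts to matching these two formulations, which I would do one implication at a time.

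For ``decreasing $\Rightarrow$ convex with respect to broken lines'': starting from $\alpha_{a_1 s_1,\dots,a_d s_d}^{(a_1+\dots+a_d)s}\neq 0$, I would use the broken-line computation of structure constants (GHKK; cf.\ the second description in \thref{def:TropicalMinkowskiSum} and \cite[Theorem~6.1]{BLC}) to produce broken lines $\gamma_1,\dots,\gamma_d$ with initial exponents $a_1 s_1,\dots,a_d s_d$, a common endpoint, and final exponents summing to $(a_1+\dots+a_d)s$. Applying the decreasing hypothesis to each $\gamma_i$ bounds $\varphi$ of its final exponent below by $\varphi(a_i s_i)=a_i\varphi(s_i)$; summing over $i$ and using that the final exponents all lie in the one chamber containing the common endpoint, on which $\varphi$ is linear, gives $\varphi\bigl((a_1+\dots+a_d)s\bigr)\geq\sum_i a_i\varphi(s_i)$, i.e.\ \eqref{eq:ConvWRTBLineq2}.

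For the converse I would argue bend by bend, the decreasing condition being local along a broken line: it is enough to compare $\varphi$ on the two exponents $m$ and $m'$ at a single bend. The bending rule forces $m'=m+\sum_j n_j w_j$ with $n_j\in\Z_{\geq 0}$, where the $w_j$ are the exponents in the wall function of the crossed wall; since for a cluster scattering diagram that wall function is a product of factors $(1+z^{w_j})$ with $\tf_{w_j}=z^{w_j}$, the monomial $z^{m'}$ occurring at the bend lifts to the statement that $\tf_{m'}$ is a summand of $\tf_{m}\cdot\prod_j \tf_{w_j}^{\,n_j}$ — and this summand is genuinely non-zero by strong positivity of scattering functions (\thref{rem:StrongPositivity}), so no cancellation intervenes. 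Feeding this into \thref{prop:ConvWRTBL} and using homogeneity gives $\varphi(m')\geq\varphi(m)+\sum_j n_j\varphi(w_j)$, and non-negativity of $\varphi$ on the wall-function exponents $w_j$ then yields the monotonicity demanded by \cite[Definition~8.3]{GHKK}.

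The step I expect to be the main obstacle is the bookkeeping that makes the two formalisms line up: reconciling GHKK's orientation and indexing conventions for broken lines with those fixed in \S\ref{sec:Scattering-BrokenLines}; checking carefully that at each bend the exponent moves by a \emph{non-negative} combination of wall-function exponents, so that the induced theta-function identity (read off via \thref{rem:LinInd} and \thref{rem:StrongPositivity}) carries the right sign; verifying the non-negativity of $\varphi$ on those wall directions and the compatibility between the domains of linearity of $\varphi$ and the chamber structure used to compute structure constants; and confirming that the resulting chain of inequalities runs in the direction demanded by ``decreasing''. With those conventions pinned down, both implications are short.
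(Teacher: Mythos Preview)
Your proposal rests on a misidentification of what GHKK's \emph{decreasing} condition actually is. In \cite[Definition~8.3]{GHKK}, a piecewise linear $\varphi$ is \emph{decreasing} precisely when $\alpha_{p,q}^{r}\neq 0$ implies $\varphi(r)\geq\varphi(p)+\varphi(q)$ for all $p,q,r\in U^{\trop}(\Z)$. The broken-line monotonicity condition you describe --- that $\varphi$ of the decorating exponent is monotone along a broken line --- is the content of \emph{min-convex}, \cite[Definition~8.2]{GHKK}. So the argument you sketch is aimed at \thref{prop:min-convex}, not \thref{prop:decreasing}.

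Ironically, your opening paragraph already contains the correct proof. You correctly observe (via \thref{prop:ConvWRTBL}, \thref{rem:ConvWRTBL-Equiv}, and homogeneity) that convexity with respect to broken lines is equivalent to the $d$-ary structure-constant inequality $\varphi(q)\geq\sum_i\varphi(p_i)$ whenever $\alpha_{p_1,\dots,p_d}^{q}\neq 0$. With the correct definition of \emph{decreasing} in hand, one direction is just the specialization $d=2$; the other is the observation that the $d=2$ case implies the general case by induction (expand $\tf_{p_1}\cdots\tf_{p_{d}}$ as $(\tf_{p_1}\cdots\tf_{p_{d-1}})\cdot\tf_{p_d}$ and use \thref{rem:LinInd}). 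This is exactly the paper's argument.

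Even as an attempt at \thref{prop:min-convex}, your bend-by-bend converse has a genuine gap: after deriving $\varphi(m')\geq\varphi(m)+\sum_j n_j\varphi(w_j)$, you invoke ``non-negativity of $\varphi$ on the wall-function exponents $w_j$'', but no such non-negativity is assumed --- $\varphi$ is an arbitrary piecewise linear function. The claim that $\tf_{m'}$ appears in $\tf_m\cdot\prod_j\tf_{w_j}^{n_j}$ is also not immediate, since wall-crossing acts on monomials rather than on theta functions and the identification $\tf_{w_j}=z^{w_j}$ is only local. The paper instead proves \thref{prop:min-convex} by a direct $\epsilon$-argument at a single wall crossing, using the convexity inequality at the midpoint, and then cites \cite[Lemma~8.4]{GHKK} together with \thref{prop:decreasing} for the other direction.
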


\begin{proof}
	First, suppose $\varphi$ is convex with respect to broken lines.
	Let $s_1$, $s_2$, and $r$ be in $U^{\trop}(\Z)$ and satisfy $\alpha_{s_1, s_2}^r \neq 0$.
	We need to show that $\varphi(r)\geq \varphi(s_1) + \varphi(s_2)$.
	Comparing to \thref{prop:ConvWRTBL}, we have $a_1=a_2=1$, and $r=2 s$.
    Then 
    \eqn{\varphi(s) \geq \frac{1}{2} \varphi(s_1) +\frac{1}{2} \varphi(s_2) }
    and
    \eqn{\varphi(r) = \varphi(2 s) = 2 \varphi(s) \geq \varphi(s_1) + \varphi(s_2) .}
	That is, $\varphi$ is decreasing.

	For the other direction, we use an induction argument very similar to the one used in \thref{prop:ConvWRTBL}.
	Suppose $\varphi$ is decreasing.
	Let $s_1,\dots, s_d,s \in U^{\trop}(\Q)$ and $a_1,\dots, a_d \in \Q_{\geq 0}$, with $a_1 \, s_1, \dots, a_d \, s_d$, and $(a_1+\cdots + a_d)s$ all integral and
	$\alpha_{a_1 s_1, \dots, a_d s_d}^{(a_1+\cdots +a_d)s} \neq 0$.
	We need to show that \eqref{eq:ConvWRTBLineq2} holds for $\varphi$.
	For the $d=1$ case, \eqref{eq:ConvWRTBLineq2} trivially reduces to an equality.
	For $d=2$, since $\varphi$ is decreasing we have 
	\eqn{\varphi((a_1+a_2)s) \geq \varphi(a_1 s_1) + \varphi(a_2 s_2),}
	which implies 
	\eqn{\varphi(s) \geq \frac{a_1}{a_1+a_2}\varphi(s_1) + \frac{a_2}{a_1+a_2}\varphi(s_2)}
	by piecewise linearity of $\varphi$.
	So  \eqref{eq:ConvWRTBLineq2} holds for $d=2$.
	Now assume it holds for $d=k$, and consider the case $d=k+1$.
	As we argued in \thref{prop:ConvWRTBL}, 
    \eqn{\tf_{\lrp{a_1+\cdots + a_{k+1}}s} \text{ must be a non-zero summand of }\tf_{\lrp{a_1+\cdots + a_{k}}s'}\tf_{a_{k+1}\, s_{k+1}}}
    for some $\lrp{a_1+\cdots + a_{k}}s'$ with $\alpha_{a_1\, s_1,\dots,a_{k} \, s_{k}}^{\lrp{a_1+\cdots + a_{k}}s'}\neq 0$.
    So, by the induction hypothesis we have
    \eqn{\varphi(s)&\geq \frac{a_1+\cdots + a_{k}}{a_1+\cdots + a_{k+1}} \varphi(s') + \frac{a_{k+1}}{a_1+\cdots + a_{k+1}} \varphi(s_{k+1})\\
    &\geq \frac{a_1+\cdots + a_{k}}{a_1+\cdots + a_{k+1}} \lrp{\sum_{i=1}^{k} \frac{a_i}{a_1+\cdots +a_k}\varphi(s_i)} + \frac{a_{k+1}}{a_1+\cdots + a_{k+1}} \varphi(s_{k+1})\\
    &= \sum_{i=1}^{k+1} \frac{a_i}{a_1+\cdots +a_{k+1}}\varphi(s_i),}
	which proves the claim.
\end{proof}

\begin{prop}\thlabel{prop:min-convex}
	A piecewise linear function $\varphi:U^{\trop}(\Q) \to \Q$ is convex with respect to broken lines if and only if it is {\it{min-convex}} in the sense of \cite[Definition~8.2]{GHKK}.
\end{prop}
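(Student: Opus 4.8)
The plan is to prove this equivalence exactly as \thref{prop:decreasing} was proved, with the ``decreasing'' condition of \cite[Definition~8.3]{GHKK} replaced by the ``min-convex'' condition of \cite[Definition~8.2]{GHKK} throughout. Recall that min-convexity is the local condition that $\varphi$, read off in a seed chart, is piecewise linear with the correct (``min'') sign of kink across each of the walls at which it bends -- equivalently, that near every point $\varphi$ is a minimum of finitely many linear functions, hence (being piecewise linear, so positively homogeneous) superadditive. One implication comes cheaply: min-convexity of $\varphi$ implies, by the standard broken-line computation of a product of two theta functions (the argument behind \cite[Remark~8.5]{GHKK}), that $\varphi$ is decreasing, and by \thref{prop:decreasing} a decreasing piecewise linear function is convex with respect to broken lines. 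If a self-contained argument is wanted here instead, it is the induction of \thref{prop:decreasing} verbatim: by \thref{rem:ConvWRTBL-Equiv} it suffices to prove \eqref{eq:ConvWRTBLineq2} for $d=2$, then one expands $\tf_{s_1}\tf_{s_2}$ over contributing broken lines and telescopes the values of $\varphi$ on consecutive decoration exponents using the min-convex kink condition at each bend, and the passage from $d=2$ to general $d$ is identical.

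The real content is the reverse implication, ``convex with respect to broken lines $\Rightarrow$ min-convex''. Here I would use the structure-constant characterization of \thref{prop:ConvWRTBL}. Since $\varphi$ is piecewise linear, the set of walls at which it actually bends is locally finite, and min-convexity can be checked one such wall at a time. At a bending wall, crossing produces a theta-function relation; feeding the simplest pair of broken lines that detect this wall into \thref{prop:ConvWRTBL} (with $d=2$, $a_1=a_2=1$, and $r=2s$) and invoking positive homogeneity of $\varphi$ exactly as in \thref{prop:decreasing}, one obtains an inequality $\varphi(r)\geq\varphi(s_1)+\varphi(s_2)$ that unwinds to precisely the statement that the kink of $\varphi$ across that wall has the min-convex sign. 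Running this over all bending walls shows $\varphi$ is min-convex.

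The step I expect to be the main obstacle is the wall-by-wall dictionary in this second implication: checking that the ``kink sign'' condition of \cite[Definition~8.2]{GHKK} matches, with all orientation and sign conventions, the inequality produced by a broken line bending at that wall -- including the bending direction, the decoration monomial it picks up, and the $\min$-versus-$\max$ convention flagged after \thref{def:ConvWRTBL} -- and exhibiting, for each wall, an explicit witnessing triple $(s_1,s_2,r)$ with $\alpha_{s_1,s_2}^r\neq 0$. A secondary nuisance is the presence of regions of dense walls in the cluster scattering diagram, so one must argue carefully that min-convexity is only being imposed at, and need only be verified at, the locally finite collection of walls where the piecewise linear $\varphi$ genuinely bends, and that passing a wall across which $\varphi$ is linear does not disturb the broken-line bookkeeping. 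Once these conventions are pinned down, the proof is a direct adaptation of that of \thref{prop:decreasing}.
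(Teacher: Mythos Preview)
Your proposal reverses which direction is ``the real content''. In the paper, the implication you flag as hard --- convex with respect to broken lines $\Rightarrow$ min-convex --- is dispatched in two lines directly from \thref{def:ConvWRTBL}, without any appeal to structure constants or to \thref{prop:ConvWRTBL}. Given any broken line $\gamma$ and a time $\tau$ at which it crosses a wall, one restricts $\gamma$ to $[\tau-\epsilon,\tau+\epsilon]$, expands $\varphi(\gamma(\tau\pm\epsilon))=\varphi(\gamma(\tau))\pm\epsilon\, d\varphi_{\gamma(\tau\pm\epsilon)}(\dot\gamma(\tau\pm\epsilon))$ using local linearity of $\varphi$, plugs into the defining inequality \eqref{eq:ConvWRTBLineq1} at the midpoint, and simplifies to obtain $d\varphi_{\gamma(\tau-\epsilon)}(\dot\gamma)\geq d\varphi_{\gamma(\tau+\epsilon)}(\dot\gamma)$. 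This is exactly the statement that $d\varphi$ is decreasing along $\dot\gamma$, i.e.\ min-convexity in the sense of \cite[Definition~8.2]{GHKK}. All of the obstacles you anticipate --- building a wall-by-wall dictionary, exhibiting witnessing triples $(s_1,s_2,r)$ with $\alpha_{s_1,s_2}^{r}\neq 0$, and worrying about dense walls versus the walls where $\varphi$ bends --- simply do not arise, because the argument never leaves the broken line $\gamma$ itself.

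For the other direction, min-convex $\Rightarrow$ convex with respect to broken lines, you and the paper agree: invoke \cite[Lemma~8.4]{GHKK} to get that $\varphi$ is decreasing, then apply \thref{prop:decreasing}. Your sketch is correct there. The net effect is that your route through structure constants for the forward direction would likely work once the bookkeeping is pinned down, but it is substantially more involved than necessary; the paper's argument is a local differential computation that sidesteps the entire apparatus.
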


\begin{proof}
	First, suppose $\varphi$ is convex with respect to broken lines.
	We need to verify that $d\varphi$ is decreasing on $\dot{\gamma}$ for all broken lines $\gamma$.
	Suppose $\gamma$ crosses a wall at time $\tau$.
	Then for sufficiently small $\epsilon>0$, we have $\varphi(\gamma(\tau \pm \epsilon)) = \varphi(\gamma(\tau)) \pm \epsilon d\varphi_{\gamma(\tau\pm \epsilon)}(\dot{\gamma}(\tau\pm \epsilon))$ and 
	\eqn{\varphi(\tau) \geq \frac{1}{2}\lrp{\varphi(\tau)-\epsilon d\varphi_{\gamma(\tau-\epsilon)}(\dot{\gamma})} + \frac{1}{2} \lrp{\varphi(\tau)+\epsilon d\varphi_{\gamma(\tau+\epsilon)}(\dot{\gamma})}. }
	Simplifying, we find $d\varphi_{\gamma(\tau-\epsilon)}(\dot{\gamma}) \geq d\varphi_{\gamma(\tau+\epsilon)}(\dot{\gamma})$ as desired.

	The other direction follows from \cite[Lemma~8.4]{GHKK} and \thref{prop:decreasing}.
\end{proof}

Taken together, Propositions~\ref{prop:decreasing} and \ref{prop:min-convex} resolve a question posed in \cite[Remark~8.5]{GHKK}:

\begin{cor}\thlabel{cor:min-convex=decreasing}
    The notions ``min-convex'' and ``decreasing'' of \cite[Definitions~8.2~\&~8.3]{GHKK} are equivalent.
\end{cor}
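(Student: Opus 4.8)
The plan is to simply chain together the two preceding propositions. \thref{prop:decreasing} establishes that a piecewise linear $\varphi:U^{\trop}(\Q)\to\Q$ is convex with respect to broken lines if and only if it is decreasing in the sense of \cite[Definition~8.3]{GHKK}, while \thref{prop:min-convex} establishes that the same $\varphi$ is convex with respect to broken lines if and only if it is min-convex in the sense of \cite[Definition~8.2]{GHKK}. Thus ``convex with respect to broken lines'' serves as a common intermediary: $\varphi$ is min-convex if and only if it is convex with respect to broken lines, if and only if it is decreasing. Transitivity of the biconditional then yields the claim, that ``min-convex'' and ``decreasing'' are equivalent.

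There is no real obstacle left at this stage; the content of the corollary has already been absorbed into the two propositions. It is perhaps worth noting where the work actually lived: the bridge to ``convex with respect to broken lines'' relied crucially on the structure-constant characterization of \thref{prop:ConvWRTBL} together with its converse in \thref{rem:ConvWRTBL-Equiv} (which in turn rests on the broken line segment existence results of \cite{BLC}), and the min-convex direction additionally invoked \cite[Lemma~8.4]{GHKK}. So I would keep the proof of the corollary to a single sentence, citing Propositions~\ref{prop:decreasing} and \ref{prop:min-convex}, and let the reader trace the chain of equivalences through ``convex with respect to broken lines.''

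If desired, one could instead phrase the argument without explicitly invoking the intermediate notion: given a decreasing $\varphi$, apply the ``only if'' direction of \thref{prop:decreasing} to see it is convex with respect to broken lines, then the ``if'' direction of \thref{prop:min-convex} to conclude it is min-convex; reverse the roles for the other implication. Either way the proof is a two-line deduction, and I would present it as such.
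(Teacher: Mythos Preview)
Your proposal is correct and matches the paper's own treatment exactly: the paper simply says that Propositions~\ref{prop:decreasing} and \ref{prop:min-convex} taken together give the corollary, with no further argument. Your observation that ``convex with respect to broken lines'' serves as the common intermediary is precisely the intended chain of equivalences.
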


\subsection{Basic results}
We now state and prove some basic results about functions which are convex with respect to broken lines.

\begin{lemma}\thlabel{lem:SumConvWRTBL}
    Let $\varphi_1,\, \varphi_2:S \to \Q$ be convex with respect to broken lines.  Then $\varphi_1+ \varphi_2$ is convex with respect to broken lines.
\end{lemma}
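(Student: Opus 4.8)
The plan is to prove this directly from \thref{def:ConvWRTBL}, with essentially no work beyond adding two inequalities.

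First I would fix an arbitrary broken line segment $\gamma:[t_1,t_2]\to S$ and an arbitrary $t\in[t_1,t_2]$. Since $\varphi_1$ is convex with respect to broken lines, \eqref{eq:ConvWRTBLineq1} gives
\eqn{\varphi_1(\gamma(t)) \geq \lrp{\dfrac{t_2-t}{t_2-t_1}} \varphi_1(\gamma(t_1)) + \lrp{\dfrac{t-t_1}{t_2-t_1}} \varphi_1(\gamma(t_2)),}
and likewise for $\varphi_2$. Adding these two inequalities term by term and collecting the coefficients of $\gamma(t_1)$ and $\gamma(t_2)$ yields
\eqn{(\varphi_1+\varphi_2)(\gamma(t)) \geq \lrp{\dfrac{t_2-t}{t_2-t_1}} (\varphi_1+\varphi_2)(\gamma(t_1)) + \lrp{\dfrac{t-t_1}{t_2-t_1}} (\varphi_1+\varphi_2)(\gamma(t_2)),}
which is exactly \eqref{eq:ConvWRTBLineq1} for $\varphi_1+\varphi_2$. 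Since $\gamma$ and $t$ were arbitrary, $\varphi_1+\varphi_2$ is convex with respect to broken lines.

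There is no real obstacle here: adding inequalities pointing in the same direction is valid unconditionally, so no sign or positivity considerations on the barycentric weights are even needed. The only thing worth remarking is that $\varphi_1+\varphi_2$ is again a well-defined function $S\to\Q$ on the same broken line convex domain $S$, so the hypothesis of \thref{def:ConvWRTBL} applies verbatim. (If desired, one could also note that the same one-line argument shows $\varphi_1+\varphi_2$ remains convex with respect to broken lines when $\varphi_2$ is replaced by $c\varphi_2$ for any $c\in\Q_{\geq 0}$, but this is not needed for the statement as given.)
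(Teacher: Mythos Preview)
Your proof is correct and is exactly the argument the paper has in mind: the paper's proof simply reads ``This follows immediately from \thref{def:ConvWRTBL},'' and what you have written is the natural unpacking of that sentence.
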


\begin{proof}
    This follows immediately from \thref{def:ConvWRTBL}.
\end{proof}

\begin{prop}\thlabel{prop:ConvWRTBL-BLC}
    Let $\varphi:U^{\trop}(\Q) \to \Q$ be convex with respect to broken lines.
    Then \eqn{\Xi_{\varphi,r}:=\lrc{x\in U^{\trop}(\Q)\, :\, \varphi(x) \geq -r}} is broken line convex.
\end{prop}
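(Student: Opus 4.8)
The plan is to argue directly from \thref{def:BLC}. Fix two points $s_1,s_2\in\Xi_{\varphi,r}$ and an arbitrary broken line segment $\gamma$ with endpoints $s_1$ and $s_2$; I must show $\supp(\gamma)\subset\Xi_{\varphi,r}$. Parametrize $\gamma$ as $\gamma:[t_1,t_2]\to U^{\trop}(\Q)$ with $\gamma(t_1)=s_1$ and $\gamma(t_2)=s_2$. Since every point of $\supp(\gamma)=\mathrm{Im}(\gamma)$ is $\gamma(t)$ for some $t\in[t_1,t_2]$, it suffices to prove $\varphi(\gamma(t))\geq -r$ for each such $t$.

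The only ingredient needed is the defining inequality of \thref{def:ConvWRTBL}, applied to $\gamma$ (legitimate since $\varphi$ is convex with respect to broken lines and $U^{\trop}(\Q)$ is broken line convex): $\varphi(\gamma(t))\geq \frac{t_2-t}{t_2-t_1}\,\varphi(s_1)+\frac{t-t_1}{t_2-t_1}\,\varphi(s_2)$. The coefficients $\frac{t_2-t}{t_2-t_1}$ and $\frac{t-t_1}{t_2-t_1}$ are nonnegative and sum to $1$, while $s_1,s_2\in\Xi_{\varphi,r}$ gives $\varphi(s_1)\geq -r$ and $\varphi(s_2)\geq -r$. Replacing $\varphi(s_1)$ and $\varphi(s_2)$ by $-r$ therefore only decreases the right-hand side, so $\varphi(\gamma(t))\geq -r\bigl(\frac{t_2-t}{t_2-t_1}+\frac{t-t_1}{t_2-t_1}\bigr)=-r$. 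Hence $\gamma(t)\in\Xi_{\varphi,r}$; as $t$ was arbitrary, $\supp(\gamma)\subset\Xi_{\varphi,r}$, and as $s_1$, $s_2$, $\gamma$ were arbitrary, $\Xi_{\varphi,r}$ is broken line convex.

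I do not expect any real obstacle: this is the broken line analogue of the classical fact that superlevel sets of concave functions are convex, and the inequality in \thref{def:ConvWRTBL} is precisely what makes the classical one-line argument transfer verbatim. (One could equally well verify that $\Xi_{\varphi,r}$ is positive in the sense of \thref{def:positive}, using the $d=2$ case of \thref{prop:ConvWRTBL} applied to $p=ap'$, $q=bq'$, $\rho=(a+b)\sigma$, and then invoke the equivalence of positivity and broken line convexity; but the direct argument is shorter and avoids a small case analysis for vanishing coefficients.)
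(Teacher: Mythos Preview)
Your proof is correct and takes a genuinely different route from the paper's. The paper argues via \thref{char_blc}, showing $t\,\Xi_{\varphi,r}\+t(1-t)\,\Xi_{\varphi,r}\subset\Xi_{\varphi,r}$ by unpacking the tropical Minkowski sum in terms of structure constants and then invoking \thref{prop:ConvWRTBL} (the $d=2$ case) to bound $\varphi(z)$. You instead appeal directly to \thref{def:BLC} and the defining inequality of \thref{def:ConvWRTBL}, which is shorter and avoids the detour through structure constants and the characterization of broken line convexity via tropical Minkowski sums. Your parenthetical remark about the alternative via positivity and \thref{prop:ConvWRTBL} is essentially the paper's approach (filtered through \thref{char_blc}), so you correctly identified both routes; the direct one you chose is the cleaner of the two here.
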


\begin{proof}
By \thref{char_blc}, this holds if and only if $\Xi_{\varphi,r} = t\, \Xi_{\varphi,r} \+t (1-t) \Xi_{\varphi,r}$.
We always have the inclusion  $\Xi_{\varphi,r} \subset t\, \Xi_{\varphi,r} \+t (1-t) \Xi_{\varphi,r}$, so we just need to show the opposite inclusion.
Let $z\in t\, \Xi_{\varphi,r} \+t (1-t) \Xi_{\varphi,r}$.
Then there exists $x\in t\, \Xi_{\varphi,r}$, $y\in (1-t) \Xi_{\varphi,r}$, and $a \in \Z_{>0}$ such that $a x$, $a y$, and $a z$ are all integral and $\alpha_{a x, a y}^{a z}\neq 0$.
Define $x'$, $y' \in \Xi_{\varphi,r}$ by $x=t x'$, $y= (1-t)y'$.
Now, $a= t a + (1-t) a$, so $0\neq \alpha_{a x, a y}^{a z} =\alpha_{a t x', a (1-t) y'}^{a z}$.
Then by \thref{prop:ConvWRTBL},
\eqn{\varphi(z) &\geq t \varphi(x') + (1-t) \varphi(y')\\
&\geq t(-r) + (1-t) (-r)\\
&= -r.}
That is, $z\in \Xi_{\varphi,r}$.
\end{proof}

\begin{lemma}\thlabel{lem:linearwrtblc}
    Let $\varphi:U^{\trop}(\Q) \to \Q$ be convex with respect to broken lines, and let $\gamma:[t_1,t_2] \to U^{\trop}(\Q)$ be a broken line segment satisfying 
    \eqn{\varphi(\gamma(t)) = \lrp{\frac{t_2 -t}{t_2 -t_1}} \varphi(\gamma(t_1)) + \lrp{\frac{t-t_1}{t_2 -t_1}} \varphi(\gamma(t_2))}
    for some $t \in \lrp{t_1,t_2}$.
    Then 
    \eqn{\varphi(\gamma(t)) = \lrp{\frac{t_2 -t}{t_2 -t_1}} \varphi(\gamma(t_1)) + \lrp{\frac{t-t_1}{t_2 -t_1}} \varphi(\gamma(t_2))}
    for all $t \in \lrb{t_1,t_2}$.
\end{lemma}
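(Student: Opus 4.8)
The plan is to argue that if convexity-with-respect-to-broken-lines holds with equality at one interior point $t$, then it must hold with equality on the entire interval $[t_1,t_2]$ — a ``rigidity'' statement exactly analogous to the fact that a convex function on an interval that is affine on a subinterval and touches the chord at an interior point is affine on the whole interval. The key observation is that any point $\gamma(t')$ with $t' \in [t_1,t_2]$ can itself be viewed as lying on a broken line sub-segment of $\gamma$, and we can play off the convexity inequalities for overlapping sub-segments against each other.

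\medskip
\noindent\textbf{Step 1: Reduce to interior points and set up sub-segments.} Fix $t' \in (t_1,t_2)$, $t' \neq t$; I will show equality holds at $t'$, and then the endpoints $t_1,t_2$ are handled trivially (the claimed identity is an identity there). Without loss of generality suppose $t_1 < t' < t < t_2$ (the other case is symmetric). Write $L := \varphi(\gamma(t_1))$ and $R := \varphi(\gamma(t_2))$, so the hypothesis reads $\varphi(\gamma(t)) = \frac{t_2-t}{t_2-t_1}L + \frac{t-t_1}{t_2-t_1}R$; call this value $h(t)$, where $h$ is the affine function on $[t_1,t_2]$ with $h(t_1)=L$, $h(t_2)=R$. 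By convexity with respect to broken lines applied to the whole segment, $\varphi(\gamma(t')) \geq h(t')$, and it remains to rule out strict inequality.

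\medskip
\noindent\textbf{Step 2: Apply convexity to the two overlapping sub-segments.} The restrictions $\gamma|_{[t_1,t]}$ and $\gamma|_{[t',t_2]}$ are again broken line segments (a restriction of a broken line segment to a subinterval between two of its points is a broken line segment connecting those points — this is the content needed from \cite{BLC} and is implicit in the machinery already invoked in, e.g., \thref{prop:ConvWRTBL} and \thref{rem:ConvWRTBL-Equiv}). Applying \thref{def:ConvWRTBL} to $\gamma|_{[t_1,t]}$ at the point $t'$ gives
\eqn{\varphi(\gamma(t')) \geq \frac{t-t'}{t-t_1}\,\varphi(\gamma(t_1)) + \frac{t'-t_1}{t-t_1}\,\varphi(\gamma(t)) = \frac{t-t'}{t-t_1}\,L + \frac{t'-t_1}{t-t_1}\,h(t).}
Since $h$ is affine and $h(t_1)=L$, the right-hand side is exactly $h(t')$. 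So this alone only recovers $\varphi(\gamma(t'))\geq h(t')$. The real leverage comes from combining: apply convexity to $\gamma|_{[t_1,t_2]}$ at the point $t$, but now using the \emph{value at $t'$} as an intermediate anchor. Precisely, $t$ lies in $[t',t_2]$, so convexity of $\varphi$ along $\gamma|_{[t',t_2]}$ gives
\eqn{\varphi(\gamma(t)) \geq \frac{t_2-t}{t_2-t'}\,\varphi(\gamma(t')) + \frac{t-t'}{t_2-t'}\,\varphi(\gamma(t_2)) = \frac{t_2-t}{t_2-t'}\,\varphi(\gamma(t')) + \frac{t-t'}{t_2-t'}\,R.}
Now substitute $\varphi(\gamma(t)) = h(t)$ on the left and $\varphi(\gamma(t')) \geq h(t')$ on the right: we get $h(t) \geq \frac{t_2-t}{t_2-t'}h(t') + \frac{t-t'}{t_2-t'}R$. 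But since $h$ is affine with $h(t_2)=R$, the right-hand side equals $h(t)$ exactly. Hence every inequality in this chain is forced to be an equality; in particular $\varphi(\gamma(t')) = h(t')$, which is the desired identity at $t'$.

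\medskip
\noindent\textbf{Step 3: Conclude on the closed interval.} Having shown $\varphi(\gamma(t')) = h(t')$ for all $t' \in (t_1,t) \cup (t,t_2)$, together with the hypothesis at $t$ and the trivial cases $t'=t_1,t_2$, we obtain $\varphi(\gamma(t')) = h(t')$ for all $t' \in [t_1,t_2]$, which is precisely the assertion. The main obstacle I anticipate is the clean justification of Step 2's use of sub-segments: I need that $\gamma|_{[t',t_2]}$ (and $\gamma|_{[t_1,t]}$) genuinely qualifies as a ``broken line segment'' in the sense of \cite{BLC} with the stated endpoints, so that \thref{def:ConvWRTBL} applies to it with the correct barycentric coefficients. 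This should follow directly from the definition of broken line segments and is of the same nature as facts already used freely in this section, but it is the one place where care is required; everything else is bookkeeping with affine functions.
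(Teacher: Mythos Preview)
Your proof is correct and takes essentially the same approach as the paper: both apply convexity along the sub-segment $\gamma|_{[t',t_2]}$ at the point $t$ and compare against the affine interpolant $h$, forcing $\varphi(\gamma(t'))=h(t')$. The only cosmetic difference is that the paper phrases it as a proof by contradiction (assume strict inequality at $t'$ and derive the reverse inequality), whereas you run the same sandwich argument directly.
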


\begin{proof}
    Suppose not.  Then there is some $t'\in \lrp{t_1,t_2}$\footnote{We take the open interval here since equality is clear for the endpoints $t_1$ and $t_2$.}, $t'\neq t$,  with 
    \eq{\varphi(\gamma(t')) > \lrp{\frac{t_2 -t'}{t_2 -t_1}} \varphi(\gamma(t_1)) + \lrp{\frac{t'-t_1}{t_2 -t_1}} \varphi(\gamma(t_2)).}{eq:StrictIneq}
    The argument is identical for $t'<t$ and $t'>t$, so without loss of generality, take $t'<t$.
    Since $\varphi$ is convex with respect to broken lines, by restricting $\gamma$ to $\lrb{t',t_2}$ we find
    \eqn{\varphi(\gamma(t)) \geq \lrp{\frac{t_2 -t}{t_2 -t'}} \varphi(\gamma(t')) + \lrp{\frac{t-t'}{t_2 -t'}} \varphi(\gamma(t_2)).}
    That is,
    \eqn{\lrp{\frac{t_2 -t}{t_2 -t_1}} \varphi(\gamma(t_1)) + \lrp{\frac{t-t_1}{t_2 -t_1}} \varphi(\gamma(t_2)) \geq \lrp{\frac{t_2 -t}{t_2 -t'}} \varphi(\gamma(t')) + \lrp{\frac{t-t'}{t_2 -t'}} \varphi(\gamma(t_2)),}
    which upon simplifying yields
    \eqn{\lrp{\frac{t_2 -t'}{t_2 -t_1}} \varphi(\gamma(t_1)) + \lrp{\frac{t'-t_1}{t_2 -t_1}} \varphi(\gamma(t_2)) \geq  \varphi(\gamma(t')).}
    This contradicts the strict inequality \eqref{eq:StrictIneq}.
\end{proof}

One type of function that will come up frequently in the remainder of the paper is simply given by evaluation: $\lra{\ \boldsymbol{\cdot}\ ,y}: U^{\trop}(\Q) \to \Q$.
For this reason, we introduce the following terminology.

\begin{definition}
    We say a function $\varphi: U^{\trop}(\Q) \to \Q$ is {\it{tropically linear}} if $\varphi = \lra{\ \boldsymbol{\cdot}\ ,y}$ for some $y\in (U^\vee)^{\trop}(\Q)$.
    We also use the terminology for a function $\psi$ on a $\Q_{\geq 0}$-invariant subset $\sigma$ in $U^{\trop}(\Q)$ if there exists an extension of $\psi$ from $\sigma$ to $U^{\trop}(\Q)$ which is tropically linear.
\end{definition}

The following results are a corollaries of \thref{thm:ValuativeIndependence}.

\begin{cor}\thlabel{cor:TropLinearEquality}
    Let $\varphi: U^{\trop}(\Q) \to \Q$ be tropically linear, and 
    consider a collection of integral tropical points $x_1, \dots, x_d \in U^{\trop}(\Z)$. 
    Then
    \eqn{\sum_{i=1}^d\varphi(x_i) = \min \lrc{\varphi(x) : x\in U^{\trop}(\Z), \alpha_{x_1, \dots, x_d}^{x}\neq 0}.}
\end{cor}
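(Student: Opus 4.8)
The plan is to recognize the right-hand side as an instance of the valuative independence theorem applied to a product of $\tf$-functions. First I would write $\varphi = \lra{\,\boldsymbol{\cdot}\,, y}$ for the appropriate $y \in (U^\vee)^{\trop}(\Q)$; since evaluation at integral points is what appears, I would first treat the case $y \in (U^\vee)^{\trop}(\Z)$ and then remark that the general rational case follows by clearing denominators (replacing $y$ by $\lambda y$ scales both sides of the claimed identity by $\lambda$, using \thref{lem:rescale} to see that the set of $x$ with $\alpha_{x_1,\dots,x_d}^x \neq 0$ is unchanged and bilinearity of the pairing). So it suffices to prove the integral statement.

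The key step is to consider the regular function $f := \tf_{x_1}\cdots \tf_{x_d}$ on $U$, expanded in the $\tf$-basis as $f = \sum_{x} \alpha_{x_1,\dots,x_d}^x \tf_x$. By \thref{thm:ThetaRecipocity}, the canonical pairing satisfies $\lra{x, y} = x(\tf_y) = y(\tf_x)$, so $\varphi(x) = y(\tf_x)$ for every $x \in U^{\trop}(\Z)$ — that is, $\varphi$ restricted to integral points is exactly the valuation $y$ applied to the corresponding basis element. Now I would apply \thref{thm:ValuativeIndependence} to $f$ and the integral tropical point $y$ (of the mirror $U$, noting that the roles of $U$ and $U^\vee$ are symmetric in the Fock--Goncharov setup, so $y \in (U^\vee)^{\trop}(\Z)$ is a discrete valuation on $\ssO(U)$), obtaining
\eqn{y(f) = \min_{\alpha_{x_1,\dots,x_d}^x \neq 0} \lrc{y(\tf_x)} = \min_{\alpha_{x_1,\dots,x_d}^x \neq 0} \lrc{\varphi(x)}.}
On the other hand, $y$ is a valuation, so $y(\tf_{x_1}\cdots \tf_{x_d}) = y(\tf_{x_1}) + \cdots + y(\tf_{x_d}) = \varphi(x_1) + \cdots + \varphi(x_d)$ by Theta Reciprocity again. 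Equating the two expressions for $y(f)$ gives the claim.

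The main obstacle — really the only non-bookkeeping point — is making sure \thref{thm:ValuativeIndependence} is being invoked in the correct direction: it is stated for a regular function on $U$ and an integral tropical point $x \in U^{\trop}(\Z)$, whereas here I want a regular function on $U$ (namely the product of $\tf$-functions, which lives in $\ssO(U)$ since the $\tf_{x_i}$ do) together with a valuation coming from $(U^\vee)^{\trop}(\Z)$. This is fine because in the full Fock--Goncharov setting $(U^\vee)^{\trop}(\Z)$ is precisely the set of relevant valuations on $\ssO(U)$, and Theta Reciprocity is exactly the statement that these valuations agree with the evaluation pairing; I would state this matching explicitly at the start of the proof so that the application of \thref{thm:ValuativeIndependence} is unambiguous. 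A secondary minor point is that the product $\tf_{x_1}\cdots\tf_{x_d}$ is a finite sum in the $\tf$-basis (so the minimum is over a finite, nonempty set whenever $d \geq 1$), which is immediate since products of basis elements are regular functions.
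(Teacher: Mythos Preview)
Your proof is correct and follows essentially the same approach as the paper's own proof: clear denominators to make $y$ integral, then apply \thref{thm:ValuativeIndependence} to the product $\tf_{x_1}\cdots\tf_{x_d}$ together with multiplicativity of the valuation $y$. One minor note: invoking \thref{lem:rescale} when passing from rational to integral $y$ is unnecessary, since the index set $\{x : \alpha_{x_1,\dots,x_d}^x \neq 0\}$ does not depend on $y$ at all---only the values $\varphi(x)$ scale.
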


\begin{proof}
    Let $\varphi = \lra{\, \boldsymbol{\cdot} \ ,y_\varphi}$,
    and let $a y_\varphi$ be integral for some $a>0$.
    Then 
    \eqn{ a y_\varphi \lrp{\tf_{x_1}} + \cdots + a y_\varphi \lrp{\tf_{x_d}} = a y_\varphi \lrp{\tf_{x_1} \cdots \tf_{x_d}} = \min\lrc{a y_\varphi (\tf_x) : x\in U^{\trop}(\Z), \alpha_{x_1, \dots, x_d}^{x}\neq 0}. }
    But then 
    \eqn{\sum_{i=1}^d\varphi(x_i) = \min \lrc{\varphi(x) : x\in U^{\trop}(\Z), \alpha_{x_1, \dots, x_d}^{x}\neq 0}}
    as claimed.
\end{proof}

\begin{cor}\thlabel{cor:TropLinCWRTBL}
    A tropically linear function is convex with respect to broken lines.
\end{cor}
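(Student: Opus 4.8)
The plan is to deduce \thref{cor:TropLinCWRTBL} directly from \thref{cor:TropLinearEquality} together with the characterization of convexity with respect to broken lines provided by \thref{prop:ConvWRTBL} and \thref{rem:ConvWRTBL-Equiv}. Recall that by that characterization, a function $\varphi:U^{\trop}(\Q)\to\Q$ is convex with respect to broken lines if and only if for all $s_1,\dots,s_d,s$ and all $a_1,\dots,a_d\in\Q_{\geq 0}$ with $a_1 s_1,\dots,a_d s_d$ and $(a_1+\cdots+a_d)s$ integral and $\alpha_{a_1 s_1,\dots,a_d s_d}^{(a_1+\cdots+a_d)s}\neq 0$, we have $\varphi(s)\geq \sum_i \frac{a_i}{a_1+\cdots+a_d}\varphi(s_i)$. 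So the task reduces to verifying this inequality for $\varphi$ tropically linear.

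First I would let $\varphi=\lra{\ \boldsymbol{\cdot}\ ,y_\varphi}$, and reduce to the integral case: since $\varphi$ is (piecewise, in fact fully) linear in the sense that $\varphi(\lambda x)=\lambda\varphi(x)$ for $\lambda\in\Q_{\geq0}$, it suffices to prove the inequality after clearing denominators, i.e.\ for integral coefficients $a_i$. Then set $x_i := a_i s_i\in U^{\trop}(\Z)$ and $r:=(a_1+\cdots+a_d)s\in U^{\trop}(\Z)$. The hypothesis $\alpha_{a_1 s_1,\dots,a_d s_d}^{(a_1+\cdots+a_d)s}\neq 0$ says exactly that $r$ is one of the $x$ appearing in the set $\lrc{x\in U^{\trop}(\Z):\alpha_{x_1,\dots,x_d}^{x}\neq 0}$. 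By \thref{cor:TropLinearEquality},
\eqn{\sum_{i=1}^d\varphi(x_i)=\min\lrc{\varphi(x):x\in U^{\trop}(\Z),\ \alpha_{x_1,\dots,x_d}^{x}\neq 0}\leq \varphi(r).}
Unwinding, $\varphi(r)=\varphi((a_1+\cdots+a_d)s)=(a_1+\cdots+a_d)\varphi(s)$ and $\varphi(x_i)=\varphi(a_i s_i)=a_i\varphi(s_i)$, so $(a_1+\cdots+a_d)\varphi(s)\geq\sum_i a_i\varphi(s_i)$, which is the required inequality after dividing by $a_1+\cdots+a_d$.

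I do not anticipate a serious obstacle here — the content is entirely packaged in \thref{cor:TropLinearEquality}, which is itself a consequence of the valuative independence theorem, and the remaining bookkeeping is the homogeneity of a tropically linear function under the $\Q_{\geq 0}$-action. The one point needing a sentence of care is the reduction from rational to integral $a_i$: one multiplies all the $a_i$ by a common denominator $\lambda$, invokes \thref{lem:rescale} to see the rescaled structure constant is still nonzero, observes $\frac{\lambda a_i}{\lambda(a_1+\cdots+a_d)}=\frac{a_i}{a_1+\cdots+a_d}$, and notes that $\varphi$ is also homogeneous under rescaling by $\lambda$. Alternatively, since a tropically linear $\varphi$ extends from any $\Q_{\geq0}$-invariant subset, one can simply absorb the rescaling into the $s_i$ and $s$ directly. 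Either way the argument closes.
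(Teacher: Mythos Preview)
Your proposal is correct and follows essentially the same route as the paper: apply \thref{cor:TropLinearEquality} to the integral points $a_i s_i$ and then invoke the characterization in \thref{rem:ConvWRTBL-Equiv}. The only difference is cosmetic—your reduction to integral $a_i$ via \thref{lem:rescale} is unnecessary, since the hypothesis already gives $a_i s_i\in U^{\trop}(\Z)$ and \thref{cor:TropLinearEquality} applies directly to those; the paper simply skips that step.
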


\begin{proof}
    Let $\varphi: U^{\trop}(\Q) \to \Q$ be tropically linear, and consider any $x_1,\dots, x_d, x \in U^{\trop}(\Q)$, $a_1, \dots, a_d \in \Q_{\geq 0}$ with $a_1 x_1,\dots, a_d x_d$, and $(a_1+\cdots +a_d)x$ all integral, and $\alpha_{a_1 x_1,\dots, a_d x_d}^{(a_1+\cdots +a_d)x} \neq 0$.
    By \thref{cor:TropLinearEquality}, 
    \eqn{\sum_{i=1}^d\varphi(a_i x_i) = \min \lrc{\varphi(s) : s\in U^{\trop}(\Z), \alpha_{a_1 x_1,\dots, a_d x_d}^{s}\neq 0}.}
    So, we have that 
    \eqn{ \varphi\lrp{(a_1+\cdots +a_d)x} \geq \sum_{i=1}^d\varphi(a_i x_i),}
    and
    \eqn{ \varphi\lrp{x} \geq \sum_{i=1}^d\frac{a_i}{a_1+\cdots +a_d} \varphi(x_i).}
    By \thref{rem:ConvWRTBL-Equiv}, $\varphi$ is convex with respect to broken lines.
\end{proof}

\section{Broken line convex polyhedral geometry}\label{sec:Polyhedra}

Let $U$ be a cluster variety for which the full Fock-Goncharov conjecture holds and let $U^\vee$ be its Fock-Goncharov dual.

We borrow some notation from \cite{Brondsted}.
\begin{definition}\thlabel{def:Half-space-Hyperplane}
For $y\in (U^\vee)^\trop(\Q)$ and $r\in \Q$ denote by $K(y,r)$ the set $\lrc{x \in U^\trop(\Q) : \lra{x, y } \geq -r }$.
We call $K(y,r)$ a {\it{tropical half-space}}, and we call its boundary $H(y,r):= \lrc{x \in U^\trop(\Q) : \lra{x, y } = -r }$ a {\it{tropical hyperplane}}.
For $S\subset U^\trop(\Q)$, we say  $K(y,r)$ is a {\it{supporting tropical half-space for $S$}} and $H(y,r)$ is a {\it{supporting tropical hyperplane for $S$}} if $S \subset K(y,r)$ and $S\cap H(y,r) \neq \varnothing$.
We define tropical half-spaces and hyperplanes in $(U^\vee)^\trop(\Q)$ analogously.
\end{definition}

\begin{remark}\thlabel{rem:Rescale}
    As $\lra{\ \boldsymbol{\cdot}\ , a y} = a \lra{\ \boldsymbol{\cdot}\ , y}$ for all $a > 0$, we have that $K(y,r) = K(ay,ar)$ for all $a>0$.
\end{remark}

\begin{lemma}\thlabel{lem:TropHalfSpaceBLC}
    A tropical half-space is broken line convex.
\end{lemma}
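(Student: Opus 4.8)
The plan is to use the characterization of broken line convexity from \thref{char_blc}: a set $S$ is broken line convex if and only if $tS +_\tf (1-t)S = S$ for all $t\in[0,1]$. As observed in the proofs of \thref{char_blc} and \thref{prop:ConvWRTBL-BLC}, the inclusion $S \subset tS +_\tf (1-t)S$ is automatic (one uses a straight line segment from $tz$ to $(1-t)z$ to see $z \in t\{z\} +_\tf (1-t)\{z\}$). So the real content is showing $t\,K(y,r) +_\tf (1-t)\,K(y,r) \subset K(y,r)$.

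**Key steps.** First I would take $z \in t\,K(y,r) +_\tf (1-t)\,K(y,r)$ and unwind the definition of the tropical Minkowski sum: there exist $x \in t\,K(y,r)$, $w \in (1-t)\,K(y,r)$, and $a \in \Z_{>0}$ with $ax, aw, az$ all integral and $\alpha_{ax,aw}^{az} \neq 0$. Write $x = t x'$ and $w = (1-t) w'$ with $x', w' \in K(y,r)$, so that $\lra{x', y} \geq -r$ and $\lra{w', y} \geq -r$. Since $a = ta + (1-t)a$, we have $\alpha_{a t x', a(1-t) w'}^{a z} \neq 0$. Now the point is to apply \thref{cor:TropLinCWRTBL}: the evaluation function $\varphi := \lra{\,\boldsymbol{\cdot}\,, y}$ is tropically linear, hence convex with respect to broken lines. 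Applying the equivalent characterization in \thref{prop:ConvWRTBL} / \thref{rem:ConvWRTBL-Equiv} to the relation $\alpha_{a t x', a(1-t) w'}^{a z} \neq 0$ (with $d = 2$, coefficients $a_1 = at$, $a_2 = a(1-t)$, so $a_1 + a_2 = a$) gives
\eqn{\lra{z, y} \geq t\lra{x', y} + (1-t)\lra{w', y} \geq t(-r) + (1-t)(-r) = -r,}
so $z \in K(y,r)$, which is exactly the inclusion we needed.

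**Main obstacle.** I expect there is essentially no serious obstacle here — the lemma is a direct consequence of the machinery already established, in particular the fact that tropically linear functions are convex with respect to broken lines (\thref{cor:TropLinCWRTBL}) combined with \thref{prop:ConvWRTBL-BLC}, which says that superlevel sets of functions convex with respect to broken lines are broken line convex. Indeed, $K(y,r)$ is literally the set $\Xi_{\varphi, r} = \{x : \varphi(x) \geq -r\}$ for $\varphi = \lra{\,\boldsymbol{\cdot}\,, y\,}$, so once one invokes \thref{cor:TropLinCWRTBL} to see that $\varphi$ is convex with respect to broken lines, \thref{prop:ConvWRTBL-BLC} immediately yields the conclusion. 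The only minor care needed is the bookkeeping in identifying $K(y,r)$ with $\Xi_{\varphi,r}$ and confirming $\varphi$ is genuinely defined on all of $U^\trop(\Q)$ (it is, via the canonical pairing extended in \thref{thm:ThetaRecipocity}). So the cleanest write-up is a two-line argument: $K(y,r) = \Xi_{\lra{\,\cdot\,,y\,},\,r}$, the function $\lra{\,\cdot\,,y\,}$ is tropically linear hence convex with respect to broken lines by \thref{cor:TropLinCWRTBL}, and therefore its superlevel set is broken line convex by \thref{prop:ConvWRTBL-BLC}.
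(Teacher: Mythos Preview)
Your proposal is correct, and the two-line argument you arrive at in the final paragraph is exactly the paper's proof: invoke \thref{cor:TropLinCWRTBL} to see $\lra{\,\boldsymbol{\cdot}\,,y}$ is convex with respect to broken lines, then apply \thref{prop:ConvWRTBL-BLC}. Your longer ``Key steps'' computation is just the proof of \thref{prop:ConvWRTBL-BLC} rewritten in this special case, so you can omit it.
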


\begin{proof}
    By \thref{cor:TropLinCWRTBL}, a tropically linear function is convex with respect to broken lines.
    Then the claim follows from \thref{prop:ConvWRTBL-BLC}.
\end{proof}

\begin{definition}\thlabel{def:polyhedral}
    A subset $S\subset U^{\trop}(\Q)$ is {\it{polyhedral}} if 
    \eqn{S = \bigcap_{i \in I} K(y_i,r_i)}
    for some finite indexing set $I$.
    We will always take $y_i \in (U^{\vee})^{\trop}(\Q)$ and $r_i \in \Q$.\footnote{In usual convex geometry, this reduces to the notion of ``rational polyhedral''.  As we only work in the rational setting in this paper, we drop the ``rational'' descriptor from our terminology here.} 
    If additionally $S$ is bounded, we say it is {\it{polytopal}}.
\end{definition}

\subsection{Faces}

\begin{definition}\thlabel{def:face-half-space}
Let $S\subset U^{\trop}(\Q)$ be broken line convex.
We say that a subset $F$ of $S$ is a {\it{face}} of $S$ if there is a  
tropical half-space $K(y,r)\supset S$ with $F= S \cap H(y,r)$.
We say this face $F$ is a {\it{proper}} face if $F\notin \lrc{\varnothing, S}$.
We call $0$-dimensional faces {\it{vertices}}, $1$-dimensional faces {\it{edges}}, and codimension 1 faces {\it{facets}}.
By convention, we view $\varnothing$ as a $-1$-dimensional face.
We denote the set of faces of $S$ by $\mathcal{F}_S$.
\end{definition}

\begin{remark}
    We will typically discuss faces of polyhedral sets rather than arbitrary broken line convex sets.
    However, the definition makes sense for arbitrary broken line convex sets, and we will want to use the face terminology for certain sets prior to proving that they are in fact polyhedral.
\end{remark}

\begin{center}
\begin{minipage}{.85\linewidth}
\begin{center}
{\bf{Warning:}} {\emph{Unlike in usual convex geometry, faces in broken line convex geometry need not be broken line convex.}}
\end{center}
\end{minipage}
\end{center}

For an example of this phenomenon, see Figure~\ref{fig:Non-convexFace}.

\noindent
\begin{center}
\begin{minipage}{.85\linewidth}
\captionsetup{type=figure}
\begin{center}
\begin{tikzpicture}[scale=.85]

    \def\x{1.5}
    \def\d{1}
    \def\l{3}
    \def\op{.3}
    
    \path (-\l,0) coordinate (3) --++ (\l,0) coordinate (0) --++ (\l,0) coordinate (1);
    \path (0,\l) coordinate (2) --++ (0,-2*\l) coordinate (4) --++ (\l,0) coordinate (5);
    \path (5) --++ (0,2*\l) coordinate (tr) --++ (-2*\l,0) coordinate (tl) --++ (0,-2*\l) coordinate (bl);

    \draw[thick, ->] (3) -- (1);
    \draw[thick, ->] (2) -- (4);
    \draw[thick, ->] (0) -- (5);

    \node at (.9,\l) {$1+z^{(0,1)}$};
    \node at (-2.3,-.35) {$1+z^{(-1,0)}$};
    \node at (3.25,-2) {$1+z^{(-1,1)}$};

    \coordinate (v1) at (-\x,\x) {};
    \coordinate (v2) at (0,\x) {};
    \coordinate (v3) at (\x,0) {};
    \coordinate (v4) at (\x,-\x) {};
    \coordinate (v5) at (0,-\x) {};
    \coordinate (v6) at (-\x,0) {};

    \path [blue-ish, thick, fill= blue-ish, fill opacity=\op] (v1.center)--(v2.center)--(v3.center)--(v4.center)--(v5.center)--(v6.center)--cycle;

    \draw[color= blue-ish, thick] (v3.center)--(v4.center)--(v5.center)--(v6.center)--(v1.center);

    \draw[color= orange, very thick] (v1.center)--(v2.center)--(v3.center);

    \path (v2) --++ (.4*\x,0) node [color=orange] {$F$};

\end{tikzpicture}

\captionof{figure}{\label{fig:Non-convexFace}A polytopal set $\tc{blue-ish}{S}\subset (\cA^\vee)^{\trop}(\Q)$ for the $\cA$ cluster variety of type $A_2$. The indicated face \tc{orange}{$F$} is not broken line convex. 
As is standard, to draw this picture we identify $(\cA^\vee)^{\trop}(\Q)$ with $\Q^2$ via a choice of seed.
} 
\end{center}
\end{minipage}
\end{center}

However, faces do satisfy some weaker notion of convexity.
To motivate this weaker convexity notion, we make an observation about tropical hyperplanes.

\begin{prop}\thlabel{cor:HyperplaneBrokenLines}
    Every pair of points $x_1$, $x_2$ in a tropical hyperplane $H(y,r)$ is connected by a broken line segment $\gamma$ whose support is contained in $H(y,r)$.
\end{prop}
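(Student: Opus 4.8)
The plan is to exploit the fact that a tropical hyperplane $H(y,r)$ is the common boundary of the broken line convex half-space $K(y,r)$ and its ``opposite'' half-space, together with the characterization of broken line convexity via the tropical Minkowski sum. The key observation is that $H(y,r) = K(y,r) \cap K(-y,-r)$: a point $x$ satisfies $\lra{x,y} = -r$ exactly when $\lra{x,y} \geq -r$ and $\lra{x,-y} = -\lra{x,y} \geq r$, i.e. $\lra{x,-y} \geq -(-r)$. So $H(y,r)$ is the intersection of two broken line convex sets (by \thref{lem:TropHalfSpaceBLC}), hence itself broken line convex.

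But broken line convexity of $H(y,r)$ is \emph{stronger} than what the proposition asks — it would say \emph{every} broken line segment between $x_1$ and $x_2$ stays in $H(y,r)$, whereas we only need \emph{one}. So actually the plan is simpler: first I would show $H(y,r)$ is broken line convex by the intersection argument above, and then invoke that between any pair of points in a broken line convex set there \emph{exists} a broken line segment — indeed \emph{all} broken line segments lie inside, and at least one such segment exists by \cite[Proposition~4.10, Theorem~6.1]{BLC} (the same existence statement used repeatedly in \thref{prop:ConvWRTBL} and \thref{rem:ConvWRTBL-Equiv}, after clearing denominators so that suitable integer multiples of $x_1$, $x_2$ and their midpoint-type combination are integral with a nonzero structure constant). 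Concretely: given $x_1, x_2 \in H(y,r)$, pick $\lambda \in \Z_{>0}$ with $\lambda x_1$, $\lambda x_2$ integral; by \thref{lem:axsumaix}-type reasoning or directly by \cite[Proposition~4.10]{BLC} there is a broken line segment $\gamma$ from $x_1$ to $x_2$, and since $H(y,r)$ is broken line convex, $\supp(\gamma) \subseteq H(y,r)$.

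The main obstacle — really the only subtlety — is verifying that $H(y,r)$ is genuinely broken line convex rather than merely weakly convex, since the Warning and Figure~\ref{fig:Non-convexFace} emphasize that faces in general fail this. The point is that a hyperplane is special: it is simultaneously a supporting structure ``from both sides,'' so it is an intersection of two honest half-spaces, and intersections of broken line convex sets are broken line convex (immediate from \thref{def:BLC}, or from \thref{char_blc} since $tA \+t (1-t)A = A$ and $tB \+t (1-t)B = B$ force $t(A\cap B) \+t (1-t)(A \cap B) \subseteq A \cap B$, with the reverse inclusion always holding). Once that is in hand, existence of \emph{some} connecting broken line segment is standard. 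I would also note explicitly that the existence of a broken line segment between any two rational tropical points follows from the cited results of \cite{BLC} after rescaling to an integral situation with nonzero structure constant via \thref{lem:rescale}, so the statement is never vacuous. The proof should be two or three lines once the hyperplane-as-double-half-space point is made.
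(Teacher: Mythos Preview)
Your argument has a genuine gap: the identity $\lra{x,-y} = -\lra{x,y}$ is \emph{false} in general. The canonical pairing $\lra{\ \boldsymbol{\cdot}\ ,\ \boldsymbol{\cdot}\ }$ is only \emph{positively} homogeneous in each variable, not linear. Concretely, $\lra{x,y} = y(\tf_x)$ is the tropicalization of the theta function $\tf_x$, which is a minimum of linear forms (one for each monomial of $\tf_x$); thus $y \mapsto \lra{x,y}$ is piecewise linear and in general $\lra{x,-y} \neq -\lra{x,y}$ as soon as $\tf_x$ has more than one term. Consequently $K(-y,-r)$ is \emph{not} the ``opposite half-space'' to $K(y,r)$, the identity $H(y,r) = K(y,r) \cap K(-y,-r)$ fails, and your conclusion that $H(y,r)$ is broken line convex is unwarranted.

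In fact $H(y,r)$ is typically \emph{not} broken line convex --- it is only weakly convex, which is precisely what the proposition asserts. Since $\lra{\ \boldsymbol{\cdot}\ ,y}$ is convex with respect to broken lines (\thref{cor:TropLinCWRTBL}), any broken line segment between $x_1,x_2 \in H(y,r)$ stays in $K(y,r)$, but the inequality \eqref{eq:ConvWRTBLineq1} can be strict, so the segment may pass through the interior of $K(y,r)$ and leave $H(y,r)$. The paper's proof instead uses \thref{thm:ValuativeIndependence} to produce \emph{one particular} broken line segment for which $\lra{\gamma(t),y}$ achieves the value $-r$ at the midpoint (equality rather than inequality in \thref{cor:TropLinearEquality}), and then invokes \thref{lem:linearwrtblc} to propagate that equality to the entire segment. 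This is a fundamentally different mechanism from an intersection-of-convex-sets argument.
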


\begin{proof}
    This is a simple corollary of \thref{thm:ValuativeIndependence} and \thref{lem:linearwrtblc}.
    By the \thref{thm:ValuativeIndependence}, there exists a broken line segment $\gamma:[t_1,t_2] \to U^{\trop}(\Q)$ with endpoints $x_1$ and $x_2$
    such that 
    \eqn{\lra{\gamma\lrp{\frac{t_1+t_2}{2}},y} &= \frac{1}{2}\lra{\gamma\lrp{t_1},y}+\frac{1}{2}\lra{\gamma\lrp{t_2},y}\\
    &= \frac{1}{2}\lra{x_1,y}+\frac{1}{2}\lra{x_2,y}\\
    &= -r.}
    Then by \thref{lem:linearwrtblc}, 
    \eqn{\varphi(\gamma(t)) &= -\lrp{\frac{t_2 -t}{t_2 -t_1}} r - \lrp{\frac{t-t_1}{t_2 -t_1}} r\\
    &=-r}
    for all $t \in \lrb{t_1,t_2}$.
    That is, the support of $\gamma$ is contained in $H(y,r)$.
\end{proof}

In light of \thref{cor:HyperplaneBrokenLines}, we make the following definition.

\begin{definition}\thlabel{def:WeaklyConvex}
    We say a subset $S\subset U^{\trop}(\Q)$ is {\it{weakly convex}} if for every pair of points $s_1,\, s_2 \in S$, there exists a broken line segment with endpoints $s_1$ and $s_2$ whose support is contained in $S$.    
\end{definition}

Clearly, the notions {\it{broken line convexity}} and {\it{weak convexity}} coincide in usual convex geometry.
They are very different notions in $U^{\trop}(\Q)$, but both play important roles in in the theory of broken line convex geometry.
In fact, the two convexity notions interact with each other:

\begin{prop}\thlabel{prop:BLCcapWC}
    Let $S$ and $S'$ be subsets of $U^{\trop}(\Q)$ with $S$ broken line convex and $S'$ weakly convex.
    Then $S \cap S'$ is weakly convex.
\end{prop}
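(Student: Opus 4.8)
The plan is to combine the two hypotheses directly, exploiting the asymmetry between them: weak convexity of $S'$ provides \emph{some} broken line segment between any two of its points, whereas broken line convexity of $S$ controls \emph{every} broken line segment between two of its points. Thus the single segment supplied by $S'$ is automatically forced to lie in $S$ as well, and hence in $S \cap S'$.

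First I would fix an arbitrary pair of points $s_1, s_2 \in S \cap S'$. Since $s_1, s_2 \in S'$ and $S'$ is weakly convex, \thref{def:WeaklyConvex} yields a broken line segment $\gamma$ with endpoints $s_1$ and $s_2$ whose support is contained in $S'$. Next, since $s_1, s_2$ also lie in $S$ and $S$ is broken line convex, \thref{def:BLC} applies to this very segment: every broken line segment with endpoints $s_1$ and $s_2$ --- in particular $\gamma$ --- has support contained in $S$. Hence $\supp(\gamma) \subset S \cap S'$, and as $s_1, s_2$ were arbitrary, $S \cap S'$ is weakly convex by \thref{def:WeaklyConvex}.

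There is essentially no obstacle here; the one point requiring care is not to over-claim. We obtain only weak convexity of $S \cap S'$, because the segment $\gamma$ produced above need not be the unique broken line segment joining $s_1$ and $s_2$, and other such segments may escape $S'$. If one wanted $S \cap S'$ to be broken line convex one would need $S'$ itself to be broken line convex (in which case the intersection of two broken line convex sets is immediately broken line convex from \thref{def:BLC}); weak convexity of $S'$ is genuinely weaker, so weak convexity of the intersection is the best one can assert.
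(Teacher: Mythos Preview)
Your proof is correct and follows essentially the same approach as the paper's: use weak convexity of $S'$ to produce a broken line segment $\gamma$ between the two points, then use broken line convexity of $S$ to force $\supp(\gamma)\subset S$, hence $\supp(\gamma)\subset S\cap S'$. The additional remark on why only weak convexity can be concluded is accurate and not in the paper's proof, but it does not alter the argument.
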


\begin{proof}
    Let $s_1,\, s_2 \in S\cap S'$.
    Since $S'$ is weakly convex, there exists a broken line segment $\gamma$ with endpoints $s_1$ and $s_2$ whose support is contained in $S'$.
    Since $S$ is broken line convex, the support of $\gamma$ must be contained in $S$ as well.
    Hence the support of $\gamma$ is contained in $S\cap S'$, proving the claim.
\end{proof}

\begin{cor}\thlabel{cor:FaceWC}
    Every face of a polyhedral set is weakly convex.
\end{cor}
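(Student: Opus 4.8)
The plan is to exhibit a face $F$ of a polyhedral set $S$ as the intersection of a broken line convex set with a weakly convex set, and then apply \thref{prop:BLCcapWC} directly.

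First I would record that a polyhedral set $S = \bigcap_{i\in I} K(y_i, r_i)$ is itself broken line convex. Each tropical half-space $K(y_i,r_i)$ is broken line convex by \thref{lem:TropHalfSpaceBLC}, and an arbitrary intersection of broken line convex sets is broken line convex: this is immediate from \thref{def:BLC}, since if every broken line segment joining a pair of points in $S$ has support contained in each $K(y_i,r_i)$, then its support is contained in their intersection $S$.

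Next, by \thref{def:face-half-space}, the face $F$ has the form $F = S \cap H(y,r)$ for some tropical half-space $K(y,r) \supset S$. By \thref{cor:HyperplaneBrokenLines}, every pair of points of the tropical hyperplane $H(y,r)$ is joined by a broken line segment whose support lies in $H(y,r)$; that is, $H(y,r)$ is weakly convex in the sense of \thref{def:WeaklyConvex}.

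Finally, since $S$ is broken line convex and $H(y,r)$ is weakly convex, \thref{prop:BLCcapWC} yields that $F = S \cap H(y,r)$ is weakly convex, as claimed. I do not expect any real obstacle here; the one point that deserves a moment's care is the passage from ``each tropical half-space is broken line convex'' to ``$S$ is broken line convex'', and even that is essentially formal.
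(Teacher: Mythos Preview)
Your proof is correct and follows essentially the same approach as the paper: write $F = S \cap H(y,r)$, observe that $S$ is broken line convex and $H(y,r)$ is weakly convex (via \thref{cor:HyperplaneBrokenLines}), and apply \thref{prop:BLCcapWC}. The only difference is that you spell out why a polyhedral set is broken line convex, which the paper leaves implicit.
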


\begin{proof}
    By definition, a face $F$ of a polyhedral set $S\subset U^{\trop}(\Q)$ is of the form 
    \eqn{F = S \cap H(y,r)}
    for $H(y,r)$ a tropical hyperplane at the boundary of a tropical half-space $K(y,r)$ which contains $S$.
    The polyhedral set $S$ is broken line convex, and by \thref{cor:HyperplaneBrokenLines}, $H(y,r)$ is weakly convex.
\end{proof}

In usual convex geometry, the set of faces of a polyhedron forms a polyhedral complex.
Unfortunately, in general the faces of a polyhedral set in $U^{\trop}(\Q)$ will not form such a complex.
For instance, if we consider the bigon of Figure~\ref{fig:Bigon1}, the intersection of the pair of facets is a pair of vertices-- so in this instance the intersection of two faces is {\emph{not}} a face, but rather a union of faces.

\noindent
\begin{center}
\begin{minipage}{.85\linewidth}
\captionsetup{type=figure}
\begin{center}
\begin{tikzpicture}[scale=.85]

    \def\x{1.5}
    \def\l{3}
    \def\op{.3}

    \path (-\l,0) coordinate (3) --++ (\l,0) coordinate (0) --++ (\l,0) coordinate (1);
    \path (0,\l) coordinate (2) --++ (0,-2*\l) coordinate (4) --++ (\l,0) coordinate (5);
    \path (5) --++ (0,2*\l) coordinate (tr) --++ (-2*\l,0) coordinate (tl) --++ (0,-2*\l) coordinate (bl)--++ (2*\l,0) coordinate (br);

    \draw[thick, ->] (3) -- (1);
    \draw[thick, ->] (2) -- (4);
    \draw[thick, ->] (0) -- (5);

    \node at (.9,\l) {$1+z^{(0,1)}$};
    \node at (-2.3,-.35) {$1+z^{(-1,0)}$};
    \node at (3.25,-2) {$1+z^{(-1,1)}$};

    \coordinate (v1) at (-\x,\x);

    \path (v1) --++ (-.2*\x,.25*\x) node[color = more-blue] {$(-1,1)$};
    
    \coordinate (v2) at (\x,-\x);

    \path (v2) --++ (.5*\x,.2*\x) node[color = more-green] {$(1,-1)$};

    \coordinate (top-bend) at (0,0.5*\x);
    \coordinate (bottom-bend) at (-0.5*\x,0);
    
    \path [fill= blue-ish, fill opacity=\op] (v1.center)--(top-bend)--(v2.center)--(bottom-bend)--cycle;

    \draw[orange, thick] (v1.center)--(top-bend)--(v2.center);

    \draw[purple, thick] (v1.center)--(bottom-bend)--(v2.center);

    \node [circle, fill, inner sep = 1.5pt, color = more-blue] at (v1) {};

    \node [circle, fill, inner sep = 1.5pt, color = more-green] at (v2) {};
    
\end{tikzpicture}

\captionof{figure}{\label{fig:Bigon1}A bigon \tc{blue-ish}{$S$} in $(\cA^\vee)^{\trop}(\Q)$ together with its faces $\mathcal{F}_{\tc{blue-ish}{S}}$ for the $\cA$ cluster variety of type $A_2$. Note that the intersection of the facets is a pair of vertices rather than a single face.}
 
\end{center}
\end{minipage}
\end{center}

Nevertheless, the set of faces of a polyhedral set has a structure very reminiscent of a polyhedral complex.
To make this precise, we introduce the following definition:

\begin{definition}
    \thlabel{def:pseudo-complex}Let $\mathcal{P}$ be a set of subsets of $U^{\trop}(\Q)$.
    We say that $\mathcal{P}$ is a pseudo-complex if it has the following properties:
    \begin{enumerate}
        \item \label{it:PBoundary}If $P \in \mathcal{P}$, then there is a subset $\mathcal{A}$ of $\mathcal{P}$ with
        \eqn{\partial P = \bigcup_{P' \in \mathcal{A}} P'.}
        \item \label{it:PIntersection}If $P_1,\, P_2 \in \mathcal{P}$, then there is a subset $\mathcal{B}$ of $\mathcal{P}$ with 
        \eqn{P_1 \cap P_2 = \bigcup_{P \in \mathcal{B}} P.}
        \item \label{it:PIntersectionInBoundary}If $P_1,\, P_2 \in \mathcal{P}$ and $P_1 \cap P_2 \subsetneq P_1$, then 
        \eqn{P_1 \cap P_2 \subset \partial P_1.} 
    \end{enumerate}
\end{definition}

\begin{prop}\thlabel{prop:FacePseudo-Complex}
    The set of faces $\mathcal{F}_S$ of a polyhedral set $S\subset U^{\trop}(\Q)$ forms a pseudo-complex.
\end{prop}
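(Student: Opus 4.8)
The plan is to verify the three axioms of \thref{def:pseudo-complex} for $\mathcal{F}_S$, relying throughout on the description of faces as intersections $S \cap H(y,r)$ with supporting tropical hyperplanes, and on the fact that the pairing $\lra{\ \boldsymbol{\cdot}\ ,\ \boldsymbol{\cdot}\ }$ is tropically linear in each slot.

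First I would record some preliminary observations that all three parts will use. A face $F = S \cap H(y,r)$ of $S = \bigcap_{i\in I} K(y_i, r_i)$ can itself be written as a polyhedral set: it is $S \cap K(y,r) \cap K(-y,-r)$, so $\mathcal{F}_S$ consists of polyhedral sets. I would then want a description of $\partial F$ in terms of further faces. Working in a seed via $\mf{r}_{\vb{s}}$, the interior of a polyhedral set relative to its affine span is cut out by strict inequalities, and the boundary $\partial F$ is the union of the loci where one of the supporting inequalities $\lra{\ \boldsymbol{\cdot}\ , y_i}\geq -r_i$ (including those defining $F$ itself) becomes an equality; each such locus is $F \cap H(y_i, r_i)$, which is again a face of $S$ (one checks $K(y_i,r_i) \supset S$, and intersecting with $H(y,r)$ first and $H(y_i,r_i)$ second still yields $S$ intersected with a supporting hyperplane — or more carefully, a face of a face of a polyhedral set is a face of the original, which should be stated and proved as a lemma using that a sum of two supporting hyperplane functionals is again a supporting functional). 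This establishes Property~\ref{it:PBoundary}.

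For Property~\ref{it:PIntersection}, given faces $F_1 = S \cap H(y_1, r_1)$ and $F_2 = S \cap H(y_2,r_2)$, I would consider the functional $\lra{\ \boldsymbol{\cdot}\ , y_1 + y_2}$. Since $S \subset K(y_1,r_1) \cap K(y_2,r_2)$, tropical linearity of the pairing in the second argument — more precisely $\lra{x, y_1+y_2} \geq \min\{\text{terms}\}$ is the wrong direction, so I must be careful: what is true is $\lra{x,y_1}+\lra{x,y_2}\geq -(r_1+r_2)$ pointwise on $S$ simply by adding the two inequalities, and this sum equals $\lra{x, y_1 +y_2}$ only where no ``tropical cancellation'' occurs. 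The cleaner route is: on $S$ we have $\lra{\ \boldsymbol{\cdot}\ ,y_1} + \lra{\ \boldsymbol{\cdot}\ , y_2} \geq -(r_1+r_2)$ with equality precisely on $F_1 \cap F_2$; and by \thref{cor:TropLinearEquality} (or directly), the function $x \mapsto \lra{x,y_1}+\lra{x,y_2}$ agrees with $\lra{x, y_1+y_2}$ on the locus where the latter is minimal among structure-constant outputs. This is delicate. A safer approach for Property~\ref{it:PIntersection} is to argue purely polyhedrally in the seed chart: $F_1 \cap F_2$ is a polyhedral set, hence (once we know bounded or general polyhedral sets are ``cut out by their facets'' — Item~4 of the introduction's list, which is asserted but whose proof isn't in the excerpt) equals the union of its own faces, and each face of $F_1 \cap F_2$ is a face of $S$ by the face-of-a-face lemma. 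I expect this is the intended argument and I would structure it that way, citing the relevant polyhedral structure result once it is available.

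Property~\ref{it:PIntersectionInBoundary} should then follow formally: if $F_1 \cap F_2 \subsetneq F_1$, then $F_1 \cap F_2$ is a union of faces of $S$ each of which is properly contained in $F_1$, and a proper face of a polyhedral set $F_1$ lies in $\partial F_1$ (because a supporting hyperplane that does not contain all of $F_1$ meets the relative interior of $F_1$ in the empty set — this is the standard fact that a proper face avoids the relative interior, which in the seed chart is just linear algebra). The main obstacle, I anticipate, is Property~\ref{it:PIntersection}: making rigorous that the intersection of two faces decomposes as a union of faces requires either a careful analysis of where $\lra{\ \boldsymbol{\cdot}\ , y_1+y_2}$ fails to be the honest sum $\lra{\ \boldsymbol{\cdot}\ ,y_1}+\lra{\ \boldsymbol{\cdot}\ ,y_2}$, or an appeal to the polyhedral decomposition theorem for these sets. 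I would isolate a clean lemma — ``a face of a face of a polyhedral set $S$ is a face of $S$'' — prove it via additivity of supporting functionals, and then reduce all three axioms to it together with the (separately established) fact that a polyhedral set is the union of its faces.
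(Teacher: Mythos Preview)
You have correctly located the central difficulty---what happens to the ``sum of two supporting functionals''---but you have not resolved it, and your proposed fallbacks do not work. The seed-chart route fails because ordinary faces of the image polytope are generally \emph{not} tropical faces of $S$: a tropical hyperplane $H(y,r)$ becomes a piecewise-linear hypersurface in the chart, so you cannot import the usual ``face of a face is a face'' lemma. The face-of-a-face lemma you propose, proved via additivity of supporting functionals, would (if it worked) show that $F_1\cap F_2$ is itself a single face of $S$---but the bigon example in the paper shows this is false. The missing idea is that the ``sum'' $y_1+y_2$ must be the multi-valued tropical Minkowski sum $y_1 \+t y_2$, and then \thref{cor:TropLinearEquality} gives, \emph{for each individual point} $x\in F_1\cap F_2$, some element $y_x\in y_1\+t y_2$ with $\lra{x,y_x}=\lra{x,y_1}+\lra{x,y_2}=-(r_1+r_2)$. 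Thus $x$ lies in the face $S\cap H(y_x,r_1+r_2)$, which \thref{lem:HSpaceOfMinkSum} shows is contained in $F_1\cap F_2$. The point-dependence of $y_x$ is precisely why the intersection is only a \emph{union} of faces, i.e.\ why we get a pseudo-complex rather than a complex. The same mechanism handles Item~\ref{it:PBoundary}.

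Your argument for Item~\ref{it:PIntersectionInBoundary} is also incomplete. Knowing that $F_1\cap F_2$ is a union of faces $G$ of $S$ properly contained in $F_1$ does not immediately place $G$ in $\partial F_1$: $G$ is a face of $S$, not \emph{a priori} of $F_1$, and since $F_1$ is only weakly convex the usual linear-algebra argument does not apply. The paper instead uses a genuinely broken-line-theoretic lemma (\thref{prop:BLInHyperplanes}): a broken line segment contained in $H(y_1,r_1)\cap K(y_2,r_2)$ that touches $H(y_2,r_2)$ along a positive-length piece must lie entirely in $H(y_2,r_2)$. This forces any point of $F_1\cap F_2$ to be a boundary point of $F_1$, since a broken line in $F_1$ reaching such a point from the interior of $K(y_2,r_2)$ cannot continue into $H(y_2,r_2)$.
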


To establish \thref{prop:FacePseudo-Complex},
we will need a pair of lemmas:

\begin{lemma}\thlabel{lem:HSpaceOfMinkSum}
    Let $S = \bigcap_{i \in I} K(y_i,r_i)$, and let $\displaystyle{y\in \Sumt_{i\in I} y_i}$ and $\displaystyle{r= \sum_{i \in I}r_i}$.
    Then $S \subset K(y,r)$ and
    \eqn{S \cap H(y,r)  \subset \lrp{S \cap \bigcap_{i \in I} H(y_i,r_i)}. }
\end{lemma}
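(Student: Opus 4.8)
The plan is to unpack both claims directly from the definition of the tropical Minkowski sum (\thref{def:TropicalMinkowskiSum}), the bilinearity/positivity properties of the canonical pairing, and the valuative independence theorem (\thref{thm:ValuativeIndependence}).

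First I would establish $S \subset K(y,r)$. Take $x \in S$, so $\langle x, y_i\rangle \geq -r_i$ for each $i \in I$. The point $y$ is by hypothesis a value of the multivalued sum $\Sumt_{i\in I} y_i$ in $(U^\vee)^{\trop}(\Q)$, so by \thref{def:TropicalMinkowskiSum} there is an integer $a \in \Z_{>0}$ with $a y_i$ and $a y$ all integral and $\alpha_{a y_1, \dots, a y_{|I|}}^{a y} \neq 0$. I would then apply \thref{cor:TropLinearEquality} to the tropically linear function $\varphi = \langle x, \ \boldsymbol{\cdot}\ \rangle$ — or rather, invoke \thref{cor:TropLinCWRTBL} together with \thref{rem:ConvWRTBL-Equiv}, which gives exactly $\langle x, y\rangle \geq \sum_{i\in I} \frac{1}{|I|}\langle x, y_i\rangle \cdot$ (after rescaling) — more cleanly, from \thref{cor:TropLinearEquality} applied to $\varphi(\ \boldsymbol{\cdot}\ ) = \langle x, \ \boldsymbol{\cdot}\ \rangle$ we get $\sum_{i\in I} \langle x, a y_i\rangle = \min\{\langle x, z\rangle : \alpha_{a y_1,\dots,a y_{|I|}}^{z}\neq 0\} \leq \langle x, a y\rangle$, hence $\langle x, y\rangle \geq \sum_{i \in I}\langle x, y_i\rangle \geq -\sum_{i\in I} r_i = -r$, so $x \in K(y,r)$.

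Next I would prove the inclusion $S \cap H(y,r) \subset S \cap \bigcap_{i\in I} H(y_i, r_i)$. Suppose $x \in S \cap H(y,r)$, so $\langle x, y\rangle = -r = -\sum_{i\in I} r_i$. From the chain of inequalities just derived, $\langle x, y\rangle \geq \sum_{i\in I}\langle x, y_i\rangle \geq -\sum_{i\in I} r_i$, and since the two ends are equal, both inequalities are equalities. In particular $\sum_{i\in I}\langle x, y_i\rangle = \sum_{i\in I}(-r_i)$, and since each term satisfies $\langle x, y_i\rangle \geq -r_i$, equality of the sums forces $\langle x, y_i\rangle = -r_i$ for every $i$, i.e. $x \in H(y_i, r_i)$ for all $i$. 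Combined with $x \in S$, this gives the claimed containment.

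The main obstacle I anticipate is purely bookkeeping: making sure the rescaling integer $a$ that witnesses $y$ as a value of $\Sumt_{i\in I} y_i$ is handled consistently, and that \thref{cor:TropLinearEquality} (stated for integral points $x_i$) is applied to the integral points $a y_i$ rather than to the possibly non-integral $y_i$ directly, then divided back down by $a$ using bilinearity of the pairing in the second argument (\thref{rem:Rescale}). There is no genuine difficulty beyond this — the geometric content is entirely in \thref{thm:ValuativeIndependence} via \thref{cor:TropLinearEquality}, and the "equality of sums of bounded-below terms forces termwise equality" step is elementary.
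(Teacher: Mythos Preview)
Your proposal is correct and follows essentially the same route as the paper: establish $\lra{x,y}\geq \sum_{i\in I}\lra{x,y_i}$ for $x\in S$ via the valuative-independence machinery, then force termwise equality when $x\in H(y,r)$. The only cosmetic difference is that the paper cites \thref{prop:ConvWRTBL} (using that $\lra{x,\ \boldsymbol{\cdot}\ }$ is convex with respect to broken lines) rather than \thref{cor:TropLinearEquality} directly, and leaves the rescaling bookkeeping implicit.
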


\begin{proof}
    Let $x\in S$, then we have that $\lra{x,y_i} \geq -r_i$.
    By \thref{prop:ConvWRTBL}, 
    \eqn{\lra{x,y}\geq  \sum_{i\in I}\lra{x,y_i} \geq - \sum_{i\in I}r_i = -r.}
    So $x\in K(y,r)$.

    Now let $x\in S \cap H(y,r)$.  Then 
    \eqn{-r = \lra{x,y}\geq  \sum_{i\in I}\lra{x,y_i} \geq - \sum_{i\in I}r_i = -r,}
    and we must have equality throughout.
    That is, $ \lra{x,y_i} = -r_i$ and $x \in H(y_i,r_i)$ for all $i\in I$.
\end{proof}

\begin{lemma}\thlabel{prop:BLInHyperplanes}
    Let $\gamma:[t_1,t_2] \to U^{\trop}(\Q)$ be a broken line segment whose support is contained in $H(y_1,r_1) \cap K(y_2,r_2)$.
    If $\supp(\gamma) \cap H(y_2,r_2)$ is one dimensional, then $\supp(\gamma) \subset H(y_2,r_2)$.
\end{lemma}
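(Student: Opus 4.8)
The plan is to show that along $\gamma$ the tropically linear function $\lra{\ \boldsymbol{\cdot}\ , y_2}$ is constant, equal to $-r_2$. We already know $\supp(\gamma)\subset K(y_2,r_2)$, so $\lra{\gamma(t),y_2}\geq -r_2$ for all $t\in[t_1,t_2]$. By \thref{cor:TropLinCWRTBL}, the function $\varphi:=\lra{\ \boldsymbol{\cdot}\ ,y_2}$ is convex with respect to broken lines, so applying \thref{def:ConvWRTBL} to $\gamma$ gives
\eqn{\varphi(\gamma(t)) \geq \lrp{\dfrac{t_2-t}{t_2-t_1}} \varphi(\gamma(t_1)) + \lrp{\dfrac{t-t_1}{t_2-t_1}} \varphi(\gamma(t_2))}
for all $t\in[t_1,t_2]$. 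The hypothesis that $\supp(\gamma)\cap H(y_2,r_2)$ is one dimensional means the closed set $\lrc{t\in[t_1,t_2]:\varphi(\gamma(t))=-r_2}$ is infinite; since a broken line segment has finitely many domains of linearity and $\varphi$ restricted to $\gamma$ is piecewise linear in $t$, this forces $\varphi(\gamma(t))=-r_2$ on some nondegenerate subinterval $[a,b]\subset[t_1,t_2]$ with $a<b$.

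Next I would propagate this equality to all of $[t_1,t_2]$ using \thref{lem:linearwrtblc}. The obstacle is that \thref{lem:linearwrtblc} is stated for the equality case of the defining inequality along a single broken line segment, so I must first reduce to a situation where the affine interpolation between the endpoint values is the constant $-r_2$. Since $\supp(\gamma)\subset K(y_2,r_2)$ and $\varphi(\gamma(a))=\varphi(\gamma(b))=-r_2$, restricting $\gamma$ to $[a,b]$ and invoking \thref{lem:linearwrtblc} (with $t_1,t_2$ there taken to be $a,b$, and noting $\varphi$ is convex with respect to broken lines) would not immediately help because I want to extend outside $[a,b]$. Instead, I would argue directly: suppose for contradiction there is $t^*\in[t_1,t_2]$ with $\varphi(\gamma(t^*))>-r_2$; without loss of generality $t^*>b$ (the case $t^*<a$ is symmetric). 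Pick any $t_0\in(a,b)$. Applying convexity with respect to broken lines to the restriction $\gamma|_{[t_0,t^*]}$ gives
\eqn{\varphi(\gamma(t)) \geq \lrp{\dfrac{t^*-t}{t^*-t_0}} \varphi(\gamma(t_0)) + \lrp{\dfrac{t-t_0}{t^*-t_0}} \varphi(\gamma(t^*)) = \lrp{\dfrac{t^*-t}{t^*-t_0}}(-r_2) + \lrp{\dfrac{t-t_0}{t^*-t_0}}\varphi(\gamma(t^*))}
for $t\in[t_0,t^*]$; taking $t\in(t_0,b)$, the right-hand side strictly exceeds $-r_2$ since $\varphi(\gamma(t^*))>-r_2$ and the weight on $t^*$ is positive, contradicting $\varphi(\gamma(t))=-r_2$ for $t\in(t_0,b)\subset(a,b)$.

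Therefore $\varphi(\gamma(t))=-r_2$ for all $t\in[t_1,t_2]$, i.e. $\supp(\gamma)\subset H(y_2,r_2)$, as claimed. The main obstacle I anticipate is purely the bookkeeping in the reduction step — correctly extracting a nondegenerate interval of equality from the one-dimensionality hypothesis and the finiteness of domains of linearity, and then setting up the contradiction interval $[t_0,t^*]$ so that $\gamma$ restricted to it is still a legitimate broken line segment (which it is, being a restriction). Alternatively, if one prefers to invoke \thref{lem:linearwrtblc} as a black box, one can reparametrize and apply it on the interval $[t_0,t^*]$ directly: equality of $\varphi\circ\gamma$ with the affine interpolation holds at the interior point where it equals $-r_2$ only if the endpoint values average to $-r_2$, which they need not; so the direct contradiction argument above is cleaner and is the route I would take.
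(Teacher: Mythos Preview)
Your proof is correct and takes a genuinely different route from the paper's own argument. The paper proceeds by a geometric perturbation: it reduces to the situation where $\gamma$ lies in $H(y_1,r_1)\cap H(y_2,r_2)$ up to some wall-crossing time $\tau$, then analyzes the three possibilities for $\lra{\gamma(\tau+\epsilon),y_2}$ after the bend. The case $\lra{\gamma(\tau+\epsilon),y_2}>-r_2$ is eliminated by constructing a perturbed broken line segment $\gamma'$ (shifting the velocity by a small multiple of the wall direction $m_\wall$) whose endpoints lie in $K(y_2,r_2)$ but whose support does not, contradicting broken line convexity of $K(y_2,r_2)$.

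Your argument is purely analytic: you use only that $\varphi=\lra{\ \boldsymbol{\cdot}\ ,y_2}$ is convex with respect to broken lines (\thref{cor:TropLinCWRTBL}) together with the constraint $\varphi\circ\gamma\geq -r_2$ from $\supp(\gamma)\subset K(y_2,r_2)$. The contradiction you derive on the subinterval $[t_0,t^*]$ is exactly the mechanism behind \thref{lem:linearwrtblc}, applied in the appropriate direction. Your approach is cleaner, avoids any direct manipulation of wall-crossings or scattering data, and in fact never uses the hypothesis $\supp(\gamma)\subset H(y_1,r_1)$ --- so you have actually proved the stronger statement that any broken line segment in $K(y_2,r_2)$ meeting $H(y_2,r_2)$ in a one-dimensional set lies entirely in $H(y_2,r_2)$. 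The paper's perturbation argument, by contrast, is more hands-on with the broken line machinery; this makes it heavier here, though the same perturbation technique is reused later (e.g.\ in the proof of \thref{prop:SBLC-FullDim}) in contexts where a pure convexity argument would not suffice.
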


\begin{proof}
    By restricting the domain of $\gamma$ and reversing the direction of $\gamma$ as needed, we may reduce to the case in which
    $\gamma([t_1,\tau]) \subset H(y_1,r_1) \cap H(y_2,r_2)$ for some $\tau \in (t_1, t_2)$.
    Without hitting a wall, there is no way for $\gamma$ to leave the intersection $H(y_1,r_1) \cap H(y_2,r_2)$, so suppose $\gamma$ crosses the wall $(\wall, f_\wall(z^{m_\wall}))$, with $\wall \subset n_{\wall}^{\perp}$, at time $\tau$.
    There are three possibilities for $\gamma(\tau+\epsilon)$ for small $\epsilon>0$.
    \begin{enumerate}
        \item \label{it:=r}$\gamma(\tau+\epsilon) \in H(y_1,r_1) \cap H(y_2,r_2)$
        \item \label{it:<r}$\lra{\gamma(\tau+\epsilon), y_2} < -r_2$        
        \item \label{it:>r}$\lra{\gamma(\tau+\epsilon), y_2} > -r_2$        
    \end{enumerate}
    We want eliminate Items~\ref{it:<r} and \ref{it:>r}. 
    We immediately note that if $\lra{\gamma(\tau+\epsilon), y_2} < -r_2$, then $\supp(\gamma) \not\subset K(y_2,r_2)$, eliminating Item~\ref{it:<r}.
    
    Next, suppose $\lra{\gamma(\tau+\epsilon), y_2} > -r_2$.
    Denote the velocity of $\gamma$ immediately prior to crossing $(\wall,f_{\wall}(z^{m_\wall}))$ by $\dot{\gamma}_-$ and the velocity immediately after crossing by $\dot{\gamma}_+$.
    For some $k\geq 0$, we have $\dot{\gamma}_+=\dot{\gamma}_- -k m_{\wall}$. 
    We can give a new broken line segment
    $\gamma':[t_1', t_2']\to U^{\trop}(\Q)$ crossing $(\wall,f_{\wall}(z^{m_\wall}))$ such that for some $\lambda>0$ and some small $\delta>0$ 
    \begin{itemize}
        \item $\gamma'(t_1')= \gamma(t_1)$, 
        \item $\dot{\gamma}'_-= \lambda \lrp{\dot{\gamma}_- - \delta m_{\wall}}$,  
        \item $\dot{\gamma}_+ = \lambda \lrp{\dot{\gamma}_+ - \delta m_{\wall}}$, and
        \item $\lra{\gamma'(t_2'), y_2} > -r_2$.
    \end{itemize}
    The factor of $\lambda$ above is simply to ensure we can make exponent vectors integral. With this in mind, since $\dot{\gamma}_+=\dot{\gamma}_- -k m_{\wall}$ pertains to an allowed bend and $\lra{n_\wall, m_{\wall}}=0$, for some $\lambda>0$ we have that 
    \eqn{\dot{\gamma}'_+= \lambda \lrp{\dot{\gamma}_+ - \delta m_{\wall}} =  \lambda \lrp{(\dot{\gamma}_- -km_{\wall})-\delta m_{\wall}} =   \lambda \lrp{(\dot{\gamma}_- -\delta m_{\wall})-k m_{\wall}} =  \dot{\gamma}'_- - \lambda k m_{\wall}}
    is also an allowed bend.
    See Figure~\ref{fig:ContradictConvexity} for an illustration of this scenario.
    
\noindent
\begin{center}
\begin{minipage}{.85\linewidth}
\captionsetup{type=figure}
\begin{center}
\begin{tikzpicture}[scale=.85]

    \def\x{1.5}
    \def\l{3}
    \def\d{.07}
    \def\k{.4}
    \def\v{2}
    \def\op{.3}

    \path (-\l,0) coordinate (3) --++ (\l,0) coordinate (0) --++ (\l,0) coordinate (1);
    \path (0,\l) coordinate (2) --++ (0,-2*\l) coordinate (4) --++ (\l,0) coordinate (5);
    \path (5) --++ (0,2*\l) coordinate (tr) --++ (-2*\l,0) coordinate (tl) --++ (0,-2*\l) coordinate (bl)--++ (2*\l,0) coordinate (br);

    \path [name path = right] (tr) -- (br); 

    \path [name path = bottom] (bl) -- (br); 

    \path [name path = top] (tl) -- (tr); 

    \path [name path = after-wall] (0) --++ (\l,-0.5*\l);

    \path [name path = wall-above] (0) --++ (.2*\l,\l);

    \path [name path = wall-below] (0) --++ (-.2*\l,-\l);

    \path [name intersections={of=right and after-wall, by=right-end}];

    \path [name intersections={of=top and wall-above, by=wall-top}];

    \path [name intersections={of=bottom and wall-below, by=wall-bot}];
    
    \path[fill, color = blue-ish, opacity = \op] (3) -- (0) -- (right-end) -- (br) -- (bl) -- cycle; 
    
    \draw[blue-ish, thick] (3) -- (0) -- (right-end) node [pos=.7, sloped, above, color= blue-ish] {$K(y_2,r_2)$};

    \draw[thick, dashed] (wall-bot) -- (wall-top);

    \path (wall-top) --++ (.8*\x,-.4*\x) node {$\lrp{\wall,f_{\wall}(z^{m_\wall})}$};

    \node [circle, fill, inner sep = 1.5pt, color = orange](v1) at (-.7*\l,0) {};

    \node [circle, fill, inner sep = 1.5pt, color = orange](v2) at (\v-\k,-5*\k) {};   

    \draw[very thick, orange] (v1) -- (0) -- (v2) node [pos=.6, sloped, below, color= orange] {$\left.\gamma\right|_{[\tau-\epsilon,\tau+\epsilon]}$};

    \path [name path = gpm] (v1) --++(2*\v+2*\d,10*\d);

    \path [name intersections={of=gpm and wall-above, by=bend}];

    \draw[very thick, purple] (v1) --(bend)node [pos=.6, sloped, above, color= purple] {$\gamma'$}--++(\v-\k+\d,-5*\k+5*\d) node [circle, fill, inner sep = 1.5pt, color = purple] {};

\end{tikzpicture}

\captionof{figure}{\label{fig:ContradictConvexity}
Schematic of \tc{orange}{$\left.\gamma\right|_{[\tau-\epsilon,\tau+\epsilon]}$} and \tc{purple}{$\gamma'$} as detailed above.
}
 
\end{center}
\end{minipage}
\end{center}
    However, this broken line segment $\gamma'$ has endpoints in $K(y_2,r_2)$, while having support not contained in $K(y_2, r_2)$.
    In particular, the point at which $\gamma'$ crosses $(\wall,f_{\wall}(z^{m_\wall}))$ does not lie in $K(y_2, r_2)$.
    This contradicts broken line convexity of $K(y_2, r_2)$.
\end{proof}

\begin{proof}[Proof of \thref{prop:FacePseudo-Complex}]
    We begin with Item~\ref{it:PBoundary} of \thref{def:pseudo-complex}.
    For the face $F =\varnothing$, the statement is vacuous.
    So consider a face $F\neq \varnothing$. 
    Then $F$ is of the form $F=H(y,r)\cap S$ for some supporting tropical hyperplane $H(y,r)$.   Let 
    \eqn{S= \bigcap_{i \in I} K(y_i,r_i)}
    be a presentation of $S$.
    Necessarily, the boundary of $F$ is obtained by intersection with some of the tropical hyperplanes $H(y_i,r_i)$.
    Precisely, define 
    \eqn{\mathcal{I}_F :=  \lrc{J \subset I \, : \,  \lrp{F \cap \bigcap_{j \in J} H(y_j,r_j) }\subsetneq F}.}
    Then
    \eqn{\partial F =  F \cap \bigcup_{J \in \mathcal{I}_F }\bigcap_{j \in J} H(y_j,r_j). }
    For shorthand, write 
    \eqn{F_J:= \lrp{F \cap \bigcap_{j \in J} H(y_j,r_j) },}
    so $\partial F = \bigcup_{J \in \mathcal{I}_F} F_J$.
    Observe that 
    \eqn{F_J= \lrp{H(y,r) \cap \bigcap_{j \in J} H(y_j,r_j) }\cap S.}
    Now set $r'=r+\sum_{j\in J}r_j$ and let $y'\in y\+t \Sumt y_j$.
    Then by \thref{lem:HSpaceOfMinkSum}, 
    \eqn{\lrp{K(y,r) \cap \bigcap_{j \in J} K(y_j,r_j) } \subset K(y',r')}
    and
    \eqn{S \cap H(y',r') \subset F_J. }
    Moreover, by \thref{cor:TropLinearEquality}, for each $x\in F_J$, there exists some such $y'_x\in y\+t \Sumt y_j$ with $H(y'_x,r')$ a supporting tropical hyperplane containing $x$.
    That is, $F_x:= H(y'_x,r')\cap S$ is a face of $S$ containing $x$.
    This establishes Item~\ref{it:PBoundary}.

    Now we turn our attention to Item~\ref{it:PIntersection} of \thref{def:pseudo-complex}, whose proof is very similar to the one above.
    As before, if $F_1 \cap F_2 = \varnothing$, the claim trivially holds.
    Suppose $F_1 \cap F_2 \neq \varnothing$, and
    let $F_i = H(y_i,r_i)\cap S$, where $H(y_i,r_i)$ is a supporting tropical hyperplane.
    Now \thref{lem:HSpaceOfMinkSum} and \thref{cor:TropLinearEquality} imply that for each $x \in F_1 \cap F_2$, there is a supporting tropical hyperplane $H(y_x,r_1+r_2)$ containing $x$ such that $K(y_1,r_1)\cap K(y_2,r_2) \subset K(y_x,r_1+r_2)$ and $F_x:=S\cap H(y_x,r_1+r_2) \subset \lrp{S \cap H(y_1,r_1)\cap H(y_2,r_2)} = F_1 \cap F_2$, establishing Item~\ref{it:PIntersection}.

    Finally, for Item~\ref{it:PIntersectionInBoundary} of \thref{def:pseudo-complex},
    let $F_i = H(y_i,r_i) \cap S$.
    Since $F_1 \cap F_2$ is properly contained in $F_1$, for some $x\in F_1$ we have $\lra{x,y_2} > -r_2$.
    However, tropically linear functions are continuous so this implies $\lra{\ \boldsymbol{\cdot} \ ,y_2} > -r_2$ on an open neighborhood of $x$.
    Now consider a broken line segment contained in $F_1$ which begins at $x$ and proceeds to some $x'$ with $\lra{x',y_2}=-r_2$.
    (If no such broken line segment exists, then $F_1 \cap F_2 = \varnothing$, and we are done.)
    By \thref{prop:BLInHyperplanes}, this broken line segment {\emph{cannot}} be extended in such a way that a positive length subsegment lies in $H(y_1,r)\cap H(y_2,r)$.
    Then $x'$ must in fact lie at the boundary of $F_1$.
\end{proof}

\begin{prop}\thlabel{prop:FaceComplement}
    Let $F$ be a proper face of a polyhedral subset $S \subset U^{\trop}(\Q)$.
    Then $F$ is not contained in ${\bconv(S\setminus F)}$.
\end{prop}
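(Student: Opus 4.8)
The plan is to argue by contradiction, using the separating‐hyperplane structure that defines $F$ together with the filtration description of the broken line convex hull from \thref{cor:ConvColim}. Suppose $F \subset \bconv(S \setminus F)$. Since $F$ is a proper face, write $F = S \cap H(y,r)$ where $K(y,r) \supset S$ is a supporting tropical half-space, so $\langle x, y\rangle \geq -r$ for all $x \in S$, with equality precisely on $F$. The key point is that the tropically linear function $\varphi := \langle\,\cdot\,, y\rangle$ is convex with respect to broken lines by \thref{cor:TropLinCWRTBL}, and that $\varphi > -r$ strictly on $S \setminus F$ (this uses that $S \setminus F$ is contained in $S \cap K(y,r)$ but misses $H(y,r)$ entirely).

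**First**, I would pick a point $x_0 \in F$ that lies in the relative interior of $F$ (or just any point of $F$, since the argument should not need interiority), and invoke $x_0 \in \bconv(S\setminus F)$. By \thref{cor:ConvColim}, $x_0 \in (S\setminus F)_d$ for some $d$, so there exist $s_1,\dots,s_d \in S \setminus F$ and $a_1,\dots,a_d \in \Z_{\geq 0}$ with $\sum a_i \neq 0$, all of $a_i s_i$ and $(\sum a_i)x_0$ integral, and $\alpha_{a_1 s_1,\dots,a_d s_d}^{(a_1+\cdots+a_d)x_0} \neq 0$. **Then** I would apply \thref{prop:ConvWRTBL} to the convex-with-respect-to-broken-lines function $\varphi$ to get
\eqn{\varphi(x_0) \geq \sum_{i=1}^d \frac{a_i}{a_1+\cdots+a_d}\varphi(s_i).}
Since each $s_i \in S\setminus F$ satisfies $\varphi(s_i) > -r$ strictly, and the coefficients $\tfrac{a_i}{\sum a_j}$ are non-negative and sum to $1$ (with at least one $a_i$ positive, hence at least one coefficient positive), the right-hand side is strictly greater than $-r$. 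Therefore $\varphi(x_0) > -r$, i.e. $x_0 \notin H(y,r)$, contradicting $x_0 \in F = S \cap H(y,r)$.

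**The main obstacle** I anticipate is the strict‐inequality bookkeeping: one needs $S\setminus F$ to be \emph{strictly} on the positive side of $H(y,r)$, which is immediate from $F = S\cap H(y,r)$ and $S \subset K(y,r)$ (a point of $S$ is either in $F$, hence on $H$, or not in $F$, hence with $\varphi > -r$), and one needs that not all $s_i$ with $a_i>0$ could somehow force equality — but since every relevant $\varphi(s_i)$ is strictly above $-r$ and the weights are a genuine probability vector, the weighted average cannot reach $-r$. A minor subtlety is that \thref{prop:ConvWRTBL} is stated for $\varphi$ defined on a broken line convex set $S$, whereas here $\varphi$ is tropically linear on all of $U^{\trop}(\Q)$; restricting $\varphi$ to $\bconv(S)$ (or to any broken line convex set containing all the $s_i$ and $x_0$) is harmless, so this causes no real difficulty. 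No appeal to the finer pseudo-complex structure of faces is needed.
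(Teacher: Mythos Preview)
Your argument is correct.  The paper, however, takes a shorter and more direct route: it simply observes that $S\setminus F = S \cap \bigl(K(y,r)\setminus H(y,r)\bigr)$ is the intersection of two broken line convex sets (the open tropical half-space being broken line convex by the same reasoning as \thref{prop:ConvWRTBL-BLC} with a strict inequality), hence is itself broken line convex.  Therefore $\bconv(S\setminus F)=S\setminus F$, which is disjoint from $F$.  Your approach unpacks essentially the same mechanism --- strict positivity of $\lra{\,\cdot\,,y}+r$ on $S\setminus F$ together with convexity with respect to broken lines --- but routes it through the filtration description of \thref{cor:ConvColim} and the inequality of \thref{prop:ConvWRTBL} rather than through the cleaner structural statement ``$S\setminus F$ is already broken line convex''.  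The paper's version is a one-liner; yours is a valid expansion of the same idea, and has the minor advantage of not needing to state the open-half-space analogue of \thref{lem:TropHalfSpaceBLC} separately.
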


\begin{proof}
    $F$ is of the form $F=H(y,r)\cap S$ for some supporting tropical hyperplane $H(y,r)$ for $S$.
    The open tropical half-space $K(y,r)\setminus H(y,r)$ is broken line convex, so its intersection with $S$ is as well.
    But $F$ is not contained in this intersection.
\end{proof}

\subsection{The weak face fan}

The other vitally important polyhedral complex in the theory of toric varieties is the fan.
To pursue our goal of a cluster version of Batyrev-Borisov duality in future work, we will primarily be interested in a particular sort of fan, namely  a face fan.
So, we now turn our attention to defining the broken line convex geometry analogue of a face fan, and showing that it also forms a pseudo-complex.

\begin{prop}\thlabel{prop:WeakCone(S)}
    If $S \subset U^{\trop}(\Q)$ is weakly convex, then so is $\Q_{\geq 0} \cdot S$.    
\end{prop}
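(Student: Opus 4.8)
The plan is to verify \thref{def:WeaklyConvex} directly for $\Q_{\geq 0}\cdot S$: given two points $x_1,x_2\in\Q_{\geq 0}\cdot S$, I want to exhibit a broken line segment from $x_1$ to $x_2$ whose support lies in $\Q_{\geq 0}\cdot S$. Write $x_i=\lambda_i s_i$ with $\lambda_i\in\Q_{\geq 0}$ and $s_i\in S$. The construction will rest on two elementary facts. First, \emph{radial segments are broken line segments}: for $s\in U^{\trop}(\Q)$ and $0\leq a\leq b$ in $\Q$ with $a+b>0$, putting $z=(a+b)s$ and $t=a/(a+b)\in[0,1)$, the straight segment $\lrc{cs:c\in[a,b]}$ coincides with the straight segment from $tz$ to $(1-t)z$ used in the proof of \thref{char_blc}, and its support lies in $\Q_{\geq 0}\cdot\lrc{s}$. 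Second, \emph{rescaling}: if $\eta$ is a broken line segment from $p$ to $q$ and $\mu\in\Q_{>0}$, then the pointwise dilation $\mu\eta$ is a broken line segment from $\mu p$ to $\mu q$ with $\supp(\mu\eta)=\mu\cdot\supp(\eta)$; this is the geometric avatar of \thref{lem:rescale}, clearing denominators as needed to reach integral data. Both operations send supports inside $\Q_{\geq 0}\cdot S$ to supports inside $\Q_{\geq 0}\cdot S$.

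First I would dispose of the easy cases. If $\lambda_1=0$ then $x_1=0$, and a radial segment from $0$ to $x_2=\lambda_2 s_2$ has support $\lrc{cs_2:c\in[0,\lambda_2]}\subseteq\Q_{\geq 0}\cdot\lrc{s_2}\subseteq\Q_{\geq 0}\cdot S$; the case $\lambda_2=0$ is symmetric, and if $x_1,x_2$ lie on a common ray $\Q_{\geq 0}\cdot\lrc{s}$ with $s\in S$ then a radial segment between them already works. So I may assume $\lambda_1,\lambda_2>0$ and that $x_1,x_2$ are not collinear with the origin. Using weak convexity of $S$, fix a broken line segment $\delta:[0,\tau]\to S$ from $s_1$ to $s_2$ with $\supp(\delta)\subseteq S$. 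The natural candidate joining $x_1$ to $x_2$ is a concatenation of three broken line segments: a radial segment from $x_1=\lambda_1 s_1$ to $\mu s_1$, then the dilated segment $\mu\delta$ from $\mu s_1$ to $\mu s_2$, then a radial segment from $\mu s_2$ to $\lambda_2 s_2=x_2$, for a suitable scale $\mu\in\Q_{>0}$. The middle piece has support $\mu\cdot\supp(\delta)\subseteq\mu S\subseteq\Q_{\geq 0}\cdot S$, and each radial piece has support in $\Q_{\geq 0}\cdot\lrc{s_i}\subseteq\Q_{\geq 0}\cdot S$, so the support of the whole path lies in $\Q_{\geq 0}\cdot S$.

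The hard part will be to show that this concatenation is a \emph{genuine} broken line segment, i.e.\ that the bends at the junction points $\mu s_1$ and $\mu s_2$ are admissible. A bend can occur only at a point lying on some wall $\wall$, and then only by a non-negative multiple of $m_{\wall}$, with matching decorating monomials (cf.\ \thref{def:k-generic} and the bend analysis in \thref{prop:BLInHyperplanes}); since walls are cones through the origin, $\mu s_i$ lies on a wall exactly when $s_i$ does, so for a general $s_i\in S$ one cannot rely on any wall at $\mu s_i$ to absorb the jump between the velocity of $\mu\delta$ there and the radial velocity. To get around this I would arrange that $\mu\delta$ meets each ray $\Q_{\geq 0}\cdot\lrc{s_i}$ \emph{tangentially}, so that the two junctions become straight --- no bend at all --- with trivially compatible decorating monomials. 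Concretely, after first replacing $\delta$ by a suitable restriction near each endpoint (restriction and reversal of broken line segments are again broken line segments, as used in \thref{lem:linearwrtblc} and \thref{prop:BLInHyperplanes}), I would exploit the scale-invariance of the scattering diagram along a ray --- together with the radial broken line segments above and \thref{lem:axsumaix} --- to graft short radial arcs onto $\delta$ so that the modified segment leaves $s_1$ and arrives at $s_2$ along the respective rays, all the while keeping its support inside $\Q_{\geq 0}\cdot S$. I expect this local straightening of a broken line segment near a ray of the conical scattering diagram, carried out without leaving $\Q_{\geq 0}\cdot S$, to be the genuinely delicate step, and the point at which input beyond the lemmas already established in the paper is needed. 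Once a bona fide broken line segment $\gamma$ from $x_1$ to $x_2$ has been assembled this way, its support is the union of the supports of its pieces and hence lies in $\Q_{\geq 0}\cdot S$, which is exactly what weak convexity of $\Q_{\geq 0}\cdot S$ demands.
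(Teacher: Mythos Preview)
Your three–piece concatenation is not a proof, and the gap you flag is real and not repairable along the lines you sketch. A concatenation of broken line segments is almost never a broken line segment: at the junction $\mu s_1$ the incoming velocity is radial (a multiple of $s_1$) while the outgoing velocity of $\mu\delta$ is $-m_\ell$ for the first exponent of $\delta$, and these agree only if $\delta$ itself starts radially. Your proposed fix --- ``graft short radial arcs onto $\delta$ so that the modified segment leaves $s_1$ and arrives at $s_2$ along the respective rays'' --- just recreates the same illegal junction one step in: the radial arc still has to meet the original non-radial $\delta$ somewhere, and at that meeting point you again need an admissible wall bend that there is no reason to exist. Neither \thref{lem:axsumaix} nor scale-invariance of walls gives you control over the \emph{direction} of $\delta$ at its endpoints; they only tell you about straight broken lines along a single ray. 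So the ``genuinely delicate step'' is in fact the whole argument, and you have not supplied it.

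The paper avoids concatenation entirely. Starting from the weak-convexity segment $\gamma:[0,T]\to S$ joining $s_1$ to $s_2$, it picks the time $\tau=\frac{\lambda_2}{\lambda_1+\lambda_2}T$ and feeds $(\gamma,\tau)$ into the algorithm of \cite[\S5]{BLC} to obtain a balanced pair of broken lines $(\gamma^{(1)},\gamma^{(2)})$ with initial exponents $\mu\lambda_i s_i$; then it feeds this pair into the algorithm of \cite[\S4]{BLC} to produce a \emph{single} broken line segment $\overline{\gamma}$ from $\mu\lambda_1 s_1$ to $\mu\lambda_2 s_2$. The crucial feature of these two algorithms is that every bending point of $\overline{\gamma}$ is a positive scalar multiple of a bending point of $\gamma$; hence each straight piece $\overline{L}$ of $\overline{\gamma}$ has endpoints positively proportional to the endpoints of the corresponding piece $L\subset S$, so $\overline{L}\subset\Q_{\geq 0}\cdot L\subset\Q_{\geq 0}\cdot S$. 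A final rescaling by $1/\mu$ gives the desired segment. In short, the paper reinterprets the weighted average along $\gamma$ (weights $\lambda_1,\lambda_2$) as an unweighted average along a new broken line segment with rescaled endpoints, using the segment~$\leftrightarrow$~pair correspondence from \cite{BLC}; this produces one genuine broken line segment rather than a glued path.
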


\begin{proof}
    Consider an arbitrary pair of points $\lambda_1 s_1,\ \lambda_2 s_2 \in \Q_{\geq 0} \cdot S$.
    Let $\gamma:[0,T] \to U^{\trop}(\Q)$ be a broken line segment with endpoints $s_1$ and $s_2$ whose support is contained in $S$.
    We will show that there is a broken line segment $ \widetilde{\gamma}:[0, \widetilde{T}] \to U^{\trop}(\Q)  $ with endpoints $\lambda_1 s_1$ and $\lambda_2 s_2$ whose support is contained in $\Q_{\geq 0} \cdot S$.
    Let us address a few trivial cases before turning our attention to the generic setting.
    If $\lambda_1=\lambda_2=:\lambda$, then we can simply rescale the support of $\gamma$ by $\lambda$ while leaving the decoration monomials unchanged.
    The particular cases of $\lambda = 0$ and $\lambda = 1$ are the constant broken line segment with image the origin and the original broken line segment $\gamma$ respectively.
    Next, if $\lambda_i\neq \lambda_j =0$, we may take a straight segment between the origin and $\lambda_i s_i$.

    The remaining cases are less obvious, but follow from results of \cite{BLC}.
    Assume $\lambda_1$ and $\lambda_2$ are both non-zero.
    As in \cite{BLC}, denote the initial exponent vector of a broken line $\eta$ by $I(\eta)$ and the exponent vector of near the endpoint of $\eta$ by $\mf{m}_0(\eta)$.
    Define $\tau:= \frac{\lambda_2}{\lambda_1+\lambda_2} T$.
    Then the algorithm of \cite[\S5]{BLC} produces a balanced pair of broken lines $(\gamma^{(1)},\gamma^{(2)})$ where, for some $\mu>0$, 
    \begin{itemize}
        \item $I(\gamma^{(i)}) = \mu \lambda_i s_i$,
        \item $\mf{m}_0(\gamma^{(1)})+ \mf{m}_0(\gamma^{(2)}) = \mu (\lambda_1 + \lambda_2) \gamma(\tau)$, and
        \item $\gamma^{(1)}(0)=\gamma^{(2)}(0) = \mu (\lambda_1 + \lambda_2) \gamma(\tau)$.
    \end{itemize}
    Moreover, in this algorithm the bending points of the broken line segment $\gamma$ are positively proportional to the bending points of the pair $(\gamma^{(1)},\gamma^{(2)})$.
    (In the non-generic case in which $\gamma(\tau)$ is a bending point, the corresponding bend for the pair is by convention recorded in $\gamma^{(2)}$ in the algorithm.)
    
    Next, we take this pair of broken lines $(\gamma^{(1)},\gamma^{(2)})$, together with the pair of integers $a=b=1$, as input for the algorithm of \cite[\S4]{BLC}.
    The result is a broken line segment $\overline{\gamma}:[0,\overline{T}]\to U^{\trop}(\Q)$ with
    $\overline{\gamma}(0)= I(\gamma^{(1)})= \mu \lambda_1 s_1$ and $\overline{\gamma}(\overline{T})= I(\gamma^{(2)})= \mu \lambda_2 s_2$, passing through $\frac{\mu }{2}(\lambda_1 + \lambda_2) \gamma(\tau)$ at time $\frac{1}{2}\overline{T}$.
    As before, in this algorithm the bending points of the broken line segment $\overline{\gamma}$ are positively proportional to the bending points of the pair $(\gamma^{(1)},\gamma^{(2)})$, and thus positively proportional to the bending points of the broken line segment $\gamma$.
    Then the endpoints of each straight segment $\overline{L}$ of $\overline{\gamma}$ are positively proportional to the endpoints of the corresponding straight segment $L$ of $\gamma$.
    As a result, each such $\overline{L}$ is in $\Q_{\geq 0}\cdot L \subset \Q_{\geq 0}\cdot S$.
    
    Finally, we obtain the desired $\widetilde{\gamma}$ by rescaling the support (and elapsed time) of $\overline{\gamma}$ by $\frac{1}{\mu}$. 
\end{proof}

\begin{remark}
    Heuristically, in the proof of \thref{prop:WeakCone(S)}, we are translating between different tropical representations of the statement:
    \begin{center}
        $\tf_{\mu (\lambda_1+\lambda_2) \gamma(\tau)}$ is a non-zero summand of the product $\tf_{\mu \lambda_1 s_1} \tf_{\mu \lambda_2 s_2}$. 
    \end{center}
    In particular, if we consider the original input -- the broken line segment $\gamma$ and time $\tau$ -- the tropical point $\gamma(\tau)$ is viewed as a weighted average along $\gamma$ of the tropical points $\gamma(0)= s_1$ (with weight $\lambda_1$) and $\gamma(T)=s_2$ (with weight $\lambda_2$).
    Meanwhile, for the broken line segment $\overline{\gamma}$, the tropical point $\frac{\mu }{2}(\lambda_1 + \lambda_2) \gamma(\tau)$ is interpreted as the (unweighted) average along $\gamma$ of the tropical points $\mu \lambda_1 s_1$ and $\mu \lambda_2 s_2$. 
\end{remark}

\begin{remark}
    Note that if we were to consider a broken line convex set $S$ in \thref{prop:WeakCone(S)} rather than just a weakly convex set, then $\Q_{\geq}\cdot S$ would clearly be broken line convex.
    This follows immediately from the equivalence of broken line convexity and positivity (\cite[Theorem~6.1]{BLC}). 
\end{remark}

\begin{definition}\thlabel{def:WeakCone}
    If $S$ is weakly convex, we call $\Q_{\geq 0}\cdot S$ the {\it{weak cone of $S$}}.
    For arbitrary $S$, we call $\bconv(\Q_{\geq 0}\cdot S)$ the {\it{cone of $S$}} and denote it by $\Cone(S)$.
\end{definition}

\begin{definition}\thlabel{def:FaceFan}
    Let $S \subset U^{\trop}(\Q)$ be a full-dimensional polytopal set containing $0$ in the interior.
    The {\it{weak face fan of $S$}}, denoted $\Sigma[S]$, is the following collection of weak cones in $U^{\trop}(\Q)$:
    \eqn{\Sigma[S]:= \lrc{\sigma_F := \Q_{\geq 0} \cdot F : F \in \mathcal{F}_S } \cup \lrc{0}.}
\end{definition}

\begin{prop}\thlabel{prop:FaceFanPseudo-Complex}
    Let $S \subset U^{\trop}(\Q)$ be a full-dimensional polytopal set containing $0$ in the interior.
    Then the weak face fan of $S$ forms a pseudo-complex.
\end{prop}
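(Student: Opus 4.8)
The plan is to transport the pseudo-complex structure on $\mathcal{F}_S$ obtained in \thref{prop:FacePseudo-Complex} to $\Sigma[S]$ through the coning map $F\mapsto\sigma_F:=\Q_{\geq 0}\cdot F$, treating the adjoined apex $\lrc{0}$ and the top cone $\sigma_S=\Q_{\geq 0}\cdot S$ as easy special cases. First I would record what $0\in\operatorname{int}(S)$ provides. Writing $S=\bigcap_{i\in I}K(y_i,r_i)$ and using that each $\lra{\ \boldsymbol{\cdot}\ ,y_i}$ is tropically linear --- hence homogeneous, $\lra{\lambda x,y_i}=\lambda\lra{x,y_i}$ for $\lambda\geq 0$, and convex with respect to broken lines --- one checks that we may take every $r_i>0$, and then that, for each $x\neq 0$, the set $\lrc{\lambda\geq 0:\lambda x\in S}$ is an interval $[0,\mu_0(x)]$ with $\mu_0(x)\in(0,\infty)$, that $\lambda x\in\operatorname{int}(S)$ for $\lambda<\mu_0(x)$, and that $\mu_0(x)\,x\in\partial S$. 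Consequently $S$ is star-shaped about $0$, $\Q_{\geq 0}\cdot S=U^{\trop}(\Q)$ (so $\sigma_S$ is the top cone), and --- crucially --- each ray $\Q_{>0}\cdot x$ meets $\partial S$ in exactly the single point $\mu_0(x)\,x$. I will call this last fact \emph{radial uniqueness}.

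Next I would assemble a ``coning dictionary''. Since every face $F$ is weakly convex by \thref{cor:FaceWC}, \thref{prop:WeakCone(S)} shows each $\sigma_F$ is a genuine weak cone, so $\Sigma[S]$ is a well-defined collection of weak cones. For nonempty faces $F_1,F_2$ one has $\sigma_{F_1}\cap\sigma_{F_2}=\lrp{\Q_{\geq 0}\cdot(F_1\cap F_2)}\cup\lrc{0}$: the nonobvious inclusion uses radial uniqueness, since a nonzero point $x=\lambda_1 f_1=\lambda_2 f_2$ with $f_i\in F_i$ has $f_1,f_2$ on the common ray $\Q_{>0}\cdot x$ and (for proper $F_i$) in $\partial S$, forcing $f_1=f_2\in F_1\cap F_2$; the formula also holds when $F_2=S$, reducing to $\sigma_{F_1}\cap\sigma_S=\sigma_{F_1}$. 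Likewise the relative boundary of a weak cone is $\partial\sigma_F=\lrp{\Q_{\geq 0}\cdot\partial F}\cup\lrc{0}$; for $F=S$ this becomes $\partial\sigma_S=\lrp{\Q_{\geq 0}\cdot\partial S}\cup\lrc{0}$, which --- again by radial uniqueness together with the description $\partial S=\bigcup_{F'\text{ proper}}F'$ from \thref{prop:FacePseudo-Complex} --- is all of $U^{\trop}(\Q)$: the ``boundary'' of the top cone must be understood combinatorially, as the union of all lower weak cones. Finally $\Q_{\geq 0}\cdot(\ \boldsymbol{\cdot}\ )$ distributes over arbitrary unions, so $\Q_{\geq 0}\cdot\bigcup_\alpha G_\alpha=\bigcup_\alpha\sigma_{G_\alpha}$.

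Granting the dictionary, the three axioms of \thref{def:pseudo-complex} for $\Sigma[S]$ follow by coning off the corresponding facts for $\mathcal{F}_S$. For Item~\ref{it:PBoundary}: if $P=\sigma_F$ with $F$ proper and nonempty, apply Item~\ref{it:PBoundary} for $\mathcal{F}_S$ to write $\partial F=\bigcup_{F'\in\mathcal{A}}F'$ and cone off, giving $\partial\sigma_F=\bigcup_{F'\in\mathcal{A}}\sigma_{F'}\cup\lrc{0}$; for $P\in\lrc{\varnothing,\lrc{0}}$ the statement is trivial, and for $P=\sigma_S$ it follows from the formula for $\partial\sigma_S$ above. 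For Item~\ref{it:PIntersection}: combine the intersection formula with Item~\ref{it:PIntersection} for $\mathcal{F}_S$ (writing $F_1\cap F_2=\bigcup_{F\in\mathcal{B}}F$) and cone off; cases involving $\varnothing$, $\lrc{0}$ or $\sigma_S$ are immediate. For Item~\ref{it:PIntersectionInBoundary}: if $\sigma_{F_1}\cap\sigma_{F_2}\subsetneq\sigma_{F_1}$ then necessarily $F_1\cap F_2\subsetneq F_1$ (else the intersection formula would give equality), so Item~\ref{it:PIntersectionInBoundary} for $\mathcal{F}_S$ gives $F_1\cap F_2\subset\partial F_1$, and coning off yields $\sigma_{F_1}\cap\sigma_{F_2}=\lrp{\Q_{\geq 0}\cdot(F_1\cap F_2)}\cup\lrc{0}\subset\lrp{\Q_{\geq 0}\cdot\partial F_1}\cup\lrc{0}=\partial\sigma_{F_1}$; the $\lrc{0}$ and $\sigma_S$ cases are read off the boundary formulas directly.

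The main obstacle will be the dictionary step: showing that intersections, unions, and relative boundaries interact with $\Q_{\geq 0}\cdot(\ \boldsymbol{\cdot}\ )$ exactly as in the linear picture, even though the ambient $U^{\trop}(\Q)$ is only piecewise linear and $\sigma_F$ is not a cone inside a vector space. Everything hinges on radial uniqueness of $\partial S$, which itself rests on homogeneity of tropically linear functions and the explicit shape of tropical half-spaces; and one has to bookkeep carefully both the apex $\lrc{0}$ --- which lies in every $\sigma_F$ with $F\neq\varnothing$ yet equals no $\sigma_F$, so it must be adjoined by hand and appears in every boundary and every intersection --- and the top cone $\sigma_S=U^{\trop}(\Q)$, whose boundary is combinatorial rather than topological.
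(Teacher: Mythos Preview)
Your approach matches the paper's: transport the pseudo-complex structure on $\mathcal{F}_S$ to $\Sigma[S]$ through the coning map $F\mapsto\sigma_F$, handling the apex $\{0\}$ separately. The paper is extremely terse --- it verifies only Item~\ref{it:PBoundary} (setting $\mathcal{A}_{\sigma_F}:=\{\sigma_{F'}:F'\in\mathcal{A}_F\}\cup\{0\}$ and asserting $\partial\sigma_F=\bigcup_{\tau\in\mathcal{A}_{\sigma_F}}\tau$) and declares the remaining items ``recovered similarly''. Your radial-uniqueness lemma and coning dictionary are precisely the ingredients needed to make that ``similarly'' honest, so in substance you are filling in what the paper leaves implicit.

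One correction concerning the top cone. Since $\sigma_S=\Q_{\geq 0}\cdot S=U^{\trop}(\Q)$ is the entire ambient space, its topological boundary is $\varnothing$, not all of $U^{\trop}(\Q)$. Your formula $\partial\sigma_F=(\Q_{\geq 0}\cdot\partial F)\cup\{0\}$ relies on $F\subset\partial S$ and simply breaks for $F=S$; no ``combinatorial boundary'' reinterpretation is warranted. For Item~\ref{it:PBoundary} this is harmless --- just take $\mathcal{A}=\varnothing$. It does, however, expose a genuine wrinkle in Item~\ref{it:PIntersectionInBoundary}: with $P_1=\sigma_S$ and $P_2=\sigma_F$ for $F$ a nonempty proper face, one has $P_1\cap P_2=\sigma_F\subsetneq P_1$ while $\partial P_1=\varnothing$, so the required containment $\sigma_F\subset\partial\sigma_S$ fails on its face. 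The paper's sketch does not confront this case either; the cleanest fix is to exclude the improper face $S$ (equivalently, restrict to $y\neq 0$ in \thref{def:Half-space-Hyperplane}) when forming $\Sigma[S]$, which is the standard convention for face fans in the linear theory.
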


\begin{proof}
    This follows almost immediately from \thref{prop:FacePseudo-Complex}.
    We will simply illustrate that Item~\ref{it:PBoundary} of \thref{def:pseudo-complex} holds for $\Sigma[S]$.
    The remaining items are recovered similarly.
    The weak cone $\lrc{0}$ has empty boundary, so there is nothing to do in this case.
    Now let $F\in \mathcal{F}_S$.  By \thref{prop:FacePseudo-Complex} there is a subset $\mathcal{A}_F$ of $\mathcal{F}_S$ such that
    \eqn{\partial F = \bigcup_{F' \in \mathcal{A}_F} F'.}
    Let $\mathcal{A}_{\sigma_F}:= \lrc{\sigma_{F'} \in \Sigma[S]\, : \,  F' \in \mathcal{A}_F}\cup\lrc{0}$.
    Then 
    \eqn{\partial \sigma_F = \bigcup_{\tau \in \mathcal{A}_{\sigma_F}} \tau.}
\end{proof}

We illustrate the weak face fan of the bigon from Figure~\ref{fig:Bigon1} in Figure~\ref{fig:Bigon1FaceFan} below.

\noindent
\begin{center}
\begin{minipage}{.85\linewidth}
\captionsetup{type=figure}
\begin{center}
\begin{tikzpicture}[scale=.85]

    \def\x{1.5}
    \def\l{3}
    \def\op{.3}

    \path (-\l,0) coordinate (3) --++ (\l,0) coordinate (0) --++ (\l,0) coordinate (1);
    \path (0,\l) coordinate (2) --++ (0,-2*\l) coordinate (4) --++ (\l,0) coordinate (5);
    \path (5) --++ (0,2*\l) coordinate (tr) --++ (-2*\l,0) coordinate (tl) --++ (0,-2*\l) coordinate (bl)--++ (2*\l,0) coordinate (br);

    \draw[thick, ->] (3) -- (1);
    \draw[thick, ->] (2) -- (4);
    \draw[thick, ->] (0) -- (5);

    \node at (.9,\l) {$1+z^{(0,1)}$};
    \node at (-2.3,-.35) {$1+z^{(-1,0)}$};
    \node at (3.25,-2) {$1+z^{(-1,1)}$};

    \coordinate (v1) at (-\x,\x);

    \path (v1) --++ (-.2*\x,.25*\x) node[color = more-blue] {$(-1,1)$};
    
    \coordinate (v2) at (\x,-\x);

    \path (v2) --++ (.5*\x,.2*\x) node[color = more-green] {$(1,-1)$};

    \coordinate (top-bend) at (0,0.5*\x);
    \coordinate (bottom-bend) at (-0.5*\x,0);
    
    \path [fill= blue-ish, fill opacity=\op] (v1.center)--(top-bend)--(v2.center)--(bottom-bend)--cycle;

    \draw[orange, thick] (v1.center)--(top-bend)--(v2.center);

    \draw[purple, thick] (v1.center)--(bottom-bend)--(v2.center);

    \node [circle, fill, inner sep = 1.5pt, color = more-blue] at (v1) {};

    \node [circle, fill, inner sep = 1.5pt, color = more-green] at (v2) {};
    
\begin{scope}[xshift=9cm]

    \path (-\l,0) coordinate (3) --++ (\l,0) coordinate (0) --++ (\l,0) coordinate (1);
    \path (0,\l) coordinate (2) --++ (0,-2*\l) coordinate (4) --++ (\l,0) coordinate (5);
    \path (5) --++ (0,2*\l) coordinate (tr) --++ (-2*\l,0) coordinate (tl) --++ (0,-2*\l) coordinate (bl)--++ (2*\l,0) coordinate (br);

    \draw[thick, ->, opacity=\op] (3) -- (1);
    \draw[thick, ->, opacity=\op] (2) -- (4);
    \draw[thick, ->, opacity=\op] (0) -- (5);

    \path[fill= orange, fill opacity=\op] (tl) -- (tr) -- (br) -- cycle;

    \path[fill= purple, fill opacity=\op] (tl) -- (bl) -- (br) -- cycle;
    
    \draw[ultra thick, more-blue] (0) -- (tl);

    \draw[ultra thick, more-green] (0) -- (br);

\end{scope}

\end{tikzpicture}

\captionof{figure}{\label{fig:Bigon1FaceFan}On the left, the bigon $\tc{blue-ish}{S}$ of Figure~\ref{fig:Bigon1}.
On the right, the weak face fan $\Sigma[\tc{blue-ish}{S}]$.
Note that for $F$ either facet, $\sigma_F$ is only weakly convex, not broken line convex.  In fact, $\bconv\lrp{\sigma_F}=(\cA^\vee)^{\trop}(\Q)$ for both facets.
} 
\end{center}
\end{minipage}
\end{center}

In order to generalize the convex geometry duality of \cite{Borisov} in 
future work,
it will be convenient to have a notion of support functions on weak face fans.

\begin{definition}\thlabel{def:SupportFunction}
Let $\Sigma[S]$ be the weak face fan of a full-dimensional polytopal set $S \subset U^{\trop}(\Q)$ containing $0$ in the interior.
A function $\varphi: U^{\trop}(\Q) \to \Q$ is a {\it{support function}} for $\Sigma[S]$ if for each weak cone $\sigma \in \Sigma[S]$ there is some $y_{\sigma}\in (U^\vee)^{\trop}(\Q)$ such that $\left.\varphi\right|_{\sigma} = \lra{\ \boldsymbol{\cdot}\ , y_\sigma}$.
A support function $\varphi$ is {\it{integral}} if each $y_\sigma$ may be taken to lie in $(U^\vee)^{\trop}(\Z)$.   
\end{definition}

\subsection{Duality for polyhedral sets and faces}

\begin{definition}
Let $S \subset U^\trop(\Q)$.
We define the {\it{polar of $S$}} to be
\eqn{S^\circ := \lrc{y \in (U^\vee)^\trop(\Q): \lra{s,y} \geq -1 \text{ for all } s \in S }.  }
More generally, for $r\in \Q_{\geq 0}$, we define the {\it{$r$-dual of $S$}} to be 
\eqn{S^{\vee_r} := \lrc{y \in (U^\vee)^\trop(\Q): \lra{s,y} \geq -r \text{ for all } s \in S }.  }
(In particular, if $r=1$, $S^{\vee_r}= S^\circ$.)
In the special case $r=0$, we simply write $S^\vee$ for $S^{\vee_0}$.
We define the polar and $r$-dual of subsets of $(U^\vee)^\trop(\Q)$ analogously.
\end{definition}

\begin{prop}\thlabel{prop:r-dual}
Let $S\subset U^{\trop}(\Q)$ and $r\in \Q_{\geq 0}$.  Then
    \eqn{S^{\vee_r} = \overline{\bconv\lrp{S\cup \lrc{0}}}^{\vee_r} .}
\end{prop}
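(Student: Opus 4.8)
The plan is to prove the two inclusions separately. The inclusion $\overline{\bconv(S\cup\{0\})}^{\vee_r}\subseteq S^{\vee_r}$ is immediate: the assignment $A\mapsto A^{\vee_r}$ is containment-reversing straight from the definition, and $S\subseteq S\cup\{0\}\subseteq\bconv(S\cup\{0\})\subseteq\overline{\bconv(S\cup\{0\})}$, so applying $(-)^{\vee_r}$ reverses this chain and yields the claim.

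For the reverse inclusion $S^{\vee_r}\subseteq\overline{\bconv(S\cup\{0\})}^{\vee_r}$, I would fix $y\in S^{\vee_r}$ and show that $\lra{x,y}\geq -r$ for every $x\in\overline{\bconv(S\cup\{0\})}$. First, because $r\in\Q_{\geq 0}$ we have $\lra{0,y}=0\geq -r$, so in fact $S\cup\{0\}\subseteq K(y,r)$. Second, by \thref{lem:TropHalfSpaceBLC} the tropical half-space $K(y,r)$ is broken line convex; since it contains $S\cup\{0\}$, the defining property of the broken line convex hull (\thref{def:BLCHull}) gives $\bconv(S\cup\{0\})\subseteq K(y,r)$. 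Third, $K(y,r)=\lrc{x:\lra{x,y}\geq -r}$ is closed: the function $\lra{\ \boldsymbol{\cdot}\ ,y}$ is tropically linear, hence piecewise linear and continuous for the topology of \S\ref{sec:Scattering-BrokenLines} (this continuity is already used in the proof of \thref{prop:FacePseudo-Complex}), so $K(y,r)$ is the preimage of a closed ray. A closed set containing $\bconv(S\cup\{0\})$ contains its closure, whence $\overline{\bconv(S\cup\{0\})}\subseteq K(y,r)$, i.e.\ $y\in\overline{\bconv(S\cup\{0\})}^{\vee_r}$.

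There is no substantive obstacle here; the proof is essentially a bookkeeping exercise once one has \thref{lem:TropHalfSpaceBLC} and the continuity of tropically linear functions. The only point worth flagging is the interplay of the three operations appearing in the statement: adjoining $0$ is harmless precisely because $r\geq 0$, passing to $\bconv$ is harmless because tropical half-spaces are broken line convex, and passing to the closure is harmless because tropical half-spaces are closed. The argument above in fact shows that each of these three operations individually leaves the $r$-dual unchanged, so one may record that observation as a remark if desired.
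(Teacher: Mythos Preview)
Your proof is correct and follows essentially the same route as the paper's own argument: both establish the easy inclusion by monotonicity of $(-)^{\vee_r}$, and for the reverse inclusion both observe that $S\cup\{0\}\subset K(y,r)$ (using $r\geq 0$ for the origin), then use that $K(y,r)$ is closed and broken line convex to conclude $\overline{\bconv(S\cup\{0\})}\subset K(y,r)$. Your version is slightly more explicit in citing \thref{lem:TropHalfSpaceBLC} and justifying closedness via continuity of tropically linear functions, but the structure is identical.
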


\begin{proof}
    First note that $S \subset \overline{\bconv\lrp{S\cup \lrc{0}}}$, so the containment $\overline{\bconv\lrp{S\cup \lrc{0}}}^{\vee_r} \subset S^{\vee_r}$ is immediate.

    Next, let $y \in S^{\vee_r}$.
    That is, $\lra{x,y} \geq -r $ for all $x \in S$.
    Also, $\lra{0,y}= 0 \geq -r$.
    Then $\lrc{0} \cup S \subset K(y,r)$. 
    Since $K(y,r)$ is closed and broken line convex, this implies $\overline{\bconv\lrp{S\cup \lrc{0}}} \subset K(y,r)$.
    In other words, $y \in \overline{\bconv\lrp{S\cup \lrc{0}}}^{\vee_r} $, and $S^{\vee_r} = \overline{\bconv\lrp{S\cup \lrc{0}}}^{\vee_r}$.
\end{proof}

\begin{prop}\thlabel{prop:DoubleDual}
Let $S\subset U^{\trop}(\Q)$.  Then for $r>0$,
    \eqn{(S^{\vee_r})^{\vee_r} = \overline{\bconv\lrp{S\cup \lrc{0}}} .}
\end{prop}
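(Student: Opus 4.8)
The plan is to reduce the claim to a ``bipolar theorem'' for a closed broken line convex set and then to establish that by separating exterior points from the set with tropical half-spaces.

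First I would invoke \thref{prop:r-dual} to replace $S$ by $T := \overline{\bconv(S \cup \{0\})}$: since $S^{\vee_r} = T^{\vee_r}$ we have $(S^{\vee_r})^{\vee_r} = (T^{\vee_r})^{\vee_r}$, so it suffices to prove $(T^{\vee_r})^{\vee_r} = T$, where now $T$ is closed, broken line convex, and contains $0$. The inclusion $T \subseteq (T^{\vee_r})^{\vee_r}$ is immediate: if $x \in T$ and $y \in T^{\vee_r}$ then $\lra{x,y} \geq -r$ by the definition of $T^{\vee_r}$, and since the canonical pairing is symmetric (\thref{thm:ThetaRecipocity}, extended to $\Q$-points), this is exactly the statement $x \in (T^{\vee_r})^{\vee_r}$.

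For the reverse inclusion I would first rewrite $(T^{\vee_r})^{\vee_r} = \bigcap\{K(y,r) : y \in (U^\vee)^{\trop}(\Q),\ T \subseteq K(y,r)\}$ and observe that this equals the intersection of \emph{all} tropical half-spaces of $U^{\trop}(\Q)$ containing $T$. Indeed, since $0 \in T$ any such half-space $K(y',r')$ has $r' \geq 0$ (as $\lra{0,y'} = 0 \geq -r'$); if $r' > 0$ then $K(y',r') = K(\tfrac{r}{r'}y',r)$ by \thref{rem:Rescale} -- this is the one place $r > 0$ is used -- and if $r' = 0$ then $K(y',0) = \bigcap_{\lambda > 0}K(\lambda y',r)$ with every $K(\lambda y',r)$ containing $T$. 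So every tropical half-space containing $T$ contains $(T^{\vee_r})^{\vee_r}$, while the opposite inclusion is trivial. The proof therefore reduces to the separation statement: every $x_0 \notin T$ lies outside some tropical half-space containing $T$; this is the broken line analogue of the classical fact that a closed convex set is the intersection of the half-spaces containing it.

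This separation statement is the step I expect to be the main obstacle, since $U^{\trop}(\Q)$ has no global linear structure and no direct Hahn--Banach argument is available. By \thref{cor:TropLinCWRTBL} and \thref{prop:ConvWRTBL-BLC} any tropically linear $\varphi = \lra{\ \boldsymbol{\cdot}\ , y}$ has broken line convex superlevel sets, so it would suffice to produce such a $\varphi$ with $\varphi(x_0) < \inf_{t \in T}\varphi(t)$; then $K\bigl(y, -\inf_{t\in T}\varphi(t)\bigr)$ is a tropical half-space containing $T$ but not $x_0$ (note $\inf_{t\in T}\varphi(t) \leq \varphi(0) = 0$). Since $T$ is closed there is an open neighbourhood of $x_0$ disjoint from $T$, and the real content is turning this into a separating tropically linear functional. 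I would attempt this by localizing -- working inside the finitely many cluster charts that meet a bounded region around $x_0$, using \thref{cor:ConvColim} (so that $\bconv(S\cup\{0\}) = \bigcup_{d \geq 1}(S\cup\{0\})_d$ from \thref{def:Sd}) to reduce to separating $x_0$ from a polyhedral inner approximation of $T$ within those charts, where a supporting tropical hyperplane can be produced from the defining half-spaces, and then passing to a limit. I expect this supporting/separation result to absorb the bulk of the technical work; granting it, it combines with the reduction via \thref{prop:r-dual} and the easy inclusion above to give $(T^{\vee_r})^{\vee_r} = T$, i.e.\ $(S^{\vee_r})^{\vee_r} = \overline{\bconv(S \cup \{0\})}$.
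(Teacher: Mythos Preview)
Your overall architecture matches the paper's proof almost exactly: both show the easy inclusion $\overline{\bconv(S\cup\{0\})}\subset (S^{\vee_r})^{\vee_r}$ by noting the double dual is closed and broken line convex, rewrite $(S^{\vee_r})^{\vee_r}$ as the intersection of all tropical half-spaces $K(y,r)$ containing $S$, and then appeal to a separation statement together with the rescaling $y\mapsto \tfrac{r}{t}y$ (which is exactly where $r>0$ is used in the paper too). Your preliminary reduction via \thref{prop:r-dual} to $T=\overline{\bconv(S\cup\{0\})}$ is a harmless cosmetic variant of the paper's order of operations.

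The one substantive difference is how the separation step is handled. The paper simply \emph{asserts} that for $z\notin \overline{\bconv(S\cup\{0\})}$ there exists a supporting tropical half-space $K(y,r')$ excluding $z$, citing the classical argument in Br{\o}ndsted and not elaborating further. You correctly flag this as the crux and propose a chart-by-chart polyhedral approximation scheme. That scheme is more intricate than anything the paper does, and it is not clear it would go through as written: a linear separating functional produced in one seed chart need not be of the form $\lra{\,\cdot\,,y}$ globally, since $\lra{\,\cdot\,,y}$ is only piecewise linear (a minimum of linear forms) in a fixed chart, and the inequality goes the wrong way for transporting a lower bound on $T$. So while your diagnosis that this is the delicate point is accurate, your proposed resolution is speculative, and the paper itself does not supply an argument here either---it treats the existence of the separating tropical half-space as given.
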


\begin{proof}
    This is proved just like the classical version for polytopes in $\Q^n$.
    We follow the proof given in \cite[Theorem~6.2]{Brondsted}. 

    By \thref{lem:TropHalfSpaceBLC}, a tropical half-space is broken line convex.
    The $r$-dual of a set is by definition an intersection of closed tropical half-spaces, and the intersection of closed, broken line convex sets is closed and broken line convex.
    So, $(S^{\vee_r})^{\vee_r}$ is closed and broken line convex.
    Moreover, if $x\in S$ then $\lra{x,y}\geq -r $ for all $y$ in $S^{\vee_r}$ by definition of $S^{\vee_r}$.
    So $S \subset (S^{\vee_r})^{\vee_r}$, and obviously $\lrc{0} \subset (S^{\vee_r})^{\vee_r}$ as well.
    That is, $(S^{\vee_r})^{\vee_r}$ is a closed, broken line convex set containing $S \cup \lrc{0}$,
    and $(S^{\vee_r})^{\vee_r} \supset \overline{\bconv\lrp{S\cup \lrc{0}} }$.

    Next observe that
    \eqn{y \in S^{\vee_r}  \iff \lra{x, y} \geq -r \text{ for all } x \in S  \iff S \subset K(y,r).}
    So,
    \eqn{(S^{\vee_r})^{\vee_r} = \bigcap_{y \in S^{\vee_r}} K(y,r) = \bigcap_{K(y,r) \supset S }K(y,r). }
    Now take a point $z\notin \overline{\bconv\lrp{S \cup \lrc{0}}}$.
    There exists a supporting tropical half-space $K(y,r')$ of $\overline{\bconv\lrp{S \cup \lrc{0}}}$ with $z\notin K(y,r')$.
    So, 
    \eqn{ \min \lrc{\lra{x,y}: x\in \overline{\bconv\lrp{S\cup \lrc{0}}} } = -r' > \lra{z,y}. }
    Then there exists $t \in \Q_{>0}$ such that 
    \eqn{ \min \lrc{\lra{x,y}: x\in \overline{\bconv\lrp{S\cup \lrc{0}} }} \geq -t > \lra{z,y}. }
    Set $u:= \frac{r}{t} y$.
    Then 
    \eqn{ \min \lrc{\lra{x,u}: x\in \overline{\bconv\lrp{S\cup \lrc{0}} }} \geq -r > \lra{z,u}. }
    So $K(u,r) \supset S$, which implies $(S^{\vee_r})^{\vee_r} \subset K(u,r)$.
    But $z\notin K(u,r)$, so $z\notin (S^{\vee_r})^{\vee_r} $.
    That is, $z\notin \overline{\bconv\lrp{S \cup \lrc{0}}}$ implies $z\notin (S^{\vee_r})^{\vee_r} $.
    We conclude that \eqn{(S^{\vee_r})^{\vee_r} = \overline{\bconv\lrp{S\cup \lrc{0}}} .}
\end{proof}

\begin{lemma}\thlabel{lem:Halfspace-r-dual}
    Let $x\in U^{\trop}(\Q)$ be non-zero, and let $r\in \Q_{\geq 0}$.  Then $K(x,0)^{\vee_r} = \Q_{\geq 0}\cdot x$.
\end{lemma}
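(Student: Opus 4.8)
The plan is to deduce the equality from the double-polarity result \thref{prop:DoubleDual}, after first removing the dependence on $r$. Throughout I use that the canonical pairing is homogeneous in each slot, so that $\lra{\lambda x,y}=\lambda\lra{x,y}$ for $\lambda\in\Q_{\geq 0}$ (cf. \thref{rem:Rescale}). First note the easy inclusion $\Q_{\geq 0}\cdot x\subseteq K(x,0)^{\vee_r}$: if $y\in K(x,0)$ then $\lra{\lambda x,y}=\lambda\lra{x,y}\geq 0\geq -r$ for every $\lambda\geq 0$. For the reverse, observe that $K(x,0)=\lrc{y\in(U^\vee)^\trop(\Q):\lra{x,y}\geq 0}$ is $\Q_{\geq 0}$-invariant, and that any $\Q_{\geq 0}$-invariant set $C$ satisfies $C^{\vee_r}=C^{\vee_0}$ for all $r\geq 0$ (the inclusion $C^{\vee_0}\subseteq C^{\vee_r}$ is clear, and if $z\in C^{\vee_r}$ and $c\in C$ then $\lambda\lra{c,z}=\lra{\lambda c,z}\geq -r$ for all $\lambda\geq 0$ forces $\lra{c,z}\geq 0$). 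Hence $K(x,0)^{\vee_r}=K(x,0)^{\vee_0}$, and it suffices to treat a single value $r>0$. The same homogeneity argument shows $\lrp{\Q_{\geq 0}\cdot x}^{\vee_r}=K(x,0)$, since $\lra{\lambda x,y}\geq -r$ for all $\lambda\geq 0$ if and only if $\lra{x,y}\geq 0$. Therefore, for $r>0$,
\eqn{K(x,0)^{\vee_r} &= \lrp{\lrp{\Q_{\geq 0}\cdot x}^{\vee_r}}^{\vee_r}= \overline{\bconv\lrp{\lrp{\Q_{\geq 0}\cdot x}\cup\lrc{0}}}\\ &= \overline{\bconv\lrp{\Q_{\geq 0}\cdot x}},}
by \thref{prop:DoubleDual} and $0=0\cdot x\in\Q_{\geq 0}\cdot x$. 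So the lemma reduces to the statement that $\Q_{\geq 0}\cdot x$ is closed and broken line convex.

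Closedness is immediate: in a seed chart $\mf{r}_{\vb{s}}$ is a homeomorphism and is $\Q_{\geq 0}$-equivariant, so it carries $\Q_{\geq 0}\cdot x$ to the closed ray $\Q_{\geq 0}\cdot\mf{r}_{\vb{s}}(x)\subset V$. For broken line convexity, I use the equivalence of broken line convexity with positivity (\cite{BLC}) together with \thref{lem:MultiProdPositivity}: it is enough to show that whenever $c_1,\dots,c_n\in\Q_{\geq 0}$ are such that each $c_i x$ is integral and $\alpha_{c_1 x,\dots,c_n x}^{\rho}\neq 0$, then $\rho\in\Q_{\geq 0}\cdot x$. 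The tropical points $c_1 x,\dots,c_n x$ all lie on the single ray $\Q_{\geq 0}\cdot x$, which is contained in one cone of the scattering fan of \cite{GHKK}; on such a cone $\tf$-function multiplication involves no wall-crossing correction, so $\tf_{c_1 x}\cdots\tf_{c_n x}=\tf_{(c_1+\cdots+c_n)x}$, giving $\rho=(c_1+\cdots+c_n)x\in\Q_{\geq 0}\cdot x$. Combining this with the two displays above yields $K(x,0)^{\vee_r}=\overline{\bconv(\Q_{\geq 0}\cdot x)}=\Q_{\geq 0}\cdot x$ for $r>0$, and the reduction in the first paragraph extends this to all $r\geq 0$.

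An alternative, more self-contained route to the broken line convexity of $\Q_{\geq 0}\cdot x$ (or directly to the hard inclusion) is to work in a seed chart: by \thref{thm:ValuativeIndependence}, $\lra{x,\cdot}$ equals the minimum of the linear functionals dual to the exponents of $\tf_x$, so $K(x,0)$ is the polar of the finite cone $\Cone(\supp\tf_x)$, and then $K(x,0)^{\vee_0}$ is exactly the set of $x'$ whose $\tf$-function is supported inside $\Cone(\supp\tf_x)$; one argues that this support condition forces $x'\in\Q_{\geq 0}\cdot x$ (for $x$ in a cluster chamber this is transparent from $\tf_x$ being a monomial there, and in general it again rests on the structure of $\tf_x$ along the ray). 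In either presentation, the main obstacle — and the only genuinely non-formal point — is this input on the structure of $\tf$-functions attached to a common ray: that they multiply without correction, equivalently that $\Q_{\geq 0}\cdot x$ is positive. Everything else is formal manipulation of the polarity operations set up earlier in this section.
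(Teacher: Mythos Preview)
Your route differs from the paper's. After the shared reduction $K(x,0)^{\vee_r}=K(x,0)^{\vee}$ via homogeneity, the paper argues the hard inclusion directly, asserting that any $z\notin\Q_{\geq 0}\cdot x$ pairs negatively with some $y\in H(x,0)$. You instead pass through \thref{prop:DoubleDual} and reduce to showing that the ray $\Q_{\geq 0}\cdot x$ is closed and broken line convex. That reduction is correct and is a pleasant reformulation.

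The gap is in your justification of broken line convexity of the ray. The displayed claim $\tf_{c_1x}\cdots\tf_{c_nx}=\tf_{(c_1+\cdots+c_n)x}$ is false for general $x$. There is no ``cone of the scattering fan'' containing an arbitrary ray once the cluster complex is incomplete (and walls can be dense), and even for $x$ on a wall or limiting ray the product typically has several $\tf$-summands. Concretely, in the Kronecker ($\widehat{A}_1$) example with $x$ on the imaginary ray one has $\tf_x^2=\tf_{2x}+c\,\tf_0$ with $c\neq 0$. You also assert that ``multiply without correction'' is \emph{equivalent} to positivity of $\Q_{\geq 0}\cdot x$; it is strictly stronger. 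What you actually need is only that every $\rho$ with $\alpha_{c_1x,\dots,c_nx}^{\rho}\neq 0$ lies on the ray --- a weaker statement, not contradicted by the Kronecker example since $0\in\Q_{\geq 0}\cdot x$ --- but nothing in your argument establishes it. The alternative sketch has the same defect: knowing $\supp(\tf_{x'})\subset\Cone(\supp\tf_x)$ in one chart does not obviously force $x'\in\Q_{\geq 0}\cdot x$ once $\tf_x$ has more than one exponent. The paper's direct separation assertion is terse, but it avoids altogether the question of how $\tf$-functions multiply along a ray; your approach trades that assertion for a structural claim about such products which, as written, is incorrect.
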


\begin{proof}
    First, $y\in K(x,0)$ if and only if $\lra{x,y}\geq 0$.
    For any $\lambda \geq 0$, we have $\lra{\lambda x,y}= \lambda \lra{x,y}$, and $\Q_{\geq 0}\cdot x \subset K(x,0)^{\vee} \subset K(x,0)^{\vee_r}$. 

    On the other hand, if $z\in K(x,0)^{\vee_r}$, then $\lra{z,y}\geq -r $ for all $y \in K(x,0)$.
    Suppose for some such $y$ we have $0>\lra{z,y}\geq -r$.
    Then for sufficiently large $\lambda>0$, we will have $\lra{z,\lambda y} < -r$.
    But $\lambda y \in K(x,0)$, so this contradicts the assumption that $z\in K(x,0)^{\vee_r}$.
    We find that in fact $K(x,0)^{\vee_r}=K(x,0)^{\vee}$.
    If $z$ is not a non-negative multiple of $x$, it will pair negatively with some $y\in H(x,0)$, so $K(x,0)^{\vee_r}=K(x,0)^{\vee} = \Q_{\geq 0} \cdot x$.
\end{proof}

\begin{prop}\thlabel{prop:r-dual-x-ray}
    Let $S\subset U^{\trop}(\Q)$ and $r \in \Q_{\geq 0}$.
    Then $S^{\vee_r} \subset K(x,0)$ if and only if $\Q_{\geq 0} \cdot x\subset \overline{\bconv\lrp{\lrc{0} \cup S}}$.
\end{prop}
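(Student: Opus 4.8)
The plan is to deduce both implications from the fact that the $r$-dual operation reverses inclusions, combined with the duality identities already established. Throughout we use that if $A\subset B$ then $B^{\vee_r}\subset A^{\vee_r}$, which is immediate from the definition of the $r$-dual.

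First dispose of the degenerate case $x=0$. Then $K(0,0)=\lrc{y:\lra{y,0}\geq 0}$ is all of $(U^\vee)^{\trop}(\Q)$, so $S^{\vee_r}\subset K(0,0)$ holds automatically; likewise $\Q_{\geq 0}\cdot 0=\lrc{0}\subset\overline{\bconv\lrp{\lrc{0}\cup S}}$. Both sides of the equivalence hold, so there is nothing to prove, and we may assume $x\neq 0$.

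For the ``if'' direction, suppose $\Q_{\geq 0}\cdot x\subset\overline{\bconv\lrp{\lrc{0}\cup S}}$. Taking $r$-duals reverses the inclusion, giving $\overline{\bconv\lrp{\lrc{0}\cup S}}^{\vee_r}\subset(\Q_{\geq 0}\cdot x)^{\vee_r}$. By \thref{prop:r-dual} the left-hand side equals $S^{\vee_r}$. For the right-hand side, a one-line computation identifies $(\Q_{\geq 0}\cdot x)^{\vee_r}$ with $K(x,0)$: a point $y$ satisfies $\lra{\lambda x,y}\geq -r$ for every $\lambda\geq 0$ if and only if $\lambda\lra{x,y}\geq -r$ for every $\lambda\geq 0$, which forces $\lra{x,y}\geq 0$ upon letting $\lambda\to\infty$, while conversely $\lra{x,y}\geq 0$ clearly suffices. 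Hence $S^{\vee_r}\subset K(x,0)$. This step uses nothing about $r$ beyond $r\geq 0$.

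For the ``only if'' direction, suppose $S^{\vee_r}\subset K(x,0)$. Taking $r$-duals gives $K(x,0)^{\vee_r}\subset(S^{\vee_r})^{\vee_r}$. By \thref{lem:Halfspace-r-dual} (here is where $x\neq 0$ enters) the left-hand side is $\Q_{\geq 0}\cdot x$, and by \thref{prop:DoubleDual} the right-hand side is $\overline{\bconv\lrp{\lrc{0}\cup S}}$. Combining, $\Q_{\geq 0}\cdot x\subset\overline{\bconv\lrp{\lrc{0}\cup S}}$, as desired.

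The delicate point — and the main obstacle — is the invocation of \thref{prop:DoubleDual} in the ``only if'' direction, which is available only for $r>0$: when $r=0$ the double $r$-dual of $S$ is the closed conical broken line convex hull of $\lrc{0}\cup S$, in general strictly larger than $\overline{\bconv\lrp{\lrc{0}\cup S}}$, so for $r=0$ the ``only if'' implication either needs the hypothesis $r>0$ or must be argued separately (indeed, taking $S$ a single nonzero point in a torus and $x$ on its ray shows the equivalence fails as stated when $r=0$). Everything else is formal: the order-reversal of the $r$-dual, the elementary identity $(\Q_{\geq 0}\cdot x)^{\vee_r}=K(x,0)$, and the fact that \thref{lem:Halfspace-r-dual} requires only $x\neq 0$, which the initial case split supplies.
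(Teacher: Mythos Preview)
Your proof is correct and follows essentially the same route as the paper's: both arguments combine the order-reversal of the $r$-dual with \thref{prop:r-dual}, \thref{lem:Halfspace-r-dual}, and (for the ``only if'' direction) \thref{prop:DoubleDual}. The paper packages the two directions into a single chain of equivalences using $(K(x,0)^{\vee_r})^{\vee_r}=K(x,0)$, but the underlying ingredients are identical to yours.

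Your additional care is well-placed and goes beyond the paper. Separating out the case $x=0$ is needed since \thref{lem:Halfspace-r-dual} is stated for nonzero $x$; the paper does not address this. More importantly, your observation that \thref{prop:DoubleDual} requires $r>0$ exposes a genuine gap: the paper's proof silently uses the double-dual identity at the same step, and your torus counterexample (a single nonzero point $S=\lrc{p}$ with $x=p$) shows the ``only if'' implication is actually false for $r=0$, since then $S^{\vee_0}=K(x,0)$ while $\overline{\bconv\lrp{\lrc{0}\cup S}}$ is the segment $[0,p]$, which does not contain the ray $\Q_{\geq 0}\cdot p$.
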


\begin{proof}
    First observe that $(K(x,0)^{\vee_r})^{\vee_r} = K(x,0)$.
    This follows from \thref{lem:Halfspace-r-dual}. 
    Next, \thref{prop:r-dual} states that $S^{\vee_r} = \overline{\bconv\lrp{\lrc{0} \cup S}}^{\vee_r}$.
    So, we have $S^{\vee_r} \subset K(x,0)$ if and only if $\overline{\bconv\lrp{\lrc{0} \cup S}}^{\vee_r} \subset (K(x,0)^{\vee_r})^{\vee_r}$, which holds if and only if $K(x,0)^{\vee_r} \subset \overline{\bconv\lrp{\lrc{0} \cup S}}$.
    But \thref{lem:Halfspace-r-dual} states that $K(x,0)^{\vee_r}=\Q_{\geq 0} \cdot x$.
\end{proof}

\begin{prop}\thlabel{prop:DualCone}
    Let $S\subset U^\trop(\Q)$ be a weak cone.  Then $S^{\vee_r}$ is a cone.  Specifically, if $S=\Q_{\geq 0} \cdot T$ then $S^{\vee_r}= S^{\vee} = \Cone(T^\vee)$. 
\end{prop}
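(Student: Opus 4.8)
The plan is to reduce everything to the case $r=0$, identify $\Cone(T)^\vee$ with $T^\vee$, and then observe that $T^\vee$ is \emph{already} a broken line convex cone, so that applying $\Cone$ to it changes nothing. Throughout I use the $\Q_{\geq 0}$-homogeneity of the canonical pairing in each argument, i.e. $\lra{\lambda x,y}=\lambda\lra{x,y}=\lra{x,\lambda y}$ for $\lambda\in\Q_{\geq 0}$; this is immediate from \thref{rem:Rescale}, \thref{thm:ThetaRecipocity}, and the definition of the pairing on $\Q$-points.

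First I would show that the $r$-dual of any cone $S$ agrees with its $\vee$-dual and is again a cone. If $y\in S^{\vee_r}$ then for every $x\in S$ and every $\lambda\geq 0$ we have $\lambda x\in S$, so $\lambda\lra{x,y}=\lra{\lambda x,y}\geq -r$; letting $\lambda\to\infty$ forces $\lra{x,y}\geq 0$, whence $S^{\vee_r}\subset S^\vee$, and $S^\vee\subset S^{\vee_r}$ is trivial since $r\geq 0$. The same homogeneity shows $S^\vee$ is a cone: if $y\in S^\vee$ and $\mu\geq 0$ then $\lra{x,\mu y}=\mu\lra{x,y}\geq 0$ for all $x\in S$, and $0\in S^\vee$. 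Hence $S^{\vee_r}=S^\vee$ is a cone for every cone $S$, which already proves the first assertion of the proposition.

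Next I would treat $S=\Cone(T)=\bconv(\Q_{\geq 0}\cdot T)$. I would first note that $\Cone(T)$ really is a cone: multiplication by a fixed $\lambda\in\Q_{>0}$ sends a broken line segment to a broken line segment (rescale the support and the exponent vectors of the decoration monomials, clearing denominators to land in an honest, possibly higher-multiplicity, broken line segment), so it preserves broken line convexity and therefore commutes with $\bconv$; applied to $X=\Q_{\geq 0}\cdot T$ this gives $\lambda\cdot\Cone(T)=\bconv(\lambda X)=\bconv(X)=\Cone(T)$, and since $0\in\Q_{\geq 0}\cdot T\subset\Cone(T)$ we get $\Q_{\geq 0}\cdot\Cone(T)=\Cone(T)$. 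Now I claim $\Cone(T)^\vee=T^\vee$. The inclusion $\Cone(T)^\vee\subset T^\vee$ is immediate from $T\subset\Cone(T)$. Conversely, if $y\in T^\vee$ then $T\subset K(y,0)$; by \thref{lem:TropHalfSpaceBLC} the half-space $K(y,0)$ is broken line convex, and by homogeneity of the pairing it is also a cone, so it contains $\Q_{\geq 0}\cdot T$ and hence, being broken line convex, contains $\bconv(\Q_{\geq 0}\cdot T)=\Cone(T)$; thus $y\in\Cone(T)^\vee$. Combining with the previous paragraph, $\Cone(T)^{\vee_r}=\Cone(T)^\vee=T^\vee$.

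Finally I would observe that $T^\vee=\bigcap_{s\in T}K(s,0)$ (a half-space in $(U^\vee)^\trop(\Q)$ for each $s$) is an intersection of broken line convex cones, hence is itself a broken line convex cone; therefore $\Q_{\geq 0}\cdot T^\vee=T^\vee$ and $\bconv(T^\vee)=T^\vee$, so $\Cone(T^\vee)=\bconv(\Q_{\geq 0}\cdot T^\vee)=T^\vee$. Putting the pieces together, for $S=\Cone(T)$ we get $S^{\vee_r}=\Cone(T)^{\vee_r}=T^\vee=\Cone(T^\vee)=\Cone(T)^\vee=S^\vee$, as claimed. The step requiring the most care is verifying that positive rational rescaling is a well-defined bijection on broken line segments, so that it genuinely commutes with $\bconv$ — this is what makes $\Cone(T)$ a cone; the remaining arguments are routine manipulations with the canonical pairing and the definitions.
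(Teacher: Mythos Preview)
Your proof is correct and follows a somewhat different route from the paper's. The paper argues directly with the intersection description: it writes $S^{\vee_r}=\bigcap_{x\in\Q_{\geq 0}\cdot T}K(x,r)$, observes that the family $\bigcap_{t\in T}K(\lambda t,r)$ is nested decreasing in $\lambda$, and passes to the limit $\lambda\to\infty$ to obtain $\bigcap_{t\in T}K(t,0)=T^\vee$; it then notes, as you do, that $T^\vee$ is already a broken line convex cone so $\Cone(T^\vee)=T^\vee$. Your argument instead first isolates the general fact that $C^{\vee_r}=C^\vee$ for any cone $C$ via the scaling argument, then shows $\Cone(T)^\vee=T^\vee$ by the clean containment observation that $K(y,0)$ is a broken line convex cone whenever $y\in T^\vee$. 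The extra work you do is checking that $\Cone(T)$ is genuinely $\Q_{\geq 0}$-invariant (so that your first step applies), which the paper leaves implicit; your justification via rescaling of broken line segments is fine and is essentially the mechanism behind \thref{lem:rescale}. Your decomposition is a bit more modular and avoids the informal ``$\lim_{\lambda\to\infty}$'' over $\Q$, at the cost of one additional verification; the paper's computation is more compact but relies on the reader to parse the nested-intersection limit.
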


\begin{proof}
    Observe that 
\eqn{S^{\vee_r} = \lrp{\Q_{\geq 0}\cdot T}^{\vee_r}
= \bigcap_{x\in\Q_{\geq 0}\cdot T} K\lrp{x,r}
= \bigcap_{\lambda\in\Q_{\geq 0}} \bigcap_{t\in T} K\lrp{\lambda t,r}
= \lim_{\lambda\rightarrow\infty} \bigcap_{t\in T} K\lrp{\lambda t, r}.
}    
The last equality follows from the fact that if $\lambda_1\leq\lambda_2$, then $K(\lambda_1 t,r)\subset K(\lambda_2 t, r)$ for all $t\in T$. Later,
\eqn{\lim_{\lambda\rightarrow\infty} \bigcap_{t\in T} K\lrp{\lambda t, r} = \bigcap_{t\in T} K\lrp{t,0} 
= T^{\vee}
= \Q_{\geq 0}\cdot T^{\vee}
=\bconv\lrp{\Q_{\geq 0}\cdot T^{\vee}}
=\Cone\lrp{T^{\vee}}.
}
\end{proof}

\begin{prop}\thlabel{prop:CapDuals=DualCup}
Let $S$ and $T$ be subsets of $U^{\trop}(\Q)$.  Then
\eqn{\lrp{S^{\vee_r} \cap T^{\vee_r}}  = \lrp{S\cup T}^{\vee_r} .}
\end{prop}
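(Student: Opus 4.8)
The statement is immediate once one unwinds the definition of the $r$-dual, so the plan is simply to compare the defining conditions of the two sides. First I would recall that, by definition, a point $y \in (U^\vee)^\trop(\Q)$ lies in $S^{\vee_r}$ precisely when $\lra{s,y} \geq -r$ for every $s \in S$; equivalently, $S^{\vee_r} = \bigcap_{s \in S} K(s,r)$, where we view $K(s,r)$ as a tropical half-space in $(U^\vee)^\trop(\Q)$ (using the canonical pairing of \thref{thm:ThetaRecipocity}, which is symmetric in its two arguments).

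\textbf{Key step.} Then $y \in S^{\vee_r} \cap T^{\vee_r}$ if and only if $\lra{s,y} \geq -r$ for all $s \in S$ \emph{and} $\lra{t,y} \geq -r$ for all $t \in T$. The conjunction of these two universally quantified conditions is logically equivalent to the single condition that $\lra{u,y} \geq -r$ for every $u \in S \cup T$, which is by definition $y \in (S\cup T)^{\vee_r}$. Phrased in terms of half-spaces, this is just the identity
\eqn{
S^{\vee_r} \cap T^{\vee_r} = \Bigl(\bigcap_{s\in S} K(s,r)\Bigr) \cap \Bigl(\bigcap_{t\in T} K(t,r)\Bigr) = \bigcap_{u \in S \cup T} K(u,r) = (S\cup T)^{\vee_r},
}
where the middle equality records that an intersection indexed by $S$ followed by an intersection indexed by $T$ is the intersection indexed by $S \cup T$.

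\textbf{Obstacle.} There is no real obstacle: the result is purely formal and uses nothing beyond the definition of $\vee_r$ as an intersection of tropical half-spaces. (Note in particular that no convexity or closure hypotheses on $S$ or $T$ are needed, and the argument applies verbatim with $S^{\vee_r}$, $T^{\vee_r}$ replaced by polars, i.e.\ the case $r = 1$.)
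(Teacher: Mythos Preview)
Your proof is correct and follows essentially the same approach as the paper: both arguments simply unwind the definition of $\vee_r$ and observe that the conjunction of ``$\lra{s,y}\geq -r$ for all $s\in S$'' and ``$\lra{t,y}\geq -r$ for all $t\in T$'' is equivalent to ``$\lra{u,y}\geq -r$ for all $u\in S\cup T$''. The paper's proof is a one-line biconditional chain; your half-space formulation is just a slightly more explicit way of saying the same thing.
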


\begin{proof}
    Let $y \in (U^\vee)^{\trop}(\Q)$.
    Then
    \eqn{y \in \lrp{S^{\vee_r} \cap T^{\vee_r}} \LRA \lra{x,y}\geq -r\  \text{ for all }x\in (S\cup T) \LRA y \in \lrp{S\cup T}^{\vee_r}.}
    
\end{proof}

\begin{prop}
Let $S$ and $T$ be subsets of $U^{\trop}(\Q)$ both containing $0$. Then
\eqn{\lrp{S \+t T}^{\vee_{r} }\subset \lrp{S^{\vee_r} \cap T^{\vee_r}}  \subset \lrp{S \+t T}^{\vee_{2r} }.}
\end{prop}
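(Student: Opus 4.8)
The plan is to handle the two containments independently and by elementary means. For the first, $\lrp{S \+t T}^{\vee_r}\subset S^{\vee_r}\cap T^{\vee_r}$, the key point is that $S\cup T\subset S \+t T$ whenever $0\in S\cap T$. Indeed, given $s\in S$, pick $a\in\Z_{>0}$ with $as$ integral; since $\tf_0=1$ we have $\tf_{as}\cdot\tf_0=\tf_{as}$, so $\alpha_{as,\,0}^{as}=1\neq 0$, and taking the $T$-element to be $0$ in the first description of \thref{def:TropicalMinkowskiSum} shows $s\in S \+t T$. Symmetrically $0\in S$ gives $T\subset S \+t T$. As polarity $(\cdot)^{\vee_r}$ reverses inclusions, $\lrp{S \+t T}^{\vee_r}\subset S^{\vee_r}$ and $\lrp{S \+t T}^{\vee_r}\subset T^{\vee_r}$, hence $\lrp{S \+t T}^{\vee_r}\subset S^{\vee_r}\cap T^{\vee_r}$.

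For the second containment I would argue pointwise with the tropically linear function $\varphi:=\lra{\ \boldsymbol{\cdot}\ ,y}$ attached to a fixed $y\in S^{\vee_r}\cap T^{\vee_r}$, aiming to show $\varphi\geq -2r$ on all of $S \+t T$. Take $x\in S \+t T$; unwinding \thref{def:TropicalMinkowskiSum} yields $s\in S$, $t\in T$, $a\in\Z_{>0}$ with $as,at,ax$ integral and $\alpha_{as,at}^{ax}\neq 0$. Applying \thref{cor:TropLinearEquality} with $d=2$ to the integral points $as,at$ gives $\varphi(as)+\varphi(at)=\min\lrc{\varphi(w):w\in U^{\trop}(\Z),\ \alpha_{as,at}^{w}\neq 0}\leq\varphi(ax)$, and rescaling (so that $\varphi(ax)=a\varphi(x)$, $\varphi(as)=a\varphi(s)$, $\varphi(at)=a\varphi(t)$) together with $\varphi(s)\geq -r$ and $\varphi(t)\geq -r$ yields $\varphi(x)\geq\varphi(s)+\varphi(t)\geq -2r$. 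Thus $\lra{x,y}\geq -2r$ for every $x\in S \+t T$, i.e. $y\in\lrp{S \+t T}^{\vee_{2r}}$.

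I do not anticipate a serious obstacle; the only delicate points are minor bookkeeping ones. In the first containment one must remember that $\tf_0$ is the constant function $1$, so that $0$ behaves as an additive identity for $\+t$, and one must clear denominators to reach integral tropical points before invoking the structure-constant description. In the second containment the essential input is that \thref{cor:TropLinearEquality} upgrades the trivial valuative inequality to an equality (this is where \thref{thm:ValuativeIndependence} enters, ruling out pole cancellation); if one prefers, \thref{cor:TropLinearEquality} can be replaced by \thref{prop:ConvWRTBL} applied with coefficients $a_1=a_2=a$ and point $x/2$, which produces the same inequality $\varphi(x)\geq\varphi(s)+\varphi(t)$. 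Finally, it is worth recording that the factor of $2$ is genuinely necessary and not an artifact of the argument: each of $s$ and $t$ contributes only the bound $-r$, and because $\+t$ is multi-valued the sharper estimate $\varphi\geq -r$ cannot in general be expected on all of $S \+t T$.
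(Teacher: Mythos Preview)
Your proof is correct and follows essentially the same approach as the paper: for the first containment you show $S\cup T\subset S\+t T$ via $0$ acting as a $\+t$-identity and then reverse inclusions under $(\cdot)^{\vee_r}$, and for the second you fix $y\in S^{\vee_r}\cap T^{\vee_r}$ and use convexity of the tropically linear function $\lra{\ \boldsymbol{\cdot}\ ,y}$ to get $\lra{x,y}\geq\lra{s,y}+\lra{t,y}\geq -2r$. The only difference is that you spell out the justifications (invoking $\tf_0=1$ and \thref{cor:TropLinearEquality}/\thref{prop:ConvWRTBL}) where the paper simply asserts the key inequality.
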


\begin{proof}
    Since both $S$ and $T$ contain $0$, we have that $S\subset S\+t T$ and $T\subset S\+t T$.
    This implies that $S^{\vee_r}\supset \lrp{S \+t T}^{\vee_r}$ and $T^{\vee_r}\supset \lrp{S \+t T}^{\vee_r}$, showing the first containment.
    
    Next, let $y\in S^{\vee_r}\cap T^{\vee_r}$. For an element $x\in S \+t T$, there exist $s\in S$ and $t\in T$ such that $x\in s\+t t$. Then
    \eqn{\lra{x,y}\geq \lra{s,y}+\lra{t,y}\geq -2r.}
    So, $y\in \lrp{S \+t T}^{\vee_{2r}}$.
\end{proof}

We obtain the following statement as a corollary:

\begin{cor}
Let $S$ and $T$ be subsets of $U^{\trop}(\Q)$ both containing $0$. Then
\eqn{\lrp{S^{\vee} \cap T^{\vee}}  = \lrp{S \+t T}^{\vee}.}
\end{cor}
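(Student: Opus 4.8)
The plan is to obtain this corollary as an immediate consequence of the preceding proposition. First I would set $r = 0$ there; since $2r = 0$ as well, its two containments become $\lrp{S \+t T}^{\vee_0} \subset \lrp{S^{\vee_0} \cap T^{\vee_0}}$ and $\lrp{S^{\vee_0} \cap T^{\vee_0}} \subset \lrp{S \+t T}^{\vee_0}$, which pinch together to the equality $S^{\vee_0} \cap T^{\vee_0} = \lrp{S \+t T}^{\vee_0}$. Then I would simply recall that, by the definition of the $0$-dual, $S^{\vee} = S^{\vee_0}$, $T^{\vee} = T^{\vee_0}$, and $\lrp{S \+t T}^{\vee} = \lrp{S \+t T}^{\vee_0}$, so this equality is exactly the asserted statement. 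I do not anticipate any real obstacle: all the substantive work has already been carried out in the preceding proposition, whose forward inclusion uses $S, T \subset S \+t T$ (valid because $0 \in T$ and $0 \in S$, taking the other summand to be $0$) and whose reverse inclusion uses the subadditivity $\lra{x, y} \geq \lra{s, y} + \lra{t, y}$ whenever $x$ is a value of $s \+t t$.

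If instead one wanted a self-contained argument that does not route through the $\vee_{2r}$ statement, I would unwind the definitions directly. For $\lrp{S \+t T}^{\vee} \subset S^{\vee} \cap T^{\vee}$: given $y$ pairing non-negatively with every point of $S \+t T$, use that $S \subset S \+t T$ and $T \subset S \+t T$ (which hold because $0 \in T$ and $0 \in S$ respectively) to conclude $y \in S^{\vee}$ and $y \in T^{\vee}$. For the reverse inclusion $S^{\vee} \cap T^{\vee} \subset \lrp{S \+t T}^{\vee}$: given $y \in S^{\vee} \cap T^{\vee}$ and $x \in S \+t T$, choose $s \in S$ and $t \in T$ with $x$ a value of $s \+t t$; by \thref{cor:TropLinCWRTBL} the function $\lra{\ \boldsymbol{\cdot}\ , y}$ is convex with respect to broken lines, so \thref{prop:ConvWRTBL} yields $\lra{x, y} \geq \lra{s, y} + \lra{t, y} \geq 0$, whence $y \in \lrp{S \+t T}^{\vee}$. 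Either route is short; the first is the cleaner one to write.
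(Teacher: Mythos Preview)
Your proposal is correct and matches the paper's approach exactly: the paper presents this as an immediate corollary of the preceding proposition (with no written proof), and your first route---setting $r=0$ so that $\vee_r = \vee_{2r} = \vee$ and the two containments collapse to an equality---is precisely the intended derivation.
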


\begin{prop}
Let $S$ and $T$ be closed cones in $U^{\trop}(\Q)$.
Then
\eqn{\lrp{S \cap T}^{\vee_r}  = S^{\vee_r} \+t T^{\vee_r}.}
\end{prop}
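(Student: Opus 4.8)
The plan is to first reduce to the case $r=0$, then prove the two inclusions separately — the forward one directly via tropically linear functions, the reverse one by double duality — after which the only remaining issue is a closedness statement.

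\emph{Reduction.} Since $S$ and $T$ are cones, so is $S\cap T$: intersections of broken line convex sets are broken line convex and intersections of $\Q_{\geq 0}$-invariant sets are $\Q_{\geq 0}$-invariant, and $S\cap T$ is closed. By \thref{prop:DualCone}, $(S\cap T)^{\vee_r}=(S\cap T)^\vee$, $S^{\vee_r}=S^\vee$, $T^{\vee_r}=T^\vee$, and $S^\vee,T^\vee$ are themselves closed cones; they contain $0$. So it suffices to prove $(S\cap T)^\vee=S^\vee\+t T^\vee$. I would also record at the outset that $C:=S^\vee\+t T^\vee$ is a cone containing $0$: it is broken line convex by \thref{blctropms}, it contains $0$ because $\tf_0\tf_0=\tf_0$, and it is $\Q_{\geq 0}$-invariant since \thref{lem:rescale} rescales a witness $\alpha_{ay_1,ay_2}^{ay}\neq 0$ to $\alpha_{apy_1,apy_2}^{apy}\neq 0$, exhibiting $\tfrac pq y\in C$ with witnesses $\tfrac pq y_1\in S^\vee$, $\tfrac pq y_2\in T^\vee$.

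\emph{The inclusion $C\subseteq(S\cap T)^\vee$.} Take $y\in C$, with witnesses $y_1\in S^\vee$, $y_2\in T^\vee$, $a\in\Z_{>0}$, $\alpha_{ay_1,ay_2}^{ay}\neq 0$. For fixed $x\in S\cap T$ the function $\varphi:=\lra{x,\ \boldsymbol{\cdot}\ }$ on $(U^\vee)^{\trop}(\Q)$ is tropically linear, hence convex with respect to broken lines by \thref{cor:TropLinCWRTBL}. Applying \thref{prop:ConvWRTBL} with $d=2$, $a_1=a_2=a$, $s_1=y_1$, $s_2=y_2$, $s=y/2$ gives $\varphi(y/2)\geq\tfrac12\varphi(y_1)+\tfrac12\varphi(y_2)\geq 0$, the last inequality because $x\in S$ and $x\in T$ force $\lra{x,y_1}\geq 0$ and $\lra{x,y_2}\geq 0$. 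By $\Q$-bilinearity of the pairing, $\lra{x,y}=2\varphi(y/2)\geq 0$; as $x\in S\cap T$ was arbitrary, $y\in(S\cap T)^\vee$.

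\emph{The reverse inclusion, up to closure.} Since $S$ and $T$ are closed cones, \thref{prop:DualCone} and \thref{prop:DoubleDual} give $(S^\vee)^\vee=\overline{\bconv(S\cup\lrc{0})}=S$ and likewise $(T^\vee)^\vee=T$. Applying the corollary $\lrp{A^\vee\cap B^\vee}=\lrp{A\+t B}^\vee$ above (valid when $A,B\ni 0$) with $A=S^\vee$, $B=T^\vee$ yields $C^\vee=(S^\vee)^\vee\cap(T^\vee)^\vee=S\cap T$. Dualizing once more, and using that $C$ is a cone containing $0$ so that $(C^\vee)^\vee=(C^{\vee_1})^{\vee_1}=\overline{\bconv(C\cup\lrc{0})}=\overline C$ by \thref{prop:DualCone} and \thref{prop:DoubleDual}, we obtain $(S\cap T)^\vee=\overline C=\overline{S^\vee\+t T^\vee}$. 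Combined with the previous paragraph this shows $S^\vee\+t T^\vee\subseteq(S\cap T)^\vee\subseteq\overline{S^\vee\+t T^\vee}$, so the claimed equality is equivalent to the assertion that $C=S^\vee\+t T^\vee$ is closed. That closedness is the main obstacle: unlike the compact case, the tropical Minkowski sum of two closed cones need not obviously be closed, so the cone structure must be exploited. I would fix a seed, recall from the second description in \thref{def:TropicalMinkowskiSum} that $y\in C$ is witnessed by a broken line segment joining $\Q_{\geq 0}$-rescaled points of $S^\vee$ and $T^\vee$ through $y/2$, and for a convergent sequence $y_n\to y$ in $C$ use $\Q_{\geq 0}$-invariance to renormalize so the witnessing broken lines all lie in a bounded region of a single finite-order scattering diagram $\scat_k$, where a compactness argument should produce a witness for $y$. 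If a fully general argument proves elusive, in the polyhedral case one can instead write $S^\vee=\bigcap_i K(y_i,0)$ and $T^\vee=\bigcap_j K(y_j',0)$ and induct on the number of half-spaces, with base case $\lrp{K(y,0)\cap K(y',0)}^\vee=\Q_{\geq 0}\cdot y\+t\Q_{\geq 0}\cdot y'$ supplied by \thref{lem:Halfspace-r-dual}.
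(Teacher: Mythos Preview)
Your approach is essentially the paper's: both directions are argued the same way. For the inclusion $S^{\vee_r}\+t T^{\vee_r}\subseteq(S\cap T)^{\vee_r}$ the paper uses that $S\cap T\subseteq S,T$ gives $(S\cap T)^{\vee_r}\supseteq S^{\vee_r}\cup T^{\vee_r}$, then takes $\bconv$ and invokes \thref{prop:ConvUnion-UnionSum}; your route via convexity of $\lra{x,\ \boldsymbol{\cdot}\ }$ is equivalent and equally valid. For the reverse inclusion both you and the paper use double duality: from $(S^{\vee_r})^{\vee_r}=S$ and $(T^{\vee_r})^{\vee_r}=T$ one gets $(S\cap T)^{\vee_r}\subseteq\bigl((S^{\vee_r}\cup T^{\vee_r})^{\vee_r}\bigr)^{\vee_r}=\overline{\bconv(S^{\vee_r}\cup T^{\vee_r})}$.

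The substantive point is the one you flag: passing from $\overline{\bconv(S^{\vee_r}\cup T^{\vee_r})}$ to $\bconv(S^{\vee_r}\cup T^{\vee_r})=S^{\vee_r}\+t T^{\vee_r}$ requires that the latter be closed. The paper simply writes this equality in its chain, citing Propositions~\ref{prop:DoubleDual}, \ref{prop:DualCone}, and \ref{prop:ConvUnion-UnionSum}, but none of those results addresses closedness of a tropical Minkowski sum or broken line convex hull. So you have correctly identified a step that the paper glosses over; already in ordinary convex geometry the Minkowski sum of two closed convex cones can fail to be closed, so this is not an innocuous omission. Your proposal is therefore at least as complete as the paper's own argument and more honest about where the difficulty lies, though your sketched remedies (compactness in a finite-order $\scat_k$, or a polyhedral induction via \thref{lem:Halfspace-r-dual}) are not carried through either.
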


\begin{proof}
Since $S\cap T\subseteq S$ and $S\cap T\subseteq T$ we have that $\lrp{S\cap T}^{\vee_r}\supseteq S^{\vee_r}\cup T^{\vee_r}$. Consequently, 
\eqn{\bconv\lrp{S\cap T}^{\vee_r}\supseteq \bconv\lrp{S^{\vee_r}\cup T^{\vee_r}}.} 
Then  
\eqn{\lrp{S\cap T}^{\vee_r}=\bconv\lrp{S\cap T}^{\vee_r}\supseteq \bconv\lrp{S^{\vee_r}\cup T^{\vee_r}} = S^{\vee_r}\+t T^{\vee_r}.} 
Here, we use \thref{prop:r-dual} together with the fact that $S$ and $T$ are closed cones for the first equality.
For the second, \thref{prop:DualCone} implies $S^{\vee_r}=S^\vee$ and $T^{\vee_r}=T^\vee$ are closed cones as well, at which point we apply \thref{prop:ConvUnion-UnionSum}.

Conversely, since $S^{\vee_r}\subseteq S^{\vee_r}\cup T^{\vee_r}$, we have $\lrp{S^{\vee_r}}^{\vee_r}\supseteq \lrp{S^{\vee_r}\cup T^{\vee_r}}^{\vee_r}$. 
Then Proposition~\ref{prop:DoubleDual} implies the containment 
\eqn{S=\overline{\bconv\lrp{S\cup\{0\}}}\supseteq \lrp{S^{\vee_r}\cup T^{\vee_r}}^{\vee_r}.} 
Using an analogous argument, we have that $T\supseteq \lrp{S^{\vee_r}\cup T^{\vee_r}}^{\vee_r}$. 
So,  $(S\cap T)\supseteq \lrp{S^{\vee_r}\cup T^{\vee_r}}^{\vee_r}$, and in turn  $\lrp{S\cap T}^{\vee_r}\subseteq \lrp{\lrp{S^{\vee_r}\cup T^{\vee_r}}^{\vee_r}}^{\vee_r}$. 
Using once again Propositions~\ref{prop:DoubleDual}, \ref{prop:DualCone} and \ref{prop:ConvUnion-UnionSum}, we have that
\eqn{
\lrp{S\cap T}^{\vee_r}\subseteq \lrp{\lrp{S^{\vee_r}\cup T^{\vee_r}}^{\vee_r}}^{\vee_r}
&=\overline{\bconv\lrp{S^{\vee_r}\cup T^{\vee_r}\cup \{0\}}}\\
&=\bconv\lrp{S^{\vee_r}\cup T^{\vee_r}}\\
&=S^{\vee_r} \+t T^{\vee_r}.
}
This concludes the proof.
\end{proof}

\begin{prop}
Let $S$ and $T$ be closed broken line convex sets both containing 0. Then
\eqn{\overline{\bconv\lrp{S^{\circ} \cup T^{\circ}}}  = \lrp{S \cap T}^{\circ}.}
\end{prop}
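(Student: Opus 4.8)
The plan is to prove the two inclusions separately, following the template of the preceding proposition on closed cones and leaning on the double-polar results already in place.

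For the inclusion $\bconv(S^\circ\cup T^\circ)\subseteq(S\cap T)^\circ$: since $S\cap T\subseteq S$ and $S\cap T\subseteq T$, order-reversal of the polar gives $S^\circ\subseteq(S\cap T)^\circ$ and $T^\circ\subseteq(S\cap T)^\circ$, hence $S^\circ\cup T^\circ\subseteq(S\cap T)^\circ$. Now $(S\cap T)^\circ=\bigcap_{x\in S\cap T}K(x,1)$ is an intersection of closed tropical half-spaces, each broken line convex by \thref{lem:TropHalfSpaceBLC}, and an intersection of closed broken line convex sets is closed and broken line convex (as already used in the proof of \thref{prop:DoubleDual}). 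So $(S\cap T)^\circ$ is a broken line convex set containing $S^\circ\cup T^\circ$, and therefore contains $\bconv(S^\circ\cup T^\circ)$.

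For the reverse inclusion I would pass through the double polar, exactly as in \thref{prop:DoubleDual}. First observe $0\in S^\circ$ and $0\in T^\circ$, because $\lra{s,0}=0\geq-1$ for every $s$. Applying \thref{prop:CapDuals=DualCup} with $r=1$ gives
\eqn{\lrp{S^\circ\cup T^\circ}^\circ=\lrp{S^\circ}^\circ\cap\lrp{T^\circ}^\circ,}
and \thref{prop:DoubleDual} with $r=1$, together with the hypotheses that $S$ and $T$ are closed, broken line convex, and contain $0$, gives $\lrp{S^\circ}^\circ=\overline{\bconv(S\cup\lrc{0})}=S$ and likewise $\lrp{T^\circ}^\circ=T$; hence $\lrp{S^\circ\cup T^\circ}^\circ=S\cap T$. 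Taking polars once more and applying \thref{prop:DoubleDual} to the set $S^\circ\cup T^\circ$ (whose union with $\lrc{0}$ is itself, since $0\in S^\circ$) yields
\eqn{(S\cap T)^\circ=\lrp{\lrp{S^\circ\cup T^\circ}^\circ}^\circ=\overline{\bconv\lrp{S^\circ\cup T^\circ}},}
so in particular $(S\cap T)^\circ\supseteq\bconv(S^\circ\cup T^\circ)$, which is the inclusion we need.

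The step I expect to be the real obstacle is the last one: the argument above actually proves $(S\cap T)^\circ=\overline{\bconv(S^\circ\cup T^\circ)}$, so to obtain the proposition as stated one must still check that $\bconv(S^\circ\cup T^\circ)$ is already closed. Here I would use \thref{prop:ConvUnion-UnionSum}: since $S^\circ$ and $T^\circ$ are broken line convex,
\eqn{\bconv\lrp{S^\circ\cup T^\circ}=\bigcup_{\substack{a,b\in\Q_{\geq0}\\ a+b=1}}\lrp{a\,S^\circ\+t b\,T^\circ},}
reducing the problem to showing this union of tropical Minkowski sums is closed. When $S$ and $T$ are polytopal this is routine — then $S^\circ,T^\circ$ are polytopal, each $a\,S^\circ\+t b\,T^\circ$ is bounded, and a compactness argument in the parameter $a\in[0,1]$ closes the union — and this is the case relevant to the intended applications. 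For general closed broken line convex $S,T$ one would need to rule out a convergent sequence of points lying in the sets $a_n\,S^\circ\+t b_n\,T^\circ$ whose limit lies in none of the $a\,S^\circ\+t b\,T^\circ$; this parallels the classical subtlety that a Minkowski sum of two closed sets need not be closed, and is where the remaining work lies. Everything else is bookkeeping with duality identities already established.
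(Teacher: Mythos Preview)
Your approach is the same as the paper's: both routes reduce to the identity
\[
\bigl((S^\circ\cup T^\circ)^\circ\bigr)^\circ
= \bigl((S^\circ)^\circ\cap(T^\circ)^\circ\bigr)^\circ
= (S\cap T)^\circ,
\]
using \thref{prop:CapDuals=DualCup} and \thref{prop:DoubleDual}. The paper's proof is in fact just this one-line chain, beginning with the assertion $\bconv(S^\circ\cup T^\circ)=\bigl((S^\circ\cup T^\circ)^\circ\bigr)^\circ$.

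You are right to flag the closure issue, and in doing so you are being more careful than the paper itself. \thref{prop:DoubleDual} only gives $\bigl((S^\circ\cup T^\circ)^\circ\bigr)^\circ=\overline{\bconv(S^\circ\cup T^\circ)}$, so the paper's first equality silently assumes $\bconv(S^\circ\cup T^\circ)$ is closed. This is not automatic: already in the trivial scattering diagram (where broken line convexity is ordinary convexity) one can take $S$ to be the $y$-axis and $T$ the half-plane $\{y\geq -1\}$ in $\Q^2$; then $S^\circ$ is the $x$-axis, $T^\circ$ is the segment $\{0\}\times[0,1]$, and $\conv(S^\circ\cup T^\circ)$ is the open strip $\{0\leq y<1\}$ together with the single point $(0,1)$, while $(S\cap T)^\circ$ is the closed strip $\{0\leq y\leq 1\}$. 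So the proposition as stated needs either a closure on the left-hand side or an additional hypothesis (e.g.\ boundedness of $S^\circ$ or $T^\circ$, as in the polytopal case you mention). Your diagnosis of where the difficulty lies is exactly correct.
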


\begin{proof}
Observe that by Proposition~\ref{prop:DoubleDual} we have that
\eqn{\overline{\bconv\lrp{S^{\circ}\cup T^{\circ}}}  = {\lrp{S^{\circ}\cup T^{\circ}}^{\circ}}^{\circ} 
= \lrp{{S^{\circ}}^{\circ}\cap {T^{\circ}}^{\circ}}^{\circ} 
= \lrp{\overline{\bconv\lrp{S\cup\{0\}}}\cap \overline{\bconv\lrp{T\cup\{0\}}}}^{\circ} 
= \lrp{S\cap T}^{\circ}.
}
\end{proof}

\begin{definition}
    A broken line convex set $S\subset U^{\trop}(\Q)$ is {\it{strongly}}
    broken line convex if no doubly infinite broken line has support contained in $S$.
\end{definition}

\begin{definition}
    Let $S \subset U^{\trop}(\Q)$, let $x$ be a non-zero element of $U^{\trop}(\Q)$, and fix a seed $\vb{s}$.
    We say that $S$ {\it{contains the asymptotic direction $x$}} if there exists a sequence $s_0,s_1,s_2, \dots$ of elements of $S$ such that 
    \eqn{\lim_{n \to \infty}\lVert \mf{r}_{\vb{s}}(s_{n}) \rVert = \infty \quad \text{and} \quad \lim_{n\to  \infty}\frac{\mf{r}_{\vb{s}}(s_{n}) }{\lVert \mf{r}_{\vb{s}}(s_{n}) \rVert} = \mf{r}_{\vb{s}}(x).}
\end{definition}

\begin{prop}\thlabel{prop:asymptotic-direction}
    $S$ contains the asymptotic direction $x$ if and only if $S^{\vee_r} \subset K(x,0)$.
\end{prop}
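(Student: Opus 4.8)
The plan is to prove the two implications separately: the forward direction (from an asymptotic sequence to the inclusion of duals) is direct, while the reverse direction is routed through \thref{prop:r-dual-x-ray}, and the crux of that reduction is the main obstacle.

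For the forward direction, suppose $(s_n)_{n\geq 0}$ is a sequence in $S$ with $\lVert \mf{r}_{\vb{s}}(s_n)\rVert \to \infty$ and $\mf{r}_{\vb{s}}(s_n)/\lVert \mf{r}_{\vb{s}}(s_n)\rVert \to \mf{r}_{\vb{s}}(x)$, and fix $y \in S^{\vee_r}$; the goal is to show $\lra{x,y} \geq 0$, i.e. $y \in K(x,0)$. Consider $\varphi := \lra{\ \boldsymbol{\cdot}\ , y}$. Clearing a denominator we may assume $\tf_y$ is defined over $\Z$; restricted to the cluster chart determined by $\vb{s}$ it is a Laurent polynomial, so under the identification of $U^{\trop}(\Q)$ with $V$ via $\mf{r}_{\vb{s}}$, $\varphi$ is a minimum of finitely many linear functionals on $V$. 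In particular $\varphi$ is $\Q_{\geq 0}$-homogeneous (also clear from \thref{rem:Rescale}) and extends to a continuous, $\R_{\geq 0}$-homogeneous function $\bar\varphi$ on $V_\R := L\otimes_{\Z}\R$ restricting to $\varphi$ on $V$. Writing $v_n = \mf{r}_{\vb{s}}(s_n)$, the hypothesis $y \in S^{\vee_r}$ gives $\varphi(v_n) = \lVert v_n\rVert\,\bar\varphi(v_n/\lVert v_n\rVert) \geq -r$, hence $\bar\varphi(v_n/\lVert v_n\rVert) \geq -r/\lVert v_n\rVert$. Letting $n \to \infty$, using $v_n/\lVert v_n\rVert \to \mf{r}_{\vb{s}}(x)$ and continuity of $\bar\varphi$, one obtains $\lra{x,y} = \bar\varphi(\mf{r}_{\vb{s}}(x)) \geq 0$. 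Since $y \in S^{\vee_r}$ was arbitrary, $S^{\vee_r} \subset K(x,0)$. (One could invoke \thref{cor:TropLinCWRTBL} to see $\varphi$ is convex with respect to broken lines, but only its continuity and homogeneity are used.)

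For the reverse direction, assume $S^{\vee_r} \subset K(x,0)$. By \thref{prop:r-dual-x-ray} this gives $\Q_{\geq 0}\cdot x \subset \overline{\bconv(\lrc{0}\cup S)} =: \widehat S$, and by \thref{prop:DoubleDual} we also have $\widehat S = (S^{\vee_r})^{\vee_r} = \bigcap_{y \in S^{\vee_r}} K(y,r)$; in particular $n\,x \in \widehat S$ for every $n \in \Z_{>0}$. What remains is to produce a sequence in $S$ itself realizing the asymptotic direction $x$, and this is where the real work lies. For each $n$, choose $p_n \in \bconv(\lrc{0}\cup S)$ with $\lVert \mf{r}_{\vb{s}}(p_n) - n\,\mf{r}_{\vb{s}}(x)\rVert < 1$. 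By \thref{cor:ConvColim} and \thref{prop:ConvUnion-UnionSum} applied to the covering of $\lrc{0}\cup S$ by its singletons, $p_n$ lies in a finite tropical Minkowski sum $\Sumt_{i} a_i^{(n)} \lrc{s_i^{(n)}}$ with $s_i^{(n)} \in \lrc{0}\cup S$ and $\sum_i a_i^{(n)} \leq 1$; unwinding \thref{def:TropicalMinkowskiSum}, there are $b_n \in \Z_{>0}$ and integral points $c_i^{(n)} s_i^{(n)}$ with $\alpha_{\lrc{c_i^{(n)} s_i^{(n)}}}^{b_n p_n} \neq 0$, so $\tf_{b_n p_n}$ is a nonzero summand of $\prod_i \tf_{c_i^{(n)} s_i^{(n)}}$. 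The plan from here is: (i) using the broken lines realizing this structure constant, bound $\mf{r}_{\vb{s}}(p_n)$ in terms of a convex combination of the $\mf{r}_{\vb{s}}(s_i^{(n)})$ up to an error governed by the wall-crossings involved -- where strong positivity (\thref{rem:StrongPositivity}) and the finite-order structure $\scat = \colim_k \scat_k$ are the relevant inputs -- which should force some $s_i^{(n)} \in S$ to have large $\mf{r}_{\vb{s}}$-norm with direction approaching $\mf{r}_{\vb{s}}(x)$; and (ii) extract, by a diagonal argument over $n$, a single sequence in $S$ witnessing the asymptotic direction $x$.

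The hard part will be step (i): passing from asymptotic information about points of the broken line convex hull $\widehat S$ to asymptotic information about points of $S$, i.e. controlling the extent to which the hull operation can manufacture a direction at infinity that $S$ itself does not accumulate toward. Concretely this amounts to a quantitative estimate on how much a broken line's exponent vector can grow, and how far its direction can rotate, under wall-crossings -- bounded in terms of the walls met in some finite scattering diagram $\scat_k$. I expect this to be the technical heart of the argument, and the step where the finite-order structure of the scattering diagram must be used with care.
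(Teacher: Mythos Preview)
Your forward direction is correct and in fact cleaner than the paper's: you argue directly via continuity and homogeneity of $\lra{\,\boldsymbol{\cdot}\,,y}$, whereas the paper passes through $\overline{\bconv(\{0\}\cup S)}$ and asserts that a closed broken line convex set containing $0$ and a sequence approaching the ray at infinity must contain the ray.

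Your reverse direction, however, is incomplete and significantly overcomplicated. You correctly reduce via \thref{prop:r-dual-x-ray} to showing that if $\overline{\bconv(\{0\}\cup S)}$ contains the asymptotic direction $x$ then so does $S$. But your proposed attack---decomposing $\{0\}\cup S$ into \emph{singletons} in \thref{prop:ConvUnion-UnionSum}, then seeking quantitative bounds on how exponent vectors rotate under wall-crossing---is a plan, not a proof, and you yourself flag step~(i) as the unproven ``technical heart''.

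The paper avoids all of this with one observation you missed: apply \thref{prop:ConvUnion-UnionSum} with the two-piece decomposition $S^1=\{0\}$, $S^2=S$ (both broken line convex---the latter is an implicit hypothesis, consistent with how the proposition is used in \thref{prop:SBLC-FullDim}). Since $\{0\}\+t T = T$, this gives
\[
\bconv(\{0\}\cup S)=\bigcup_{a\in[0,1]} aS.
\]
Now any $p_n\in\bconv(\{0\}\cup S)$ near $n\,x$ is simply $a_n s_n'$ for some $a_n\in(0,1]$ and $s_n'\in S$. Rescaling by $1/a_n\geq 1$ preserves the direction $\mf{r}_{\vb{s}}(s_n')/\lVert\mf{r}_{\vb{s}}(s_n')\rVert = \mf{r}_{\vb{s}}(p_n)/\lVert\mf{r}_{\vb{s}}(p_n)\rVert$ and only increases the norm, so $(s_n')$ is already the desired sequence in $S$. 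No scattering-diagram estimates are needed.
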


\begin{proof}
    We first show that if $\overline{\bconv\lrp{\lrc{0}\cup S}}$ contains the asymptotic direction $x$, then so does $S$.
    If we are given a sequence $\overline{s_0}, \overline{s_1}, \overline{s_2}, \dots$ in $\overline{\bconv\lrp{\lrc{0}\cup S}}$, we can obtain a sequence $s_0, s_1, s_2, \dots$ in $\bconv\lrp{\lrc{0}\cup S}$ with only small perturbations of each term.
    That is, for any $\epsilon>0$, we can ensure that each $s_n$ satisfies $\lVert \mf{r}_{\vb{s}}(s_n)- \mf{r}_{\vb{s}}(\overline{s_n}) \rVert <\epsilon$.
    Clearly, if 
    \eqn{\lim_{n \to \infty}\lVert \mf{r}_{\vb{s}}(\overline{s_{n}}) \rVert = \infty \quad \text{and} \quad \lim_{n\to  \infty}\frac{\mf{r}_{\vb{s}}(\overline{s_{n}}) }{\lVert \mf{r}_{\vb{s}}(\overline{s_{n}}) \rVert} = \mf{r}_{\vb{s}}(x),}
    then
    \eqn{\lim_{n \to \infty}\lVert \mf{r}_{\vb{s}}(s_{n}) \rVert = \infty \quad \text{and} \quad \lim_{n\to  \infty}\frac{\mf{r}_{\vb{s}}(s_{n}) }{\lVert \mf{r}_{\vb{s}}(s_{n}) \rVert} = \mf{r}_{\vb{s}}(x)}
    as well.
    Taking a subsequence if necessary, we may assume each $s_n$ is non-zero.
    Now we use \thref{prop:ConvUnion-UnionSum} to conclude that 
    \eqn{\bconv\lrp{\lrc{0}\cup S} = \bigcup_{a \in [0,1]} a S.}
    Take each $s_n$ to be in $a_n S$. Since we have assumed $s_n$ to be non-zero, $a_n \in (0,1]$.
    Now let $s_n' = \frac{1}{a_n} s_n$.
    Then we also have
    \eqn{\lim_{n \to \infty}\lVert \mf{r}_{\vb{s}}(s_{n}') \rVert = \infty \quad \text{and} \quad \lim_{n\to  \infty}\frac{\mf{r}_{\vb{s}}(s_{n}') }{\lVert \mf{r}_{\vb{s}}(s_{n}') \rVert} = \mf{r}_{\vb{s}}(x),}
    so $S$ contains the asymptotic direction $x$ if $\overline{\bconv\lrp{\lrc{0}\cup S}}$ does.
        
    By \thref{prop:r-dual-x-ray},  $S^{\vee_r} \subset K(x,0)$ if and only if $\Q_{\geq 0}\cdot x \subset \overline{\bconv\lrp{\lrc{0} \cup S}}$.
    But $\Q_{\geq 0}\cdot x $ clearly contains the asymptotic direction $x$.
    So if $S^{\vee_r} \subset K(x,0)$, then $S$ contains the asymptotic direction $x$.
    
    Similarly, if $S$ contains the asymptotic direction $x$, then so does $\overline{\bconv\lrp{\lrc{0}\cup S}}$.
    Then $\overline{\bconv\lrp{\lrc{0}\cup S}}$ is a closed, broken line convex set containing both $0$ and a sequence of points approaching the ray $\Q_{\geq 0}\cdot x$ at infinity.
    It must contain $\Q_{\geq 0}\cdot x$.
    Applying \thref{prop:r-dual-x-ray} again,
    we see that $S^{\vee_r} \subset K(x,0)$.
\end{proof}

\begin{prop}\thlabel{prop:SBLC-FullDim}
    Let $S \subset U^{\trop}(\Q)$ be broken line convex.  Then $S^{\vee_r} \subset (U^\vee)^{\trop}(\Q)$ is full-dimensional if and only if $S$ is {\emph{strongly}} broken line convex.
\end{prop}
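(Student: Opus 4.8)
The statement is the broken line convex geometry analogue of the classical fact that a closed convex set containing the origin has full-dimensional polar precisely when it contains no line, and I would prove both implications by contraposition. Throughout, recall that $S$ is assumed broken line convex (this is built into the notion of \emph{strongly} broken line convex), and that by \thref{cor:TropLinCWRTBL} every tropically linear function $\lra{\ \boldsymbol{\cdot}\ ,y}$ is convex with respect to broken lines.

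First suppose $S$ is \emph{not} strongly broken line convex, so there is a doubly infinite broken line $\gamma\colon\Q\to U^{\trop}(\Q)$ with $\supp(\gamma)\subseteq S$; I claim $S^{\vee_r}$ is not full-dimensional. Fix $y\in S^{\vee_r}$. Applying \thref{def:ConvWRTBL} with $\varphi=\lra{\ \boldsymbol{\cdot}\ ,y}$ to the finite subsegments of $\gamma$ shows that $g(t):=\lra{\gamma(t),y}$ lies on or above each of its chords, i.e. $g$ is concave on $\Q$; and since $\supp(\gamma)\subseteq S$ and $y\in S^{\vee_r}$ we have $g(t)\geq -r$ for all $t$. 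A concave function on $\Q$ that is bounded below is constant (otherwise some chord has nonzero slope, and monotonicity of difference quotients forces $g\to-\infty$ at one of the two ends). So $g$ is constant. Now fix a seed $\vb s$ such that one of the two infinite rays of $\gamma$ lies in a cluster chamber $\mathcal{C}$ of $U^{\trop}(\Q)$ (arranging this may require first passing to a finite scattering diagram $\scat_k$); on $\mathcal{C}$ the canonical pairing coincides with the standard pairing of $V$ with $V^*$ in the corresponding chart, and along the ray $\mf{r}_{\vb s}(\gamma(t))$ moves linearly with a fixed velocity $v\neq 0$. Constancy of $g$ then forces $\lra{v,\mf{r}_{\vb s}^\vee(y)}=0$. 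As this holds for every $y\in S^{\vee_r}$, the set $\mf{r}_{\vb s}^\vee(S^{\vee_r})$ lies in the linear hyperplane $v^\perp$, so $S^{\vee_r}$ is not full-dimensional.

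For the converse, suppose $S^{\vee_r}$ is not full-dimensional; I must produce a doubly infinite broken line inside $S$. By \thref{prop:r-dual}, $S$ and $\overline{\bconv(S\cup\lrc{0})}$ have the same $r$-dual, and by \thref{prop:ConvUnion-UnionSum} the latter equals $\overline{\bigcup_{a\in[0,1]} aS}$; I will first find a doubly infinite broken line in $\overline{\bconv(S\cup\lrc{0})}$ and then push it into $S$ using that $S$ is broken line convex and unbounded in the relevant directions. Since $S^{\vee_r}$ contains $0$ and has empty interior, in a seed chart there is $v\neq 0$ with $\mf{r}_{\vb s}^\vee(S^{\vee_r})\subseteq v^\perp$. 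The plan is to produce $x_0,x_0'\in U^{\trop}(\Q)$ with $\mf{r}_{\vb s}(x_0)=v=-\mf{r}_{\vb s}(x_0')$ and with $S^{\vee_r}\subseteq K(x_0,0)$ and $S^{\vee_r}\subseteq K(x_0',0)$: by \thref{prop:r-dual-x-ray} (together with \thref{lem:Halfspace-r-dual}) this yields $\Q_{\geq 0}\, x_0\subseteq\overline{\bconv(\lrc{0}\cup S)}$ and $\Q_{\geq 0}\, x_0'\subseteq\overline{\bconv(\lrc{0}\cup S)}$, and the union of these two opposite rays is exactly $\mf{r}_{\vb s}^{-1}(\Q v)$ — a straight line through the origin, which is the support of a (straight) doubly infinite broken line in the sense of \thref{def:broken2infinity}. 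Hence $\overline{\bconv(S\cup\lrc{0})}$, and then $S$, is not strongly broken line convex.

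The main obstacle is precisely this last extraction step: exhibiting the tropical points $x_0,x_0'$ realizing the degeneracy direction of $S^{\vee_r}$ together with the two half-space containments. The subtlety is that the canonical pairing is only piecewise linear, so ``$\mf{r}_{\vb s}^\vee(S^{\vee_r})\subseteq v^\perp$ in one chart'' does not on its face say that $S^{\vee_r}$ is contained in a tropical hyperplane. Overcoming this requires exploiting the cluster-chamber structure — using that $\lra{x,\ \boldsymbol{\cdot}\ }$ restricts to a linear functional on each cluster chamber, in combination with \thref{prop:asymptotic-direction}, \thref{prop:DoubleDual}, and \thref{lem:Halfspace-r-dual} — and possibly reorganizing the argument order by order in the finite scattering diagrams $\scat_k$. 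Everything else (the concavity argument, the reductions via \thref{prop:r-dual} and \thref{prop:ConvUnion-UnionSum}, and recognizing the union of opposite rays as a doubly infinite broken line) is routine bookkeeping.
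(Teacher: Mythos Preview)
Your Direction~1 argument (concavity of $g(t)=\lra{\gamma(t),y}$ from \thref{cor:TropLinCWRTBL}, bounded below by $-r$, hence constant) is a clean observation and is correct up to the extraction step. The gap is your assumption that one of the two infinite rays of $\gamma$ lies in a single cluster chamber: in infinite type the cluster complex need not cover $U^{\trop}(\Q)$, and ``passing to a finite $\scat_k$'' does not help, since the canonical pairing $\lra{\,\cdot\,,y}$ is defined via the full scattering diagram rather than $\scat_k$, so linearity on a $\scat_k$-chamber says nothing. The paper only asks that $\gamma$ meet a chamber at a \emph{single non-bending point} $x_0$; it splits $\gamma$ there into two broken lines $\eta_\pm$, uses asymptotic boundedness to get $\lra{\mf{r}_{\vb s}^{-1}(v_\pm),y}\geq 0$, and then uses the local theta expansion $\tf_{v_\pm,\mf{r}_{\vb s}(x_0)}$ to bound this above by $(-\dot{\eta}_\pm(-\epsilon))(\mf{r}_{\vb s}^\vee(y))$. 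The patching $\dot{\eta}_-(-\epsilon)=-\dot{\eta}_+(-\epsilon)$ then forces $\mf{r}_{\vb s}^\vee(y)\in\dot{\eta}_\pm(-\epsilon)^\perp$. Your constancy result could in fact streamline this last part: once $g$ is constant, differentiating along any short interval of $\gamma$ lying in a chamber (not necessarily the infinite ray) gives the hyperplane constraint immediately.

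Your Direction~2 is substantially incomplete, and you have correctly flagged one obstacle but underestimated a second. Even granting $S^{\vee_r}\subset K(x_0,0)\cap K(x_0',0)$, \thref{prop:r-dual-x-ray} only places the straight line $\mf{r}_{\vb s}^{-1}(\Q v)$ inside $\overline{\bconv(\{0\}\cup S)}$; your phrase ``and then $S$'' hides a genuine difficulty, since $S$ need not contain $0$ and there is no general implication from $\overline{\bconv(\{0\}\cup S)}$ failing strong convexity to $S$ failing it. The paper's argument here is considerably more delicate: it first uses \thref{prop:BLInHyperplanes} to propagate the containment $S^{\vee_r}\subset H(x_+,0)\cap H(x_-,0)$ from a single chamber to all of $S^{\vee_r}$; it then interprets the existence of $x_\pm$ as giving $\alpha_{x_+,x_-}^{0}\neq 0$ and uses the balanced-pair description of this structure constant to construct a doubly infinite broken line through a point in the \emph{relative interior of $S$} (not through the origin); finally it combines \thref{prop:asymptotic-direction} with a perturbation argument in the style of \thref{prop:BLInHyperplanes} to show that this broken line cannot exit $S$. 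None of these ingredients appears in your outline.
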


\begin{proof}
    Suppose $S$ is {\emph{not}} strongly broken line convex.
    Then there exists a doubly infinite broken line $\gamma:\Q \to U^{\trop}(\Q)$ with support contained in $S$.
    We will choose a particular seed $\vb{s}$ to identify $ U^{\trop}(\Q)$ and $ (U^\vee)^{\trop}(\Q)$ with dual $\Q$-vector spaces $V$ and $V^*$ via the maps $\mf{r}_{\vb{s}}$ and $\mf{r}_{\vb{s}}^\vee$ as in \thref{not:Qvs}.
    Specifically, we choose $\vb{s}$ such that the support of $\gamma$ intersects the $\scat_{\vb{s}}^{U^\vee}$ chamber associated to $\vb{s}$ at a non-bending point $\mf{r}_{\vb{s}}(x_0)$ of $\mf{r}_{\vb{s}}(\supp(\gamma))$.
    Reparametrizing $\gamma$ if necessary, we can take $x_0 = \gamma(0)$.
    Now define
    \eqn{\eta_-: \Q_{\leq 0} &\to V\\
        t &\mapsto \mf{r}_{\vb{s}}(\gamma(t))}
    and
    \eqn{\eta_+: \Q_{\leq 0} &\to V\\
        t &\mapsto \mf{r}_{\vb{s}}(\gamma(-t)).}
    Since $\mf{r}_{\vb{s}}(x_0)$ is a non-bending point, for sufficiently small $\epsilon>0$, we have 
    \eq{\dot{\eta}_-(-\epsilon) = -\dot{\eta}_+(-\epsilon).}{eq:PatchingEtas}
    Write $v_{\pm}:= \lim_{t\to -\infty}\dot{\eta}_{\pm}(t)$.

    As $\eta_{\pm}$ has only finitely many bends, there exists some $R>0$ such that for all $t \in \Q_{\leq 0}$, $\eta_{\pm}(t)$ is contained in $B(R,t v_{\pm})$, the ball of radius $R$ centered at $t v_{\pm}$.
    That is, for all $t \in \Q_{\leq 0}$, we can write $\eta_{\pm}(t) = t v_{\pm} + b $ for some $b \in B(R,0)$.
    Now suppose $\lra{\mf{r}_{\vb{s}}(-v_{\pm}), y}<0$ for some $y\in (U^\vee)^{\trop}(\Q)$.
    Then 
    \eq{\lim_{t\to -\infty}\lra{\mf{r}_{\vb{s}}(t v_{\pm}), y}= -\infty.}{eq:pairing-infinity}

    The tropically linear function $\lra{\ \boldsymbol{\cdot} \ , y}: U^{\trop}(\Q) \to \Q$ defines a piecewise linear function on $V$ by 
    $\lrp{\mf{r}_{\vb{s}}^{-1}}^* \lra{\ \boldsymbol{\cdot} \ , y}$,
    and this piecewise linear function has the form 
    \eqn{\lrp{\mf{r}_{\vb{s}}^{-1}}^* \lra{\ \boldsymbol{\cdot} \ , y}= \min_{\ell \in L} \lrc{\ell( \ \boldsymbol{\cdot} \ )}}
    for some finite set $L$ of linear functions on $V$.
    Then 
    \eqn{\lra{\mf{r}_{\vb{s}}^{-1}\lrp{\eta_{\pm}(t)},y} = \min_{\ell \in L} \lrc{\ell(\eta_{\pm}(t))} =  \min_{\ell \in L} \lrc{\ell(t v_{\pm}) + \ell(b)}  }
    for some $b\in B(R,0)$.
    However, $\left.\ell\right|_{B(R,0)}$ is bounded for all $\ell \in L$, while $\lim_{t\to -\infty} \min_{\ell \in L}\ell(t v_{\pm}) = -\infty$ by \eqref{eq:pairing-infinity}.
    So, $ \lim_{t\to -\infty} \lra{\mf{r}_{\vb{s}}^{-1}\lrp{\eta_{\pm}(t)},y} = -\infty$
    as well, and $y\notin \supp(\gamma)^{\vee_r}$.
    In other words, if  $y\in \supp(\gamma)^{\vee_r}$, then $\lra{\mf{r}_{\vb{s}}^{-1}(v_{\pm}), y} \geq 0$.

    Next, 
    \eqn{\lra{\mf{r}_{\vb{s}}^{-1}(v_{\pm}), y} = \tf_{\mf{r}_{\vb{s}}^{-1}(v_{\pm})}^{\trop}(y) &= \lrp{\mf{r}_{\vb{s}}^\vee(y)}\lrp{\tf_{v_{\pm}, \mf{r}_{\vb{s}}(x_0)}}\\
    &= \min\lrc{m(\mf{r}_{\vb{s}}^\vee(y)) \,: \, z^m \text{ is a non-zero summand of } \tf_{v_{\pm}, \mf{r}_{\vb{s}}(x_0)}}\\
    &\leq \lrp{- \dot{\eta}_{\pm}(-\epsilon)}\lrp{\mf{r}_{\vb{s}}^\vee(y)} \text{ for small }\epsilon>0.}
    That is, if $y\in \supp(\gamma)^{\vee_r}$, then $0 \leq \lra{\mf{r}_{\vb{s}}^{-1}(v_{\pm}), y} \leq \lrp{- \dot{\eta}_{\pm}(-\epsilon)}\lrp{\mf{r}_{\vb{s}}^\vee(y)} $ for small $\epsilon>0$.
    Then \eqref{eq:PatchingEtas} implies $\mf{r}_{\vb{s}}^\vee(y) \in \dot{\eta}_{\pm}(-\epsilon)^\perp$, and $\supp(\gamma)^{\vee_r}$ is not full-dimensional.
    But $\supp(\gamma) \subset S$, so $S^{\vee_r} \subset \supp(\gamma)^{\vee_r}$, and $S^{\vee_r}$ is also not full-dimensional.

    Now suppose $S^{\vee_r}$ is not full-dimensional.
    Say $d:= \dim(S^{\vee_r})$.
    Choose a seed $\vb{s}$ such that the chamber $\sigma_{\vb{s}}$ of $\scat_{\vb{s}}^{U}$ associated to the seed $\vb{s}$ intersects $\mf{r}_{\vb{s}}^\vee(S^{\vee_r})$ in a $d$-dimensional subset.
    Note that $\mf{r}_{\vb{s}}^\vee(S^{\vee_r})$ is contained in some hyperplane through the origin, say $m^{\perp}$ for some integral $m$.
    Let $x_+$ and $x_-$ to be the points of $U^{\trop}(\Z)$ with 
    \eq{\left.\lrp{{\mf{r}_{\vb{s}}^\vee}^{-1}}^*\lra{x_{\pm},\ \boldsymbol{\cdot} \ }\right|_{\sigma_{\vb{s}}} = \pm m (\ \boldsymbol{\cdot} \ ).}{eq:xpm}
    Then clearly 
    \eqn{ S^{\vee_r} \cap (\mf{r}_{\vb{s}}^\vee)^{-1}(\sigma_{\vb{s}}) \subset \lrp{ H(x_+,0) \cap H(x_-,0)}.}
    Moreover, since $S^{\vee_r}$ is broken line convex, \thref{prop:BLInHyperplanes} implies that in fact 
    \eq{ S^{\vee_r} \subset \lrp{ H(x_+,0) \cap H(x_-,0)}.}{eq:H+capH-}
    
    Note that \eqref{eq:xpm} implies there is a pair of broken lines $\eta_{-}, \eta_+$ in $V$ with initial exponent vectors $\mf{r}_{\vb{s}}(x_-), \mf{r}_{\vb{s}}(x_+)$
    and final exponent vectors $-m, +m$
    sharing the same basepoint.
    This pair of broken lines indicates that the product $\tf_{x_-} \tf_{x_+}$ has non-zero constant ($\tf_{0}$) term.
    Explicitly, we can dilate the supports of the pair of broken lines to bring the basepoint arbitrarily close to the origin.
    Then this pair of broken lines precisely describes a contribution to the product $\tf_{x_-} \tf_{x_+}$ as described in \cite[Definition-Lemma~6.2]{GHKK}.
    However, by \cite[Proposition~6.4.(3)]{GHKK}, we can compute the structure constant $\alpha_{x_+,x_-}^{0}$ of this multiplication using any basepoint near the origin.
    In particular, we may choose a basepoint $\mf{r}_{\vb{s}}(x_b)$ such that $\lambda x_b$  is in the relative interior of $S$ for some $\lambda> 0$.
    Then we obtain a pair of broken lines in $V$ with basepoint $\mf{r}_{\vb{s}}(x_b)$, initial exponent vectors $\mf{r}_{\vb{s}}(x_-)$ and $\mf{r}_{\vb{s}}(x_+)$, and final exponent vectors summing to $0$.
    Dilating the supports of these broken lines by $\lambda$, we obtain a such a pair with basepoint in the relative interior of $S$.
    We may reverse the direction of one of the broken lines to obtain a doubly infinite broken line $\gamma$ passing through the previous basepoint and having 
    \eq{\lim_{t\to \pm \infty} \dot{\gamma}(t) = -\mf{r}_{\vb{s}}(x_\pm).}{eq:gamma-dot}
    
    Next, \eqref{eq:H+capH-} and \thref{prop:asymptotic-direction} together imply $S$ contains the asymptotic directions $x_+$ and $x_-$.
    Now suppose $\supp(\gamma) \not\subset \mf{r}_{\vb{s}}(S)$.
    Then at some point $\gamma$ must leave $\mf{r}_{\vb{s}}(S)$.
    As $\gamma$ passes through the relative interior of $\mf{r}_{\vb{s}}(S)$, \thref{prop:BLInHyperplanes} prevents $\gamma$ from simply entering and remaining in the boundary of the closure of $\mf{r}_{\vb{s}}(S)$ in the event that $\mf{r}_{\vb{s}}(S)$ is not closed.
    Then points of $\gamma$ must eventually be a positive distance from $\mf{r}_{\vb{s}}(S)$.
    However, since $S$ contains the asymptotic directions $x_\pm$, \eqref{eq:gamma-dot} implies this positive distance is bounded.
    Then as argued in \thref{prop:BLInHyperplanes},
    we may take a small perturbation $\gamma'$ of a segment of $\gamma$, this time adding a small contribution to the velocity at $\lambda x_b$ so that the first bend for $\gamma'$ after leaving $\mf{r}_{\vb{s}}(S)$ is slightly closer to $\mf{r}_{\vb{s}}(S)$ the corresponding bend of $\gamma$.
    Keeping all wall contributions the same (up to a multiplicative constant to maintain integrality of exponent vectors) as in \thref{prop:BLInHyperplanes},
    we obtain a broken line segment which must eventually re-enter $\mf{r}_{\vb{s}}(S)$ as the direction after the last bend will have a small contribution directed toward $\mf{r}_{\vb{s}}(S)$, much like the situation illustrated in Figure~\ref{fig:ContradictConvexity}.
    This contradicts the assumption that $S$ is broken line convex.
    As a result, we conclude that $\mf{r}_{\vb{s}}^{-1}(\supp(\gamma))$ is contained in $S$ and so $S$ is not strongly broken line convex.
\end{proof}

We have treated $r$-duals thus far so that we can apply our results equally well in the two main cases of interest: $r=0$ and $r=1$.
For the remainder of this subsection, we will focus on questions that are primarily interesting for polar duality.
With this in mind, we will return to the simpler ${}^\circ$ notation and comment that our arguments may easily be adapted to treat the more general $r>0$ case. 

\begin{prop}\thlabel{prop:Bounded-FullDim0}
    Let $S \subset U^{\trop}(\Q)$ be broken line convex. Then $S^\circ \subset (U^\vee)^{\trop}(\Q)$ is bounded if and only if $S$ is full-dimensional and contains the origin in its interior.
\end{prop}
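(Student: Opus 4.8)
The plan is to establish this broken-line analogue of the classical fact (\cite[Theorem~9.1]{Brondsted}) by sandwiching the boundedness of $S^\circ$ between two equivalent conditions, the crucial intermediate one being that the cone $S^{\vee} = S^{\vee_0}$ is trivial. So the two main claims are: (i) $S^\circ$ is bounded if and only if $S^{\vee} = \{0\}$; and (ii) $S^{\vee} = \{0\}$ if and only if $S$ is full-dimensional and contains $0$ in its interior.

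For (i): if $S^\vee \neq \{0\}$, pick $y \in S^\vee \setminus \{0\}$; then $\langle s, \lambda y\rangle = \lambda\langle s, y\rangle \geq 0 \geq -1$ for every $s \in S$ and $\lambda \in \Q_{\geq 0}$, so $\Q_{\geq 0}\cdot y \subset S^\circ$ and $S^\circ$ is unbounded. Conversely, suppose $S^\vee = \{0\}$ but $S^\circ$ is unbounded. Fix a seed $\vb{s}$ and choose $y_n \in S^\circ$ with $\lVert \mf{r}^\vee_{\vb{s}}(y_n)\rVert \to \infty$; passing to a subsequence, $\mf{r}^\vee_{\vb{s}}(y_n)/\lVert \mf{r}^\vee_{\vb{s}}(y_n)\rVert \to u$ with $\lVert u\rVert = 1$. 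The canonical pairing is positively homogeneous and continuous in its second argument (being, by \thref{thm:ValuativeIndependence} and \thref{thm:ThetaRecipocity}, a minimum of functions linear in that argument), so dividing the inequalities $\langle s, y_n\rangle \geq -1$ by $\lVert \mf{r}^\vee_{\vb{s}}(y_n)\rVert$ and letting $n \to \infty$ gives $\langle s, \bar u\rangle \geq 0$ for all $s \in S$, where $\bar u = (\mf{r}^\vee_{\vb{s}})^{-1}(u) \neq 0$. If $\bar u$ were rational this would already contradict $S^\vee = \{0\}$; in general one must argue that even such a ``real'' functional is excluded, because the image of a broken line convex subset of $U^{\trop}(\Q)$ cannot be confined to an irrational linear half-space through the origin unless it also lies in a rational one. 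This is the crux, and I return to it below.

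For (ii): if $y \in S^\vee \setminus \{0\}$ then $S \subset K(y,0)$, hence $\mathrm{int}(S) \subset \mathrm{int}(K(y,0)) = \{x : \langle x,y\rangle > 0\}$, which does not contain $0$; so $0 \in \mathrm{int}(S)$ forces $S^\vee = \{0\}$, and this gives full-dimensionality as well. For the converse, write $\widehat S := \overline{\bconv(S \cup \{0\})}$; then $S^\vee = \widehat S^\vee$ by \thref{prop:r-dual}, and by \thref{prop:ConvUnion-UnionSum} applied to the broken line convex sets $S$ and $\{0\}$ (using $T +_\tf \{0\} = T$) we get $\bconv(S \cup \{0\}) = \bigcup_{a \in [0,1]} a S$. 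I would then show: if $0 \notin \mathrm{int}(\widehat S)$ then $S^\vee \neq \{0\}$. Since $\widehat S$ is closed and contains $0$, we would have $0 \in \partial \widehat S$; taking exterior points $z_n \notin \widehat S$ with $z_n \to 0$ and applying the separation argument from the proof of \thref{prop:DoubleDual} yields nonzero rational functionals $y_n$ and offsets $r_n' \geq 0$ with $\widehat S \subset K(y_n,r_n')$ and $z_n \notin K(y_n,r_n')$; normalizing $\lVert \mf{r}^\vee_{\vb{s}}(y_n)\rVert = 1$ and using a uniform Lipschitz bound for the tropically linear functions near $0$ forces $r_n' \to 0$, and a subsequential limit gives a nonzero $\bar u$ with $\widehat S \subset K(\bar u,0)$, hence $S \subset K(\bar u,0)$ — which (again via the rationality argument) upgrades to a genuine element of $S^\vee \setminus \{0\}$. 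Finally, $0 \in \mathrm{int}(\widehat S)$ implies $0 \in \mathrm{int}(S)$ with $S$ full-dimensional by a short argument using $\widehat S = \overline{\bigcup_{a\in[0,1]} aS}$ and the fact that broken line convex sets are convex in the ordinary sense on any wall-free region. Combining (i) and (ii) completes the proof.

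The main obstacle, common to both halves, is precisely this rationality issue: limits of rational tropical functionals need not be rational, and a general convex subset of $\Q^n$ can be unbounded while having only irrational asymptotic directions, so neither direction can be finished by convexity alone. One must exploit the specific structure of broken line convex sets — that $\widehat S = \bigcup_{a \in [0,1]} aS$ is assembled from rational tropical points and is convex on wall-free regions, that its asymptotic directions (in the sense of the definition preceding \thref{prop:asymptotic-direction}) are a priori rational, and that the canonical pairing is non-degenerate — to replace the limiting real functional $\bar u$ by a genuine nonzero element of $(U^\vee)^{\trop}(\Q)$, using \thref{prop:BLInHyperplanes} to propagate a flat direction detected in a single scattering chamber to all of $\widehat S$. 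I expect essentially all of the genuine work to be concentrated in this point.
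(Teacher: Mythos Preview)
Your approach differs substantially from the paper's, and the rationality issue you correctly identify as the crux is almost entirely sidestepped there.

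For the forward direction, the paper does not go through $S^\vee = \{0\}$ at all. It simply observes that if $0$ lies in the interior of $S$, then after choosing a seed, $\mf{r}_{\vb{s}}(S)$ contains a ball $B(R,0)$. Since each tropically linear function $\lra{x,\,\cdot\,}$ is a minimum of linear functions one of which is $\mf{r}_{\vb{s}}(x)$, one has $\mf{r}_{\vb{s}}^\vee(K(x,1)) \subset K(\mf{r}_{\vb{s}}(x),1)$ in the ordinary sense, and hence $\mf{r}_{\vb{s}}^\vee(S^\circ) \subset B(R,0)^\circ = B(1/R,0)$. No limits, no rationality problem.

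For the backward direction, the paper splits into two contrapositive cases. If $S$ is not full-dimensional, it invokes \thref{prop:SBLC-FullDim} applied to $S^\circ$: since $(S^\circ)^\circ = \overline{\bconv(S\cup\{0\})}$ is then not full-dimensional, $S^\circ$ is not strongly broken line convex, so it contains a doubly infinite broken line and is unbounded. All the hard work (including the kind of chamber-propagation argument you sketch with \thref{prop:BLInHyperplanes}) was already absorbed into the proof of \thref{prop:SBLC-FullDim}. If instead $0$ is not in the interior of $S$, the paper asserts the existence of a nonzero $y$ with $S \subset K(y,0)$, whence $\Q_{\geq 0}\cdot y \subset S^\circ$. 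This last step is exactly your separation concern; the paper treats it as immediate rather than routing through a limiting argument.

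So your factorization through $S^\vee = \{0\}$ introduces a genuine difficulty in (i) --- proving that $S^\vee = \{0\}$ implies $S^\circ$ bounded over $\Q$ --- that the paper never needs to confront, and your proposed resolution of it remains incomplete. The ball argument and the appeal to \thref{prop:SBLC-FullDim} are what you are missing.
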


\begin{proof}
    Suppose $S$ is full-dimensional and contains the origin in its interior.
    Choose a seed $\vb{s}$ to identify $U^{\trop}(\Q)$ and $(U^\vee)^{\trop}(\Q)$ with dual $\Q$-vector spaces $V$ and $V^*$ via $\mf{r}_{\vb{s}}$ and $\mf{r}_{\vb{s}}^\vee$ as in \thref{not:Qvs}.
    Then $\mf{r}_{\vb{s}}(S)$ contains the ball $B(R,0)$ for some sufficiently small $R>0$.
    For any $x\in U^{\trop}(\Q)$, 
    we have
    \eqn{\lrp{{\mf{r}_{\vb{s}}^\vee}^{-1}}^*\lra{x, \ \boldsymbol{\cdot}\ } = \min_{\ell \in L}\lrc{\ell(\  \boldsymbol{\cdot}\ )}}
    for some finite set of linear maps $L$ containing $\mf{r}_{\vb{s}}(x)$.
    Then 
    \eqn{\mf{r}_{\vb{s}}^\vee\lrp{K(x,1)} \subset K(\mf{r}_{\vb{s}}(x),1),} where $K(\mf{r}_{\vb{s}}(x),1):= \lrc{v\in V^*\, :\, \lrp{\mf{r}_{\vb{s}}(x)}(v) \geq -1}$.
    
    Now note that the polar of $B(R,0) \subset V$ is $B\lrp{\frac{1}{R},0} \subset V^*$.
    Then we have
    \eqn{\mf{r}_{\vb{s}}^\vee\lrp{S^\circ}=\mf{r}_{\vb{s}}^\vee\lrp{\bigcap_{s\in S}K(s,1)} \subset \mf{r}_{\vb{s}}^\vee\lrp{\bigcap_{\mf{r}_{\vb{s}}(s)\in B(R,0)} K(s,1)} \subset B(R,0)^\circ = B\lrp{\frac{1}{R},0},} 
    and $S^\circ$ is bounded.

    Next, if $S$ is not full-dimensional, $S^\circ$ is not strongly convex by \thref{prop:SBLC-FullDim}, and {\it{a fortiori}} not bounded.
    If $S$ does not contain the origin in the interior, then it is contained in $K(y,0)$ for some non-zero ${y\in (U^\vee)^{\trop}(\Q)}$.
    So $\Q_{\geq 0} \cdot y \in S^\circ$ and $S^\circ$ is not bounded.
\end{proof} 

We have the following immediate corollary:

\begin{cor}\thlabel{cor:BoundedFullDim}
    If $S$ is a bounded, full-dimensional set containing the origin in its interior, then so is $S^\circ$.
\end{cor}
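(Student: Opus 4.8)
The plan is to derive all three conclusions about $S^\circ$ from \thref{prop:Bounded-FullDim0} (applied twice) together with \thref{prop:DoubleDual}. First I would record two observations valid for an arbitrary $S\subset U^{\trop}(\Q)$: the polar $S^\circ$ is an intersection of closed tropical half-spaces, hence closed, and broken line convex by \thref{lem:TropHalfSpaceBLC} (an intersection of broken line convex sets being broken line convex); and, by \thref{prop:r-dual}, $S^\circ = T^\circ$ where $T:=\overline{\bconv(S\cup\lrc{0})}$. Since $S\subseteq T$, the set $T$ is broken line convex, full-dimensional, and contains $0$ in its interior.

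Granting that $T$ is bounded, the corollary follows formally. Because $T$ is broken line convex, full-dimensional, and contains $0$ in its interior, \thref{prop:Bounded-FullDim0} applied to $T$ gives that $T^\circ = S^\circ$ is bounded. Next, apply \thref{prop:Bounded-FullDim0} to $S^\circ$ itself (which we have observed is broken line convex): it says that $(S^\circ)^\circ$ is bounded if and only if $S^\circ$ is full-dimensional and contains $0$ in its interior. By \thref{prop:DoubleDual} (with $r=1$), $(S^\circ)^\circ = \overline{\bconv(S\cup\lrc{0})} = T$, which is bounded; hence $S^\circ$ is full-dimensional and contains $0$ in its interior. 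Combining, $S^\circ$ is bounded, full-dimensional, and contains $0$ in its interior.

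The one non-formal point, and the step I expect to be the main obstacle, is the boundedness of $T$. If $S$ happens to be broken line convex (as in the hypothesis of \thref{prop:Bounded-FullDim0}), this is free: $\bconv(S)=S$ and, since $0\in S$, $T=\overline S$ is bounded. For a general $S$ I would argue directly. Fix a seed $\vb{s}$, so $\mf{r}_{\vb{s}}(S)$ lies in a ball about $0$. Each tropically linear function $\lra{\ \boldsymbol{\cdot}\ ,y}$ is, via $\mf{r}_{\vb{s}}$, the minimum of the finitely many linear functionals indexed by the exponents of the regular function $\tf_y$ -- one of which is $\mf{r}_{\vb{s}}^\vee(y)$ itself, as used in the proof of \thref{prop:Bounded-FullDim0}. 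Hence $\lra{\ \boldsymbol{\cdot}\ ,y}$ is bounded below on the bounded set $S\cup\lrc{0}$; and since it is convex with respect to broken lines (\thref{cor:TropLinCWRTBL}) and $\bconv(S\cup\lrc{0})=\bigcup_{d\geq 1}(S\cup\lrc{0})_d$ by \thref{cor:ConvColim}, \thref{prop:ConvWRTBL} bounds $\lra{u,y}$ below, for $u\in\bconv(S\cup\lrc{0})$, by a convex combination of the values $\lra{s_i,y}$ with $s_i\in S\cup\lrc{0}$, hence below by $\mu_y:=\inf_{s\in S\cup\lrc{0}}\lra{s,y}>-\infty$. Thus $\mf{r}_{\vb{s}}(\bconv(S\cup\lrc{0}))$ lies in the closed convex set $C:=\bigcap_{y}\lrc{v : \lra{v,\mf{r}_{\vb{s}}^\vee(y)}\geq \mu_y}$, which contains $0$ (as each $\mu_y\leq\lra{0,y}=0$). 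For any nonzero direction $v$, choosing $y$ with $\lra{v,\mf{r}_{\vb{s}}^\vee(y)}<0$ bounds $C$ along $\Q_{\geq 0}v$; a closed convex set that contains $0$ and is bounded in every direction from $0$ is bounded, so $C$ -- and therefore $\bconv(S\cup\lrc{0})$ and its closure $T$ -- is bounded. (Should this general-position argument be deemed unsatisfactory, one may simply restrict the corollary to broken line convex $S$, consistent with \thref{prop:Bounded-FullDim0}, in which case the proof is truly immediate.)
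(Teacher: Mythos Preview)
Your argument is correct and matches the paper's intended proof: the paper states the result as an ``immediate corollary'' of \thref{prop:Bounded-FullDim0}, and your second paragraph is precisely the unpacking of that word---apply \thref{prop:Bounded-FullDim0} once to get $S^\circ$ bounded, then apply it to $S^\circ$ and use \thref{prop:DoubleDual} to get full-dimensionality and $0$ in the interior.

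Where you go beyond the paper is in worrying about whether $S$ itself is broken line convex. The paper does not explicitly include this hypothesis in the corollary, but given that \thref{prop:Bounded-FullDim0} carries it and that every later application of the corollary is to polyhedral (hence broken line convex) sets, the omission is almost certainly an oversight rather than a claim of extra generality. Your third paragraph is a reasonable sketch for the general case, though the final step (``a closed convex set containing $0$ and bounded along every ray from $0$ is bounded'') tacitly invokes the standard fact that an unbounded closed convex set has nontrivial recession cone; this is fine but worth naming. In any case, as you note yourself, restricting to broken line convex $S$ makes the whole thing genuinely immediate, and that is what the paper does.
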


\begin{definition}
Let $S \subset U^\trop(\Q)$.
We say $S$ is {\it{integral}} if $S= \bconv\lrp{A}$ for some finite subset $A$ of $ U^\trop(\Z)$.  
We define integral subsets of $(U^\vee)^\trop(\Q)$ analogously, and
we say $S$ is {\it{reflexive}} if $0\in S$ and both $S$ and $S^\circ$ are integral.
\end{definition}

\begin{remark}
    
\end{remark}

\begin{definition}\thlabel{def:FaceDual}
    Let $S \subset U^{\trop}(\Q)$ be a full-dimensional polytopal subset and containing $0$ in the interior.
    We define the {\it{dual of a face}} $F$ of $S$ to be
   \eqn{\widecheck{F}:=\lrc{y \in S^\circ \, : \, \lra{x,y} = -1 {\text{ for all } x \in F}}.}
\end{definition}

Note that for the empty face $\varnothing$, we have $\widecheck{\varnothing}= S^\circ$.

\begin{prop}\thlabel{prop:FaceDual}
    Let $S \subset U^{\trop}(\Q)$ be a full-dimensional polytopal subset and containing $0$ in the interior,
    and let $F$ be a proper face of $S$.
    Then $\widecheck{F}$ is a proper face of $S^\circ$. 
    Precisely, if $x$ is in the relative interior of $F$, then $\widecheck{F} = S^\circ \cap H(x,1)$.
    Moreover, if $F'\subsetneq F$ then $\widecheck{F}\subsetneq \widecheck{F}'$ and $\widecheck{\widecheck{F}}=F$. 
    This gives a bijective, containment-reversing correspondence between proper faces of $S$ and $S^\circ$.
\end{prop}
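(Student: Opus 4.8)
The plan is to identify $\widecheck{F}$ explicitly as the face $S^{\circ}\cap H(x,1)$ cut out by any relative–interior point $x$ of $F$, and then deduce everything else from this identification together with \thref{prop:DoubleDual} and \thref{cor:BoundedFullDim}. First I would fix a convenient presentation $F=S\cap H(z,1)$: since $F$ is a face, $F=S\cap H(z,s)$ for some supporting tropical half-space $K(z,s)\supseteq S$ (\thref{def:face-half-space}); because $0$ lies in the interior of $S$ and $F$ is proper we have $0\notin F$ (a proper face cannot contain an interior point of $S$), hence $0\notin H(z,s)$, so $s\neq 0$, while $0\in S\subseteq K(z,s)$ forces $s\geq 0$; thus $s>0$ and by \thref{rem:Rescale} we may rescale to assume $s=1$. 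Then $S\subseteq K(z,1)$ gives $z\in S^{\circ}$, and $F\subseteq H(z,1)$ gives $\langle x'',z\rangle=-1$ for all $x''\in F$, so $z\in\widecheck{F}$; in particular $\widecheck{F}\neq\varnothing$. This $z$ is also the workhorse of the double-dual step.

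Next, fix $x$ in the relative interior of $F$; note $x\neq 0$ since $0\notin F$. The inclusion $\widecheck{F}\subseteq S^{\circ}\cap H(x,1)$ is immediate from $x\in F$. For the reverse inclusion let $y\in S^{\circ}$ with $\langle x,y\rangle=-1$. Since $y\in S^{\circ}$ we have $S\subseteq K(y,1)$, and as $x\in S\cap H(y,1)$ the half-space $K(y,1)$ is supporting, so $G:=S\cap H(y,1)$ is a face of $S$. I claim $F\subseteq G$, which says precisely that $\langle x'',y\rangle=-1$ for all $x''\in F$, i.e.\ $y\in\widecheck{F}$. If instead $F\not\subseteq G$, then $F\cap G\subsetneq F$, so by Item~\ref{it:PIntersectionInBoundary} of \thref{def:pseudo-complex} applied to the face pseudo-complex of $S$ (\thref{prop:FacePseudo-Complex}) we get $F\cap G\subseteq\partial F$; but $x\in F$ and $\langle x,y\rangle=-1$ give $x\in F\cap G$, contradicting $x\in\mathrm{relint}(F)=F\setminus\partial F$. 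This proves $\widecheck{F}=S^{\circ}\cap H(x,1)$. Since $x\in S$ we have $S^{\circ}\subseteq K(x,1)$ and $\widecheck{F}\ni z$ is nonempty, so $K(x,1)$ is a supporting tropical half-space for $S^{\circ}$ and $\widecheck{F}$ is a face of $S^{\circ}$. It is proper: it is nonempty, and it is not all of $S^{\circ}$ because $0\in S^{\circ}$ while $\langle x'',0\rangle=0\neq -1$ for any $x''\in F$ (and $F\neq\varnothing$).

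It remains to iterate. By \thref{cor:BoundedFullDim}, $S^{\circ}$ is again a full-dimensional polytopal set with $0$ in its interior, so \thref{def:FaceDual} applies to its proper faces, and by \thref{prop:DoubleDual} (with $r=1$, using that $S$ is closed and broken line convex with $0\in S$, so $\overline{\bconv(S\cup\{0\})}=S$) we have $(S^{\circ})^{\circ}=S$. Hence $\widecheck{\widecheck{F}}=\{x'\in S:\langle x',y\rangle=-1\text{ for all }y\in\widecheck{F}\}$. The inclusion $F\subseteq\widecheck{\widecheck{F}}$ is definitional; conversely, if $x'\in S$ pairs to $-1$ with every element of $\widecheck{F}$ then in particular $\langle x',z\rangle=-1$ (as $z\in\widecheck{F}$), so $x'\in S\cap H(z,1)=F$. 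Thus $\widecheck{\widecheck{F}}=F$. For monotonicity, $F'\subseteq F$ gives $\widecheck{F}\subseteq\widecheck{F'}$ straight from the definition, and the inclusion is strict when $F'\subsetneq F$ since $\widecheck{F}=\widecheck{F'}$ would force $F=\widecheck{\widecheck{F}}=\widecheck{\widecheck{F'}}=F'$. Finally, the whole construction is symmetric in $S$ and $S^{\circ}$ (using $(S^{\circ})^{\circ}=S$), so $G\mapsto\widecheck{G}$ sends proper faces of $S^{\circ}$ to proper faces of $S$ and is a two-sided inverse of $F\mapsto\widecheck{F}$; this yields the asserted containment-reversing bijection.

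The step I expect to require the most care is the identification $\widecheck{F}=S^{\circ}\cap H(x,1)$, and specifically the use of the relative interior of $F$: faces are only weakly convex (\thref{cor:FaceWC}), so one cannot argue by tracking broken line segments inside $F$ as in the linear case, and the argument hinges on pinning down $\mathrm{relint}(F)$ correctly (as $F\setminus\partial F$) and on feeding exactly the right pair of faces, namely $F$ and the supporting face $G=S\cap H(y,1)$, into the pseudo-complex axiom of \thref{prop:FacePseudo-Complex}. The remaining steps are essentially bookkeeping with the duality machinery already established in the excerpt.
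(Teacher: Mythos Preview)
Your proof is correct and the central step—using Item~\ref{it:PIntersectionInBoundary} of the pseudo-complex structure (\thref{prop:FacePseudo-Complex}) to show that a supporting tropical hyperplane through a relative-interior point of $F$ must contain all of $F$—matches the paper's argument exactly.

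Where you diverge is in the order and method for the double-dual and strict-reversal claims. The paper first establishes strict reversal directly: given $F'\subsetneq F$, it writes $F'=S\cap H(y',1)$ with $y'\in S^{\circ}$ and observes $y'\in\widecheck{F'}\setminus\widecheck{F}$; then it deduces $\widecheck{\widecheck{F}}=F$ by a triple-dual contradiction (if $F\subsetneq\widecheck{\widecheck{F}}$ then $\widecheck{\widecheck{\widecheck{F}}}\subsetneq\widecheck{F}$, violating the automatic inclusion). You instead fix the presentation $F=S\cap H(z,1)$ at the outset, note $z\in\widecheck{F}$, and use this single element to prove $\widecheck{\widecheck{F}}\subseteq S\cap H(z,1)=F$ directly; strict reversal then drops out of the double-dual. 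Your route is a bit more economical—it avoids the triple-dual step and recycles the element $z$ already built for non-emptiness—while the paper's exhibits the witness for strictness explicitly. Both are short, and neither buys anything the other cannot easily recover.
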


\begin{proof}

    For any $x \in F$, define $F_x:=S^\circ \cap H(x,1) $.
    Observe that 
    \eqn{\widecheck{F}= \bigcap_{x\in F} F_x,}
    and $\widecheck{F} \subset F_x$ for all $x\in F$.
    Now suppose $x$ is in the relative interior of $F$.
    Then by \thref{def:pseudo-complex} Item~\ref{it:PIntersectionInBoundary} and \thref{prop:FacePseudo-Complex},
    a supporting tropical hyperplane $H(y,r)$ for $S$ which contains $x$  must in fact contain $F$. 
    So if $y\in F_x$, then $F \subset H(y,1)$, and $y \in \widecheck{F}$.
    That is, $\widecheck{F}=F_x$ for any $x$ in the relative interior of $F$.

    Next, we claim that every proper face of $S^\circ$ is the dual of a face of $S$.
    By definition, every face of $S^\circ$ is of the form $S^\circ \cap H(x,r)$ for some $x\in U^{\trop}(\Q)$, and using \thref{rem:Rescale} we can choose $x$ such that $r=1$.
    But such an $x$ is necessarily in the boundary of $S$ since $(S^\circ)^\circ =S$.
    Every boundary point is contained in some face, and moreover by \thref{prop:FacePseudo-Complex}, contained in the relative interior of a face. Then the previous argument implies that every proper face of $S^\circ$ is the dual of a face of $S$. 
    
    It follows immediately from the definition of the dual of a face that  $F' \subset F$ implies $\widecheck{F}\subset \widecheck{F}'$.
    If moreover $F' \subsetneq F$, then 
    there is some $y' \in (U^\vee)^{\trop}(\Q)$ and $r' \in \Q_{>0}$ with $F'= S \cap H(y',r')$ and $F \subset S \subset K(y',r')$.
    As argued above, we may take $r'=1$ and $y' \in S^{\circ}$.
    Then $y'\in \widecheck{F}' \setminus \widecheck{F}$, so $\widecheck{F}\subsetneq \widecheck{F}'$.
   
    Finally, $S$ and $S^\circ$ play completely interchangeable roles here.
    So, every proper face of $S$ is the dual of a proper face of $S^\circ$ as well.
    We have automatically that $F \subset \widecheck{\widecheck{F}}$.
    Suppose  $F \subsetneq \widecheck{\widecheck{F}}$.
    Since proper containments are reversed by duality of faces, we must also have $\widecheck{\widecheck{\widecheck{F}}} \subsetneq \widecheck{F}$.
    But this violates the automatic containment $\widecheck{F} \subset \widecheck{\widecheck{\widecheck{F}}}$.
    We conclude that $F = \widecheck{\widecheck{F}}$.
    
\end{proof}

See Figure~\ref{fig:BigonsDualFacePseudoComplex} for an example of this duality for face pseudo-complexes.

\noindent
\begin{center}
\begin{minipage}{.85\linewidth}
\captionsetup{type=figure}
\begin{center}
\begin{tikzpicture}[scale=.7]

    \def\x{1.3}
    \def\l{4}
    \def\op{.3}

    \path (-\l,0) coordinate (3) --++ (\l,0) coordinate (0) --++ (\l,0) coordinate (1);
    \path (0,\l) coordinate (2) --++ (0,-2*\l) coordinate (4) --++ (\l,0) coordinate (5);
    \path (5) --++ (0,2*\l) coordinate (tr) --++ (-2*\l,0) coordinate (tl) --++ (0,-2*\l) coordinate (bl)--++ (2*\l,0) coordinate (br);

    \draw[thick, ->, opacity = \op] (3) -- (1);
    \draw[thick, ->, opacity = \op] (2) -- (4);
    \draw[thick, ->, opacity = \op] (0) -- (5) node [pos=.7, sloped, above] {$1+z^{e_2^*-e_1^*}$};

    \node [opacity = \op] at (.75,\l) {$1+z^{e_2^*}$};
    \node [opacity = \op] at (-3,-.35) {$1+z^{-e_1^*}$};

    \coordinate (v1) at (-\x,\x);

    \path (v1) --++ (-.2*\x,.25*\x) node[color = more-blue] {$v_1$};
    
    \coordinate (v2) at (\x,-\x);

    \path (v2) --++ (.3*\x,.2*\x) node[color = more-green] {$v_2$};

    \coordinate (top-bend) at (0,0.5*\x);
    \coordinate (bottom-bend) at (-0.5*\x,0);
    
    \path [fill= blue-ish, fill opacity=\op] (v1.center)--(top-bend)--(v2.center)--(bottom-bend)--cycle;

    \draw[orange, thick] (v1.center)--(top-bend)--(v2.center);
    
    \path (top-bend) --++ (.4*\x,0) node [color=orange] {$F$};
 
    \draw[purple, thick] (v1.center)--(bottom-bend)--(v2.center);

    \path (bottom-bend) --++ (0,-.4*\x) node [color=purple] {$F'$};

    \node [circle, fill, inner sep = 1.5pt, color = more-blue] at (v1) {};

    \node [circle, fill, inner sep = 1.5pt, color = more-green] at (v2) {};
 
\begin{scope}[xshift=10cm]

    \def\Honeonebot{3}
    \def\Honeonetop{3}
    \def\Htwoonebot{4}
    \def\Htwoonetop{3}

    \path (-\l,0) coordinate (3) --++ (\l,0) coordinate (0) --++ (\l,0) coordinate (1);
    \path (0,\l) coordinate (2) --++ (0,-2*\l) coordinate (4) --++ (-\l,0) coordinate (5);
    \path (5) --++ (0,2*\l) coordinate (tl) --++ (2*\l,0) coordinate (tr) --++ (0,-2*\l) coordinate (br) --++ (-2*\l,0) coordinate (bl);

    \draw[thick, ->, opacity=\op] (1) -- (3);
    \draw[thick, ->, opacity=\op] (2) -- (4);
    \draw[thick, ->, opacity = \op] (0) -- (5) node [pos=.7, sloped, below] {$1+z^{e_1+e_2}$};

    \node [opacity = \op] at (.75,\l) {$1+z^{e_2}$};
    \node [opacity = \op] at (3,-.35) {$1+z^{e_1}$};

    \path[name path = left] (tl) -- (bl);    
    \path[name path = right] (tr) -- (br);
    \path[name path = top] (tl) -- (tr);
    \path[name path = bottom] (bl) -- (br);

    \coordinate (bend11) at (\x,0);

    \path[name path = path11bot] (bend11)--++ (-2*\Honeonebot, -\Honeonebot);

    \path[name intersections={of=left and path11bot, by=path11end}];

    \path[name path = path11top] (bend11)--++ (\Honeonetop, \Honeonetop);

    \path[name intersections={of=right and path11top, by=path11start}];

    \path[name path = path11] (path11start)--(bend11)--(path11end);

    \coordinate (bend21) at (0,\x);

    \path[name path = path21bot] (bend21)--++ (-\Htwoonebot, -\Htwoonebot);

    \path[name intersections={of=path11 and path21bot, by=vbl}];

    \path[name path = path21top] (bend21)--++ (\l, 0);

    \path[name intersections={of=path11 and path21top, by=vtr}];

    \path [fill=violet, fill opacity=\op] (vbl.center)--(bend11)--(vtr.center)--(bend21)--cycle;

    \draw[very thick, color=more-green] (vbl)--(bend21)--(vtr);

    \draw[very thick, color=more-blue] (vbl)--(bend11)--(vtr);

    \node[circle, fill, inner sep = 1.5pt, color = orange] at (vbl) {};
    \node[circle, fill, inner sep = 1.5pt, color = purple] at (vtr) {};
    \path (bend11) --++ (-.5*\x,-.6*\x) node [color = more-blue] {$\widecheck{v_1}$};
    \path (bend21) --++ (-\x,-.5*\x) node [color = more-green] {$\widecheck{v_2}$};
    \path (vbl) -++ (0,-.4*\x) node [color = orange] {$\widecheck{F}$};
    \path (vtr) -++ (.5*\x,.25*\x) node [color = purple] {$\widecheck{F}'$};
\end{scope}

\end{tikzpicture}

\captionof{figure}{\label{fig:BigonsDualFacePseudoComplex}On the left, the bigon $\tc{blue-ish}{S}$ of Figure~\ref{fig:Bigon1} together with its face pseudo-complex.
On the right, the dual face pseudo-complex of $\tc{violet}{S^\circ}$, which is also a bigon.
} 
\end{center}
\end{minipage}
\end{center}


For future work generalizing Borisov's duality for nef-partitions (see \cite{Borisov} for the original), it will be useful to describe some duality results for sets cut out by functions on $U^{\trop}(\Q)$.  

\begin{prop}\thlabel{prop:DualXi}
    Consider a set $T\subset (U^{\vee})^{\trop}(\Q)$, and define
    \eqn{\varphi:U^{\trop}(\Q) &\to \Q\\
            x&\mapsto \min_{y\in T}\lrc{\lra{x,y}}.    
    }
    Then 
    \eqn{{\Xi_{\varphi,1}} =  \lrc{y\in (U^\vee)^{\trop}(\Q)\, :\, \lra{x,y} \geq \varphi(x) \text{ for all } x\in U^{\trop}(\Q)}^\circ = T^\circ.}
\end{prop}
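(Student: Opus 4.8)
The plan is to prove the two claimed equalities separately, working from the outside in. Write $\Xi := \Xi_{\varphi,1} = \lrc{y : \lra{x,y}\geq \varphi(x)\text{ for all }x}$ — note this second description is already the definition of $\Xi_{\varphi,1}$ once we unwind $\varphi(x) = \min_{y'\in T}\lra{x,y'}$; indeed $\lra{x,y}\geq \varphi(x)$ for all $x$ is, after taking the defining minimum, just $\lra{x,y}\geq -(-\varphi(x))$, so the first ``equality'' in the display is essentially a restatement and the real content is $\Xi = T^\circ$. So the task reduces to showing
\eqn{\lrc{y\in (U^\vee)^{\trop}(\Q) : \lra{x,y}\geq \min_{y'\in T}\lra{x,y'}\text{ for all }x\in U^{\trop}(\Q)} = T^\circ.}

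For the inclusion $T^\circ \subset \Xi$: suppose $y\in T^\circ$, i.e. $\lra{s,y}\geq -1$ for all $s\in T$ — wait, this is the polar in the wrong space. Let me restate: $T\subset (U^\vee)^{\trop}(\Q)$, so $T^\circ = \lrc{x\in U^{\trop}(\Q) : \lra{x,y'}\geq -1\text{ for all }y'\in T}$, which lives in $U^{\trop}(\Q)$, not $(U^\vee)^{\trop}(\Q)$. Hmm — so the claim $\Xi = T^\circ$ with $\Xi\subset (U^\vee)^{\trop}(\Q)$ forces us to read $T^\circ$ as the polar computed ``analogously'' for subsets of $(U^\vee)^{\trop}(\Q)$, landing back in $U^{\trop}(\Q)$... but then it couldn't equal $\Xi$. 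I think the intended reading, consistent with the middle term, is that $\Xi_{\varphi,1}\subset U^{\trop}(\Q)$ is being compared after a double-dual, and in fact the cleanest route is: first show $\Xi_{\varphi,1} = \overline{\bconv(T^\circ\cup\lrc{0})}$ by identifying $\Xi_{\varphi,1}$ as $\bigcap_{y'\in T} K(y',1)$ — since $\lra{x,y}\geq\min_{y'}\lra{x,y'}$ over all $x$ unwinds (via $\mf{r}_{\vb{s}}$ and the piecewise-linear min-formula) to membership in each half-space $K(y',1)$ — and then apply \thref{prop:r-dual} and \thref{prop:DoubleDual} which give $\bigcap_{y'\in T}K(y',1) = (T^{\vee_1})^{\vee_1}\cap\cdots$, identifying the intersection of supporting half-spaces with $(T^\circ)^\circ = \overline{\bconv(T\cup\lrc{0})}$ in the appropriate space.

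Concretely, the key steps in order: (1) rewrite $\Xi_{\varphi,1} = \lrc{y : \lra{x,y}\geq\varphi(x)\ \forall x}$ and observe, using a seed $\vb{s}$ and \thref{not:Qvs}, that the tropically linear function $\lra{x,\ \boldsymbol\cdot\ }$ has the form $\min_{\ell\in L}\ell(\ \boldsymbol\cdot\ )$ with $\mf{r}_{\vb{s}}(x)\in L$, so that the inequality $\lra{x,y}\geq\varphi(x)$ for \emph{all} $x$ is equivalent to the collection of inequalities $\lra{x,y}\geq -1$ ranging over $x\in T^\circ$ — i.e. $\Xi_{\varphi,1} = (T^\circ)^\circ$ by definition of polar; (2) invoke \thref{prop:DoubleDual} with $r=1$ to conclude $(T^\circ)^\circ = \overline{\bconv(T^\circ\cup\lrc{0})}$ when needed, or directly $(T^\circ)^\circ = T^\circ$-closure statements; (3) reconcile this with the middle set in the display, which is literally $\lrc{y : \lra{x,y}\geq\varphi(x)\ \forall x}^\circ$, by applying \thref{prop:DoubleDual} once more.

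The main obstacle I anticipate is step (1): translating ``$\lra{x,y}\geq\min_{y'\in T}\lra{x,y'}$ holds for every $x\in U^{\trop}(\Q)$'' into the finite/clean condition ``$y\in\bigcap_{y'\in T}K(y',1)$''. The subtlety is that the function $x\mapsto\lra{x,y}$ is only piecewise linear, so one cannot just plug in $x = y'$ (an element of the wrong tropical space) to extract the individual half-space conditions; one must use \thref{thm:ThetaRecipocity} / \thref{thm:ValuativeIndependence} to control $\lra{x,y}$ as $x$ ranges over scaled copies of elements near each $y'$, and argue that the worst-case $x$ for a given $y'$ forces $\lra{\ \boldsymbol\cdot\ ,y'}\geq -1$. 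Once that equivalence is nailed down, the remaining identifications with $T^\circ$ and the double-polar are immediate from \thref{prop:r-dual} and \thref{prop:DoubleDual}, and require only bookkeeping.
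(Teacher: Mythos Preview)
Your proposal has a fundamental misreading that derails the whole argument. You write ``$\Xi := \Xi_{\varphi,1} = \lrc{y : \lra{x,y}\geq \varphi(x)\text{ for all }x}$ --- note this second description is already the definition of $\Xi_{\varphi,1}$''. This is not the definition. By \thref{prop:ConvWRTBL-BLC}, $\Xi_{\varphi,1} := \lrc{x\in U^{\trop}(\Q) : \varphi(x)\geq -1}$, a subset of $U^{\trop}(\Q)$, not $(U^\vee)^{\trop}(\Q)$. The set $\lrc{y\in (U^\vee)^{\trop}(\Q) : \lra{x,y}\geq \varphi(x)\text{ for all }x}$ appearing in the middle of the display is a \emph{different} set, and the first equality asserts that $\Xi_{\varphi,1}$ equals its polar. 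That equality is not a restatement; in the paper's argument it is obtained by computing $\Xi_{\varphi,1}^\circ$ via the boundary $\partial\Xi_{\varphi,1}$ (where $\varphi=-1$), exploiting homogeneity of both $\lra{\ \boldsymbol\cdot\ ,y}$ and $\varphi$ to extend the inequality to all of $U^{\trop}(\Q)$, and then applying \thref{prop:DoubleDual}.

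Conversely, the equality you think is hard --- $\Xi_{\varphi,1}=T^\circ$ --- is immediate once you have the right definition: $x\in\Xi_{\varphi,1}$ iff $\min_{y\in T}\lra{x,y}\geq -1$ iff $\lra{x,y}\geq -1$ for all $y\in T$ iff $x\in T^\circ$. No seed, no \thref{thm:ThetaRecipocity}, no \thref{thm:ValuativeIndependence}; the ``main obstacle'' you anticipate in step~(1) does not exist. Your confusion about the spaces (``this is the polar in the wrong space'', etc.) stems entirely from the misread definition; with the correct one, all three sets in the display visibly live in $U^{\trop}(\Q)$.
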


\begin{proof}
For the first equality, we compute:
    \eq{{\Xi_{\varphi,1}}^\circ:=& \lrc{y\in (U^\vee)^{\trop}(\Q)\, :\, \lra{x,y} \geq -1 \text{ for all } x\in \Xi_{\varphi,1}}\\
    =& \lrc{y\in (U^\vee)^{\trop}(\Q)\, :\, \lra{x,y} \geq -1 \text{ for all } x\in \partial \Xi_{\varphi,1}}\\
    =& \lrc{y\in (U^\vee)^{\trop}(\Q)\, :\, \lra{x,y} \geq \varphi(x) \text{ for all } x\in \partial \Xi_{\varphi,1}}\\
    =& \lrc{y\in (U^\vee)^{\trop}(\Q)\, :\, \lra{\lambda x,y} \geq \varphi(\lambda x) \text{ for all } x\in \partial \Xi_{\varphi,1},\ \lambda \in \Q_{\geq 0}}\\
    =& \lrc{y\in (U^\vee)^{\trop}(\Q)\, :\, \lra{x,y} \geq \varphi(x) \text{ for all } x\in U^{\trop}(\Q)}.
    }{eq:XiPhi1}
But $\Xi_{\varphi,1}$ is a closed broken line convex set containing $0$, so \thref{prop:DoubleDual} implies the first equality.

 For the second equality, suppose that $x\in {T}^{\circ}$. Then $\lra{x,y}\geq -1$ for all $y\in T$ and thus
    \eqn{\varphi(x) = \min_{y'\in T}\lrc{\lra{x,y'}} \geq -1 .}
So, $x\in\Xi_{\varphi,1}$ and this implies that ${T}^{\circ}\subseteq\Xi_{\varphi,1}$. Conversely, suppose $y\in T$. Then for all $x \in U^{\trop}(\Q)$,
    \eqn{\lra{x,y} \geq \min_{y'\in T}\lrc{\lra{x,y'}} = \varphi(x).}
So, $y\in{\Xi_{\varphi,1}}^{\circ}$ by Equation~\eqref{eq:XiPhi1}. 
We have then ${T\subseteq{\Xi_{\varphi,1}}^{\circ}}$ and consequently $T^{\circ}\supseteq \Xi_{\varphi,1}$.   
\end{proof}

\begin{prop}\thlabel{prop:DualPolysOfSupportFuns}
    Let $S \subset U^{\trop}(\Q)$ be a full-dimensional polytopal set containing $0$ in the interior, and let $\varphi:U^{\trop}(\Q) \to \Q$ be a support function for $\Sigma[S]$ which is convex with respect to broken lines.     Then
    $\Xi_{\varphi,1}\subset U^{\trop}(\Q)$ is a full-dimensional polyhedral set containing $0$ in the interior.
    Meanwhile, ${\Xi_{\varphi,1}}^\circ\subset (U^\vee)^{\trop}(\Q)$ is bounded and given by
    \eqn{{\Xi_{\varphi,1}}^\circ &=
    \bconv\lrp{\lrc{0}\cup \bigcup_{\sigma \in \Sigma[S]}\lrc{y_\sigma}},}
    where $y_\sigma$ satisfies $\left.\varphi\right|_\sigma = \lra{\ \boldsymbol{\cdot}\ ,y_\sigma}$.
\end{prop}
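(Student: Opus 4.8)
The plan is to first establish the claims about $\Xi_{\varphi,1}$ itself, then identify its polar. For the first part, I would use \thref{prop:DualXi}. Since $\varphi$ is a support function for $\Sigma[S]$, restricting to each weak cone $\sigma$ we have $\left.\varphi\right|_\sigma = \lra{\ \boldsymbol{\cdot}\ , y_\sigma}$. Because every point of $U^{\trop}(\Q)$ lies in $\Q_{\geq 0}\cdot F$ for some face $F$ of $S$ (as $S$ is full-dimensional, polytopal, and contains $0$ in the interior, so the weak cones $\sigma_F$ cover $U^{\trop}(\Q)$), we can write $\varphi(x) = \min_{\sigma\in\Sigma[S]}\lrc{\lra{x,y_\sigma}}$ for all $x$: indeed for any fixed $x$, choosing the $\sigma$ containing $x$ gives $\lra{x,y_\sigma} = \varphi(x)$, and by convexity with respect to broken lines together with \thref{cor:TropLinCWRTBL} (so each $\lra{\ \boldsymbol{\cdot}\ ,y_\sigma}$ is convex w.r.t. broken lines) one checks $\lra{x,y_\sigma}\geq \varphi(x)$ for every $\sigma$ — this last inequality is the key point and follows because $\varphi$ is convex w.r.t. broken lines and agrees with the tropically linear function $\lra{\ \boldsymbol{\cdot}\ , y_\sigma}$ on the full-dimensional cone $\sigma$, so $\lra{\ \boldsymbol{\cdot}\ , y_\sigma}$ must be a "tropical supporting hyperplane direction'' — I expect this comparison to be the main obstacle and the place requiring the most care, likely invoking \thref{lem:linearwrtblc} and the piecewise-linear structure. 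Then $\Xi_{\varphi,1} = \bigcap_{\sigma\in\Sigma[S]} K(y_\sigma,1)$ is a finite intersection of tropical half-spaces, hence polyhedral by \thref{def:polyhedral}. That it contains $0$ in the interior: $\varphi(0)=0 > -1$, and continuity of each $\lra{\ \boldsymbol{\cdot}\ ,y_\sigma}$ (plus finiteness of $\Sigma[S]$) gives a neighborhood of $0$ on which $\varphi > -1$. Full-dimensionality is then automatic since it contains an open neighborhood of $0$.

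For the boundedness of ${\Xi_{\varphi,1}}^\circ$, I would apply \thref{prop:Bounded-FullDim0}: since $\Xi_{\varphi,1}$ is broken line convex (it is an intersection of tropical half-spaces, each broken line convex by \thref{lem:TropHalfSpaceBLC}), full-dimensional, and contains $0$ in its interior, its polar ${\Xi_{\varphi,1}}^\circ$ is bounded.

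For the explicit description of ${\Xi_{\varphi,1}}^\circ$: by \thref{prop:DualXi} applied with $T := \lrc{y_\sigma : \sigma\in\Sigma[S]}$ (using the identity $\varphi = \min_{\sigma}\lra{\ \boldsymbol{\cdot}\ ,y_\sigma}$ established above), we get ${\Xi_{\varphi,1}}^\circ = T^\circ$. Then I would invoke \thref{prop:r-dual} with $r=1$: $T^\circ = T^{\vee_1} = \overline{\bconv(T\cup\lrc{0})}^{\vee_1} = \overline{\bconv(T\cup\lrc{0})}^\circ$. Finally, ${\Xi_{\varphi,1}}^{\circ\circ} = \overline{\bconv(T\cup\lrc{0})}$ by \thref{prop:DoubleDual}, but I actually want the reverse: I would instead argue that $T^\circ = \bconv(T\cup\lrc{0})^\circ$ and then note that since $T$ is a finite set of points, $\bconv(T\cup\lrc{0})$ is already closed (here I would either appeal to an earlier boundedness/closure fact or argue directly that the broken line convex hull of a finite integral point set containing $0$ is closed — this is the one spot where I should double-check whether the paper has established closure of such hulls; if not, I carry the closure bar throughout and note $\overline{\bconv(\lrc{0}\cup\bigcup_\sigma\lrc{y_\sigma})}$). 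Taking polars once more and using \thref{prop:DoubleDual} on the closed broken line convex set $\bconv(T\cup\lrc{0})$ gives
\eqn{{\Xi_{\varphi,1}}^\circ = T^\circ = \overline{\bconv(T\cup\lrc{0})}^\circ,}
and since we want ${\Xi_{\varphi,1}}^\circ$ expressed as a hull rather than a dual, I instead compute $({\Xi_{\varphi,1}}^\circ)$ directly: $({\Xi_{\varphi,1}})^\circ = \overline{\bconv(\lrc{0}\cup T)}$ would be the double-dual statement, so the cleaner route is: $\Xi_{\varphi,1} = T^\circ$ is false — rather $\Xi_{\varphi,1} = \bigcap_{y\in T}K(y,1) = T^{\vee_1 \text{ computed in } U^{\trop}}$, whose polar is $\overline{\bconv(T\cup\lrc{0})}$ by \thref{prop:DoubleDual} (noting $T\subset(U^\vee)^{\trop}(\Q)$ so its iterated dual lands back, and $T\cup\lrc{0}$ generates a closed hull since $T$ is finite). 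Thus
\eqn{{\Xi_{\varphi,1}}^\circ = \overline{\bconv\lrp{\lrc{0}\cup\bigcup_{\sigma\in\Sigma[S]}\lrc{y_\sigma}}} = \bconv\lrp{\lrc{0}\cup\bigcup_{\sigma\in\Sigma[S]}\lrc{y_\sigma}},}
the last equality because the hull of a finite point set together with the origin is closed (alternatively because ${\Xi_{\varphi,1}}^\circ$ is already known bounded and a bounded broken line convex set generated as such a hull is closed).

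The main obstacle, as flagged, is the comparison inequality $\lra{x,y_\sigma}\geq\varphi(x)$ for all $x$ and all $\sigma$, i.e. verifying that a broken-line-convex function agreeing with a tropically linear function on a full-dimensional weak cone lies above it globally; everything else is assembling \thref{prop:DualXi}, \thref{prop:r-dual}, \thref{prop:DoubleDual}, \thref{prop:Bounded-FullDim0}, and \thref{lem:TropHalfSpaceBLC}.
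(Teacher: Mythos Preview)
Your approach matches the paper's: establish $\Xi_{\varphi,1} = \bigcap_{\sigma} K(y_\sigma,1)$, read off that it is polyhedral, full-dimensional, and contains $0$ in its interior, invoke \thref{prop:Bounded-FullDim0} for boundedness of the polar, and then compute the polar via double-dual. The paper's polar computation is more direct than your detour through \thref{prop:DualXi}: it simply observes $K(y_\sigma,1) = \{y_\sigma\}^\circ$, applies \thref{prop:CapDuals=DualCup} to obtain $\Xi_{\varphi,1} = \bigl(\bigcup_\sigma\{y_\sigma\}\bigr)^\circ$, and then applies \thref{prop:DoubleDual} once --- this avoids the back-and-forth you go through (and incidentally, $\Xi_{\varphi,1}=T^\circ$ \emph{is} what \thref{prop:DualXi} gives, so your self-correction there was unnecessary). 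Worth noting: the comparison inequality $\lra{x,y_\sigma}\geq\varphi(x)$ that you rightly flag as the crux (needed for the containment $\Xi_{\varphi,1}\subset K(y_\sigma,1)$) is not argued in the paper either --- the equality $\Xi_{\varphi,1}=\bigcap_\sigma K(y_\sigma,1)$ is simply asserted --- and the paper likewise drops the closure bar coming from \thref{prop:DoubleDual} without comment. So your caution on both points is well-placed, even if the paper's own proof does not address them.
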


\begin{proof}
    Since $\varphi$ is a support function for $\Sigma[S]$, for all $\sigma \in \Sigma[S]$ we have $\left.\varphi\right|_\sigma = \lra{\ \boldsymbol{\cdot}\ ,y_\sigma}$ for some tropical point $y_\sigma \in (U^\vee)^\trop(\Q)$, and
    \eqn{\Xi_{\varphi,1}\cap \sigma = \lrc{x \in \sigma \, : \, \lra{x,y_\sigma} \geq -1} = K(y_\sigma,1)\cap \sigma.}
    Next, $\Xi_{\varphi,1}$ is broken line convex by \thref{prop:ConvWRTBL-BLC}, and given by
    \eqn{\Xi_{\varphi,1} = \bigcap_{\sigma \in \Sigma[S]}K(y_{\sigma},1).}
    This shows that $\Xi_{\varphi,1}$ is polyhedral.
    As the indexing set of the intersection is finite and each $K(y_\sigma,1)$ is a full-dimensional set containing $0$ in the interior, so is $\Xi_{\varphi,1}$. 
    By \thref{prop:Bounded-FullDim0}, ${\Xi_{\varphi,1}}^\circ$ is bounded.
    Moreover, $K(y_{\sigma},1) = \lrc{y_\sigma}^\circ$, so using \thref{prop:CapDuals=DualCup} and \thref{prop:DoubleDual}
    we see that 
    \eqn{\Xi_{\varphi,1} = \bigcap_{\sigma \in \Sigma[S]}\lrc{y_\sigma}^\circ =  \lrp{\bigcup_{\sigma \in \Sigma[S]}\lrc{y_\sigma}}^\circ}
    and
    \eqn{{\Xi_{\varphi,1}}^\circ=  \lrp{\lrp{\bigcup_{\sigma \in \Sigma[S]}\lrc{y_\sigma}}^\circ}^\circ
    = \bconv\lrp{\lrc{0}\cup \bigcup_{\sigma \in \Sigma[S]}\lrc{y_\sigma}}.}
\end{proof}

\subsection{Half-space and vertex representations}

\begin{prop}\thlabel{prop:ConvFinite-ConvVert}
    Let $A\subset U^{\trop}(\Q)$ be a finite collection of points and let $S=\bconv(A)$.
    Then necessarily $V(S) \subset A$ and $S=\bconv(V(S))$.
\end{prop}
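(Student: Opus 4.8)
The plan is to prove the two assertions in turn, in both cases combining the description $\bconv(\cdot)=\bigcup_{d\geq 1}(\cdot)_d$ of \thref{cor:ConvColim} with the fact that for $y\in(U^\vee)^\trop(\Q)$ the function $\lra{\ \boldsymbol{\cdot}\ ,y}$ is convex with respect to broken lines (\thref{cor:TropLinCWRTBL}), so \thref{prop:ConvWRTBL} applies to it on any broken line convex subset on which it is defined.

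First I would show $V(S)\subseteq A$. Let $v$ be a vertex of $S$, so $v=S\cap H(y,r)$ for a tropical half-space $K(y,r)\supseteq S$. Since $v\in S=\bconv(A)=\bigcup_{d\geq 1}A_d$, there are $s_1,\dots,s_d\in A$ and $a_1,\dots,a_d\in\Z_{\geq 0}$ with $\mathbf a:=a_1+\cdots+a_d>0$, all of $a_i s_i$ and $\mathbf a\, v$ integral, and $\alpha_{a_1 s_1,\dots,a_d s_d}^{\mathbf a v}\neq 0$; dropping the indices with $a_i=0$ via \thref{lem:filtration} we may assume every $a_i>0$. Then \thref{prop:ConvWRTBL} applied to $\lra{\ \boldsymbol{\cdot}\ ,y}$ gives $\lra{v,y}\geq\sum_i\frac{a_i}{\mathbf a}\lra{s_i,y}$; but $\lra{s_i,y}\geq -r$ (as $s_i\in S\subseteq K(y,r)$) while $\lra{v,y}=-r$, so the inequality is an equality and $\lra{s_i,y}=-r$ for every $i$. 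Hence each $s_i\in S\cap H(y,r)=\{v\}$, i.e.\ $v=s_1\in A$.

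For $S=\bconv(V(S))$, the inclusion $\bconv(V(S))\subseteq\bconv(A)=S$ is immediate from the previous paragraph. For the reverse I would fix a subset $B\subseteq A$ that is minimal among subsets with $\bconv(B)=S$ (one exists since $A$ is finite) and show $B=V(S)$, whence $S=\bconv(B)=\bconv(V(S))$. The first part applied to $B$ gives $V(S)\subseteq B$. Conversely take $b\in B$. Minimality forces $\bconv(B\setminus\{b\})\subsetneq S$, hence $b\notin\bconv(B\setminus\{b\})$ (otherwise $B\subseteq\bconv(B\setminus\{b\})$ would force equality). Since the broken line convex hull of a finite set is closed, $b$ may be strictly separated from $\bconv(B\setminus\{b\})$ by a supporting tropical half-space $K(y,r)$, as in the proof of \thref{prop:DoubleDual}: then $\lra{b,y}<-r\leq\lra{b',y}$ for all $b'\in B\setminus\{b\}$, so $b$ is the unique minimizer of $\lra{\ \boldsymbol{\cdot}\ ,y}$ on $B$; set $\rho:=\lra{b,y}$. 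Exactly as before, for any $x\in S=\bigcup_{d\geq 1}B_d$ with witnesses $s_1,\dots,s_d\in B$ and $c_1,\dots,c_d>0$ (again discarding zero coefficients), \thref{prop:ConvWRTBL} gives $\lra{x,y}\geq\sum_i\frac{c_i}{\mathbf c}\lra{s_i,y}\geq\rho$, so $S\subseteq K(y,-\rho)$ and $b\in S\cap H(y,-\rho)$; and if $\lra{x,y}=\rho$ then equality throughout forces every $s_i=b$, so $\alpha_{c_1 b,\dots,c_d b}^{\mathbf c x}\neq 0$. Because a single point $\{b\}$ is broken line convex, hence positive, \thref{lem:MultiProdPositivity} yields $\mathbf c\, x\in(c_1+\cdots+c_d)\{b\}$, i.e.\ $x=b$. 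Thus $S\cap H(y,-\rho)=\{b\}$ and $b\in V(S)$, so $B\subseteq V(S)\subseteq B$ and $B=V(S)$.

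The two nontrivial inputs are: (i) the broken line convex hull of a finite set is closed, needed to pass from $b\notin\bconv(B\setminus\{b\})$ to $b\notin\overline{\bconv(B\setminus\{b\})}$ before separating (if this needs care, one can instead carry the closure along throughout, using that closures of broken line convex sets are broken line convex); and (ii) a single point $\{p\}$ is broken line convex, equivalently $\alpha_{c_1 p,\dots,c_d p}^z\neq 0$ implies $z=(c_1+\cdots+c_d)p$. I expect (ii) to be the main obstacle: it asserts that $\tf_{c_1 p}\cdots\tf_{c_d p}$ has no $\tf$-summand other than $\tf_{(c_1+\cdots+c_d)p}$, which should follow from \cite{BLC} or directly from the structure of the cluster scattering diagram (broken lines parallel at infinity cannot bend to a common endpoint with total exponent $\neq(c_1+\cdots+c_d)p$), but it is the step that genuinely uses the geometry rather than formal manipulation of structure constants.
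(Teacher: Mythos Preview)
Your approach is essentially the same as the paper's: both separate an ``extremal'' point of the generating set from the hull of the remaining points and then use \thref{prop:ConvWRTBL} applied to a tropically linear function to control which points of $S$ can lie on the resulting supporting hyperplane. The differences are mainly organizational. The paper only explicitly proves the implication ``$v\in A$ and $v\notin\bconv(A\setminus\{v\})$ imply $v\in V(S)$,'' leaving both $V(S)\subset A$ and the passage to $S=\bconv(V(S))$ implicit. You prove $V(S)\subset A$ directly and cleanly (and this part does \emph{not} require singletons to be broken line convex), then handle the second assertion via a minimal generating subset $B$. Where the paper decomposes $s\in S$ via \thref{prop:ConvUnion-UnionSum} applied to the singletons $\{x\}$ for $x\in A$, you use the $A_d$ description of \thref{cor:ConvColim}; these are closely related routes to the same inequality.

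Your flagged concern (ii) --- that a singleton $\{b\}$ is broken line convex, equivalently that $\alpha_{c_1 b,\dots,c_d b}^{z}\neq 0$ forces $z=(c_1+\cdots+c_d)b$ --- is well placed, and it is worth noting that the paper's proof relies on the very same fact: \thref{prop:ConvUnion-UnionSum} requires each $S^i$ to be broken line convex, so applying it with $S^i=\{x_i\}$ presupposes exactly this. Thus your argument is no more gappy than the paper's on this point; you have simply made the dependence explicit. Concern (i) on closedness is likewise shared: the paper's separation of $v$ from $S_v=\bconv(A\setminus\{v\})$ by a supporting tropical half-space tacitly assumes the same thing.
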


\begin{proof}
    Let $v\in A$, but $v\notin \bconv(A\setminus\lrc{v})=:S_v$.
    Then there is some supporting tropical half-space $K(y,r)$ for $S_v$ with $x\notin K(y,r)$.
    For all $r'>r$, we have that $S_v$ is contained in the interior of $ K(y,r')$.  Moreover, for some such $r'$, we have that $v \in H(y,r')$. 

    Now let $s\in S$.
    By \thref{prop:ConvUnion-UnionSum},
    \eqn{s \in \Sumt_{x\in A} a_x x}
    for some collection of non-negative $a_x$ which sum to $1$.
    Next, \thref{prop:ConvWRTBL} implies
    \eqn{\lra{s,y} &\geq  \sum_{x\in A} a_x \lra{x,y}\\
    &\geq -  a_v r' - \sum_{x \in A\setminus\lrc{v}} a_x r\\
    &\geq - r',    }
    with equality if and only if $a_v=1$, $a_{x\neq v} = 0$.
    That is, if $s\in S$ is in $H(y,r')$, then $s=v$.  Thus $v$ is a vertex of $S$.    
\end{proof}

\begin{prop}\thlabel{prop:PolytopeConvFinite}
    Every polytopal set $S\subset U^{\trop}(\Q)$ is the broken line convex hull of a finite set.
\end{prop}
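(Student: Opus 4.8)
The plan is to prove that $S$ equals the broken line convex hull of its finite set of vertices $V(S)$; by \thref{prop:ConvFinite-ConvVert} this is exactly the kind of presentation asked for. The inclusion $\bconv(V(S)) \subseteq S$ is immediate: writing $S = \bigcap_{i\in I} K(y_i, r_i)$ with $I$ finite, each $K(y_i, r_i)$ is broken line convex by \thref{lem:TropHalfSpaceBLC}, so $S$ is broken line convex and contains $V(S)$. (If $S = \varnothing$ there is nothing to prove, so assume $S \neq \varnothing$.) That $V(S)$ is finite, and more generally that $S$ has finitely many faces, follows from the polyhedral bookkeeping already used in the proof of \thref{prop:FacePseudo-Complex}: the faces of $S$ are cut out by intersecting $S$ with subsets of the finitely many bounding tropical hyperplanes $H(y_i, r_i)$.

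The crux is the following claim: \emph{every nonempty face of $S$ contains a vertex}. Let $F$ be a nonempty face. If $\dim F = 0$ then $F$ is a vertex and we are done. If $\dim F \geq 1$, then $F$ is bounded and weakly convex (\thref{cor:FaceWC}), so its boundary $\partial F$ is nonempty, has dimension strictly less than $\dim F$, and by \thref{prop:FacePseudo-Complex} together with \thref{def:pseudo-complex}, Item~\ref{it:PBoundary}, is a union of faces of $S$. Choosing such a face $F' \subseteq \partial F$ with $F' \neq \varnothing$, we have $\dim F' \leq \dim \partial F < \dim F$; iterating and noting that the dimension strictly drops at each step, this process terminates at a vertex contained in $F$.

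Now suppose, for contradiction, that some $s \in S$ is not in $\bconv(V(S))$. As in the proof of \thref{prop:ConvFinite-ConvVert}, there is a tropical half-space $K(y, r)$ with $\bconv(V(S)) \subseteq K(y, r)$ — hence $\langle v, y\rangle \geq -r$ for every $v \in V(S)$ — and with $\langle s, y\rangle < -r$. The tropically linear function $\langle\,\boldsymbol{\cdot}\,, y\rangle$ attains its minimum $m$ on the bounded set $S$, and $m \leq \langle s, y\rangle < -r$, so $F := S \cap H(y, -m) = \{x \in S : \langle x, y\rangle = m\}$ is a nonempty face of $S$. By the claim, $F$ contains a vertex $v$; but then $\langle v, y\rangle = m < -r$, contradicting $\langle v, y\rangle \geq -r$. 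Hence $S \subseteq \bconv(V(S))$, and combined with the first paragraph, $S = \bconv(V(S))$.

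I expect the main obstacle to be marshalling the ``classical'' inputs so that they are genuinely available in the broken line setting, rather than finding any new idea: namely (i) that a polytopal set has finitely many faces, with every face cut out by the defining tropical half-spaces (used but not isolated in \thref{prop:FacePseudo-Complex}); (ii) that a tropically linear function attains its minimum on a bounded polytopal set; and (iii) the separating-half-space property for broken line convex hulls of finite sets, which the proof of \thref{prop:ConvFinite-ConvVert} already invokes (and which implicitly requires such hulls to behave as closed sets). In the full-dimensional case with $0$ in the interior one can instead write $S = \bigcap_i K(y_i, 1) = \bigl(\{y_i : i\in I\}\bigr)^{\circ}$ using \thref{prop:CapDuals=DualCup}, check that $\varphi := \min_i \langle\,\boldsymbol{\cdot}\,, y_i\rangle$ is convex with respect to broken lines (a minimum of tropically linear functions, via \thref{cor:TropLinCWRTBL} and \thref{rem:ConvWRTBL-Equiv}) and is a support function for $\Sigma[S]$, and apply \thref{prop:DualPolysOfSupportFuns}; but this yields a finite broken line convex hull presentation of $S^{\circ}$ rather than of $S$, so the vertex argument above is still the essential one.
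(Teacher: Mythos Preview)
Your approach is genuinely different from the paper's and, as written, has a real gap. The paper's proof is a one-line reduction: pick a seed $\vb{s}$, identify $U^{\trop}(\Q)$ with $\Q^d$ via $\mf{r}_{\vb{s}}$, and observe that each tropical half-space $K(y_i,r_i)$ becomes an intersection of finitely many ordinary half-spaces (since $\lra{\,\cdot\,,y_i}$ is a minimum of finitely many linear forms). Thus $S$ becomes an honest rational polytope $P_S\subset\Q^d$, which by classical convex geometry is the ordinary convex hull of a finite set $A$ (the classical vertices of $P_S$). Since straight line segments in the seed picture are broken line segments, $\conv(A)\subset\bconv(A)$, giving $S=\bconv(A)$. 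The paper explicitly notes that this $A$ is typically \emph{not} $V(S)$ and need not be minimal; the identification $S=\bconv(V(S))$ with $V(S)$ finite is then deduced \emph{afterwards} as \thref{cor:Hrep-Vrep}, by combining this proposition with \thref{prop:ConvFinite-ConvVert}.

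This is exactly where your argument becomes circular. You assume at the outset that $V(S)$ is finite and then aim to show $S=\bconv(V(S))$; but in the paper's logical order, finiteness of $V(S)$ is a \emph{consequence} of \thref{prop:PolytopeConvFinite}, not an input to it. Your justification---that ``the faces of $S$ are cut out by intersecting $S$ with subsets of the finitely many bounding tropical hyperplanes $H(y_i,r_i)$''---is not what \thref{prop:FacePseudo-Complex} establishes. That proof shows only that each $x\in\partial F$ lies in \emph{some} face $F_x=S\cap H(y'_x,r')$ with $y'_x$ in a tropical Minkowski sum $y\+t\Sumt_{j\in J} y_j$; since these sums are multivalued, nothing there bounds the number of distinct faces, and in particular it does not say that a face is determined by which of the $H(y_i,r_i)$ it lies on. Proving this directly (e.g., showing the map $v\mapsto\{i:v\in H(y_i,r_i)\}$ is injective on vertices) runs into exactly the subtlety that broken line segments between two points on $\bigcap_{j\in J}H(y_j,r_j)$ need not remain in that intersection. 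Your items (ii) and (iii) are also most cleanly handled by passing to a seed---at which point you have essentially reconstructed the paper's argument, only with $V(S)$ replaced by the classical vertex set of $P_S$.
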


\begin{proof}
    This follows directly from the usual convex geometry statement by choosing a seed-- the only subtlety being that the finite set of points obtained in this way will generally not be minimal.
    
    Let $d = \dim(U)$.
    Then a choice of seed identifies $U^{\trop}(\Q)$ with $\Q^d$, and $S$ with a polytope $P_S$ in $\Q^d$.
    Specifically, each tropical half-space defining $S$ is identified with an intersection of a finite number of half-spaces in $\Q^d$.
    Thus $P_S$ is the intersection of finitely many half-spaces in $\Q^d$; it is a rational polyhedron, and in fact a polytope as it is bounded.
    Then $P_S$ is the (usual) convex hull of a finite collection of points in $\Q^d$, namely the vertices of $P_S$ in the usual convex geometry sense.
    Denote by $A$ the collection of tropical points associated to this set of vertices of $P_S$.
    The equality $S = \bconv(A)$ is clear.
\end{proof}

We can combine Propositions~\ref{prop:ConvFinite-ConvVert} and \ref{prop:PolytopeConvFinite} to obtain
\begin{cor}\thlabel{cor:Hrep-Vrep}
    If $S\subset U^{\trop}(\Q)$ is polytopal, then $V(S)$ is finite and $S=\bconv(V(S))$. 
\end{cor}

So, we have defined polytopal sets using a broken line convex geometry version of the ``half-space representation''.
\thref{cor:Hrep-Vrep} indicates that a polytopal set always has a broken line convex geometry version of the ``vertex representation'' as well.
We may also perform this translation in the opposite direction:

\begin{prop}
    Let $A\subset U^{\trop}(\Q)$ be a finite set such that $S:=\bconv(A)$ is full-dimensional and contains the origin.  Then $S$ is polytopal.
\end{prop}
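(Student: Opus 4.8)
The goal is to show that if $A \subset U^{\trop}(\Q)$ is finite with $S := \bconv(A)$ full-dimensional and containing the origin, then $S$ is polytopal, i.e. $S = \bigcap_{i \in I} K(y_i, r_i)$ for a finite index set $I$. The natural strategy is to pass through the polar dual and invoke the double-dual result \thref{prop:DoubleDual}. First I would observe that since $0 \in S$, we have $S = \bconv(S \cup \{0\}) = \bconv(A \cup \{0\})$, and we may assume $0 \in A$. I would then note that $S$ is bounded: each point of $A$ lies in some seed chart, and $\bconv(A)$ maps, under a fixed seed identification $\mf{r}_{\vb{s}}$, into the usual convex hull of the finite image of $A$ (this requires the comparison between broken-line and ordinary convex hulls as in the argument of \thref{prop:PolytopeConvFinite}, run in reverse direction: $\bconv(A) \supset$... actually one needs $\mf{r}_{\vb{s}}(\bconv(A))$ bounded). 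Concretely, $\mf{r}_{\vb{s}}(\bconv(A))$ is contained in a ball, so $S$ is bounded. Being bounded, full-dimensional and containing $0$ in its interior (here I should be careful: the statement says "contains the origin", and full-dimensionality plus being a broken line convex hull of a finite set should force $0$ into the interior — or else I restrict attention to this; I would address this point explicitly), Corollary~\ref{cor:BoundedFullDim} applies and $S^\circ$ is bounded, full-dimensional, and contains the origin in its interior.

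Next I would apply \thref{prop:DoubleDual} with $r = 1$: since $S$ is a closed (one must check $\bconv(A)$ of a finite set is closed, via the seed picture) broken line convex set containing $0$, we get $(S^\circ)^\circ = \overline{\bconv(S \cup \{0\})} = S$. Now $S^\circ$ is a bounded, full-dimensional broken line convex set containing $0$ in its interior, so by \thref{cor:Hrep-Vrep} (which is available since \thref{prop:PolytopeConvFinite} would give... wait, this is circular) — instead I should use the more elementary route: $S^\circ$ is bounded, hence contained in a ball; cover the unit sphere of directions... Actually the cleanest path: apply \thref{cor:Hrep-Vrep}'s underlying fact in the seed picture. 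Under a seed identification, $\mf{r}_{\vb{s}}^\vee(S^\circ)$ is a bounded convex (in the ordinary sense, since broken line convex sets map to ordinary convex sets? no — that is false in general) ...

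**The real argument.** Let me reorganize. The honest route is: $S^\circ$ is closed and broken line convex (intersection of tropical half-spaces $K(s,1)$, $s \in S$, each broken line convex by \thref{lem:TropHalfSpaceBLC}), and bounded. By \thref{cor:BoundedFullDim} it is also full-dimensional with $0$ in its interior. Now I claim $S^\circ$ is polytopal. For this: since $S = \bconv(A)$ with $A = \{a_1, \dots, a_n\}$ finite and each $K(a_j, 1) = \{y : \lra{a_j, y} \geq -1\}$ is a tropical half-space, I claim $S^\circ = \bigcap_{j=1}^n K(a_j, 1)$. The inclusion $\subset$ is clear. For $\supset$: if $y \in \bigcap_j K(a_j,1)$ then $\lra{a_j, y} \geq -1$ for all $j$, and also $\lra{0,y} = 0 \geq -1$, so $A \cup \{0\} \subset K(y,1)$; since $K(y,1)$ is closed and broken line convex, $S = \overline{\bconv(A \cup \{0\})} \subset K(y,1)$, i.e. $\lra{s,y} \geq -1$ for all $s \in S$, i.e. $y \in S^\circ$. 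Hence $S^\circ = \bigcap_{j=1}^n K(a_j, 1)$ is polyhedral, and bounded, hence polytopal. Then I apply this same statement with roles swapped: $S^\circ$ is a polytopal set, so by \thref{cor:Hrep-Vrep} it is $\bconv$ of the finite set $V(S^\circ)$; then $(S^\circ)^\circ = \bigcap_{v \in V(S^\circ)} K(v,1)$ by the identical argument, which is a finite intersection of tropical half-spaces, hence polyhedral. Finally \thref{prop:DoubleDual} gives $(S^\circ)^\circ = S$ (using $0 \in S$ and $S$ closed broken line convex), so $S$ is polyhedral; and $S$ is bounded, so polytopal.

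**Main obstacle.** The one genuinely delicate point is justifying that $S = \bconv(A)$ is \emph{closed} and \emph{bounded} when $A$ is finite — these are needed to invoke \thref{prop:DoubleDual}. Closedness and boundedness should follow by fixing a seed $\vb{s}$ and comparing with ordinary convex geometry in $V = \mf{r}_{\vb{s}}(U^{\trop}(\Q))$, using \thref{cor:ConvColim}: every point of $\bconv(A)$ arises from finitely many structure-constant operations on points of $A$, and in the seed chart each such operation keeps the image inside the ordinary convex hull $\conv(\mf{r}_{\vb{s}}(A))$ up to controlled bending — this is exactly the kind of estimate used implicitly in \thref{prop:PolytopeConvFinite}. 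I would spell out: $\mf{r}_{\vb{s}}(\bconv(A)) \subset \conv(\mf{r}_{\vb{s}}(A))$ is generally \emph{not} true (broken lines bend!), but $\bconv(A)$ is nonetheless bounded because the total bending is bounded by the finitely many walls crossed and the diameter of $\conv(\mf{r}_{\vb{s}}(A))$; boundedness then suffices together with the explicit half-space description $S^\circ = \bigcap_j K(a_j,1)$ above, which sidesteps needing closedness of $S$ directly (one only needs closedness of $S^\circ$, which is automatic as an intersection of closed half-spaces). So in the write-up I would lead with the half-space identity $S^\circ = \bigcap_{j} K(a_j, 1)$, deduce $S^\circ$ is polytopal, then run \thref{cor:Hrep-Vrep} and \thref{prop:DoubleDual} to conclude $S = (S^\circ)^\circ$ is polyhedral and bounded.
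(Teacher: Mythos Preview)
Your ``real argument'' is essentially the paper's proof: show $S^\circ = \bigcap_{a \in A} K(a,1)$ is polytopal, apply \thref{cor:Hrep-Vrep} to write $S^\circ = \bconv(V(S^\circ))$, then run the same identity to get $(S^\circ)^\circ = \bigcap_{v \in V(S^\circ)} K(v,1)$ and conclude via \thref{prop:DoubleDual}. The only cosmetic difference is that the paper uses $V(S)$ (via \thref{prop:ConvFinite-ConvVert}) rather than $A$ itself in the first step; your concerns about closedness of $S$ and about ``contains the origin'' versus ``contains the origin in its interior'' are legitimate and are glossed over in the paper's version as well.
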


\begin{proof}
    First note that $S^\circ$ is given by
    \eqn{S^\circ = \bigcap_{v\in V(S)} K(v,1). }
    By \thref{prop:ConvFinite-ConvVert} $V(S)$ is a finite set contained in $A$, and so $S^\circ$ is polyhedral.
    Using \thref{cor:BoundedFullDim}, we see that $S^\circ$ is in fact polytopal.
    But then \thref{cor:Hrep-Vrep} implies $V(S^\circ)$ is finite and $S^\circ = \bconv(V(S^\circ))$.
    So, we have
    \eqn{(S^\circ)^\circ = \bigcap_{y\in V(S^\circ)} K(y,1). }
    But since $S$ is a broken line convex set containing the origin, we have $(S^\circ)^\circ=S$ by \thref{prop:DoubleDual}.
\end{proof}










\bibliography{bibliography.bib}
\bibliographystyle{hep.bst}

\end{document}